\documentclass[10pt,reqno]{amsart}
\usepackage{microtype}
\usepackage{mathtools}\mathtoolsset{showonlyrefs} %
\usepackage{tikz}
\usetikzlibrary{arrows.meta}
\usepackage[a4paper,margin=2.2cm]{geometry}
\usepackage[T1]{fontenc}
\usepackage{amsthm}
\usepackage{fourier,amsfonts,amssymb}
\usepackage{xcolor}
\usepackage[authoryear]{natbib}
\usepackage[hidelinks]{hyperref}
\usepackage{enumitem}
\allowdisplaybreaks%
\numberwithin{equation}{section}
\numberwithin{figure}{section}
\theoremstyle{plain}
\newtheorem{theorem}{Theorem}[section]

\newtheorem{lemma}[theorem]{Lemma}
\newtheorem{corollary}[theorem]{Corollary}
\newtheorem{openproblem}[theorem]{Open problem}
\newtheorem{definition}[theorem]{Definition}
\newtheorem{remark}[theorem]{Remark}

\title{Singular central limit theorems for the spherical ensemble and beyond}
\date{Spring, 2026}
\author{Djalil Chafaï}
\address{DMA, École normale supérieure, CEREMADE, Université Paris-Dauphine, PSL, CNRS, France}
\email{djalil.chafai@ens.psl.eu}
\author{David García-Zelada} 
\address{LPSM, Sorbonne Université, Paris, France}
\email{david.garcia-zelada@sorbonne-universite.fr}
\author{Yuan Yuan Xu}
\address{Academy of Mathematics and Systems Science,
Chinese Academy of Sciences, Beijing, China}
\email{yyxu2023@amss.ac.cn}
\keywords{Random matrix; Spherical Model; Green Function; Logarithmic Potential; Central Limit Theorem;  Universality}%
\subjclass[2020]{%
60B20 - Random matrices; %
60F05 - Central limit and other weak theorems; %
60G55 - Point processes; %
82B21 - Continuum particle systems in equilibrium statistical mechanics; %
31C12 - Potential theory on Riemannian manifolds and other spaces.
}

\newcommand{\R}{{\mathbb{R}}}
\newcommand{\N}{{\mathbb{N}}}

\newcommand{\C}{{\mathbb{C}}}
\newcommand{\A}{{\tilde{A}}}
\newcommand{\B}{{\tilde{B}}}
\newcommand{\ii}{\mathrm{i}}

\newcommand{\wt}{\widetilde}

\newcommand{\<}{\langle}
\renewcommand{\>}{\rangle}

\newcommand{\AG}{A^{\mathrm{Gin}}}
\newcommand{\BG}{B^{\mathrm{Gin}}}
\newcommand{\MG}{M^{\mathrm{Gin}}}
\newcommand{\E}{\mathbb{E}}
\renewcommand{\P}{{\mathbb P}}
\newcommand{\dd}{\operatorname{d}\!{}}
\newcommand{\ie}{\emph{i.e., }}
\newcommand{\eg}{\emph{e.g., }}
\newcommand{\cf}{\emph{cf., }}
\renewcommand{\Re}{\operatorname{Re}}
\renewcommand{\Im}{\operatorname{Im}}
\newcommand{\II}{\mathcal{L}_{i,t}}

\begin{document}

\begin{abstract}
We study the fluctuations of logarithmic Green singularities in the spherical ensemble, viewed as a random discretization of the two-sphere. Smooth observables exhibit the usual Sobolev or Gaussian free field fluctuations, whereas logarithmic singularities live on a larger logarithmic scale and asymptotically decouple in high-dimension, producing an explicit white-noise limit. The result gives precise asymptotics for logarithmic potentials and characteristic polynomials, with constants expressed through chordal geometry on the sphere.
\end{abstract}

\maketitle

{\footnotesize\tableofcontents}

\section{Introduction}

 The spherical ensemble is a unitary invariant random matrix model obtained
 by taking the ratio of two independent complex Ginibre matrices. Its
 spectrum is a heavy-tailed determinantal Coulomb gas on the complex plane.
 Its equilibrium measure is the complex Cauchy distribution. This planar gas
 is the image by the stereographic projection of a chordal gas on the
 two-sphere. Our first main result is a Central Limit Theorem (CLT) for linear
 statistics for test functions with an arbitrary finite number of
 logarithmic singularities. The normalization is at the order of the square root of the 
 logarithm of the dimension.  The contributions of the singularities are asymptotically 
 independent, and the asymptotic variance depends only on the
 weights of the singularities. We deduce a multipoint CLT for the logarithm
 of the modulus of the characteristic polynomial, which is the logarithmic
 potential of the empirical spectral distribution. We also deduce a
 multipoint CLT for the linear statistics tested against the Green function of the
 two-sphere. As a second main result, we compute covariance asymptotics showing that the 
 associated unnormalised fields are log-correlated. The approach relies on an interplay
 between probabilistic, analytic, and geometric arguments. Furthermore, 
 we also establish, under suitable assumptions, a matrix-universality extension
 when the Ginibre matrices are replaced by Girko
 matrices. Finally, we discuss possible geometric universality extensions in which the
 sphere is replaced by a more general compact Riemann surface, as well as
 natural open problems related to Fisher--Hartwig asymptotics and Gaussian
 multiplicative chaos.

In short, this work is about the high-dimensional spectral analysis of random matrices from the spherical ensemble, a famous model of mathematical physics. We obtain CLTs for test functions with logarithmic singularities. This covers in particular the asymptotic analysis of the logarithm of the modulus of the characteristic polynomial.

The analysis of the characteristic polynomial of random matrices goes back at least to \cite{zbMATH01631908}. For non-Hermitian random matrix models and two-dimensional Coulomb gases, we refer for instance to \cite{zbMATH01921763}, \cite{zbMATH07115604}, \cite{MR3946715}, \cite{zbMATH08053185}, \cite{bourgadeetal}.

\subsection{Spherical model}

Let $A$ and $B$ be independent random $n\times n$ matrices with iid entries
$\mathcal{N}_{\mathbb{C}}(0,1)$. Then $A$ and $B$ are almost surely invertible
and the joint law of the eigenvalues of $AB^{-1}$ has Lebesgue density
proportional to
\begin{equation}\label{eq:gas:C}
  (z_1,\ldots,z_n)\in\mathbb{C}^n
  \mapsto\prod_{k=1}^n(1+|z_k|^2)^{-(n+1)}\prod_{i<j}|z_i-z_j|^2
  =\mathrm{e}^{-2(n+1)\sum_{k=1}^nQ(z_k)}\prod_{i<j}|z_i-z_j|^2
\end{equation}
where the confining potential $Q(z)=\frac{1}{2}\log(1+|z|^2)$ is logarithmic at infinity. This Coulomb gas on $\mathbb{C}$ has equilibrium measure given by the complex Cauchy distribution\footnote{It is also the law of the ratio of two independent $\mathcal{N}_{\mathbb{C}}(0,1)$ random variables (just take $n=1$).}
\begin{equation}
  \mu(\mathrm{d}z)
  =\frac{\mathrm{d}^2z}{\pi(1+|z|^2)^2}
  =\frac{\Delta Q(z)}{2\pi}\mathrm{d}^2z.
\end{equation}
This model goes back to \cite{Krishnapur2006,Krishnapur2009}. It has a natural analog in free probability, see \cite{zbMATH05361525}, and was the object of further works in random matrix theory such as \cite{MR2540391}, \cite{MR2346510}, \cite{MR2772389}, and \cite{MR2926763}, and more recently \cite{byun2025propertiesonecomponentcoulombgas}. 

The probability distribution $\mu$ is also the image, by the stereographic projection $T$ of the uniform distribution $\nu$ on the unit sphere
$\mathbb{S}^2=\{x\in\mathbb{R}^3:x_1^2+x_2^2+x_3^2=1\}$. In other words,
\begin{equation}
  \mu=\nu\circ T^{-1}.
\end{equation}
A precise definition of $T$ is given in \eqref{eq:T}. The planar Coulomb gas \eqref{eq:gas:C} is also the image, by the stereographic projection $T$, of the gas on the unit sphere $\mathbb{S}^2$
with density with respect to the uniform measure proportional to
\begin{equation}\label{eq:gas:S}
  (x_1,\ldots,x_n)\in(\mathbb{S}^2)^n
  \mapsto\prod_{i<j}{\|x_i-x_j\|}_{\mathbb{R}^3}^2.
\end{equation}
This gas is invariant under 
the isometries of $\mathbb S^2$ or, equivalently, under the action of the orthogonal group
$\mathrm{O}(3)$ on $\mathbb R^3$
restricted to $\mathbb S^2$, and its equilibrium measure is $\nu$. The random matrix model
$AB^{-1}$ is known as the spherical ensemble. For all these aspects, we refer
to \cite[Chapter 4]{MR2552864}, \cite{MR2641363} and references therein. The spherical ensemble can be seen as a two-dimensional analogue of the circular unitary ensemble (CUE). In particular, it does not have boundary. Note however that for CUE, the random matrix model is on the circle side, while for the spherical model, the random matrix model is on the plane side rather than on the sphere side.

Our main object of interest is the characteristic polynomial of $AB^{-1}$ at point $z\in\mathbb{C}$, that we denote
\begin{equation}
  P_n(z)=\det(zI-AB^{-1}).
\end{equation}
Let $Z_n=(Z_{n,1},\ldots,Z_{n,n})$ be a random vector of $\mathbb{C}^n$
distributed according to the gas \eqref{eq:gas:C}, in other words distributed as the eigenvalues of $AB^{-1}$. For all measurable\footnote{Here $f$ is not necessarily defined on the whole complex plane but on a subset of full (Lebesgue) measure.}
$f:\mathbb{C}\to\mathbb{R}$, we consider the linear statistics
\begin{equation}
  L_n(f)=\sum_{k=1}^n f(Z_{n,k})=
  \int_{\mathbb C} f\mathrm{d}L_n
  \quad\text{where}\quad
  L_n=\sum_{k=1}^n\delta_{Z_{n,k}}.
\end{equation}
The characteristic polynomial $P_n$ of $AB^{-1}$ and the logarithmic potential of
$\eta_n$ are related via
\begin{equation}
  \log|P_n(z)|
  =L_n(f_z)
  =(\log\left|\cdot\right|*L_n)(z)
  \quad\text{where}\quad
  f_z(w)=\log|w-z|.
\end{equation}
The case $z=0$ is easy to study, and regarding the high-dimensional asymptotic analysis, we have
\begin{equation}\label{eq:logpot0:CLT}
  \mathbb{E}(L_n(f_0))=0,
  \quad
  \mathrm{Var}(L_n(f_0))=\frac{H_n}{2}+\frac{n}{2}\Bigl(\frac{\pi^2}{6}-H_n^{(2)}\Bigr)=\frac{H_n+1}{2}+o(1), %
  \quad\text{and}\quad
  \frac{L_n(f_0)}{\sqrt{\frac{1}{2}\log(n)}}
  \xrightarrow[n\to\infty]{\mathrm{d}}
  \mathcal{N}(0,1),
\end{equation}
where 
\begin{equation}
    H_n=\sum_{k=1}^n\frac{1}{k}
    \quad\text{and}\quad
    H_n^{(2)}=\sum_{k=1}^n\frac{1}{k^2} 
\end{equation}
are the first and second order $n$-th harmonic numbers. Indeed, 
\begin{equation}
    L_n(f_0)=\log|\det(AB^{-1})|=\log|\det(A)|-\log|\det(B)|.
\end{equation}
Since $A$ and $B$ are iid, $L_n(f_0)$ is centered, and \eqref{eq:logpot0:CLT} can be
deduced from the analysis of $\log|\det(A)|$, which in turn follows from the reduction to the sum of independent random variables. More precisely, by using the Hermitization $|\det(A)|^2=\det(AA^*)$ into the Wishart or Laguerre matrix $AA^*$, and the Cholesky--Bartlett factorization, we get that 
\begin{equation}\label{eq:log|det|Gamma}
|\det(A)|^2=\det(AA^*) \overset{\mathrm{d}}{=}\prod_{k=1}^n\Gamma_k
\quad\text{where $\Gamma_1,\ldots,\Gamma_n$ are independent, $\Gamma_k\sim\mathrm{Gamma}(k,1)$.}
\end{equation}
This gives the formula for $\mathrm{Var}(L_n(f_0))$. Moreover it allows to use the Lindeberg CLT via moments, or the Fréchet--Shohat CLT via cumulants to get \eqref{eq:logpot0:CLT}. We refer to \cite[Prop.~2.1 and Th.~3.3]{zbMATH05547558}. 
There are multiple other ways to prove \eqref{eq:logpot0:CLT}, for
instance by using the determinantal structure via the Kostlan observation for $A$ or for $AB^{-1}$ to get a reduction to a sum of independent random variables. 
More generally, we refer for instance to \cite{zbMATH01191059}, \cite{zbMATH06265853}, and \cite{MR3352055} for more on the subject of the log-determinant of random matrices.

Beyond \eqref{eq:logpot0:CLT}, our goal is to establish the CLT for $L_n(f_z)$ for $z\neq0$, and more generally the CLT for $L_n(f)$ for an arbitrary test function $f$ with a finite number of logarithmic singularities. This requires new methods and ideas.

The CLT \eqref{eq:logpot0:CLT} does not follow immediately from \cite{PhysRevLett.75.69,MR1894104} since the test function is not bounded, or from \cite{MR2346510,MR4721731,zbMATH06113080,bourgoin} since the test function $f_0$ is not in $H^1(\overline{\mathbb{C}})$ (singularity\footnote{Indeed $z\mapsto|\nabla\log|z||=\frac{1}{|z|}$ is locally Lebesgue integrable at $0$, but not its square!}), or from the main result of \cite{MR3788208,zbMATH07654445,zbMATH08155333,bourgadeetal} since $\mu$ is not compactly supported. We also refer to \cite{MR2361453,zbMATH07431029,MR2817648,MR4552960,zbMATH08155333} for more results on CLTs for linear statistics of two-dimensional Coulomb gases, and to \cite{MR2552864}, \cite{zbMATH06340358}, \cite{MR4408512} for some links with the zeros of Gaussian analytic functions. 

\subsection{Notation} 
\label{sub:notation} We typically use $x,y$ for points on $\mathbb{S}^2$,
$p,q$ for points on $\overline{\mathbb{C}}$, and $z,w$ for points on $\mathbb{C}$. For $n$--dependent positive
quantities $a_n,b_n$ we use the notation $a_n\ll b_n$ to denote that $\lim_{n\to\infty} (a_n/b_n)=0$. 

\subsection{CLTs for logarithmic singularities}
To explore the phenomenon behind the CLT for $L_n(f_0)$ in \eqref{eq:logpot0:CLT} and target a CLT for the logarithmic potential $L_n(f_z)$, we introduce the following concept, inspired by complex analysis.

\begin{definition}[Logarithmic singularity]\label{df:main:LogSing}
	Let $p_1,\dots,p_k$ be distinct points in $\overline{\mathbb C}$.
	We say that a smooth 
    function\footnote{Recall that a function
    $f: U \to \mathbb R$ 
    defined on an open subset $U$ of $\mathbb C$ is
    smooth if it is
    infinitely differentiable on $U \setminus \{\infty\}$
    and, whenever $\infty \in U$, 
    the function $z \mapsto f(1/z)$ is infinitely
    differentiable
    on a neighborhood of $0$.}
	$f:\overline{\mathbb C} \setminus \{p_1,\dots,p_k\} \to \mathbb{R}$ has logarithmic singularities at $p_1,\dots,p_k$  with weights $c_1,\dots,c_k \in \mathbb R \setminus \{0\}$ when
	\begin{itemize}
    \item whenever $p_i\neq\infty$, the function $z \mapsto f(z) - c_i\log|z-p_i|$ can be smoothly extended in a neighborhood of $p_i$;
	\item whenever $p_i=\infty$, the function $z \mapsto f(1/z) - c_i \log|z|$ can be smoothly extended in a neighborhood of $0$.
	\end{itemize}
\end{definition}

From the point of view of electrostatics, the weight $c_i$ is the quantity of charge at $p_i$, hence the notation\footnote{A more traditional notation could be $q_i$, but in this writing, we use the notation $q$ for a typical point in $\overline{\mathbb{C}}$.}. 

The basic example that will give us a lot of information about the other ones is the function $f_0$ which has a logarithmic singularity of weight $1$ at $0$ and one of weight $-1$ at $\infty$. Another, more general example is to take any meromorphic function on $\overline {\mathbb C}$, namely a quotient of polynomials $Q(z) = P_1(z)/P_2(z)$, and to consider $\log|Q|$. This is the logarithmic potential of particles located at the zeros and at the poles of $Q$, having a positive charge for a zero and a negative charge for a pole. The main results of our work are about the fluctuations of these kinds of functions.

Thinking of $\mathbb S^2\subset\mathbb{R}^3$ as the extended plane $\overline{\mathbb C}$ via the stereographic projection leads naturally to consider the (half) chordal distance $d$ on $\overline{\mathbb{C}}$, see \eqref{eq:dchord} and \eqref{eq:d}. For all $p\in\overline{\mathbb{C}}$, the function $g_p=\log d(\cdot,p)$ has a single logarithmic singularity, at point $p$, with weight $1$. An equivalent and enlightening way of thinking about $f$ in Definition \ref{df:main:LogSing} is 
\begin{equation}\label{eq:LogSingAlt}
    f=h+\sum_{i=1}^kc_i\log d(\cdot,p_i)
    =h+\sum_{i=1}^kc_ig_{p_i}
\end{equation}
where $h:\overline{\mathbb{C}}\to\mathbb{R}$ is smooth, hence bounded and Lipschitz. In particular, this implies that $f\in L^p(\mu)$ for all $p\in[1,\infty)$.

We are now ready to state our first main result. The message is that the asymptotic variance depends on the test function only via the singularities weights. This phenomenon reminds the residue method in complex analysis.

\begin{theorem}[CLT for linear statistics with multiple-singularities test function]\label{th:LogSing}
	If $f:\overline{\mathbb{C}}\setminus\{p_1,\ldots,p_k\}\to\mathbb{R}$ is smooth, and has logarithmic singularities at the distinct points $p_1,\dots,p_k \in \overline{\mathbb C}$
	with weights $c_1,\dots,c_k\in\mathbb{R}\setminus\{0\}$, then
    \begin{equation}\label{eq:LogSing}
      \frac{L_n(f)-\mathbb{E}(L_n(f))}{\sqrt{\frac{1}{4}\log n}}
      =\frac{L_n(f)-n\int f\mathrm d \mu}{\sqrt{\frac{1}{4}\log n}} 
      \xrightarrow[n \to \infty]{\mathrm{d}}
	   \mathcal{N}(0, c_1^2+\cdots+c_k^2).
    \end{equation}
\end{theorem}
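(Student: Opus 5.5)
Our plan is to reduce to the single-singularity case and then show that different singularities decouple. By \eqref{eq:LogSingAlt} we write $f=h+\sum_i c_i g_{p_i}$ with $h$ smooth on $\overline{\mathbb C}\cong\mathbb S^2$. For $h$ the variance of $L_n(h)$ stays bounded; it converges to the $H^1$ (Gaussian free field) norm, which we take as known for smooth functions on the sphere. Hence $L_n(h)-n\int h\,\mathrm d\mu=O_{\mathbb P}(1)$, and after dividing by $\sqrt{\frac14\log n}$ it disappears. The expectation identity $\mathbb E L_n(f)=n\int f\,\mathrm d\mu$ comes from rotation invariance: the one-point density of the gas \eqref{eq:gas:S} is $n$ times uniform, and $T$ carries it to $n\mu$. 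So we only need a joint CLT for $(L_n(g_{p_1}),\dots,L_n(g_{p_k}))$.

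\textbf{Step 1: one singularity.} By $\mathrm O(3)$-invariance of \eqref{eq:gas:S}, and because $d$ is the chordal distance, $L_n(g_p)$ has the same law as $L_n(g_0)$. Now
\[
g_0(z)=\log\frac{|z|}{\sqrt{1+|z|^2}}=f_0(z)-Q(z),
\]
up to a constant. By the Kostlan observation for this determinantal radial gas, the moduli $|Z_{n,j}|^2$ are distributed as independent $\mathrm{Beta}'(j,n+1-j)$ variables, i.e. $\Gamma_j/\Gamma'_{n+1-j}$. Therefore $L_n(g_0)$ is a sum of independent terms of the form $\frac12\log\frac{\Gamma_j}{\Gamma_j+\Gamma'_{n+1-j}}=\frac12\log\beta_j$ with $\beta_j\sim\mathrm{Beta}(j,n+1-j)$. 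Its variance is $\frac14\sum_j\bigl(\psi'(j)-\psi'(n+1)\bigr)=\frac14\log n+O(1)$. Lyapunov's condition holds through third absolute moments, which are bounded by $C/j^{2}$ plus terms that are negligible. This gives $L_n(g_0)-\mathbb E L_n(g_0)$ normalized by $\sqrt{\frac14\log n}$ converging to $\mathcal N(0,1)$. The same argument recovers \eqref{eq:logpot0:CLT}: since $f_0=g_0-g_\infty$, its variance is $\frac12\log n$, consistent with asymptotic independence of the two poles.

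\textbf{Step 2: decoupling (main obstacle).} We need $\mathrm{Cov}(L_n(g_p),L_n(g_q))=o(\log n)$ for $p\neq q$, together with joint Gaussianity. For the covariance we use the determinantal structure on the sphere. The kernel $K_n(x,y)$ satisfies $|K_n(x,y)|^2=n^2(1-\|x-y\|^2/4)^{n-1}$, up to normalization, and
\[
\mathrm{Cov}(L_n(u),L_n(v))=\frac12\iint (u(x)-u(y))(v(x)-v(y))\,|K_n(x,y)|^2\,\mathrm d\nu^{\otimes2}.
\]
The kernel concentrates on $\|x-y\|\lesssim n^{-1/2}$. We split $g_p=g_p^{(\varepsilon)}+r_p^{(\varepsilon)}$, where $g_p^{(\varepsilon)}$ is smooth, equals $g_p$ outside a ball $B(p,\varepsilon)$, and $r_p^{(\varepsilon)}$ is supported in $B(p,2\varepsilon)$. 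With $\varepsilon$ fixed and $B(p,2\varepsilon)\cap B(q,2\varepsilon)=\emptyset$, the cross term between $r_p$ and $r_q$ comes only from pairs at distance $\gtrsim\varepsilon$. It is therefore exponentially small, except for the linear part involving the difference structure, which is bounded by $\|\nabla g\|$ away from the singularities. Every term involving a smooth piece is $O(\sqrt{\log n})$ by Cauchy--Schwarz, since the smooth variance is $O(1)$ and the singular variance is $O(\log n)$. This yields the covariance bound.

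For joint Gaussianity we use the Cramér--Wold device on $F=\sum_i t_i g_{p_i}$, which is itself a function with logarithmic singularities. Its variance is $\frac14(\sum_i t_i^2)\log n\,(1+o(1))$ by the above. For its Gaussianity we would try the cumulant method for determinantal processes (Soshnikov). Higher cumulants of $L_n(F)$ are controlled by those of the pieces: each singular piece $r_{p_i}$ is, after rotation, within $O_{\mathbb P}(1)$ of the radial statistic from Step 1, whose cumulants of order $m\ge3$ are $O(1)$. Mixed cumulants across distinct localized pieces vanish up to exponentially small errors, by kernel decay. So the cumulants of order $m\ge3$ are $O(1)=o((\log n)^{m/2})$, which gives the claimed limit $\mathcal N(0,c_1^2+\cdots+c_k^2)$. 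The delicate point is to make the mixed-cumulant localization rigorous despite the unbounded kernel integrands near the singularities. If this fails, a fallback is to rotate each $p_i$ to $0$ and use Step 1's independent decomposition conditionally.
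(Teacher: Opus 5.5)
Your reduction to $\sum_i c_i g_{p_i}$, the handling of the smooth remainder, Step 1, and the covariance decoupling (the paper's Lemma \ref{lem:Covariance}) are sound. Step 1 is the paper's alternative Kostlan/cumulant proof of Lemma \ref{le:Green:cum}. One small slip there: the third absolute moments of $\frac12\log\beta_j$ are of order $j^{-3/2}$, not $j^{-2}$, but they are still summable, so Lyapunov holds.

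For joint Gaussianity you take a genuinely different route, a Soshnikov-type cumulant/moment method. The paper instead decouples characteristic functions directly (Lemma \ref{lem:Independence}). From $\psi\varphi=\psi+\varphi-1$ it gets $(j_n^*\varphi j_n)(j_n^*\psi j_n)-j_n^*\psi\varphi j_n=(j_n^*(\varphi-1)j_n)(j_n^*(\psi-1)j_n)$, whose trace norm is exponentially small by \eqref{eq:kernel-distance}. Lower bounds on $|\det(j_n^*\varphi j_n)|$ control the inverses. This needs only convergence in law of each localized piece and no moment bounds beyond Lemma \ref{lem:lip-variance}.

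The step you flag as delicate is in fact easy. In Soshnikov's cyclic formula, every term of a mixed cumulant of statistics of functions with supports at chordal distance $\rho>0$ contains a factor $K_n(x,y)$ with $d(x,y)\geq\rho$, hence bounded by $n(1-\rho^2)^{(n-1)/2}$. The other kernel factors are at most $n$, and the products of the $r_{p_i}$ lie in $L^1(\mu)$ since each $r_{p_i}\in L^q(\mu)$ for all $q<\infty$. So mixed cumulants of order $m$ are $O(n^m(1-\rho^2)^{(n-1)/2})$.

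The genuine gap is the pure cumulants $\kappa_m(L_n(r_p))$, $m\geq3$, which must be $o((\log n)^{m/2})$. Being within $O_{\mathbb{P}}(1)$, or even $O_{L^2}(1)$, of a statistic with bounded higher cumulants says nothing about cumulants; you need $L^m$ control of the difference. Here is one fix.

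\begin{itemize}
\item Take $r_p=\chi(d(\cdot,p))\,g_p$ with a cutoff in the chordal distance, so that $h_p=g_p-r_p$ is a bounded, $d$-Lipschitz function of $d(\cdot,p)$.
\item After rotating $p$ to $0$, $h_p$ is radial. By \eqref{eq:kostlan2}, $L_n(h_p)$ is then a sum of independent terms bounded by ${\|h_p\|}_\infty$, with total variance at most $\frac12{\|h_p\|}_{\mathrm{Lip.}}^2$.
\item Rosenthal's inequality gives $\|L_n(h_p)-\mathbb{E}L_n(h_p)\|_{L^m}=O(1)$. Likewise $\|L_n(g_p)-\mathbb{E}L_n(g_p)\|_{L^m}=O(\sqrt{\log n})$ by Lemma \ref{le:Green:cum}.
\item Expand multilinearly and use $|\kappa(Z_1,\dots,Z_m)|\leq C_m\prod_i\|Z_i\|_{L^m}$. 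This yields $\kappa_m(L_n(r_p))=\kappa_m(L_n(g_p))+O((\log n)^{(m-1)/2})=o((\log n)^{m/2})$.
\end{itemize}

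Combined with the mixed-cumulant bound, the normalized cumulants of order $\geq3$ of $\sum_it_iL_n(r_{p_i})$ vanish. Remove the smooth parts by Slutsky and your route closes, with convergence of moments as a bonus.

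Your stated fallback would not work. No single rotation makes two non-antipodal $g_{p_i}$ simultaneously radial, so Kostlan's independent decomposition is not available jointly.
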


A proof of Theorem \ref{th:LogSing} is given in Section \ref{se:proof:th:LogSing}. The single singularity case is extracted from the special case of two-singularities \eqref{eq:logpot0:CLT}. A natural way to understand the phenomenon behind \eqref{eq:LogSing} is to start from the decomposition \eqref{eq:LogSingAlt}. The contributions coming from the singular parts $\log d(\cdot,p_i)$ are not independent, but the off-diagonal decay \eqref{eq:kernel-distance} of the determinantal kernel of the model yields an asymptotic decoupling on the logarithmic scale in the high-dimensional regime. On the other hand, the contribution of the smooth part $h$ is $O(1)$ thanks to Lemma \ref{lem:lip-variance}, in accordance with the GFF CLT \eqref{eq:GFFCLT}. It is thus absorbed by the logarithmic normalization in high dimension.

The smooth part of $f$ may depend on \(n\), provided its contribution remains negligible
at the logarithmic scale. More precisely, if $f_n=h_n+\sum_{i=1}^k c_i g_{p_i}$
with fixed $c_1,\ldots,c_k$ and $p_1,\ldots,p_k$, then the conclusion of Theorem \ref{th:LogSing} with $f_n$ instead of $f$ holds as soon as $\|h_n\|_{\mathrm{Lip.}}^2=o(\log n)$, thanks to Lemma \ref{lem:lip-variance}. 
A careful reading of the proof of Theorem \ref{th:LogSing} shows that if $p_1,\ldots,p_k$ are also allowed to depend on $n$, $f_n=h_n+\sum_{i=1}^kc_ig_{p_{n,i}}$, then the argument is uniform as long as they remain separated at a fixed positive distance. More generally, if 
\[
\Delta_n:=\min_{1\leq i\neq j\leq k}d(p_{n,i},p_{n,j})=o_{n\to\infty},
\]
then the proof of Theorem \ref{th:LogSing}, with the Lipschitz bound of Lemma \ref{lem:lip-variance} replaced by a quantitative version of the variance estimate \eqref{eq:GFFCLT:cov}, would give the same conclusion under the stronger sufficient condition 
\begin{equation}
      \|h_n\|_{\mathrm{Lip.}}^2=o(\log n)
      \qquad\text{and}\qquad
      \Delta_n=n^{-o(1)}.
\end{equation}
This comes from the variance bound of the smooth remainders created by
localizing the logarithmic singularities at scale comparable with
$\Delta_n$. It implies the condition $\Delta_n \gg \sqrt{\log n/n}$ that gives exponential decoupling via \eqref{eq:kernel-distance}. 

By using the linearity of $L_n$ and the stability of Gaussians by linear transforms, the CLT \eqref{eq:LogSing} is equivalent to a joint CLT for an arbitrary finite number of test functions with multiple singularities. 

\bigskip

Recall that for all $p\in\overline{\mathbb{C}}$, the function $g_p=\log d(\cdot,p)$ has a single logarithmic singularity, at point $p$, with weight $1$. It turns out that the function $(p,q)\in\overline{\mathbb{C}}\times\overline{\mathbb{C}}\mapsto g_q(p)=\log d(p,q)=g_p(q)$ is, up to additive and multiplicative constants, nothing else but the Green function of the round sphere $\mathbb{S}^2\equiv\overline{\mathbb{C}}$, in stereographic coordinates, see \eqref{eq:G}. It is one of the most natural and important functions with logarithmic singularities.
We have the following CLT.

\begin{corollary}[Green function : CLT]\label{co:Green:CLT}
 In the sense of finite-dimensional distributions,
 \begin{equation}\label{eq:Green:multi}
 {\left(\frac{L_n(g_p)+\frac{n}{2}}{\sqrt{\frac{1}{4}\log(n)}}\right)}_{p\in\overline{\mathbb{C}}}
 \xrightarrow[n\to\infty]{\mathrm{d}}
 {(\zeta_p)}_{p\in \overline{\mathbb{C}}}
 \end{equation}
 where ${(\zeta_p)}_{p\in{\overline{\mathbb{C}}}}$ is a white noise, a centered real Gaussian process with covariance $\mathbb{E}(\zeta_p\zeta_q)=\mathbb{1}_{p=q}$, $p,q\in \overline{\mathbb{C}}$. In particular
 \begin{equation}
 \frac{L_n(g_p)+\frac{n}{2}}{\sqrt{\frac{1}{4}\log(n)}}
 \xrightarrow[n\to\infty]{\mathrm{d}}
 \mathcal{N}(0,1)\label{eq:Green:CLT}.
 \end{equation}
\end{corollary}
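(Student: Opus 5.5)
The plan is to deduce the corollary from Theorem \ref{th:LogSing} applied to finite linear combinations of the functions $g_p$, combined with the Cramér--Wold device. Two facts need to be checked: the centering, $n\int g_p\,\mathrm d\mu=-\frac n2$, and the identification of the limiting covariance with $\mathbb 1_{p=q}$.

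\emph{Centering.} The law $\mu=\nu\circ T^{-1}$ is the image of the uniform measure on $\mathbb S^2$, which is invariant under rotations. The chordal distance $d$ is also rotation invariant. Hence $\int g_p\,\mathrm d\mu=\int\log d(\cdot,p)\,\mathrm d\mu$ does not depend on $p$. We may therefore take $p=\infty$ (the north pole), or $p=0$. For $p=0$ and the half chordal distance normalized as in \eqref{eq:d}, I expect $d(z,0)=|z|/\sqrt{1+|z|^2}$. Then
\[
\int g_0\,\mathrm d\mu=\frac12\int_0^\infty\log\frac{r^2}{1+r^2}\,\frac{2r\,\mathrm dr}{(1+r^2)^2}.
\]
Substituting $u=r^2/(1+r^2)$, which is uniform on $[0,1]$ under $\mu$, this becomes $\frac12\int_0^1\log u\,\mathrm du=-\frac12$. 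This is consistent with the equality $\mathbb E(L_n(f))=n\int f\,\mathrm d\mu$ asserted in \eqref{eq:LogSing}, which holds because the one-point density of the gas \eqref{eq:gas:C} is exactly $n\mu$, by rotation invariance of the spherical gas \eqref{eq:gas:S}. If the normalization in \eqref{eq:d} differs by a constant factor, the centering shifts by $n\log(\text{const})$. I would recheck the normalization against \eqref{eq:d} to make sure it yields exactly $-\frac n2$.

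\emph{Finite-dimensional convergence.} Fix distinct points $q_1,\dots,q_m\in\overline{\mathbb C}$ and real numbers $a_1,\dots,a_m$, not all zero, and set $f=\sum_j a_jg_{q_j}$. Each $g_{q_j}$ is smooth on $\overline{\mathbb C}\setminus\{q_j\}$, including at $\infty$ when $q_j\neq\infty$, since $d(\cdot,q)$ is a smooth positive function away from $q$ on the sphere. Each $g_{q_j}$ also has a logarithmic singularity of weight $1$ at $q_j$ in the sense of Definition \ref{df:main:LogSing}. For this I would verify that $\log d(z,q)-\log|z-q|=-\frac12\log(1+|z|^2)-\frac12\log(1+|q|^2)$ is smooth near $q$, and handle the case $q=\infty$ via $z\mapsto 1/z$. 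Thus $f$ has logarithmic singularities at those $q_j$ with $a_j\ne0$, with weights $a_j$. If every $a_j=0$ the statement is trivial. Otherwise Theorem \ref{th:LogSing} gives
\[
\frac{\sum_j a_j\bigl(L_n(g_{q_j})+\frac n2\bigr)}{\sqrt{\frac14\log n}}\xrightarrow{\mathrm d}\mathcal N\Bigl(0,\sum_j a_j^2\Bigr).
\]
Here we used $n\int f\,\mathrm d\mu=-\frac n2\sum_j a_j$ by linearity. The variance $\sum_j a_j^2$ is exactly the variance of $\sum_j a_j\zeta_{q_j}$ for a white noise. By the Cramér--Wold theorem, the vector $\bigl(\frac{L_n(g_{q_j})+n/2}{\sqrt{\frac14\log n}}\bigr)_{j}$ converges to a standard Gaussian vector in $\mathbb R^m$. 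This is the finite-dimensional convergence \eqref{eq:Green:multi}, and \eqref{eq:Green:CLT} is the case $m=1$.

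\emph{Main obstacle.} Essentially all the difficulty lies in Theorem \ref{th:LogSing}, which we take as given. The only real point to check here is bookkeeping: the exact normalization of $d$, so that the centering is precisely $-\frac n2$, and the smoothness of $g_q$ at $\infty$ as required in Definition \ref{df:main:LogSing}. Both reduce to explicit computations in stereographic coordinates.
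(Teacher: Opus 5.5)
Your proposal is correct and follows the paper's proof: you reduce via Cramér--Wold to linear combinations $\sum_j a_j g_{q_j}$, which have logarithmic singularities of weights $a_j$, and apply Theorem \ref{th:LogSing} with centering $-\frac n2\sum_j a_j$. The differences are minor. You compute $n\int g_p\,\mathrm d\mu=-\frac n2$ by a direct integral instead of quoting \eqref{eq:Green:mean:var}. You also explicitly discard zero weights, a point the paper's proof glosses over.
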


Corollary \ref{co:Green:CLT} is proved in Section \ref{se:proof:co:Green:CLT}.

We consider now $f_z$, $z\in\mathbb{C}$, which has two logarithmic singularities, at $z$ and at $\infty$, of weights $1$ and $-1$.

\begin{corollary}[Logarithmic potential : CLT]\label{co:LogPot:CLT}
 In the sense of finite dimensional distributions,
  \begin{equation}\label{eq:LogPot:multi}
 {\left(\frac{L_n(f_z)-\frac{n}{2}\log(1+|z|^2)}{\sqrt{\frac{1}{2}\log(n)}}\right)}_{z\in\mathbb{C}}
 \xrightarrow[n\to\infty]{\mathrm{d}}
 {(\xi_z)}_{z\in\mathbb{C}}
 \end{equation}
 where ${(\xi_z)}_{z\in\mathbb{C}}$ is a centered real Gaussian process with covariance $\mathbb{E}(\xi_z\xi_w)=\frac{\mathbb{1}_{z=w}+1}{2}$, $z,w\in\mathbb{C}$. In particular,
 \begin{equation}
	  \frac{L_n(f_z)-\frac{n}{2}\log(1+|z|^2)}{\sqrt{\frac12\log n}}
    \xrightarrow[n\to\infty]{\mathrm{d}}\mathcal{N}(0,1).\label{eq:logpot:CLT}
 \end{equation}
\end{corollary}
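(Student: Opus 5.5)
The corollary follows from Theorem~\ref{th:LogSing} by the Cramér--Wold device. Fix distinct points $z_1,\ldots,z_m\in\mathbb{C}$ and reals $a_1,\ldots,a_m$, and consider $F=\sum_{j=1}^m a_jf_{z_j}$. Each $f_{z_j}(w)=\log|w-z_j|$ has a logarithmic singularity of weight $1$ at $z_j$ and of weight $-1$ at $\infty$. Indeed, $f_{z_j}(1/w)-(-1)\log|w|=\log|1-z_jw|$ is smooth near $w=0$.

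Hence $F$ is smooth on $\overline{\mathbb{C}}\setminus\{z_1,\ldots,z_m,\infty\}$. It has logarithmic singularities at $z_j$ with weight $a_j$, and at $\infty$ with weight $-\sum_j a_j$. If $\sum_ja_j=0$, the point $\infty$ is not a singularity: $F(1/w)$ is smooth near $0$. Similarly, points $z_j$ with $a_j=0$ are simply dropped from the list, so the hypothesis $c_i\neq0$ in Theorem~\ref{th:LogSing} is respected. In every case Theorem~\ref{th:LogSing} gives
\[
\frac{L_n(F)-n\int F\,\mathrm{d}\mu}{\sqrt{\tfrac14\log n}}\xrightarrow[n\to\infty]{\mathrm{d}}\mathcal{N}\Bigl(0,\sum_j a_j^2+\Bigl(\sum_ja_j\Bigr)^2\Bigr).
\]
Dividing instead by $\sqrt{\frac12\log n}$ halves the variance, giving $\frac12\sum_ja_j^2+\frac12(\sum_ja_j)^2$. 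This is exactly $\mathrm{Var}(\sum_ja_j\xi_{z_j})=\sum_{j,l}a_ja_l\frac{\mathbb{1}_{z_j=z_l}+1}{2}$ for distinct points. By linearity of $L_n$ and Cramér--Wold, the finite-dimensional convergence \eqref{eq:LogPot:multi} follows, and \eqref{eq:logpot:CLT} is the case $m=1$ (variance $1$).

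It remains to identify the centering, namely to show $\int\log|w-z|\,\mu(\mathrm{d}w)=\frac12\log(1+|z|^2)$. Since $\mu=\frac{\Delta Q}{2\pi}\mathrm{d}^2z$, the potential $U(z)=\int\log|z-w|\mu(\mathrm{d}w)$ satisfies $\Delta U=2\pi\mu=\Delta Q$. Both $U$ and $Q$ are radial, grow like $\log|z|+o(1)$ at infinity, and are smooth. Hence $U-Q$ is harmonic on $\mathbb{C}$ and bounded, so it is constant, and the constant is $0$ by the asymptotics at infinity. Alternatively, a direct radial computation gives $U(0)=\int\log|w|\,\mathrm{d}\mu=0$, consistent with \eqref{eq:logpot0:CLT}. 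The finiteness of $\int|f_z|\,\mathrm{d}\mu$ needed for this is already guaranteed by the remark after \eqref{eq:LogSingAlt}.

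The main point requiring care, though minor, is the singularity bookkeeping at $\infty$: the weight there is $-\sum a_j$ and may vanish. The rest is a direct corollary.
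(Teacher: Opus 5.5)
Your proof is correct and follows the paper's route: Cram\'er--Wold together with Theorem~\ref{th:LogSing} applied to $\sum_j a_j f_{z_j}$, with weight $-\sum_j a_j$ at $\infty$, and the same variance identity. The differences are minor. You identify the centering $n\int f_z\,\mathrm{d}\mu=\frac{n}{2}\log(1+|z|^2)$ by a Liouville argument on the logarithmic potential, whereas the paper uses $A-zB\overset{\mathrm{d}}{=}\sqrt{1+|z|^2}B$. Your treatment of vanishing weights (at $\infty$ when $\sum_j a_j=0$, or at $z_j$ when $a_j=0$) is also more careful than the paper's, which lists $\infty$ with weight $-(c_1+\cdots+c_k)$ even when this weight is zero.
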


Corollary \ref{co:LogPot:CLT} is proved in Section \ref{se:proof:co:LogPot:CLT}.
The special case $z=0$ in Corollary \ref{co:LogPot:CLT} is exactly \eqref{eq:logpot0:CLT}.

An analogue of Corollary \ref{co:LogPot:CLT} for the complex Ginibre ensemble is
obtained in \cite{MR3946715} using Fisher--Hartwig and Riemann--Hilbert
methods, and more recently in \cite{bourgadeetal} using Fisher--Hartwig and
loop equations methods. An advantage of the spherical ensemble over the complex
Ginibre ensemble lies in its absence of boundary. This is already at the heart of our previous work \cite{chafai-garcia-zelada-xu}.

\subsection{Correlations}

The following result says that ${(L_n(g_p))}_{p\in\overline{\mathbb{C}}}$ is a singular log-correlated random field, with a variance that blows up as $\log(n)$, while the off-diagonal covariance tends to an affine deformation of the log-distance.

\begin{theorem}[Green function : variance and covariance]\label{th:Green:Cov}
 For all $p,q\in\overline{\mathbb{C}}$, $p\neq q$, 
 \begin{equation}
 \mathrm{Var}(L_n(g_p))
 =\frac{H_n}{4}
 \quad\text{and}\quad 
 \mathrm{Cov}(L_n(g_p),L_n(g_q))
  =-\frac{1}{4}-\frac{1}{2}\log d(p,q)+o_{n\to\infty}(1)\label{eq:Green:cov}.
 \end{equation}
\end{theorem}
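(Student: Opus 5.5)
Both formulas will follow from the determinantal structure together with rotation invariance on the sphere. The gas \eqref{eq:gas:S} is invariant under the isometries of $\mathbb{S}^2$, which act on $\overline{\mathbb{C}}$ as Möbius transformations preserving the chordal distance $d$. We may therefore move $p$ to $0$. This shows that $\mathrm{Var}(L_n(g_p))$ does not depend on $p$, and that $\mathrm{Cov}(L_n(g_p),L_n(g_q))$ depends only on $d(p,q)$.

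\emph{Variance.} For $p=0$, \eqref{eq:d} gives $g_0(z)=\log d(z,0)=\log|z|-\frac12\log(1+|z|^2)$ (up to the precise normalization of $d$, which only shifts $g_0$ by a constant). By rotation invariance in the plane, the Kostlan observation applies: the set $\{|Z_{n,k}|^2\}$ has the law of independent variables $\beta'$-distributed, with $|Z_{(k)}|^2\sim \Gamma_k/\Gamma'_{n-k+1}$ for independent Gamma variables. Then
\[
d(Z,0)^2=\frac{|Z|^2}{1+|Z|^2}\sim \frac{\Gamma_k}{\Gamma_k+\Gamma'_{n+1-k}}\sim\mathrm{Beta}(k,n+1-k).
\]
Hence $L_n(g_0)=\frac12\sum_{k=1}^n \log \beta_k$ with independent $\beta_k\sim\mathrm{Beta}(k,n+1-k)$. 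We have $\mathrm{Var}\log\beta_k=\psi'(k)-\psi'(n+1)$, so
\[
\mathrm{Var}(L_n(g_0))=\frac14\sum_{k=1}^n\bigl(\psi'(k)-\psi'(n+1)\bigr).
\]
Using $\psi'(k)=\sum_{j\ge k}j^{-2}$, this sum telescopes to $\sum_{j\le n} j\cdot j^{-2}=H_n$. This gives exactly $H_n/4$, and the mean $-n/2$ also follows. As a consistency check, the same computation should reproduce \eqref{eq:logpot0:CLT}.

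\emph{Covariance.} We expand $f_{p,q}:=g_p+g_q$ and use polarization:
\[
\mathrm{Cov}=\tfrac12\bigl(\mathrm{Var}\,L_n(g_p+g_q)-2\cdot\tfrac{H_n}{4}\bigr).
\]
It therefore suffices to show $\mathrm{Var}\,L_n(g_p+g_q)=\frac{H_n}{2}-\frac12-\log d(p,q)+o(1)$. By invariance we take $q=\infty$ and $p=z$. Then $g_z+g_\infty=\log|w-z|-\log(1+|w|^2)+\text{const}+\tfrac12\log(1+|z|^2)$-type terms, that is, $f_z$ plus a smooth function $h$.

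There are two ways to proceed, and I would attempt the first. The first route computes directly with the kernel. For a determinantal process,
\[
\mathrm{Cov}(L_n(u),L_n(v))=\int u v\,K(x,x)\,\mathrm d x-\iint u(x)v(y)|K(x,y)|^2\,\mathrm d x\,\mathrm d y .
\]
On the sphere, $|K(x,y)|^2=n^2(1-\|x-y\|^2/4)^{n-1}$ times the uniform measures, i.e.\ $n^2(1-d^2)^{n-1}$. Write $\mathrm{Cov}=\frac12\iint (u(x)-u(y))(v(x)-v(y))|K|^2$. With $u=\log d(\cdot,p)$ and $v=\log d(\cdot,q)$, as $n\to\infty$ the measure $|K|^2\mathrm d x\,\mathrm d y$ concentrates at scale $d(x,y)\sim n^{-1/2}$ near the diagonal. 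A naive first-order Taylor expansion then gives the Dirichlet form $\frac{1}{4\pi}\int\nabla u\cdot\nabla v$ (the GFF covariance \eqref{eq:GFFCLT:cov}), regularized near $p$ and $q$. For the log-singular $u,v$ with $p\ne q$, the product is integrable, and the Dirichlet pairing of two Green functions equals, by Green's formula, $-\frac12\log d(p,q)+$ const. The constant $-\frac14$ comes from the mean-zero normalization (the Green function of the sphere is $\log d$ plus $\frac12$ in suitable units).

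The main obstacle is justifying the local limit near the singularities. Contributions from $x$ within $n^{-1/2}$ of $p$ involve $\nabla u$ large but $\nabla v$ bounded, so they are $O(n^{-1/2}\log n)$-type errors. I would split the sphere into balls $B(p,\varepsilon)$, $B(q,\varepsilon)$ and the rest, bound the near-singularity pieces with the exponential off-diagonal decay \eqref{eq:kernel-distance}, and treat the smooth region by Lemma \ref{lem:lip-variance} and \eqref{eq:GFFCLT:cov}. The alternative is to reduce to the exact variance of $L_n(f_z)$ via Kostlan after a Möbius map. To pin down the constant $-\frac14$ exactly, I would check it against the $z=0$ case using \eqref{eq:logpot0:CLT}: there $\mathrm{Var}=\frac{H_n+1}{2}$ with $d(0,\infty)=1$, which forces $\mathrm{Cov}(g_0,g_\infty)=-\frac14$ and fixes the constant.
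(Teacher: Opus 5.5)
Your variance computation is correct and is the paper's argument in substance. The paper uses Kostlan's observation at $p=\infty$ and reads all cumulants off the log-Laplace transform; you use it at $p=0$ with $d(Z,0)^2\sim\mathrm{Beta}(k,n+1-k)$, which is equivalent. The covariance part, however, has a genuine gap. The target $\frac{1}{4\pi}\int\nabla g_p\cdot\nabla g_q\,\mathrm{d}^2z=-\frac14-\frac12\log d(p,q)$ is correct. But \eqref{eq:GFFCLT:cov} is only available for $H^1$ functions, and extending it to the log-singular $g_p,g_q$ is the whole content of the claim; the tools you assign do not do this.

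Localize in the functions rather than in the domain of the double integral: write $g_p=a_p+s_p$ and $g_q=a_q+s_q$, with $a_p,a_q$ carrying the singularities and supported in disjoint $\varepsilon$-balls, and $s_p,s_q$ smooth. The off-diagonal decay (Lemma \ref{lem:Covariance}) disposes only of $\mathrm{Cov}(L_n(a_p),L_n(a_q))$, and \eqref{eq:GFFCLT:cov} only of $\mathrm{Cov}(L_n(s_p),L_n(s_q))$. The cross terms $\mathrm{Cov}(L_n(a_p),L_n(s_q))$ pair a logarithmic singularity at $p$ with a function smooth near $p$, and neither tool covers them. Lemma \ref{lem:lip-variance} with Cauchy--Schwarz only bounds them by $O(\sqrt{\log n})$, since $\mathrm{Var}(L_n(a_p))\sim\frac14\log n$. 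Your remark that near $p$ the gradient of $g_p$ is large while that of $g_q$ stays bounded is the right intuition. Turning it into a bound uniform in $n$ is precisely the missing step, and the claimed $O(n^{-1/2}\log n)$ errors do not follow from anything you list.

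One way to close it along your lines is as follows. By \eqref{eq:cov-berezin}, $\mathrm{Cov}(L_n(a_p),L_n(s_q))=n\langle a_p,(I-\mathcal{B}_n)s_q\rangle_{L^2(\nu)}$. Since $s_q=g_q$ is smooth near $p$, a second-order Taylor expansion at $x$ gives $\sup_{x\in B(p,\varepsilon)}|n(I-\mathcal{B}_n)s_q(x)|\le C+o_{n\to\infty}(1)$ with $C$ independent of $\varepsilon$:
\begin{itemize}
\item the linear term integrates to zero by rotational symmetry of $\frac1n|K_n(x,\cdot)|^2$ about $x$;
\item the quadratic term is controlled by $\int d(x,y)^2\,\frac1n|K_n(x,y)|^2\,\mathrm{d}\nu(y)=\frac1{n+1}$;
\item distant points are exponentially negligible.
\end{itemize}
The cross terms are then $O(\varepsilon^2|\log\varepsilon|)+o_{n\to\infty}(1)$. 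Letting $n\to\infty$ and then $\varepsilon\to0$, with $\int\nabla s_p\cdot\nabla s_q\to\int\nabla g_p\cdot\nabla g_q$, gives a genuinely different, localizable proof, close to the heuristic of Section \ref{se:HeuristicsVarianceAsymptotics}.

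The paper avoids localization altogether. It diagonalizes $\mathcal{B}_n$ exactly on spherical harmonics, $b_{n,\ell}=\prod_{j=1}^{\ell}\frac{n-j}{n+j}$, and expands $G$ in Legendre polynomials. It then concludes by dominated convergence using $n(1-b_{n,\ell})\le\ell(\ell+1)$ and $|P_\ell(x\cdot y)|\le C\ell^{-1/2}$ for $x\cdot y\in(-1,1)$, treating antipodal points separately by Kostlan.

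Finally, two side claims are wrong.
\begin{itemize}
\item $g_z+g_\infty$ is not $f_z$ plus a smooth function: $g_z+g_\infty=f_z+2g_\infty-C_z$ has weight $+1$ at $\infty$, whereas $f_z$ has weight $-1$.
\item The Kostlan ``alternative'' cannot work for non-antipodal $p,q$, since no rotation makes $g_p+g_q$ radial. Your $z=0$ check only reproduces Lemma \ref{le:cov-antipodal}.
\end{itemize}
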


Theorem \ref{th:Green:Cov} is proved in Section \ref{se:proof:th:Green:Cov}.

The covariance asymptotics from \eqref{eq:Green:cov}, although expected\footnote{More precisely, according to \eqref{eq:GFFCLT} and \eqref{eq:GFFCLT:cov}, if $g_p$ and $g_q$ were regular, the limit covariance between $L_n(g_p)$ and $L_n(g_q)$ 
would be proportional to $\int\langle \nabla g_p(z), \nabla g_q(z) \rangle\mathrm{d}^2z=-\int g_p(z) \Delta g_q(z)\mathrm{d}^2z= -2\pi\int g_p \mathrm d (\delta_q - \mu)$ which coincides with $-\log d(p,q)$ up to constants.} from the GFF CLT \eqref{eq:GFFCLT:cov}, is not straightforward due to the singularities. Our proof of \eqref{eq:Green:cov} is based on the expansion of the Green function in terms of spherical harmonics, and the spectral decomposition of the Berezin transform of the determinantal kernel. 

Back to the characteristic polynomial, our third main result below says that ${(L_n(f_z))}_{z\in\mathbb{C}}$ is a singular log-correlated random field, with a variance and an off-diagonal covariance that both blow up as $\log(n)$ in high-dimension $n$.

\begin{corollary}[Logarithmic potential: variance and covariance]\label{co:LogPot:Cov}
  For all $z,w\in\mathbb{C}$, $z\neq w$, 
  \begin{align}    
    \mathrm{Var}(L_n(f_z))
	  &= -\frac{\log(1+|z|^2)}{2}+\frac{H_n+1}{2}+o_{n\to\infty}(1)\label{eq:LogPot:var}\\
    \mathrm{Cov}(L_n(f_z),L_n(f_w))
    &=-\frac{\log|z-w|}{2}+\frac{H_n+1}{4}+o_{n\to\infty}(1).\label{eq:LogPot:cov}
  \end{align}
\end{corollary}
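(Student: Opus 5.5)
The plan is to reduce Corollary \ref{co:LogPot:Cov} to Theorem \ref{th:Green:Cov} via an exact linear identity. The identity expresses the planar logarithmic potential $f_z=\log|\cdot-z|$ through the Green functions $g_z$ and $g_\infty$. I will use the explicit form of the half chordal distance from \eqref{eq:d}, which I take to be
\[
d(z,w)=\frac{|z-w|}{\sqrt{1+|z|^2}\sqrt{1+|w|^2}},
\qquad
d(w,\infty)=\frac{1}{\sqrt{1+|w|^2}},
\qquad z,w\in\mathbb{C}.
\]
Taking logarithms gives, for every fixed $z\in\mathbb{C}$ and every $w\in\mathbb{C}\setminus\{z\}$,
\[
f_z(w)=g_z(w)-g_\infty(w)+\tfrac12\log(1+|z|^2).
\]
Summing over the eigenvalues then yields the almost sure identity
\[
L_n(f_z)=L_n(g_z)-L_n(g_\infty)+\tfrac n2\log(1+|z|^2).
\]
This is consistent with the centering in Corollaries \ref{co:Green:CLT} and \ref{co:LogPot:CLT}. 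The deterministic shift plays no role in variances and covariances.

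Next, I expand by bilinearity and insert \eqref{eq:Green:cov}. Since $z\in\mathbb{C}$, we have $z\neq\infty$, so the off-diagonal asymptotics apply to the pair $(z,\infty)$. For the variance,
\[
\mathrm{Var}(L_n(f_z))=\mathrm{Var}(L_n(g_z))+\mathrm{Var}(L_n(g_\infty))-2\,\mathrm{Cov}(L_n(g_z),L_n(g_\infty)).
\]
This equals
\[
\frac{H_n}{2}+\frac12+\log d(z,\infty)+o(1).
\]
Using $\log d(z,\infty)=-\frac12\log(1+|z|^2)$ gives \eqref{eq:LogPot:var}.

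For $z\neq w$ in $\mathbb{C}$, all three pairs $(z,w)$, $(z,\infty)$ and $(w,\infty)$ are distinct. Expanding gives
\[
\mathrm{Cov}(L_n(f_z),L_n(f_w))=\mathrm{Cov}(g_z,g_w)-\mathrm{Cov}(g_z,g_\infty)-\mathrm{Cov}(g_\infty,g_w)+\mathrm{Var}(g_\infty),
\]
where $L_n$ is abbreviated. Substituting \eqref{eq:Green:cov} gives
\[
\frac{H_n}{4}+\frac14-\frac12\bigl(\log d(z,w)-\log d(z,\infty)-\log d(w,\infty)\bigr)+o(1).
\]
The bracket equals $\log|z-w|$ by the formula for $d$, which yields \eqref{eq:LogPot:cov}.

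There is no real analytic obstacle, since all the difficulty is contained in Theorem \ref{th:Green:Cov}. The only points requiring care are as follows.
\begin{itemize}
\item The normalization of $d$ must be exactly the half chordal one. In particular $d(\cdot,\infty)=(1+|\cdot|^2)^{-1/2}$ with no extra factor $2$, otherwise additive constants would appear. I will check this against \eqref{eq:dchord} and \eqref{eq:d}.
\item The identity above holds almost surely, because the eigenvalues a.s.\ avoid $z$ and $\infty$.
\item All the random variables involved are in $L^2$. This follows from \eqref{eq:LogSingAlt}, since $f\in L^p(\mu)$ for all $p$, together with the finiteness of the variances in Theorem \ref{th:Green:Cov}.
\end{itemize}
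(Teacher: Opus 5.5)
Your proposal is correct and is essentially the paper's own proof. It uses the same splitting $f_z=g_z-g_\infty+\frac12\log(1+|z|^2)$ from \eqref{eq:split-euclid}, expands by bilinearity, inserts Theorem~\ref{th:Green:Cov}, and recombines the chordal logarithms via the factorization \eqref{eq:dists}; your constants check out, and the antipodal pair $(0,\infty)$ when $z=0$ causes no trouble since Theorem~\ref{th:Green:Cov} covers all $p\neq q$.
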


Corollary \ref{co:LogPot:Cov} is proved in Section \ref{se:proof:co:LogPot:Cov}. The starting point is the metric splitting \eqref{eq:split-euclid} that expresses $f_z$ as $g_z-g_\infty$ up to an additive constant. The presence of $g_\infty$ in both $f_z$ and $f_w$ explains the presence of a $\log(n)$ divergence in their covariance. The formulas 
\eqref{eq:LogPot:var} and \eqref{eq:LogPot:cov} are deduced from \eqref{eq:Green:cov}.

\bigskip

Our proofs of Theorems \ref{th:LogSing} and \ref{th:Green:Cov} and Corollaries \ref{co:Green:CLT}, \ref{co:LogPot:CLT}, and \ref{co:LogPot:Cov}, are remarkably rather simple.
They do not use Fisher--Hartwig/Toeplitz determinant approach, Riemann--Hilbert problems, loop equations or Ward identities, Schwinger-Dyson equations, transport/energy-splitting, or Stein method. 

\subsection{Universality}

Corollary \ref{co:LogPot:CLT} and Theorem \ref{th:LogSing} about the spherical ensemble suggest different types of generalization or universality, related to \eqref{eq:gas:C} and \eqref{eq:gas:S} respectively.
\begin{enumerate}
    \item Matrix universality: CLT for the logarithm of the modulus of the characteristic polynomial for the ratio of independent complex Ginibre matrices (square matrices with iid entries of law $\mathcal{N}_{\mathbb{C}}(0,1)$).
    \item Beta-ensemble universality: CLT for linear statistics of test functions with logarithmic singularities, for the gas at inverse temperature $\beta$, namely with $\prod_{i<j}|z_i-z_j|^\beta$, beyond the determinantal case $\beta=2$.
    \item Geometric universality: CLT for test functions with logarithmic singularities for a model with general potential and for the determinantal process described in \cite{zbMATH06113080} for a more general surface.
\end{enumerate}
The matrix universality is considered in Theorem \ref{th:logpot:univ} below. The geometric universality is discussed in Section \ref{se:geometric}. We do not discuss the Beta universality, see also \cite{MR3788208,zbMATH07431029,MR4552960}.

\bigskip

Following \cite{chafai-garcia-zelada-xu}, we consider the following assumptions or conditions on an $n\times n$ Girko matrix $A$: 
\begin{enumerate}[label=(Cond\arabic*)]
\item\label{it:condensity} the law of $A_{11}$ has a bounded density function $\varphi$ with respect to the Lebesgue measure on $\mathbb{C}$, and
\begin{equation*}
\|\varphi\|_\infty\leq D_0\quad\text{ for some constant $D_0>0$.}
\end{equation*}
\item\label{it:condmom4} \emph{Fourth moment matching condition.}
There exists a small constant $c_0>0$ such that
\begin{equation*}
\mathcal{E}_1=\mathcal{E}_2=0, \quad \qquad \mathcal{E}_3 \leq n^{-\frac{1}{2}-c_0}, \quad\qquad \mathcal{E}_4 \leq  n^{-c_0},
\end{equation*}
where ($k$-th moment deviation of $A_{11}$ from standard complex Gaussian)
\begin{equation*}
\mathcal{E}_k=\max_{\substack{a,b\in\N\\a+b=k}} \left|\mathbb{E}[(\Re A_{11})^a(\Im A_{11})^b]-\mathbb{E}[(\Re \AG_{11})^a(\Im \AG_{11})^b]\right|.
\end{equation*}
\item\label{it:condmom} \emph{Finite moment condition.} 
For each $k\geq 1$, there exists constants $D_k>0$ (independent of $n$) such that
\begin{equation*}
\mathbb{E}[|A_{11}|^k] < D_k.
\end{equation*}

\end{enumerate}

\begin{theorem}[CLT : matrix universality]\label{th:logpot:univ}
 Theorem \ref{th:LogSing}, Corollaries \ref{co:Green:CLT}-\ref{co:LogPot:CLT},
 Theorem \ref{th:Green:Cov} up to the replacement of $\frac{H_n}{4}$ by $\frac{H_n}{4}+o_{n\to\infty}(1)$ in \eqref{eq:Green:cov}, and Corollary \ref{co:LogPot:Cov}
  are universal, in the sense that they remain valid when the spherical ensemble \eqref{eq:gas:C} on $\mathbb{C}$  is replaced by the spectrum of $AB^{-1}$ where $A$ and $B$ are two independent copies of Girko matrices under conditions \ref{it:condensity}-\ref{it:condmom4}-\ref{it:condmom}. 
\end{theorem}

\begin{remark}[Real spherical ensemble and universality]
One would expect to extend the above results to the real case $M=AB^{-1}$ with $A,B$ being independent copies of real-valued Girko matrices. The proof of matrix universality is essentially the same for both the complex and real cases. However, analogous results for the real spherical ensemble $M^{\mathrm{Gin}(\R)}=A^{\mathrm{Gin}(\R)} \big(B^{\mathrm{Gin}(\R)}\big)^{-1}$ is not proved yet. Indeed, the eigenvalues of the real spherical ensemble can be studied using Pfaffians instead
 of determinants as explained in
 \cite{zbMATH06125015}.   
 \end{remark}

A proof of Theorem \ref{th:logpot:univ} is given in Section \ref{se:proof:th:logpot:univ}.

\subsection{Non-normal CLT for radial power test functions}

Let us consider the linear statistics $L_n(k_s)$ for the test function\footnote{Known as the Riesz kernel. We can recover $f_0$ from $k_s$ as $s\to0$ through
$\left.\tfrac{\mathrm{d}}{\mathrm{d}s}\right|_{s=0}k_s(z)=\lim_{s\to0^+}\frac{|z|^{-s}-1}{s}=-\log|z|=-f_0(z)$, $z\neq0$.}
\begin{equation}
k_s(z)=|z|^{-s}, \quad s\in\mathbb{R}, z\neq0.
\end{equation}
The case $s=0$ is void: $k_0\equiv1$, $L_n(k_0)\equiv n$.
The invariance of the spherical ensemble by inversion gives, for all $s\in\mathbb R$,
\begin{equation}
L_n(k_s)\overset{\mathrm d}=L_n(k_{-s}).
\end{equation}
In particular, the law of $L_n(k_s)$ depends only on $|s|$, reducing the study to
$s>0$. This symmetry converts the case $s>0$, which is singular at the origin, into the case $s<0$, which is singular at infinity, and for which the heavy-tail nature of the spherical ensemble expresses itself naturally. The following theorem states that the high-dimensional fluctuations of
$L_n(k_s)$ are not Gaussian in general. The statistic
$L_n(k_s)$ is dominated, after centering when this is meaningful, by extreme radial particles, which is a typical heavy-tail phenomenon. Universality for the statistics of extreme eigenvalues is proved in \cite{chafai-garcia-zelada-xu} under a four moment matching condition. 

\begin{theorem}[High-dimensional analysis of linear statistics of radial
  powers]\label{th:radial-powers}
  Let $s>0$.
  \begin{enumerate}[label=\textup{(\roman*)}]
  \item \emph{Mean.}
   We have $L_n(k_s)\in L^1$ iff $s<2$, and in that case, 
    \begin{align*}
      \mathbb{E}(L_n(k_s))
      &=
      \sum_{k=1}^n
      \frac{\Gamma(k-\frac{s}{2})\Gamma(n+1-k+\frac{s}{2})}
      {\Gamma(k)\Gamma(n+1-k)}\\
      \frac{\mathbb{E}(L_n(k_s))}{n}
      &=%
        \int_0^\infty \frac{r^{-\frac{s}{2}}}{(1+r)^2}\mathrm{d}r
        =
        \Gamma(1+\tfrac{s}{2})\Gamma(1-\tfrac{s}{2})
        =
        \frac{\pi\frac{s}{2}}{\sin(\pi\frac{s}{2})}.
    \end{align*}
    Moreover if $s\ge2$, then $L_n(k_s)\not\in L^1$ for all $n$.
  \item \emph{Variance.} We have $L_n(k_s)\in L^2$ iff $s<1$,
    and in that case, 
    \begin{align*}
      \mathrm{Var}(L_n(k_s))
      &=
        \sum_{k=1}^n
        \left[
        \frac{\Gamma(k-s)\Gamma(n+1-k+s)}
        {\Gamma(k)\Gamma(n+1-k)}
        -
        \left(
        \frac{\Gamma(k-\frac{s}{2})\Gamma(n+1-k+\frac{s}{2})}
        {\Gamma(k)\Gamma(n+1-k)}
        \right)^2
        \right]
        \\
      \frac{\mathrm{Var}(L_n(k_s))}{n^s}
      &\xrightarrow[n\to\infty]{}
      \sum_{\ell=1}^{\infty}
      \left[
      \frac{\Gamma(\ell-s)}{\Gamma(\ell)}
      -
      \left(
      \frac{\Gamma(\ell-\frac{s}{2})}{\Gamma(\ell)}
      \right)^2
      \right]
      <\infty.
      \end{align*}
      Moreover if $s\ge1$, then $L_n(k_s)\not\in L^2$ for all $n$.
    \item \emph{Subcritical regime $s<2$.}
      Then
      \[
        \frac{L_n(k_s)-\mathbb{E}(L_n(k_s))}{n^{\frac{s}{2}}}
        \xrightarrow[n\to\infty]{\mathrm d}
        Y_s=
        \sum_{\ell=1}^{\infty}
        \left(
          G_\ell^{-\frac{s}{2}}
          -
          \frac{\Gamma(\ell-\frac{s}{2})}{\Gamma(\ell)}
        \right)
      \]
      where $G_1,G_2,\ldots$ are independent random variables with
      $G_\ell\sim\mathrm{Gamma}(\ell,1)$.\\ Moreover, the random series $Y_s$ converges
      almost surely, and is in $L^2$ iff $s<1$. 
    \item \emph{Critical regime $s=2$.} We have $L_n(k_2)\not\in L^1$,
    in particular $L_n(k_2)\not\in L^2$, for all $n$, and
      \[
        \frac{L_n(k_2)}{n}-\left(H_{n-1}-1\right)
        \xrightarrow[n\to\infty]{\mathrm d}
        Y_2=
        G_1^{-1}
        +
        \sum_{\ell=2}^{\infty}
        \left(
          G_\ell^{-1}-\frac1{\ell-1}
        \right).
      \]
      Equivalently, $\frac{L_n(k_2)}{n}-\log n \xrightarrow[n\to\infty]{\mathrm d} Y_2+\gamma-1$
        where $\gamma$ is the Euler constant,
      \item \emph{Supercritical regime $s>2$.} Then
        $L_n(k_s)\not\in L^1$, in particular 
    $L_n(k_s)\not\in L^2$, for all $n$, and
    \[
      \frac{L_n(k_s)}{n^{\frac{s}{2}}}
      \xrightarrow[n\to\infty]{\mathrm d}
      Z_s  = \sum_{\ell=1}^{\infty}G_\ell^{-\frac{s}{2}}
    \]
    and the random series $Z_s$ converges almost surely.     
\end{enumerate}
\end{theorem}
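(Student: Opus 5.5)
The key tool will be a Kostlan-type reduction to independent variables. The spherical ensemble is rotation invariant: its kernel with respect to the Cauchy-type weight is $K_n(z,w)=\sum_{k=0}^{n-1}\binom{n-1}{k}(z\bar w)^k$ with the weight $(1+|z|^2)^{-(n+1)}$. By Kostlan's theorem for radially symmetric determinantal processes, the set of moduli $\{|Z_{n,k}|^2\}$ has the same law as a family of independent variables $R_1,\ldots,R_n$, where $R_k$ has density proportional to $r^{k-1}(1+r)^{-(n+1)}$ on $(0,\infty)$. This is the Beta prime distribution, so
\[
R_k\overset{\mathrm d}{=}\frac{G_k}{G'_{n+1-k}},
\]
with independent $G_k\sim\mathrm{Gamma}(k,1)$ and $G'_{n+1-k}\sim\mathrm{Gamma}(n+1-k,1)$. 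It is consistent with the reduction \eqref{eq:log|det|Gamma} used for $f_0$.

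Since $k_s(z)=(|z|^2)^{-s/2}$, we get
\[
L_n(k_s)\overset{\mathrm d}{=}\sum_{k=1}^n G_k^{-s/2}\,(G'_{n+1-k})^{s/2},
\]
a sum of independent terms.

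\textbf{Mean and variance.} The formulas follow from $\mathbb{E}G^{a}=\Gamma(m+a)/\Gamma(m)$ for $G\sim\mathrm{Gamma}(m,1)$, valid iff $a>-m$. The worst term is $k=1$, which is integrable iff $s/2<1$ and square integrable iff $s<1$. This gives the stated moment thresholds, and the failure of integrability for all $n$ when $s\ge2$ or $s\ge1$ respectively. For the mean asymptotics, write the sum as a Riemann sum: with $k=xn$, $\Gamma(k-\frac s2)/\Gamma(k)\sim k^{-s/2}$ and $\Gamma(n+1-k+\frac s2)/\Gamma(n+1-k)\sim(n-k)^{s/2}$. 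The sum is then $n\int_0^1 x^{-s/2}(1-x)^{s/2}\mathrm dx$ plus lower order, since the singularity at $x=0$ is integrable for $s<2$. This integral equals $B(1-\frac s2,1+\frac s2)$, which is the stated Gamma product.

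For the variance, each term is $\mathrm{Var}(G_k^{-s/2}G'^{s/2})$. It behaves like $n^s$ times $\mathrm{Var}(G_k^{-s/2})$ for small $k$, and the bulk terms are of order $k^{-s-1}n^s$. By dominated convergence on the series in $\ell=k$, $\mathrm{Var}/n^s$ converges to $\sum_\ell[\Gamma(\ell-s)/\Gamma(\ell)-(\Gamma(\ell-\frac s2)/\Gamma(\ell))^2]$. Summability holds because the summand is $O(\ell^{-s-1})$. The contribution of terms with $k$ near $n$ is of order $n^{-s}\cdot n^{s}$ per term, i.e. $O(n\cdot n^{-s}\cdot\ldots)$, and these are negligible after dividing by $n^s$ once we expand $(G')^{s/2}$ around $(n-k)^{s/2}$.

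\textbf{Limit laws.} Couple everything on one probability space. By the law of large numbers, $G'_{n+1-k}/n\to1$ uniformly in $k\le K$ for fixed $K$, so the first $K$ terms divided by $n^{s/2}$ converge jointly to $G_\ell^{-s/2}$. This gives the truncated series. The main obstacle is controlling the tail $\sum_{k>K}$, after centering in regimes (iii)–(iv), uniformly in $n$.

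For $s<1$ this is an $L^2$ bound, by independence: $\sum_{k>K}\mathrm{Var}(\cdot)/n^s\lesssim\sum_{k>K}k^{-s-1}+o(1)$. For $1\le s<2$ the variance is infinite only through the first few terms, and for $k>K\ge s$ the individual terms are still square integrable. So the same $L^2$ tail bound works for $k>K$ with $K$ large, while the finitely many heavy terms converge in law directly. The same argument gives almost sure convergence of $Y_s$ via Kolmogorov's two-series theorem on $\ell>s$.

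For $s>2$ no centering is needed. The tail expectation is $\sum_{k>K}\mathbb{E}(\cdot)/n^{s/2}\lesssim\sum_{k>K}k^{-s/2}+n^{1-s/2}\to0$ as $K\to\infty$, and a nonnegative series with summable means converges almost surely.

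For $s=2$ the term $k=1$ has no mean, so we center terms $k\ge2$ by $\mathbb{E}=\frac{n+1-k}{k-1}$. Dividing by $n$ and summing gives $\sum_{k=2}^n\frac{n+1-k}{n(k-1)}=H_{n-1}-1+o(1)$. The remaining part is handled by the $L^2$ tail bound (variance of order $k^{-3}$), and combining with $G_1^{-1}$ yields $Y_2$. Finally, $H_{n-1}=\log n+\gamma+o(1)$ gives the equivalent form.
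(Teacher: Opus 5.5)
Your route is the one the paper intends: it omits the proof and points to the Kostlan observation \eqref{eq:kostlan2}, used exactly as you set it up. Your arguments for the limit laws (iii)--(v) are sound: the fixed $k$ terms converge via $G'_{n+1-k}/n\to1$, and the tail $k>K$ is controlled uniformly in $L^2$, or in $L^1$ when $s>2$. Two steps, however, do not establish what is claimed.

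\emph{The mean in (i) is exact, not asymptotic.} The identity $\mathbb{E}(L_n(k_s))/n=\Gamma(1+\frac s2)\Gamma(1-\frac s2)$ holds for every $n$, and your Riemann-sum argument only gives the limit. The missing observation is the zero bias \eqref{eq:ELn}. Since $K_n(z,z)=n$, we have $\mathbb{E}(L_n(k_s))=n\int|z|^{-s}\,\mathrm{d}\mu(z)$. Polar coordinates with $r=|z|^2$ then give $\int_0^\infty r^{-s/2}(1+r)^{-2}\,\mathrm{d}r=B(1-\frac s2,1+\frac s2)$. Equivalently, your Gamma sum collapses by Chu--Vandermonde: $\sum_{j=0}^{n-1}\binom{n-1}{j}(1-\frac s2)_j(1+\frac s2)_{n-1-j}=(2)_{n-1}=n!$.

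\emph{The near-$n$ estimate in (ii) does not work as written.} Your sentence is garbled: a per-term size of order $1$ summed over $n$ terms, or a total of order $n^{1-s}$, is not $o(n^s)$ when $s\le\frac12$. The clean fix is to set $X_k=G_k^{-s/2}$, $Y_m=(G'_m)^{s/2}$, $m=n+1-k$, and split
\[
\mathrm{Var}(X_kY_m)=\mathrm{Var}(X_k)\,\mathbb{E}(Y_m^2)+(\mathbb{E}X_k)^2\,\mathrm{Var}(Y_m).
\]
The first sum, divided by $n^s$, converges to the stated series by dominated convergence. This uses $\mathbb{E}(Y_m^2)=\Gamma(m+s)/\Gamma(m)\le Cn^s$ and $\sum_k\mathrm{Var}(X_k)<\infty$.

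For the second sum, use $\mathrm{Var}(Y_m)=\frac{s^2}{4}m^{s-1}(1+O(\frac1m))\le Cm^{s-1}$ and $(\mathbb{E}X_k)^2\le Ck^{-s}$. The second sum is then at most $C\sum_k k^{-s}(n+1-k)^{s-1}$. This is $O(1)$ for $0<s<1$, hence $o(n^s)$. The same computation gives $O(\log n)$ for $s=1$ and $O(n^{s-1})$ for $1<s<2$. That is exactly the bound making $\sum_k\mathbb{E}(X_k)(Y_m-\mathbb{E}Y_m)$ negligible at scale $n^{s/2}$ in (iii), which your ``$+o(1)$'' in the tail bound tacitly uses.

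Finally, the claim that $Y_s\in L^2$ only if $s<1$ needs one more line. For $s\ge1$, $G_1^{-s/2}\notin L^2$, and it is independent of the square-integrable remainder $\sum_{\ell\ge2}\bigl(G_\ell^{-s/2}-\mathbb{E}\,G_\ell^{-s/2}\bigr)$, so $Y_s\notin L^2$.
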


Theorem \ref{th:radial-powers} is a direct consequence of the Kostlan observation \eqref{eq:kostlan2}, as in Section \ref{ss:meanvarcum}. We omit the proof.

When $0<s<1$, the variance diverges, yet the normalized limit is not Gaussian. This shows that the unbounded singular test function $k_s$ falls outside the Costin--Lebowitz universality mechanism.

\subsection{CLT for counting function} It is natural to ask about the behavior for other singular test functions such that indicators of subsets. In this special case, the linear statistics is a counting function. Let us specialize to radial subsets, namely the indicator $\mathbb{1}_{D(0,R)}$ of the closed centered disc $D(0,R)=\{z\in\mathbb{C}:|z|\leq R\}$ of radius $R>0$. The following theorem states that the high-dimensional fluctuation of $L_n(\mathbb{1}_{D(0,R)})$ is Gaussian, at scale $n^{1/4}$ however.

The invariance by inversion of the spherical ensemble gives here
$L_n(\mathbb{1}_{D(0,R)})\overset{\mathrm{d}}{=}L_n(\mathbb{1}_{D(0,R^{-1})^c})=n-L_n(\mathbb{1}_{D(0,R^{-1})})$.

\begin{theorem}[CLT for disk counting function]\label{th:count}
For every fixed $0<R<\infty$,
\begin{align*}
\mathbb{E}(L_n(\mathbb{1}_{D(0,R)}))
&=\frac{R^2}{1+R^2}n,\quad
\mathrm{Var}(L_n(\mathbb{1}_{D(0,R)}))
\underset{n\to\infty}{\sim}
\frac{R}{\sqrt{\pi}(1+R^2)}\sqrt{n}\\
\frac{L_n(\mathbb{1}_{D(0,R)})-\frac{R^2}{1+R^2}n}
{n^{\frac{1}{4}}}
&\xrightarrow[n\to\infty]{\mathrm{d}}
\mathcal{N}\Bigr(0,\frac{R}{\sqrt{\pi}(1+R^2)}\Bigr).
\end{align*}
\end{theorem}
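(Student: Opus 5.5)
The plan is to use the Kostlan observation \eqref{eq:kostlan2}. For the spherical ensemble, the set of moduli $\{|Z_{n,k}|^2\}_{k=1}^n$ has the same law as $\{\beta_k\}_{k=1}^n$ with $\beta_k$ independent and $\beta_k\overset{\mathrm d}{=}\Gamma_k/\Gamma'_{n+1-k}$, where $\Gamma_k\sim\mathrm{Gamma}(k,1)$ and $\Gamma'_{n+1-k}\sim\mathrm{Gamma}(n+1-k,1)$ are independent. Equivalently, $\frac{\beta_k}{1+\beta_k}\sim\mathrm{Beta}(k,n+1-k)$. Since $\mathbb{1}_{D(0,R)}$ is radial, this gives
\[
L_n(\mathbb{1}_{D(0,R)})\overset{\mathrm d}{=}\sum_{k=1}^n \xi_k,\qquad \xi_k=\mathbb{1}\{U_k\le t\},\quad t=\tfrac{R^2}{1+R^2},\quad U_k\sim\mathrm{Beta}(k,n+1-k)\ \text{independent}.
\]
So the statistic is a sum of independent Bernoulli variables with parameters $p_k=\P(U_k\le t)$, and the theorem reduces to a Lindeberg--Feller CLT for this sum.

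\textbf{Mean.} By the Beta--binomial identity, $p_k=\P(\mathrm{Bin}(n,t)\ge k)$. Hence
\[
\sum_{k=1}^n p_k=\E(\mathrm{Bin}(n,t))=nt=\tfrac{R^2}{1+R^2}\,n .
\]
Alternatively, this follows directly from the one-point density $n\mu$.

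\textbf{Variance.} We have $\mathrm{Var}=\sum_k p_k(1-p_k)$ with $p_k=\P(S\ge k)$ and $S\sim\mathrm{Bin}(n,t)$. By the de Moivre--Laplace approximation, uniformly in $k$ we have $p_k=\overline{\Phi}\bigl((k-nt)/\sigma_n\bigr)+O(n^{-1/2})$, where $\sigma_n=\sqrt{nt(1-t)}$. Comparing the sum with an integral, and using the Gaussian tails of $p_k$ and $1-p_k$ away from $nt$ to control the error terms, we get
\[
\sum_k p_k(1-p_k)=\sigma_n\int_{\R}\Phi(x)\overline{\Phi}(x)\,\dd x+o(\sqrt n).
\]
The Gaussian integral is $\int_{\R}\Phi\overline\Phi=1/\sqrt{\pi}$, since it equals $\E|X-X'|/2$ for independent standard Gaussians $X,X'$. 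Moreover $\sqrt{t(1-t)}=R/(1+R^2)$. Together these give $\mathrm{Var}\sim\frac{R}{\sqrt\pi(1+R^2)}\sqrt n$.

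\textbf{CLT.} The summands $\xi_k-p_k$ are bounded by $1$, and the total variance tends to infinity. The Lindeberg condition therefore holds trivially, because for $n$ large every indicator in it vanishes. This is the Costin--Lebowitz mechanism. Lindeberg--Feller then gives
\[
\frac{L_n(\mathbb{1}_{D(0,R)})-nt}{\sqrt{\mathrm{Var}}}\xrightarrow[n\to\infty]{\mathrm d}\mathcal{N}(0,1),
\]
and rescaling by the variance asymptotics yields the stated limit. (The limit $\mathcal{N}(0,\cdot)$ is parametrized by its variance.)

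\textbf{Main obstacle.} The only delicate step is the uniform replacement of $p_k$ by the Gaussian tail with an $o(\sqrt n)$ total error. The plan is to apply Berry--Esseen on the window $|k-nt|\le \sigma_n\log n$, and Hoeffding bounds to show that the contributions outside this window are negligible. One could instead use an exact Euler--Maclaurin computation of $\E\bigl(|S-S'|\bigr)/2$, since
\[
\sum_k p_k(1-p_k)=\sum_k\P(S\ge k>S')=\E\bigl((S-S')_+\bigr)=\tfrac12\E|S-S'|
\]
for an independent copy $S'$ of $S$. With this identity the variance becomes exact, and the asymptotics follow from the CLT for $S-S'$ together with its uniform integrability.
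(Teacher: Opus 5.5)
Your proposal is correct and follows the route the paper indicates. The paper omits the proof, calling it a direct consequence of the Kostlan observation \eqref{eq:kostlan2} or of the Costin--Lebowitz theorem, and you reduce to independent Bernoulli variables with parameters $p_k=\P(\mathrm{Bin}(n,t)\ge k)$ and then apply a Lindeberg CLT, which is exactly that argument. Your exact identity $\mathrm{Var}(L_n(\mathbb{1}_{D(0,R)}))=\tfrac12\E|S-S'|$, combined with the CLT and uniform integrability for $S-S'$, is a clean way to obtain the $\frac{R}{\sqrt{\pi}(1+R^2)}\sqrt{n}$ asymptotics, and it makes the Berry--Esseen/Hoeffding window argument unnecessary.
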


Theorem \ref{th:count} is a direct consequence of the Kostlan observation \eqref{eq:kostlan2}, as in Section \ref{ss:meanvarcum}, or the Costin--Lebowitz type result for determinantal point process from \cite{MR1894104}.  We omit the proof. 

More generally, it is known that for two-dimensional determinantal Coulomb gases and the indicators of regular subsets of the bulk, the variance behaves like $\sqrt{n}$. We refer for instance to \cite[Th.~1.6]{LaurentBenoit} for this result and the higher determinantal generalizations. For one-dimensional determinantal log-gases and the indicator of a finite union of intervals, the variance is proportional to the logarithm of the dimension, and the contributions of the singularities are asymptotically independent at the logarithmic scale. The same works for test functions with logarithmic singularities. A comprehensive classification of the nature of the fluctuation depending on the kind of singularity of the test function and the type of decay of the two-points correlation is still to be done. 

\subsection{About Fisher--Hartwig asymptotics.}
An idea that goes back at least to the works of Brézin, who should not be confused with Berezin, is to see the Fourier transform or characteristic function of $L_n(f_z)$ as a complex moment of the modulus of the characteristic polynomial. More precisely, for all $\theta\in\mathbb{R}$,
\begin{equation}
    \mathbb{E}(\mathrm{e}^{\mathrm{i}\theta L_n(f_z)})
    =\mathbb{E}(|P_n(z)|^{\mathrm{i}\theta}).    
\end{equation}
This gives a natural alternative point of view on the CLT for the logarithmic
potential, explored in \cite{MR3946715} for the complex Ginibre ensemble, and
more recently in \cite{bourgadeetal} for two-dimensional gases with compactly
supported equilibrium measure. See also
\cite{byun2025propertiesonecomponentcoulombgas,byun2025orthogonalpolynomialssphericalensemble,byun2026freeenergyexpansiondeterminantal}
for a link with charge insertions and related questions. The Fisher--Hartwig
asymptotics, see \cite{fisher-hartwig}, for the complex Ginibre ensemble
obtained in \cite{MR3946715} together with our Corollaries \ref{co:LogPot:CLT} and
\ref{co:LogPot:Cov} suggest the following.

\begin{openproblem}[Fisher--Hartwig asymptotics]\label{op:LogPot:FH}
Show that for all $z\in\mathbb{C}$ and $\gamma\in\mathbb{C}$ with $-2<\Re\gamma<2$,
\begin{equation}\label{eq:FH}
    \mathbb{E}(|P_n(z)|^\gamma)
	=\Bigl(\frac{n}{1+|z|^2}\Bigr)^{\frac{\gamma^2}{4}}
    (1+|z|^2)^{\frac{\gamma}{2}n}
    \frac{1+o_{n\to\infty}(1)}{G(1+\frac{\gamma}{2})G(1-\frac{\gamma}{2})}
\end{equation}
uniformly with respect to $z$ and $\gamma$ on compact subsets of $\mathbb{C}$ and of $\{\gamma\in\mathbb{C}:-2<\Re\gamma <2\}$ respectively. More generally, for all $z_1,\ldots,z_k\in\mathbb C$ distinct and $\gamma_1,\ldots,\gamma_k\in\mathbb{C}$ with $\Re\gamma_i>-2$, $1\leq i\leq k$, and $\Re(\gamma_1+\cdots+\gamma_k)<2$,
\[
\mathbb{E}\Bigl(\prod_{i=1}^k|P_n(z_i)|^{\gamma_i}\Bigr)
=\prod_{i=1}^k\Bigl(\frac{n}{1+|z_i|^2}\Bigr)^{\frac{\gamma_i^2}{4}}
\prod_{i<j}\Bigl(\frac{n}{|z_i-z_j|^2}\Bigr)^{\frac{\gamma_i\gamma_j}{4}}
\prod_{i=1}^k(1+|z_i|^2)^{\frac{\gamma_i}{2}n}
\frac{1+o_{n\to\infty}(1)}
{G\bigl(1-\frac{\gamma_1+\cdots+\gamma_k}{2}\bigr)\prod_{i=1}^kG\bigl(1+\frac{\gamma_i}{2}\bigr)}.
\]
\end{openproblem}
Here $G$ is the Barnes $G$-function.
Taking $z=0$ matches the formula for the Mellin transform of $|\det(AB^{-1})|$ or characteristic function of $\log|\det(AB^{-1})|$ that 
can be deduced from the explicit representation \eqref{eq:log|det|Gamma}, namely
\begin{equation}
\mathbb{E}(|P_n(0)|^\gamma)
=\prod_{k=1}^n\frac{\Gamma(k+\frac{\gamma}{2})\Gamma(k-\frac{\gamma}{2})}{\Gamma(k)^2}
=n^{\frac{\gamma^2}{4}}\frac{1+o_{n\to\infty}(1)}{G(1+\frac{\gamma}{2})G(1-\frac{\gamma}{2})}.
\end{equation}

Regarding $g_p$ instead of $f_z$, we get from \eqref{eq:logmgf}, for all $p\in\overline{\mathbb{C}}$ and $\gamma\in\mathbb{C}$ with $\Re\gamma>-2$, the formula and asymptotics
\begin{equation}\label{eq:Green:FH}
    \mathbb{E}(\mathrm{e}^{\gamma L_n(g_p)})
    =\Bigl(\frac{\Gamma(n+1)}{\Gamma(n+1+\frac{\gamma}{2})}\Bigr)^n
    \frac{G(n+1+\frac{\gamma}{2})}{G(1+\frac{\gamma}{2})G(n+1)}
    =\mathrm{e}^{-\frac{\gamma}{2}n-\frac{\gamma}{4}-\frac{\gamma^2}{8}}
    (2\pi)^{\frac{\gamma}{4}}    
    n^{\frac{\gamma^2}{8}}
    \frac{1+o_{n\to\infty}(1)}{G(1+\frac{\gamma}{2})}.
\end{equation}
More generally, Corollary \ref{co:Green:CLT} and Theorem \ref{th:Green:Cov} suggest the following.

\begin{openproblem}[Multi-point Fisher--Hartwig asymptotics for Green field]\label{op:Green:FH:mult}
Show that for all distinct $p_1,\ldots,p_k\in\overline{\mathbb{C}}$ and all $\gamma_1,\ldots,\gamma_k\in\mathbb{C}$ with $\Re\gamma_i>-2$ for all $1\leq i\leq k$, we have, denoting $\Gamma=\gamma_1+\cdots+\gamma_k$,
\begin{equation}\label{eq:Green:FH:mult}
    \mathbb{E}(\mathrm{e}^{\gamma_1L_n(g_{p_1})+\cdots+\gamma_kL_n(g_{p_k})})
=\mathrm{e}^{-\frac{\Gamma}{2}n-\frac{\Gamma}{4}-\frac{\Gamma^2}{8}}
(2\pi)^{\frac{\Gamma}{4}}
\frac{n^{\frac18(\gamma_1^2+\cdots+\gamma_k^2)}}
{\prod_{i<j}d(p_i,p_j)^{\frac{\gamma_i\gamma_j}{2}}}
\frac{1+o_{n\to\infty}(1)}
{\prod_{i=1}^kG\left(1+\frac{\gamma_i}{2}\right)}.
\end{equation}
\end{openproblem}

The case of two antipodal singularities such as $0$ and $\infty$ can be solved using the Kostlan observation.

The formula for the Green function is nicer than for the characteristic polynomial, due to the absence of the hidden singularity at $\infty$. On the other hand, the absence of a $2\pi$ factor for the characteristic polynomial formula is a consequence of charge neutrality after adding the hidden singularity at infinity.

There is a duality between the spherical ensemble and the truncated unitary ensemble, considered in \cite[Prop.~5.3~and~5.4]{forrester2025dualitiesrandommatrixtheory}. In particular, the even integer moments of the modulus of the characteristic polynomial are related. The Fisher--Hartwig asymptotics for the truncated unitary model is considered in \cite{deano2025asymptoticsclassplanarorthogonal}.

\subsection{About Wick exponential and Gaussian Multiplicative Chaos}

The random fields ${(L_n(g_p))}_{p\in\overline{\mathbb{C}}}$ and ${(L_n(f_z))}_{z\in\mathbb{C}}$ cannot converge pointwise due to their blowing variance (Theorem \ref{th:Green:Cov} and Corollary \ref{co:LogPot:Cov}). A basic idea behind the works \cite{zbMATH03960673,zbMATH06370363} is that if $Z$ is a Gaussian random variable, then for all real $\gamma$,
\begin{equation}
    \mathrm{e}^{\gamma Z-\gamma\mathbb{E}(Z)-\frac{1}{2}\gamma^2\mathrm{Var}(Z)}
    =\frac{\mathrm{e}^{\gamma Z}}{\mathbb{E}(\mathrm{e}^{\gamma Z})}.
\end{equation}
This renormalization is positive and has unit mean. It is known to yield nontrivial limits for suitable regularizations of singular Gaussian fields. In mathematical physics, it is denoted $:\mathrm{e}^{\gamma Z}:$ and is called the Wick exponential. Now for all $p\in\overline{\mathbb{C}}$, our $L_n(g_p)$ is asymptotically Gaussian thanks to Corollary \ref{co:Green:CLT}, has a variance blowing logarithmically and logarithmic correlations thanks to Theorem \ref{th:Green:Cov}, and these facts strongly suggest the following.

\begin{openproblem}[Gaussian Multiplicative Chaos]\label{op:Green:GMC}
Show that for at least all $0<\gamma<2$, there exists a random finite positive Borel measure $\mathrm{GMC}_\gamma$ on $\mathbb{S}^2$ such that for the topology of weak convergence of measures, denoting $g_x$ for $g_{T(x)}$,
    \begin{equation}\label{eq:Green:GMC}
        \frac{\mathrm{e}^{\gamma L_n(g_x)}}{\mathbb{E}(\mathrm{e}^{\gamma L_n(g_x)})}
        \mathrm{d}\nu(x)        
        \xrightarrow[n\to\infty]{\mathrm{d}}
        \mathrm{GMC}_\gamma.
    \end{equation}
\end{openproblem}

Both sides in \eqref{eq:Green:GMC} are random positive Borel measures on $\mathbb{S}^2$. The left hand side has mean $\nu$ and is absolutely continuous with respect to $\nu$. The limit $\mathrm{GMC}_\gamma$ is expected to be almost surely singular with respect to $\nu$. The covariance normalization suggests the larger subcritical range $0<\gamma<2\sqrt2$, but the range $0<\gamma<2$ is the natural second-moment range, often referred to as the $L^2$ phase.

A proof of \eqref{eq:Green:GMC} could involve regularization by cutoff and \eqref{eq:Green:FH:mult}, as in \cite{bourgadeetal}. Regarding the field $L_n(f_z)=L_n(g_z)-L_n(g_\infty)+nC_z$, see \eqref{eq:split-euclid}, the high-dimensional behavior of the Wick exponential 
\begin{equation}
    \frac{\mathrm{e}^{\gamma L_n(f_z)}}{\mathbb{E}(\mathrm{e}^{\gamma L_n(f_z)})}
    =\frac{|P_n(z)|^\gamma}{\mathbb{E}(|P_n(z)|^\gamma)}
\end{equation}
is more subtle due to the blowing covariance (Corollary \ref{co:LogPot:Cov}). In addition to \cite{bourgadeetal}, we refer to \cite{zbMATH06514487,zbMATH07207623} for the Wick exponential of linear spectral statistics, and
\cite{cipolloni2026gaussianmultiplicativechaosiid} for Girko universality.
See also \cite{chatterjee2026exactcalculationschargeneutrality} for related exact
Coulomb-gas and imaginary GMC calculations.

\subsection{About Cayley transform and 1D Cauchy ensemble.}
Using the stereographic projection to study the spherical ensemble is
already the key point of \cite{MR2926763,chafai-garcia-zelada-xu}. In the same spirit, the Cayley transform
\begin{equation}
  C(z)=\frac{z-\mathrm{i}}{z+\mathrm{i}}
\end{equation}
is a special Möbius transformation. It maps the extended upper half-plane
$\{z\in\mathbb{C}:\Im z\geq0\}\cup\{\infty\}$ to the closed unit disc
$\{z\in\mathbb{C}:|z|\leq1\}$, namely the Poincaré hyperbolic models. It also maps their boundaries, the compactified line
$\overline{\mathbb{R}}=\mathbb{R}\cup\{\infty\}$ to the unit circle
$\mathbb{S}^1$, $C(\infty)=1$. Regarding Coulomb gases, it maps the one-dimensional Cauchy ensemble to the circular ensemble, see \cite{MR2559435}. It turns out that
the reverse Cayley transform, coincides, up to two symmetries, on
$\mathbb{S}^1$, with the one-dimensional stereographic projection:
for all $(x_1,x_2)=x_1+\mathrm{i}x_2\in\mathbb{S}^1\setminus\{1\}\subset\mathbb{C}$,
\begin{equation}
  C^{-1}(x)=\mathrm{i}\frac{x+1}{1-x}=-\frac{x_2}{1-x_1}=-(\tau\circ S)(x)
\end{equation}
where $S$ is the symmetry exchanging $x_1$ and $x_2$, and where $\tau(x_1,x_2)=x_1/(1-x_2)$, is the one-dimensional stereographic projection from $\mathbb{S}^1\subset\mathbb{R}^2$ to $\overline{\mathbb{R}}$. The unit circle
$\mathbb{S}^1$ is globally invariant by $S$ while $\overline{\mathbb{R}}$ is
globally invariant by the symmetry $t\mapsto -t$. In the one-dimensional
situation, the natural random matrix model is associated to the gas on the
circle (Haar unitary random matrices), very well studied, while in the
two-dimensional situation, the natural model is rather associated to the gas
on the complex plane ($AB^{-1}$ with $A$ and $B$ iid complex Ginibre).

\section{Known properties of the spherical ensemble}
\label{se:properties}

\subsection{Stereographic projection.}
The two-sphere 
$\mathbb{S}^2\subset\mathbb{R}^3$ and the
compactified complex plane $\overline{\mathbb{C}}=\mathbb{C}\cup\{\infty\}$
are two equivalent descriptions of
the Riemann sphere. They are related, for instance, by the stereographic projection
$T:\mathbb{S}^2\to\overline{\mathbb{C}}$ with respect to the north pole $e_3 = (0,0,1)$
given by
\begin{equation}\label{eq:T}
  T(x)=\frac{x_1+\mathrm{i}x_2}{1-x_3}
  \quad\text{if}\quad x\neq e_3\quad\text{and}\quad T(e_3)=\infty.
\end{equation}
We give an ancient Greek geometric derivation in Figure \ref{fi:stereo}. In terms of
spherical angles, it writes
$T(x)=\mathrm{e}^{\mathrm{i\varphi}}\cot\bigl(\frac{\theta}{2}\bigr)$, where
$x=(x_1,x_2,x_3)=(\sin(\theta)\cos(\varphi),\sin(\theta)\sin(\varphi),\cos(\theta))$,
$\theta\in[0,\pi]$, $\varphi\in[0,2\pi)$. 
A holomorphic atlas of $\mathbb{S}^2$
can be obtained
using 
any pair of stereographic projections from two distinct points.

In the explicit covariance calculations we will prefer to 
think our objects as living on the 
sphere $\mathbb S^2$ in $\mathbb R^3$
and points will be denoted by $x$ or $y$,
as remarked in Subsection \ref{sub:notation}.
Otherwise, when the formulas are better behaved
in stereographic coordinates $z$, $w$ or $u$ in $\mathbb C$, we chose to use
$\overline {\mathbb C}$ whose points we denote by
$p$ or $q$.

\subsection{Rotational symmetry} The gas \eqref{eq:gas:S} on $\mathbb{S}^2$ is
invariant under the action of the orthogonal group $\mathrm{O}(3)$, which is the isometry group
of the round sphere. The
subgroup $\mathrm{SO}(3)$
of isometries that preserve the orientation is conjugated under the stereographic projection $T$ to
the projective special unitary group $\mathrm{PSU}(2)$ as
\begin{equation}
 \{T\circ R\circ T^{-1}:R\in\mathrm{SO}(3)\} = \mathrm{PSU}(2)\subset\mathrm{PSL}(2,\mathbb{C}).
\end{equation}
It follows that the gas \eqref{eq:gas:C} on $\mathbb{C}$ is invariant under
the Möbius transformations of the form 
\begin{equation}
    z\mapsto M_{a,b}(z)=\frac{az+b}{-\overline{b}z+\overline{a}},\quad |a|^2+|b|^2=1.
\end{equation}
In particular, it is invariant under $z\mapsto 1/z$ (take $a=0$ and $b=\mathrm{i}$), and under $z\mapsto -z$ (take $a=\mathrm{i}$ and $b=0$).

We have $\mathrm{O}(3)\setminus\mathrm{SO}(3)=\mathrm{SO}(3)S$ where
$S$ is an arbitrary fixed reflection. It is conjugated under $T$ to the
anti-Möbius transformation $z\mapsto M_{a,b}(\overline{z})$, $|a|^2+|b|^2=1$. The
group $\mathrm{O}(3)$ is conjugated under $T$ to the group generated by the
maps $z\mapsto M_{a,b}(z)$, $|a|^2+|b|^2=1$, and the reflection
$z\mapsto\overline{z}$. This group leaves invariant the gas
\eqref{eq:gas:C} on $\mathbb{C}$.

\subsection{Laplacian and Green function}
\label{sub:LaplaceGreen}
Recall that the classical 
Laplacian $\Delta$ on $\overline{\mathbb C}$ 
associates to any smooth function
$f$, defined on an open subset of  
$\overline{\mathbb C}$, the measure
$\Delta f = (\partial_x^2 f
+ \partial_y^2 f) 
\mathrm d x \mathrm d y$. Given a 
regular measure $\Lambda$ over
 $\overline{\mathbb C}$ of total mass one,
we say that a continuous symmetric function
$G:\overline{\mathbb C} \times \overline{\mathbb C}
\to (-\infty,\infty]$ 
is a (normalized) Green function 
if\footnote{The measure $\Lambda$ is necessary
since the integral of the Laplacian of a function on
a compact surface (and without boundary) is zero.}
\begin{equation}
\label{eq:GreenFunction}
\frac{1}{2\pi}\Delta G(p,\cdot) = -\delta_p + \Lambda
\end{equation}
for every $p \in \overline{\mathbb C}$.
This function is unique up to an additive constant
that is usually fixed by
requiring that $\int G(p,\cdot) \mathrm d \Lambda = 0$.
Since the measure involved in 
\eqref{eq:GreenFunction} is not regular,
the equation should be understood 
in the sense of distributions. Namely,
for every infinitely differentiable function
$\varphi:\overline{\mathbb C}\to\mathbb{R}$,
\begin{equation}
  \frac{1}{2\pi}\int_{\overline{\mathbb C}} G(p,q)\Delta\varphi (\mathrm d q)=
  -\varphi(p)+
  \int_{\overline{\mathbb C}}\varphi\mathrm d \Lambda.
\end{equation}
A concrete way to obtain $G$ is to consider
$V(z) = \int \log|z- w| \mathrm d \Lambda(w)$,
the logarithmic potential of $\Lambda$, which satisfies 
$\Delta V = 2\pi \Lambda$ on $\mathbb C$
and to set 
$G(z,w) =  -\log|z-w| + V(z) + V(w)$ which
verifies
\eqref{eq:GreenFunction}.
On the other hand, given $G$ obeying 
\eqref{eq:GreenFunction},
the function $W(z) = G(\infty,z)$ satisfies
$\Delta W = 2\pi(-\delta_{\infty} + \Lambda)$
so that
the Laplacian of $W$ on
$\mathbb C$ is exactly $2\pi \Lambda$.
Since $W$ is logarithmic at infinity,
$W$ is the logarithmic potential of $\Lambda$ up
to a constant\footnote{This can also be
seen by taking $w \to \infty$ in the identity
$G(z,w) - G(0,w) = \log|1-z w^{-1}| 
+ V(z) - V(0)$ which gives
$V(z) = G(z,\infty) - G(0,\infty) +V(0)$.}.
In particular,
\[- \log |z-w| = G(z,w) - G(z,\infty)
- G(w,\infty).\]
So, the study of the Green function helps
us to understand 
$\log|z-w|$. For 
the round sphere,
in other words choosing as $\Lambda$
the uniform measure $\nu$ on the sphere, 
we get the explicit formula
\[G(z,w) = - \log|z-w| + 
\frac{1}{2}\log (1 + |z|^2)
+\frac{1}{2}\log (1 + |w|^2) + \mathrm{constant}\]
because 
$\frac{1}{2}\log (1 + |z|^2)$ is logarithmic at
infinity and its Laplacian is $2\pi\mu$.
As explained in Subsection \ref{sub:Chordal},
using the identification
of $\overline{\mathbb C}$ with $\mathbb{S}^2$ and in the language most natural to 
$\mathbb{S}^2$ as a subset of $\mathbb R^3$,
the Green function is
\begin{equation}\label{eq:G}
  G(x,y)=-\frac{1}{2}-\log\Bigl(\frac{1}{2}{\|x-y\|}_{\mathbb{R}^3}\Bigr)=
  -\frac{1}{2}-\log d(p,q)
  = - \frac{1}{2} - g_p(q),
  \quad \text{with } p = T(x) \text{ and } q = T(y)
\end{equation}
where 
we have chosen the additive constant $-1/2$ in the definition \eqref{eq:G} so that
$\int_{\mathbb{S}^2}G(x,y)\mathrm{d}\nu(y)=0$ for all $x\in\mathbb{S}^2$.
By identifying smooth functions and regular measures
via the map $f \mapsto f \mathrm d \nu$,
the operator $-\Delta$ 
is an essentially self-adjoint operator
on $L^2_{\mathbb{C}}(\nu)$ with a 
a compact resolvent. Its
eigenvalues are given by $4\pi \ell(\ell+1)$, $\ell\in\{0,1,2,\ldots\}$. The
associated eigenspace $\mathcal{H}_\ell$ has dimension $2\ell+1$, and is
spanned by the spherical harmonics\footnote{They are explicit
  trigonometric polynomials with respect to latitude/longitude angle coordinates, hence the name.} ${(Y_{\ell,m})}_{-\ell\leq m\leq\ell}$, orthonormal in $L^2_{\mathbb{C}}(\nu)$. Note that their normalization
  differs from the classical ones
  because $\nu$ has unit mass rather than $4\pi$. The
orthogonal projector onto $\mathcal{H}_\ell$ is a kernel
operator with kernel given for all $x,y\in\mathbb{S}^2$ by
\begin{equation}\label{eq:addition-identity}
  c_\ell(x,y)
  =\sum_{m=-\ell}^{\ell}Y_{\ell,m}(x)\overline{Y_{\ell,m}(y)}
  =(2\ell+1)P_\ell(x\cdot y)
\end{equation}
where ${(P_\ell)}_{\ell\geq0}$ are the Legendre polynomials on $[-1,1]$,
orthogonal with respect to the uniform probability measure on $[-1,1]$, and
normalized with $P_\ell(1)=1$ for all $\ell\geq0$. We have, for all
$x,y\in\mathbb{S}^2$,
\begin{equation}\label{eq:green-series}
  G(x,y)
  =\frac{1}{2}\sum_{\ell=1}^\infty\frac{1}{\ell(\ell+1)}c_\ell(x,y).
\end{equation}
See \cite{muller1966spherical,atkinsonhan2012spherical}.
By the sphere and cylinder Archimedes theorem, for all $x\in\mathbb{S}^2$, 
\begin{equation}\label{eq:nunif}
    \text{the image of $\nu$ by
$y\in\mathbb{S}^2\mapsto x\cdot y\in\mathbb{R}$ is the uniform distribution on
$[-1,1]$}.
\end{equation}

\subsection{Planar, chordal, and geodesic distances}
\label{sub:Chordal}
The (half) chordal distance on 
the unit sphere
$\mathbb{S}^2 = \{x \in \mathbb{R}^3 : \|x\|_{\mathbb R^3} = 1\}$ is defined for all $x,y \in \mathbb{S}^2$ by
\begin{equation}\label{eq:dchord}
  d(x,y)
  =\frac{1}{2}{\|x-y\|}_{\mathbb{R}^3}\in[0,1].
\end{equation}
If $p$ and $q$ belong to
$\overline{\mathbb C}$, we define
$d(p,q)$ by $d(T^{-1}(p),T^{-1}(q))$. We
have $d(x,y)=\sin(\frac{\theta}{2})$ where $\theta\in[0,\pi]$ is the geodesic
distance (arc length) between the points $x$ and $y$ in
$\mathbb{S}^2$, and since $\cos(\theta)=x \cdot y$, we also have
\begin{equation}\label{eq:d2xy}
  d(x,y)^2
  =\frac{1-x\cdot y}{2}
  =\frac{1-\cos(\theta)}{2}.
\end{equation}
Using $T^{-1}(z)=(2\Re z,2\Im z,|z|^2-1)/(1+|z|^2)$
 in the first formula of
 \eqref{eq:d2xy} we get,
for all $z,w\in\mathbb{C}$,
\begin{equation}\label{eq:d}
  d(z,w) = \frac{|z-w|}{\sqrt{(1+|z|^2)(1+|w|^2)}}
  \quad\text{and}\quad
  d(z,\infty) = \frac{1}{\sqrt{1+|z|^2}}.
\end{equation}
Now, from \eqref{eq:d} we get that for all $w,z\in\mathbb{C}$, the planar
Euclidean distance $|w-z|$ on $\mathbb{C}$ admits the factorization
\begin{equation}\label{eq:dists}
|z-w|
=
\frac{d(z,w)}{d(z,\infty)d(w,\infty)}.
\end{equation}
See Figure \ref{fi:dists}.
This formula expresses the way the distances are deformed by $T$ when
the points approach the north pole. 
After taking the logarithm,
we get
\eqref{eq:G} up to an additive constant
and we obtain again that
\begin{align}\label{eq:split-euclid}
  f_z(w)
  =\log|w-z|
  &=\log d(w,z)-\log d(w,\infty)+\frac12\log(1+|z|^2)\\
  &=g_z(w)-g_\infty(w)+C_z
    \quad\text{where}\quad g_p=\log d(\cdot,p).
\end{align}
For all $p\in\overline{\mathbb{C}}$, the function
$g_p$ has only one singularity, at point $p$. For all $z\in\mathbb{C}$, the
function $f_z$ has two singularities, at points $z$ and $\infty$, and thanks
to \eqref{eq:split-euclid}, it is, up to the additive constant $C_z$, the
difference between the two functions $g_z$ and $g_\infty$ having one
singularity each. Moreover $g_z$ and $g_\infty$ are conjugated
by the isometry sending $z$ to $\infty$.
Finally, the functions $g_0=\log(\left|\cdot\right|/\sqrt{1+\left|\cdot\right|^2})$ and $g_\infty=\log(1/\sqrt{1+\left|\cdot\right|^2})$ are radial, and this is related to the fact that the points $0$ and $\infty$ are, respectively, the south and north poles of $\mathbb{S}^2$, on the vertical axis perpendicular to the complex plane. 

\subsection{Determinant and kernel}

The law \eqref{eq:gas:C} can be written as the probability measure
\begin{equation}
  \frac{1}{n!}\det{[K_n(z_i,z_j)]}_{1\leq i,j\leq n}\prod_{k=1}^n\mathrm{d}\mu(z_k)
  =
  \frac{1}{n!} 
  \Big|
  \det{[P_{n,i}(z_j)]}_{0\leq i,j\leq n-1}
  \Big|^2\prod_{k=1}^n\mathrm{d}\mu(z_k),
\end{equation}
where $K_n: \mathbb C \times \mathbb C \to \mathbb C$
and $P_{n,k}: \mathbb C \to \mathbb C$ are defined by
\begin{equation}\label{eq:Kn}
  K_n(z,w)=\sum_{k=0}^{n-1}
  P_{n,k}(z)\overline{P_{n,k}(w)},
  \quad
  P_{n,k}(z)=\sqrt{\frac{n!}{k!(n-1-k)!}}\frac{z^k}{(1+|z|^2)^{\frac{n-1}{2}}}.
\end{equation}
When $x$ and $y$ belong to $\mathbb S^2$,
we define $K_n(x,y)$
by $K_n(T(x),T(y))$.
Notice that Equation \eqref{eq:Kn} 
is telling us that $K_n$ is the kernel of 
the orthogonal projection
of $L^2_{\mathbb{C}}(\mu)$ onto $\mathcal P_n$,
the subspace of $L^2_{\mathbb{C}}(\mu)$ of polynomials of degree less than or equal to $n-1$ weighted with
	$(1+|z|^2)^{-(n-1)/2}$, namely,
\begin{equation}\label{eq:Pn}
    \mathcal P_n = 
    \bigg\{\frac{a_0 + a_1 z+ \dots + a_{n-1}z^{n-1}}{(1+|z|^2)^{(n-1)/2}}:
	a_0,\dots,a_{n-1} \in \mathbb C\bigg\},
\end{equation}
and $\{P_{n,k}: 0 \leq k \leq n-1 \}$ is the orthonormal
basis of $\mathcal P_n$ obtained by normalizing
the monomials. Performing the sum in \eqref{eq:Kn}, we get the short expression
\begin{equation}
  K_n(z,w) = n\frac{(1+z\overline w)^{n-1}}{(1+|z|^2)^{\frac{n-1}{2}}(1+|w|^2)^{\frac{n-1}{2}}}.
\end{equation}
Since $K_n$ represents an orthogonal projection
we also have, for all $z,u$, the kernel reproducing identity
\begin{equation}\label{eq:Kn:repro}
\int_{\mathbb{C}} K_n(z,w)K_n(w,u)\mathrm{d}\mu(w)=K_n(z,u).
\end{equation}
For all these aspects, we refer to \cite{MR2552864}.

\subsection{Kernel and chordal distance}
For all $z,w\in\mathbb{C}$, we have
\begin{equation}
  |z-w|^2=(1+|z|^2)(1+|w|^2)-|1+\overline{z}w|^2.
\end{equation}
This allows us to express $|K_n|$ in terms of the chordal distance $d$, namely, for
all $z,w\in\mathbb{C}$,
\begin{equation}\label{eq:kernel-distance}
  |K_n(z,w)| = n(1-d(z,w)^2)^{\frac{n-1}{2}}\leq n\mathrm{e}^{-\frac{n-1}{2}d(z,w)^2}.
\end{equation}
This off-diagonal decay of the kernel is crucial for the decoupling behind our CLTs.
If $x$ and $y$ belong to $\mathbb S^2$, we have
\begin{equation}
\label{eq:Knxy}
|K_n(x,y)|^2 =n^2\Big(\frac{1+x\cdot y}{2}\Big)^{n-1}.
\end{equation}

\subsection{Determinantal formula and Kostlan observation}

Apart from the spherical Laplace--Beltrami linear operator $\Delta$, which is
unbounded, all the remaining operators in this note are linear and bounded.
We denote by $K_n$ the operator defined by
$K_n(h)(w)=\int K_n(w,u)h(u)\mathrm{d}\mu(u)$
and by $j_n: \mathcal P_n \to L^2_{\mathbb C}(\mu)$
the inclusion operator so that
$K_n = j_n^{} j_n^*$. 
A key determinant formula for the random vector
 $Z_n=(Z_{n,1},\ldots,Z_{n,n})$ distributed according to \eqref{eq:gas:C} 
 is usually obtained by writing, 
 for a bounded measurable function 
 $\psi: \mathbb C \to \mathbb C$, that
 \begin{align}
 \mathbb E \Big( \prod_{i=1}^n \psi(Z_{n,i})\Big)
 &= 
 \frac{1}{n!}
 \int_{\mathbb C^n}
 \Big|\det{[P_{n,i}(z_j)]}_{0\leq i,j\leq n-1} \Big|^2 
 \prod_{i=0}^{n-1} \psi(z_i)     
 \prod_{i=0}^{n-1} \mathrm d \mu(z_i)     
 \\
 &=\frac{1}{n!}
 \int_{\mathbb C^n}
 \overline{\det{[P_{n,i}(z_j)]}_{0\leq i,j\leq n-1}}
 \det{[P_{n,i}(z_j) \psi(z_j)]}_{0\leq i,j\leq n-1}
 \prod_{i=0}^{n-1} \mathrm d \mu(z_i)            \\
 &=\det\Big[\langle P_{n,j},\psi P_{n,k}\rangle_{L^2_{\mathbb{C}}(\mu)}\Big]_{0\leq j,k\leq n-1}                   
                \label{eq:DetExplicit}     \\
 &
 = \det \big[j_n^* \psi j_n^{} \big] \label{eq:Det},
 \end{align}
 where $\psi$ inside the determinant in \eqref{eq:Det} 
 denotes
 the multiplication operator by
 $\psi$ on $L^2_{\mathbb C}(\mu)$.
The determinant can be interpreted as a Fredholm determinant for a trace-class operator.
This is the classical Costin--Lebowitz--Soshnikov approach for the
Laplace/Fourier transform of linear statistics of determinantal processes. See
\cite[Section 4.3.8]{MR2552864} and \cite{PhysRevLett.75.69,MR1894104}.

An observation which goes back to \cite{MR1148410}, see also \cite{MR2552864},
states that for all radial measurable $g:\mathbb{C}\to\mathbb{R}$, in the
sense that $g=\widetilde g(\left|\cdot\right|)$ for some measurable
$\widetilde g:\mathbb{R}_+\to\mathbb{R}$, we have, for all $t\in\mathbb{R}$,
\begin{equation}\label{eq:kostlan1}
  \mathbb{E}(\mathrm{e}^{\mathrm{i}tL_n(g)})
  =
  \prod_{k=0}^{n-1}\int_{\mathbb{C}}\mathrm{e}^{\mathrm{i}tg}|P_{n,k}|^2\mathrm{d}\mu.
\end{equation}
This can be obtained
from \eqref{eq:DetExplicit}
since
$\langle P_{n,j},\mathrm{e}^{\mathrm{i}tg} P_{n,k}\rangle_{0 \leq j,k \leq n-1}$
is a diagonal matrix in this case.
In probabilistic words, taking $g$ radial gives the following
equality in distribution for random multisets:
\begin{equation}\label{eq:kostlan2}
  \{|Z_{n,k}|:1\leq k\leq n\}\overset{\mathrm{d}}{=}\{\xi_{n,k}:1\leq k\leq
  n\}
\end{equation}
where $\xi_{n,1},\ldots,\xi_{n,n}$ are independent random variables
with
$\xi_{n,k}^2\sim\mathrm{BetaPrime}(k,n+1-k)=\frac{n!}{(k-1)!(n-k)!}\frac{x^{k-1}}{(1+x)^{n+1}}\mathbb{1}_{x\geq0}\mathrm{d}x$. In other words, $\xi_{n,k}^2$ has the law of $\Gamma_k/\Gamma_{n+1-k}$ where $\Gamma_k$ and $\Gamma_{n+1-k}$ are independent, where $\Gamma_a\sim\mathrm{Gamma}(a,1)$.

\subsection{Correlation densities and covariance of linear statistics}

Let us denote by $\rho_n$ the density \eqref{eq:gas:C}. The Fredholm determinantal formula \eqref{eq:Det} implies that for all $1\leq k\leq n$, the marginal density
\begin{equation}
   (z_1,\ldots,z_k)\in\mathbb{C}^k
   \mapsto\rho_{n,k}(z_1,\ldots,z_k):=\int_{\mathbb{C}^{n-k}}\rho_n(z_1,\ldots,z_n)\mathrm{d}^2z_{k+1}\cdots\mathrm{d}^2z_n
 \end{equation}
 or $k$-points correlation density\footnote{It is customary to use also the $k$-points correlation
$\rho_n^{(k)}(z_1,\ldots,z_k):=\det\bigl[K_n(z_i,z_j)\bigr]_{1\leq i,j\leq
  k}\mu(z_1)\cdots\mu(z_k)=\frac{n!}{(n-k)!}\rho_{n,k}(z_1,\ldots,z_k)$.} can be expressed using the kernel \eqref{eq:Kn} as
\begin{equation}
  \rho_{n,k}(z_1,\ldots,z_k)
  \mathrm{d}^2z_1\cdots\mathrm{d}^2z_k
  =\frac{(n-k)!}{n!}\det\bigl[K_n(z_i,z_j)\bigr]_{1\leq i,j\leq k}\prod_{i=1}^k\mathrm{d}\mu(z_i).
\end{equation}
In particular $\rho_{n,n}=\rho_n$. Since $K_n(z,z)=n$, we get, denoting $\mu(z)$ the density of $\mu$ at $z$,
\begin{equation}\label{eq:rho}
\frac{\rho_{n,1}(z)}{\mu(z)}=\frac{K_n(z,z)}{n}=1
\quad\text{and}\quad
\frac{\rho_{n,2}(z,w)}{\mu(z)\mu(w)}=\frac{K_n(z,z)K_n(w,w)-|K_n(z,w)|^2}{n(n-1)}
=\frac{n^2-|K_n(z,w)|^2}{n(n-1)}.
\end{equation}
This provides formulas for the mean and variance of linear statistics, namely 
\begin{align}
  \mathbb{E}(L_n(f))
  &=\int_{\mathbb{C}}f(z)K_n(z,z)\mathrm{d}\mu(z)=n\int_{\mathbb{C}}f(z)\mathrm{d}\mu(z)\label{eq:ELn}\\
   \mathrm{Cov}(L_n(f),L_n(g))
  &=\iint_{\mathbb{C}^2} (nf(z)g(z)-f(z)g(w)|K_n(z,w)|^2)\mathrm{d}\mu(z)\mathrm{d}\mu(w)\label{eq:CovLnLn2}
\end{align}
for all $f\in L^1(\mu)$ and for all $f,g\in L^2(\mu)$ respectively.
Using \eqref{eq:Kn:repro} we also have
\begin{equation}
   \mathrm{Cov}(L_n(f),L_n(g))
  =\frac{1}{2}\iint_{\mathbb{C}^2} (f(z)-f(w))(g(z)-g(w))|K_n(z,w)|^2\mathrm{d}\mu(z)\mathrm{d}\mu(w).\label{eq:CovLnLn}\\
\end{equation}

\subsection{Law of large numbers and GFF CLT}

Following \cite{MR2552864}, see also \cite{MR2926763}, almost surely,
\begin{equation}\label{eq:macro}
  \frac{L_n(f)-n\int f\mathrm{d}\mu}{n}
  =\frac{1}{n}\int f\mathrm{d}L_n-\int f\mathrm{d}\mu\xrightarrow[n\to\infty]{}0
\end{equation}
for all continuous and bounded $f:\mathbb{C}\to\mathbb{R}$. In order to
separate the fluctuations of $L_n$ around its average $\mathbb{E}L_n$
and the bias of the mean with respect to the
macroscopic limit $\mu$, it is natural to rewrite \eqref{eq:macro} as
\begin{equation}\label{eq:macro2}
  \underbrace{\frac{L_n-\mathbb{E}L_n}{n}}_{\mathrm{fluct.}}+\underbrace{\frac{1}{n}\mathbb{E}L_n-\mu}_{\mathrm{bias}}
  \xrightarrow[n\to\infty]{}0.
\end{equation}
The spherical ensemble has zero bias : the logarithmic potential of $\mathbb{E}\eta_n$ is $\frac{n}{2}\log(1+|z|^2)$, hence 
\begin{equation}\label{eq:bias}
    \mathbb{E}L_n=n\mu\quad\text{for all $n\geq1$}.
\end{equation}
This is also visible in \eqref{eq:rho} since the density of $\mathbb{E}\eta_n$ with respect to $\mu$ is $z\mapsto K_n(z,z)=n$.
Regarding the fluctuation, and following \cite{MR2346510}, the right functional space is the Sobolev space
\begin{equation}
 H^1(\overline{\mathbb{C}})
 :=\{f:\overline{\mathbb{C}}\to\mathbb{R}:f\circ T\in H^1(\mathbb{S}^2)\}
 =\Bigl\{f\in L^2(\mu):\int_{\mathbb{C}}|\nabla f(z)|^2\mathrm{d}^2z<\infty\Bigr\}.
\end{equation}
This should not be confused with $H^1(\mu)$. The last formulation comes from the fact that $\mu=\nu\circ T^{-1}$ and
\begin{equation}
\int_{\mathbb{S}^2}|\nabla_{\mathbb{S}^2}(f\circ T)|_{\mathbb{S}^2}^2\mathrm{d}\nu=\int_{\mathbb{C}}\frac{(1+|z|^2)^2}{4}|\nabla f(z)|^2\mathrm{d}\mu(z)=\frac{1}{4\pi}\int_{\mathbb{C}}|\nabla f(z)|^2\mathrm{d}^2z.
\end{equation}
Now still following \cite{MR2346510}, for all $f,g\in H^1(\overline{\mathbb{C}})$,
\begin{equation}\label{eq:GFFCLT:cov}
  \mathrm{Cov}(L_n(f),L_n(g))
  \xrightarrow[n\to\infty]{}
  \tfrac{1}{4\pi}\langle
  f,g\rangle,
\end{equation}
where
\begin{equation}
  \langle f,g\rangle
  =\int_{\mathbb{C}} \nabla f(z)\cdot\nabla g(z) \mathrm{d}^2z
  =\langle -\Delta g,f\rangle_{H^{-1},H^1}
  =\langle -\Delta f,g\rangle_{H^{-1},H^1}.
\end{equation}
This defines an inner product on $H^1(\overline{\mathbb{C}})/\mathbb{R}$.
Moreover, for all $f\in H^1(\overline{\mathbb{C}})\subset L^2(\mu)\subset L^1(\mu)$, we have the CLT
\begin{equation}\label{eq:GFFCLT}
 L_n(f)-\mathbb{E}L_n(f)
 =  \int f\mathrm{d}(L_n-\mathbb{E}L_n)
 \xrightarrow[n\to\infty]{\mathrm{d}}\mathcal{N}\bigl(0,\tfrac{1}{4\pi}\langle f,f\rangle\bigr).
\end{equation}
In other words, the fluctuations in the law of large numbers \eqref{eq:macro2}
are of order $1$, instead of the usual $\sqrt{n}$ for iid random particles. This is an important rigidity effect of the singular repulsion in the Coulomb gas \eqref{eq:gas:C}. 

The random centered measure $L_n-\mathbb{E}L_n$ converges as
$n\to\infty$, weakly with respect to $H^1(\overline{\mathbb{C}})/\mathbb{R}$ test functions, to a Gaussian
field with covariance operator $-\frac{1}{4\pi}\Delta$. It is the image by $\frac{-\Delta}{\sqrt{4\pi}}$ of the zero mean Gaussian Free Field (GFF), which has covariance operator
$(-\Delta)^{-1}$, with singular kernel $-\frac{1}{2\pi}\log d(\cdot,\cdot)$. The two-dimensional GFF is log-correlated. With the large deviations principle or high-dimensional analysis of
Boltzmann--Gibbs measure with mean-field interactions in mind, both
\eqref{eq:macro} and \eqref{eq:GFFCLT} can be guessed from the formal
rewriting of the Coulomb gas \eqref{eq:gas:C} as
\begin{equation}
  \exp\Bigl(-(n+1)^2\Bigl[\int Q\frac{\mathrm{d}\ell_n}{n+1}
  +\frac{1}{2}\iint_{\neq}\Bigl(-\frac{\Delta}{4\pi}\Bigr)^{-1}\frac{\mathrm{d}\ell_n^{\otimes 2}}{(n+1)^2}\Bigr]\Bigr)
  \quad\text{where}\quad
  \ell_n=\sum_{k=1}^n\delta_{z_k}.
\end{equation}

\section{Logarithmic singularities : proof of Theorem \ref{th:LogSing}}
\label{se:proof:th:LogSing}

This proof relies on the CLT \eqref{eq:logpot0:CLT} for $L_n(f_0)=\log|\det(AB^{-1})|$, the Lipschitz-variance Lemma \ref{lem:lip-variance}, and the decoupling Lemmas \ref{lem:Covariance} and \ref{lem:Independence}. It does not rely on the GFF CLT \eqref{eq:GFFCLT} or on the Kostlan observation \eqref{eq:kostlan2}.

If $f$ and $g$ have finite singularities at the same points and with the same weights, with sum of squares $\sigma^2$, then $f-g$ smoothly extend to $\overline{\mathbb C}$ so that, by  Lemma \ref{lem:lip-variance}\footnote{Since $f-g\in H^1$, we could use alternatively the GFF CLT \eqref{eq:GFFCLT} but this is not needed.}, $(L_n(f-g)-\mathbb{E}(L_n(f-g)))/\sqrt{\log(n)}\to0$ in $L^2$ as $n\to\infty$. Therefore
\begin{equation}\label{eq:LogSingEquiv}
    \frac{L_n(f)-\mathbb{E}(L_n(f))}{\sqrt{\frac{1}{4}\log(n)}}
    \xrightarrow[n\to\infty]{\mathrm{d}}
    \mathcal{N}(0,\sigma^2)
    \quad\text{if and only if}\quad
    \frac{L_n(g)-\mathbb{E}(L_n(g))}{\sqrt{\frac{1}{4}\log(n)}}
    \xrightarrow[n\to\infty]{\mathrm{d}}
    \mathcal{N}(0,\sigma^2).
\end{equation}

\subsection{Case of a single singularity at the origin}
\label{sub:SingleSingularity}

Suppose that $f:\overline{\mathbb{C}}\setminus\{0\}\to\mathbb{R}$ has a single singularity at $0$, with weight $c$. By scaling, we can assume that $c=1$. According to \eqref{eq:LogSingEquiv}, it suffices to consider an example of such a function.

A proof based on the Kostlan observation is given in Section \ref{ss:meanvarcum}. We present now an alternative proof, which is more original. The idea is to deduce the result from the two-singularities case $f_0=\log\left|\cdot\right|$, by splitting via localization. Choose a smooth function $\chi: [0,\infty) \to [0,1]$ satisfying $\chi(x) = 0$ for $x \geq 1/2$ and $\chi(x) = 1$ for $x \leq 1/4$.
Define the functions $\ell_0(x) = \log|x| \chi(|x|)$, which has a single logarithmic singularity, at $0$, and $\ell_\infty(x) = -\log|x| \chi(1/|x|)$ which has a single logarithmic singularity, at $\infty$. These functions have disjoint supports. We have 
\begin{equation}\label{eq:f000infty}
  L_n(\ell_0)
  \overset{\mathrm{d}}{=} 
  L_n(\ell_\infty).
\end{equation}
This comes from the invariance of \eqref{eq:gas:C} by the map $z\mapsto 1/z$.
Now 
\begin{equation}\label{eq:f0split}
    \ell_0 - \ell_\infty =
    \log|z| + h = f_0 + h
    \quad\text{for a smooth $h:\overline{\mathbb C} \to \mathbb R$}.
\end{equation}
From \eqref{eq:f000infty}, we get that $L_n(\ell_0)-L_n(\ell_\infty)$ is centered.
Since $L_n(f_0)$ is also centered, it follows that $L_n(h)$ is centered.
Since $h$ is Lipschitz, Lemma \ref{lem:lip-variance} implies that $L_n(h)/\sqrt{\log(n)}\to0$ in $L^2$ as $n\to\infty$, and therefore, by \eqref{eq:logpot0:CLT}, we get
\begin{equation}
\frac{L_n(\ell_0)}{\sqrt{\log(n)}}
-\frac{L_n(\ell_\infty)}{\sqrt{\log(n)}}
\xrightarrow[n\to\infty]{\mathrm{d}}
\mathcal{N}(0,\tfrac{1}{2})
\quad\text{and}\quad
\mathrm{Var}\left(\frac{L_n(\ell_0)}{\sqrt{\log(n)}}-\frac{L_n(\ell_\infty)}{\sqrt{\log(n)}}\right)
\xrightarrow[n\to\infty]{}\frac{1}{2}.
\end{equation}
By \eqref{eq:f000infty}, we can write
\begin{equation}
2\mathrm{Var}(L_n(\ell_0))
=\mathrm{Var}(L_n(\ell_0 - \ell_\infty))
+ 2 \mathrm{Cov}(L_n(\ell_0),L_n(\ell_\infty))
\end{equation}
and we can use Lemma \ref{lem:Covariance} 
to get that
$\mathrm{Cov}(L_n(\ell_0), L_n(\ell_\infty))$
converges to zero so that
the variance of
$L_n(\ell_0)/\sqrt{\log(n)}$ converges
to $1/4$. In particular,
the sequences of random vectors
$(X_n,Y_n)_{n \geq 2}$ defined by
\begin{equation} 
X_n = \frac{L_n(\ell_0)-
\mathbb E(L_n(\ell_0))}{\sqrt{\log(n)}}
\quad\text{and}\quad
Y_n=\frac{L_n(\ell_\infty)-
\mathbb E(L_n(\ell_\infty))}{\sqrt{\log(n)}}
\end{equation}
is tight. Take any subsequence such that
$(X_n,Y_n)_{n \geq 2}$ converges in law
to some random vector $(X,Y)$.
Let us define
\begin{equation}
\gamma(\alpha) = \mathbb E\big(\mathrm{e}^{\mathrm{i}\alpha X} \big)
= \lim_{n \to \infty} 
\mathbb E \big(\mathrm{e}^{\mathrm{i} \alpha X_n }\big)
\end{equation}
which is non-zero for $\alpha$ small enough.
So by Lemma \ref{lem:Independence} for
\begin{equation}
\Phi_n = \exp\Bigl(\mathrm{i}\alpha\frac{\ell_0}{\sqrt{\log(n)}}\Bigr)
\quad\text{and}\quad
\Psi_n = \exp\Bigl(\mathrm{i}\beta\frac{\ell_\infty}{\sqrt{\log(n)}}\Bigr)
\end{equation}
and by using
$\mathbb E(\Phi_n)
\mathrm{e}^{-\mathrm{i}\alpha n (\log n)^{-1/2}\int \ell_0 \mathrm{d} \mu}
= \mathbb E(\mathrm{e}^{\mathrm{i}\alpha X_n})$
and $\mathbb E(\Psi_n)
\mathrm{e}^{-\mathrm{i}\beta n (\log n)^{-1/2}\int \ell_\infty \mathrm{d} \mu}
= \mathbb E(\mathrm{e}^{\mathrm{i}\beta Y_n})$
we get that 
$\mathbb E(\mathrm{e}^{\mathrm{i}\alpha X_n + \mathrm{i}\beta Y_n})$
is asymptotically equivalent to
$\mathbb E(\mathrm{e}^{\mathrm{i}\alpha X_n}) \mathbb E(\mathrm{e}^{\mathrm{i}\beta Y_n})$
as long as 
$|\mathbb E(\mathrm{e}^{\mathrm{i}\alpha X_n}) \mathbb E(\mathrm{e}^{\mathrm{i}\beta Y_n})|$
is bounded from below.
The latter occurs at least for $\alpha$ and $\beta$ small since 
its limit is $|\gamma(\alpha) \gamma(\beta)|$.
This implies that
$\mathbb E(\mathrm{e}^{\mathrm{i}(\alpha X + \beta Y)})
=\gamma(\alpha) \gamma(\beta)$ for every $\alpha, \beta$ 
small enough and, in particular,
$\mathbb E(\mathrm{e}^{\mathrm{i}(\alpha (X - Y))})
=|\gamma(\alpha)|^2$ for $\alpha$ small.
Since 
$X-Y \sim \mathcal N(0,1/2)$,
we also know that 
$\mathbb E(\mathrm{e}^{\mathrm{i}\alpha(X-Y)})
= \mathrm{e}^{-\alpha^2/4}$
which implies that
$|\gamma(\alpha)|^2 = e^{-\alpha^2/4}$ at
least for $\alpha$ small enough.
Consider $\widetilde X$ an independent copy
of $X$ so that
$\mathbb E(e^{i(\alpha(X-\widetilde X))}) = 
|\gamma(\alpha)|^2$
for every $\alpha \in \mathbb R$.
The moments of  $X - \widetilde X$
can be obtained by differentiating
$|\gamma(\alpha)|^2$ at the origin so
that they coincide with the ones of
$X-Y$ implying that both $X-\widetilde X$ and $X - Y$ have the same law and
\[
|\gamma(\alpha)|^2 = \mathrm{e}^{-\alpha^2 /4} \quad \mbox{ for every } \alpha 
\in \mathbb R.
\]
But this entails that $\gamma(\alpha)$ is non-zero for every $\alpha$ so that
$X$ and $Y$ are independent
by Lemma \ref{lem:Independence}. Because of this independence and
since 
$X - Y$ is Gaussian, we may use Cramér's decomposition theorem
to conclude that $X$ and
$Y$ are Gaussian and by using that $X$ and $Y$ have the same law
we get
\begin{equation}
    X \sim \mathcal N \bigg(0,\frac{1}{4}\bigg)
\end{equation}
which is what we wanted : the CLT \eqref{eq:LogSing} for $k=1$ and $p_1=0$.

\subsection{Case of a single singularity at a point} 

Suppose that $f$ has a single singularity located at $p \in \overline{\mathbb C}$ having weight $c$. By linearity we can assume that $c=1$. By the $\mathrm{SO}(3)$ invariance of \eqref{eq:gas:S} and $\nu$, we can assume that $p=0$, so $h=f-\ell_0$ is smooth on $\overline{\mathbb C}$.
Lemma \ref{lem:lip-variance} gives $(L_n(h)-n\int h\mathrm{d}\mu)/\sqrt{\log n}\to0$ in $L^2$ as $n\to\infty$, so the CLT for $\ell_0$ gives
\begin{equation}
    \frac{L_n(f)-n\int f\mathrm{d}\mu}{\sqrt{\log(n)}}
    \xrightarrow[n\to\infty]{\mathrm{d}}
    \mathcal{N}(0,\tfrac{1}{4}).
\end{equation}

\subsection{General case} 

Consider $f$ with logarithmic singularities at $p_1,\ldots,p_k$ with weights $c_1,\ldots,c_k$. Take $k$ smooth functions $\chi_1,\ldots,\chi_k$ on $\overline{\mathbb{C}}$ with pairwise disjoint supports and such that $\chi_j\equiv 1$ in a neighborhood of $p_j$. Then,
\begin{equation}
f = c_1 f_1 + \dots + c_k f_k + h \quad \text{with}\quad 
f_j =c_j^{-1} f \chi_j,
\end{equation}
and $h:\overline{\mathbb{C}}\to\mathbb{R}$ is smooth. With this decomposition, $f_j$ has a single singularity at point $p_j$, with weight $1$. Now, we know that $(L_n(h)-n \int h \mathrm d \mu)/\sqrt{\log(n)}\to0$ in $L^2$ as $n\to\infty$ by Lemma \ref{lem:lip-variance}, and that
\begin{equation}
\frac{L_n(f_j)-n\int f_j \mathrm{d}\mu}{\sqrt{\log(n)}}
\xrightarrow[n\to\infty]{\mathrm{d}}\mathcal{N}(0,\tfrac{1}{4}).
\end{equation}
We may apply iteratively Lemma \ref{lem:Independence} to %
\begin{equation}
\label{eq:psiphiInduction}
\psi_n = \mathrm{e}^{\mathrm{i}(\alpha_1 f_1+\dots+\alpha_{j-1}f_{j-1})/\sqrt{\log(n)}}
\quad \mbox{ and } 
\quad \varphi_n =\mathrm{e}^{\mathrm{i}\alpha_{j} f_{j}/\sqrt{\log(n)}}
\end{equation}
to get that the random variables
$(L_n(f_i)-n\int f_i\mathrm{d}\mu)/\sqrt{\log(n)}$ jointly converge to independent centered Gaussians of variance $1/4$. 
Indeed, 
if we already know that
$(L_n(f_i)-n\int f_i\mathrm{d}\mu)/\sqrt{\log(n)}$ jointly converge to 
independent centered Gaussians of variance $1/4$
for $i \leq j-1$ we can take
$\psi_n$ and $\varphi_n$ from
\eqref{eq:psiphiInduction}
which satisfy the hypothesis
of Lemma 
\ref{lem:Independence}
since $|\mathbb E[\Psi_n]|$ and $|\mathbb E[\Phi_n]|$
have non-zero limits. 
We get
$\mathbb E[\Psi_n \Phi_n] \sim_{n \to \infty}
\mathbb E[\Psi_n] \mathbb E[\Phi_n]$
and, because this holds for every $\alpha_1,\dots,\alpha_j
\in \mathbb R$,
we get the desired 
asymptotic independence, namely, we get that
$(L_n(f_i)-n\int f_i\mathrm{d}\mu)/\sqrt{\log(n)}$ jointly converge to 
independent centered Gaussians of variance $1/4$
for $i \leq j$.

\subsection{Lemmas for localization and decoupling}
\label{se:lemmas}

\begin{lemma}[Uniform variance bound for Lipschitz functions]\label{lem:lip-variance}
  For all $g:\mathbb{C}\to\mathbb{R}$ and $n\geq1$,
  \begin{equation}
    \mathrm{Var}(L_n(g))\leq \frac{1}{2}{\|g\|}_{\mathrm{Lip.}}^2
    \quad\text{where}\quad
    {\|g\|}_{\mathrm{Lip.}}=\sup_{w\neq u}\frac{|g(w)-g(u)|}{d(w,u)}.
  \end{equation}
  In particular, if ${\|g\|}_{\mathrm{Lip.}}<\infty$ then $L_n(g)-\mathbb{E}(L_n(g))=O_{L^2,n\to\infty}(1)$.
\end{lemma}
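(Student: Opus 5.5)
We start from the symmetrized covariance formula \eqref{eq:CovLnLn} with $f=g$:
\begin{equation}
  \mathrm{Var}(L_n(g))
  =\frac{1}{2}\iint_{\mathbb{C}^2}(g(z)-g(w))^2|K_n(z,w)|^2\mathrm{d}\mu(z)\mathrm{d}\mu(w)
  \leq\frac{1}{2}{\|g\|}_{\mathrm{Lip.}}^2\iint_{\mathbb{C}^2}d(z,w)^2|K_n(z,w)|^2\mathrm{d}\mu(z)\mathrm{d}\mu(w).
\end{equation}
Here the Lipschitz bound is applied pointwise, using that the chordal distance $d$ is the metric in the definition of ${\|g\|}_{\mathrm{Lip.}}$. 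It then suffices to show that the remaining double integral is at most $1$, uniformly in $n$. If ${\|g\|}_{\mathrm{Lip.}}=\infty$ the bound is trivial. If ${\|g\|}_{\mathrm{Lip.}}<\infty$, then $g$ is bounded, since $d\leq1$, and so $g\in L^2(\mu)$ and \eqref{eq:CovLnLn} applies.

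To evaluate the integral, we pass to the sphere via $\mu=\nu\circ T^{-1}$. By \eqref{eq:d2xy} and \eqref{eq:Knxy}, the integrand becomes
\[
  \frac{1-x\cdot y}{2}\,n^2\Bigl(\frac{1+x\cdot y}{2}\Bigr)^{n-1}.
\]
It depends only on $t=x\cdot y$. For fixed $x$, the Archimedes fact \eqref{eq:nunif} says that $t$ is uniform on $[-1,1]$. With the substitution $s=(1+t)/2$, which is uniform on $[0,1]$, the inner integral becomes
\[
  n^2\int_0^1(1-s)s^{n-1}\mathrm{d}s
  =n^2\Bigl(\frac{1}{n}-\frac{1}{n+1}\Bigr)
  =\frac{n}{n+1}\leq1 .
\]
Integrating over $x$ against $\nu$ gives the same value, so
\[
  \mathrm{Var}(L_n(g))\leq\frac{n}{2(n+1)}{\|g\|}_{\mathrm{Lip.}}^2\leq\frac{1}{2}{\|g\|}_{\mathrm{Lip.}}^2 .
\]
The statement that $L_n(g)-\mathbb{E}(L_n(g))=O_{L^2,n\to\infty}(1)$ is an immediate restatement of this bound.

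The only point needing care is the use of \eqref{eq:CovLnLn} itself. It relies on the reproducing identity \eqref{eq:Kn:repro}, through $\int|K_n(z,w)|^2\mathrm{d}\mu(w)=K_n(z,z)=n$, and on integrability. Both are fine here, since $g$ is bounded and $\mu$ is a probability measure. Everything else is a one-line Beta integral, so I expect no real obstacle.
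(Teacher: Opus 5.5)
Your proof is correct and follows essentially the same route as the paper: the symmetrized covariance formula \eqref{eq:CovLnLn}, the pointwise Lipschitz bound, and the Archimedes reduction to the Beta integral $n^2\int_0^1 x(1-x)^{n-1}\mathrm{d}x=\frac{n}{n+1}$. Your variable $s=1-d^2$ is just the mirror image of the paper's $x=d^2$, and your remark that a finite Lipschitz norm forces $g$ to be bounded (so that \eqref{eq:CovLnLn} applies) is a small justification the paper leaves implicit.
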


\begin{proof}
  By the formula \eqref{eq:CovLnLn} for $\mathrm{Var}(L_n(g))=\mathrm{Cov}(L_n(g),L_n(g))$ and
  the Lipschitz bound, it remains to control 
  \begin{equation}
    \int\Bigl(\int d(w,u)^2|K_n(w,u)|^2\mathrm{d}\mu(w)\Bigr)\mathrm{d}\mu(u).
  \end{equation}
  By $\mathrm{SO}(3)$ invariance of $\mu$ and $K_n$, the integral above equals the
  integral with $u$ fixed. 
  Now, by combining \eqref{eq:nunif} and \eqref{eq:d2xy}, it follows that the image
  of $\mu$ by the map $w\mapsto d(w,u)^2$ is uniform on $[0,1]$.
Combining with the expression \eqref{eq:kernel-distance} of the kernel $K_n$ in terms of the chordal distance $d$, we get
\begin{equation}
    \int d(w,u)^2|K_n(w,u)|^2\mathrm{d}\mu(w)
    =n^2\int d(w,u)^2(1-d(w,u)^2)^{n-1}\mathrm{d}\mu(w)
    =n^2\int_0^1x(1-x)^{n-1}\mathrm{d}x
    =\frac{n}{n+1}\leq 1.
  \end{equation}
\end{proof}

\begin{lemma}[$L^2$ Decoupling] \label{lem:Covariance}
 If $f,g \in L^2_{\mathbb C}(\mu)$ have disjoint supports and at least one of them is compact in $\mathbb C$ or, equivalently,
 their supports in $\overline{\mathbb C}$ 
 are disjoint, then 
 \begin{equation}
  \mathrm{Cov}(L_n(f),L_n(g)) \xrightarrow[n\to\infty]{} 0.
  \end{equation}
\end{lemma}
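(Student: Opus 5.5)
The plan is to start from the covariance formula \eqref{eq:CovLnLn2}. Since $f$ and $g$ have disjoint supports, $f(z)g(z)=0$ for every $z$, so the diagonal term $n\int fg\,\mathrm{d}\mu$ vanishes identically. We are left with
\[
\mathrm{Cov}(L_n(f),L_n(g))=-\iint f(z)g(w)|K_n(z,w)|^2\mathrm{d}\mu(z)\mathrm{d}\mu(w),
\]
where the integrand is supported on $\mathrm{supp}(f)\times\mathrm{supp}(g)$.

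The hypothesis says that the supports are disjoint closed subsets of the compact space $\overline{\mathbb C}$. (The equivalence with the planar formulation follows because, if one support is compact in $\mathbb C$, its closure in $\overline{\mathbb C}$ avoids $\infty$.) Hence they are at positive chordal distance
\[
\delta=\inf\{d(z,w):z\in\mathrm{supp} f,\ w\in\mathrm{supp} g\}>0.
\]
On this product set, the kernel bound \eqref{eq:kernel-distance} gives
\[
|K_n(z,w)|^2\le n^2\mathrm{e}^{-(n-1)\delta^2}.
\]

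We then bound by Cauchy--Schwarz, using that $\mu$ is a probability measure:
\[
|\mathrm{Cov}(L_n(f),L_n(g))|\le n^2\mathrm{e}^{-(n-1)\delta^2}\,\|f\|_{L^1(\mu)}\|g\|_{L^1(\mu)}\le n^2\mathrm{e}^{-(n-1)\delta^2}\,\|f\|_{L^2(\mu)}\|g\|_{L^2(\mu)},
\]
which tends to $0$ exponentially fast. No cancellation or reproducing identity is needed.

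The only point that needs care is the topological claim $\delta>0$, which is just compactness of $\overline{\mathbb C}\simeq\mathbb S^2$ together with continuity of $d$. We take supports to be closed in $\overline{\mathbb C}$, and disjointness is meant for these closures. If only essential supports are intended, the same argument applies after discarding $\mu$-null sets.
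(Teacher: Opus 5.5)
Your proposal is correct and follows essentially the same route as the paper: drop the diagonal term using disjointness of supports, bound $|K_n|^2$ on $\mathrm{supp} f\times\mathrm{supp} g$ by $n^2(1-\rho^2)^{n-1}$ via \eqref{eq:kernel-distance} with $\rho>0$ from compactness of $\overline{\mathbb C}$, and conclude with $L^1(\mu)$ norms, which you then control by $L^2(\mu)$ norms. The only cosmetic difference is that the paper starts from the symmetrized formula \eqref{eq:CovLnLn}, whereas you start from \eqref{eq:CovLnLn2}; both yield the same reduced integral.
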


\begin{proof}
	Denote by $\mathcal{A}$ the support of $f$ and by 
	$\mathcal{B}$ the support of $g$. Since $\mathcal{A}\cap\mathcal{B}=\varnothing$, we get,
    from \eqref{eq:CovLnLn},
	\begin{align*}
	\mathrm{Cov}(L_n(f),L_n(g))
	&= -
	\int_{\mathcal{A}\times\mathcal{B}}
	f(x) g(y) |K_n(x,y)|^2 \mathrm d \mu(x) \mathrm d \mu(y).							
	\end{align*}
	The assumptions on $\mathcal{A}$ and $\mathcal{B}$ give $\rho=d(\mathcal{A},\mathcal{B})=\inf_{(u,v)\in\mathcal{A}\times\mathcal{B}}d(u,v)\in(0,1)$, and thus, by \eqref{eq:kernel-distance},  
	\begin{equation}
	|\mathrm{Cov}(L_n(f),L_n(g))|
	\leq n^2(1-\rho^2)^{n-1}\|f\|_1\|g\|_1 
    \xrightarrow[n \to \infty]{}0. 
    \end{equation}
\end{proof}

\begin{lemma}[Decoupling] \label{lem:Independence}
	Let $\psi_n, \varphi_n : \overline{\mathbb C} \to \mathbb C$ be
	bounded measurable functions such that $|\psi_n|,|\varphi_n|\leq1$. Suppose there are
	two disjoint closed sets
	$\mathcal{A}, \mathcal{B}\subset \overline{\mathbb C}$ such that
	\begin{equation}
	    \psi_n(x) = 1 \mbox{ for } x \notin \mathcal{A} 
        \quad\text{and}\quad 
    	\varphi_n(x) = 1 \mbox{ for } x \notin  \mathcal{B}.
    \end{equation}
	Define
	$\Psi_n = \prod_{i=1}^n \psi_n(Z_{n,i})$ and
	$\Phi_n = \prod_{i=1}^n \varphi_n(Z_{n,i})$ where $Z_n=(Z_{n,1},\ldots,Z_{n,n})$ is distributed according to \eqref{eq:gas:C}.
	Then, if along a subsequence
    $|\mathbb E[\Psi_{n}]
	\mathbb E[\Phi_{n}]| \geq \varepsilon$
    for some $\varepsilon > 0$, along the same
    subsequence
	\begin{equation}
		\label{eq:indep}
		 \mathbb E[\Psi_{n} \Phi_{n}] 
         \sim_{n \to \infty} \mathbb E[\Psi_n]
	       \mathbb E[\Phi_n].
	\end{equation}
\end{lemma}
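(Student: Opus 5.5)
We will prove Lemma \ref{lem:Independence} from the Fredholm determinant formula \eqref{eq:Det}. The plan is to show that the determinant of the ``joint'' operator nearly factorizes, up to an error that is exponentially small in $n$.

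\textbf{Step 1: rewrite the three expectations as determinants.} Write $a_n=\psi_n-1$ and $b_n=\varphi_n-1$. These are supported in $\mathcal{A}$ and $\mathcal{B}$ respectively and bounded by $2$. Since the supports are disjoint, $\psi_n\varphi_n-1=a_n+b_n$. By \eqref{eq:Det}, with $K_n=j_nj_n^*$ and $\det(I+j_n^*aj_n)=\det(I+aK_n)$ as Fredholm determinants on $L^2(\mu)$, we get
\[
\mathbb{E}[\Psi_n]=\det(I+a_nK_n),\qquad
\mathbb{E}[\Phi_n]=\det(I+b_nK_n),\qquad
\mathbb{E}[\Psi_n\Phi_n]=\det(I+(a_n+b_n)K_n).
\]
Compress to $L^2(\mathcal{A}\cup\mathcal{B})$. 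With respect to $L^2(\mathcal{A})\oplus L^2(\mathcal{B})$, the operator $(a_n+b_n)K_n$ has diagonal blocks $A_n=a_nK_n\mathbb{1}_{\mathcal{A}}$ and $B_n=b_nK_n\mathbb{1}_{\mathcal{B}}$. Its off-diagonal blocks are $a_nK_n\mathbb{1}_{\mathcal{B}}$ and $b_nK_n\mathbb{1}_{\mathcal{A}}$. Since $\rho=d(\mathcal{A},\mathcal{B})>0$ by compactness of $\overline{\mathbb{C}}$, \eqref{eq:kernel-distance} bounds the Hilbert--Schmidt norm of each off-diagonal block by $2n(1-\rho^2)^{(n-1)/2}=:\epsilon_n$, which is exponentially small.

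\textbf{Step 2: factor the determinant.} Write $I+(a_n+b_n)K_n=(I+D_n)(I+E_n)$, where $D_n=\mathrm{diag}(A_n,B_n)$ and $E_n=(I+D_n)^{-1}O_n$, with $O_n$ the off-diagonal part. This factorization requires $I+D_n$ to be invertible. Then $\det(I+D_n)=\mathbb{E}[\Psi_n]\mathbb{E}[\Phi_n]$, and the claim reduces to showing $\det(I+E_n)\to1$.

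\textbf{Step 3: control the error term.} This is the main obstacle, because $\|(I+D_n)^{-1}\|$ must be controlled even though the operators have rank up to $n$. Working in the $n$-dimensional space $\mathcal{P}_n$, the determinants become $\det(I+j_n^*a_nj_n)$, where $I+j_n^*a_nj_n=j_n^*\psi_nj_n$ has norm at most $1$. The same holds for $b_n$ and for the joint operator.

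The cleanest route avoids the inverse. The operators $M=j_n^*\psi_n\varphi_nj_n$ and $M'=I+j_n^*a_nj_n+j_n^*b_nj_n-\text{(correction)}$ should be compared directly via
\[
|\det X-\det Y|\le n\|X-Y\|\max(\|X\|,\|Y\|)^{n-1}\le n\|X-Y\|.
\]
The factorized object $\det(j_n^*\psi_nj_n)\det(j_n^*\varphi_nj_n)=\det(j_n^*\psi_nj_nj_n^*\varphi_nj_n)$ is the determinant of a product of two contractions. Its difference from $j_n^*\psi_n\varphi_nj_n$ equals
\[
j_n^*\psi_n(I-K_n)\varphi_nj_n=j_n^*a_n(I-K_n)b_nj_n,
\]
where we used $(I-K_n)j_n=0$. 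Because $a_nb_n=0$, this equals $-j_n^*a_nK_nb_nj_n$. Its norm is at most $4\|\mathbb{1}_{\mathcal{A}}K_n\mathbb{1}_{\mathcal{B}}\|_{HS}\le 4\epsilon_n$.

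Hence
\[
\bigl|\mathbb{E}[\Psi_n\Phi_n]-\mathbb{E}[\Psi_n]\mathbb{E}[\Phi_n]\bigr|\le 4n\epsilon_n=8n^2(1-\rho^2)^{(n-1)/2}\to0.
\]
Dividing by $|\mathbb{E}[\Psi_n]\mathbb{E}[\Phi_n]|\ge\varepsilon$ along the subsequence gives \eqref{eq:indep}. This bypasses Step 2 entirely, and we would adopt it as the actual proof.
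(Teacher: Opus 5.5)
Your final argument, the last paragraph of Step 3, is correct, and it takes a genuinely different route from the paper. Steps 1--2 and the loosely defined $M'$ are scaffolding you can drop.

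Both proofs rest on the same algebraic identity. Since $j_n^*(I-K_n)=0=(I-K_n)j_n$ and $(\psi_n-1)(\varphi_n-1)=0$, the difference between $j_n^*\psi_n\varphi_nj_n$ and $(j_n^*\psi_nj_n)(j_n^*\varphi_nj_n)$ is $-j_n^*(\psi_n-1)K_n(\varphi_n-1)j_n$. You need both vanishing relations here, the first following from the second by self-adjointness of $K_n$. This difference only sees $K_n$ on $\mathcal A\times\mathcal B$ and is therefore exponentially small.

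The paper then works multiplicatively:
\begin{itemize}
\item it writes the ratio $\mathbb E[\Psi_n\Phi_n]/(\mathbb E[\Psi_n]\mathbb E[\Phi_n])$ as a determinant involving $(j_n^*\psi_nj_n)^{-1}(j_n^*\varphi_nj_n)^{-1}$;
\item it bounds the trace norm of the difference by Schatten--H\"older, picking up $\|j_n^*\|_4\|j_n\|_4=\sqrt n$;
\item it applies $|\det(1+A)-1|\le\|A\|_1\mathrm e^{\|A\|_1}$;
\item it needs the hypothesis $|\mathbb E[\Psi_n]\mathbb E[\Phi_n]|\ge\varepsilon$ already inside the argument, to bound the two inverses through ``least singular value $\ge|\det|$'' for contractions.
\end{itemize}
You instead work additively, via the Lipschitz bound $|\det X-\det Y|\le n\|X-Y\|$ for contractions on the $n$-dimensional space $\mathcal P_n$ (Hadamard's inequality plus telescoping over columns). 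No inverse ever appears, and the hypothesis enters only in the final division.

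What your route buys is the unconditional estimate
\[
\bigl|\mathbb E[\Psi_n\Phi_n]-\mathbb E[\Psi_n]\mathbb E[\Phi_n]\bigr|\le 4n^2(1-\rho^2)^{\frac{n-1}{2}},
\]
which is strictly stronger than the lemma as stated. In Subsection~\ref{sub:SingleSingularity} it gives $\mathbb E[\mathrm e^{\mathrm i(\alpha X+\beta Y)}]=\mathbb E[\mathrm e^{\mathrm i\alpha X}]\,\mathbb E[\mathrm e^{\mathrm i\beta Y}]$ for all $\alpha,\beta$ at once. That makes unnecessary the detour through small $\alpha$, the independent copy $\widetilde X$, and the moment identification of $|\gamma(\alpha)|^2$. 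In the general case it likewise removes the need to check that the limiting characteristic functions do not vanish. The paper's multiplicative form controls the relative error directly, but it is only available once a lower bound on the determinants is known.
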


\begin{proof}
	
	Recall that $\mathcal P_n$ is the set
	of polynomials of degree less or equal than $n-1$ weighted
	with $(1+|z|^2)^{-(n-1)/2}$, as in \eqref{eq:Pn}.
	Recall also that
	$j_n^{}: \mathcal P_n \to L^2_{\mathbb C}(\mu)$ 
    denotes the (isometric) inclusion
    and that $j_n^*: L^2_{\mathbb C}(\mu) \to \mathcal P_n$ 
    is the orthogonal projection from $L^2_{\mathbb C}(\mu)$ onto the space of polynomials $\mathcal{P}_n$. The 
    determinant identity \eqref{eq:Det} gives
	\[\mathbb E[\Psi_n] = \det(j_n^* \psi_n j_n^{}), \quad 
	\mathbb E[\Phi_n] = \det(j_n^* \varphi_n j_n^{}),
	\quad \mbox{ and } 
	\quad 
	\mathbb E[\Psi_n \Phi_n] = \det(j_n^* \psi_n \varphi_n j_n^{}),\]
	where a function $g:\overline{\mathbb C} \to \mathbb R$ inside the determinant represent
	the multiplication operator by $g$.
	We omit the indices $n$ to simplify notation.
	Our goal is to study the expression
	\[ \frac{\mathbb E[\Psi \Phi]}{\mathbb E[\Psi] \mathbb E[\Phi]}
	=  \frac{\det(j^* \psi \varphi j)}
	{\det(j^* \psi j)
	\det(j^* \varphi j)} 
	= \det	\big( (j^* \psi \varphi j)(j^* \psi j)^{-1}(j^* \varphi j)^{-1}\big).
	\]
	Since 
	$|\det(1+A_n) - 1| \leq \|A_n\|_1 \mathrm{e}^{\|A_n\|_1}$ for any 
	matrix $A_n$ of size $n$, it is enough to control the operator
	\[1 - (j^* \psi \varphi j)(j^* \psi j)^{-1}(j^* \varphi j)^{-1}
	=\Big[
	(j^* \varphi j)
	(j^* \psi j)
	-
	(j^* \psi\varphi j)
	\Big]
	(j^* \psi j)^{-1}(j^* \varphi j)^{-1}.
	\]
	We will bound the operator norm of 
	$(j^* \psi j)^{-1}$ and of $(j^* \varphi j)^{-1}$
	and show that
	$\|(j^* \varphi j)
	(j^* \psi j)
	-
	(j^* \psi\varphi j)\|_1 \to 0$ which will allow us to conclude.
	Since $\psi = 1$ outside $\mathcal{A}$ and $\varphi = 1$ outside $\mathcal B$
	we get that
	\[\psi\varphi = \varphi + \psi - 1\]
	which allows us to write
	\[j^*  \psi\varphi j = j^* \varphi j + 
	j^* \psi j - j^* j.\]
	Using that $j^* j j^* = j^*$ and that $j j^* j = j$, we obtain
	\[(j^* \varphi j)(j^* \psi j) - 
	(j^* \psi\varphi j) 
	= \big( j^* (\varphi  - 1) j\big)
	\big( j^* (\psi  - 1) j\big).\]
	Now, it is enough to show that
	\begin{itemize}
		\item $\lim_{n \to \infty} \big\|\big( j^* (\varphi  - 1) j\big)
		\big( j^* (\psi  - 1) j\big) \big\|_1  = 0  $
		\item $\|(j^* \varphi j)^{-1}\|_{\mathrm{op.}} $ is bounded in $n$ and
		\item $\|(j^* \psi j)^{-1}\|_{\mathrm{op.}} $ is bounded in $n$.
	\end{itemize}
    We denote by $\|A\|_p$ the Schatten $p$-norm of $A$, the $\ell^p$ norm of its singular values. In particular $\|A\|_\infty=\|A\|_{\mathrm{op.}}$. 
    Now
	\begin{align}
    \big\|\big( j^* (\varphi  - 1) j\big)
	\big( j^* (\psi  - 1) j\big) \big\|_1
	&\leq \|j^*\|_4
	\big\|(\varphi  - 1) j
	 j^* (\psi  - 1)  \big\|_2 \|j\|_4 \\
	 &= \sqrt n
	 \Bigl(\int_{\mathbb C \times \mathbb C}
	 |\varphi(x) - 1|^2 |K(x,y)|^2 |\psi(y) - 1|^2 \mathrm d \mu(x) \mathrm d \mu(y)\Bigr)^{1/2}.
	  \end{align}
	Since $\mathcal{A}$ and $\mathcal{B}$ are disjoint compact
	closed subsets of the compact space $\overline{\mathbb C}$,
    Equation
	\eqref{eq:kernel-distance} tells us that there is some constant $C>0$
	such that $|K(x,y)| \leq C^{-1} n \mathrm{e}^{- C n}$ for every $x \in \mathcal{A}$ and $y \in \mathcal{B}$.
	We obtain
	\begin{align*}
		\sqrt n
	\Bigl(\int_{\mathbb C \times \mathbb C}
	|\varphi(x) - 1|^2 |K(x,y)|^2 |\psi(y) - 1|^2 \mathrm d \mu(x) \mathrm d \mu(y)\Bigr)^{1/2}
	&=
	\sqrt n
	\Bigl(\int_{\mathcal{B} \times \mathcal{A}}
	|\varphi(x) - 1|^2 |K(x,y)|^2 |\psi(y) - 1|^2 \mathrm d \mu(x) \mathrm d \mu(y)\Bigr)^{1/2}				\\
	& \leq  4C^{-1}n^{\frac{3}{2}} \mathrm{e}^{-Cn} 	 \xrightarrow[n \to \infty]{} 0 .
	\end{align*}
	Now we bound $\|(j^* \varphi j)^{-1}\|_{\mathrm{op.}} $ which is
	the inverse of the least singular value of $j^*\varphi j$.
	We have $\|j^* \varphi j\|_{\mathrm{op.}}
	\leq \|\varphi \|_{\mathrm{op.}} \leq 1$ by hypothesis 
	so that every singular value of $j^* \varphi j$ is  bounded by $1$.
	If we add the fact that the absolute value of the determinant
	is the product of singular values,
	we get that the least singular value is bounded from below by the absolute value
	of the determinant.
	It is enough to show that $|\det(j^* \varphi j)|$ is bounded from below
	but this is part of the hypotheses because
	$|\det(j^* \varphi j)| = |\mathbb E[\Phi]| 
    \geq 
    |\mathbb E[\Phi]\mathbb E[\Psi]| \geq \varepsilon $. A similar reasoning works
	to bound $\|(j^* \psi j)^{-1}\|_{\mathrm{op.}} $. %
  \end{proof}

\section{Green function : proof of Corollary \ref{co:Green:CLT} and Theorem \ref{th:Green:Cov}}
\label{se:proof:Green}

\subsection{Mean, variance, and one-point CLT via cumulants}
\label{ss:meanvarcum}

Following Lemma \ref{le:Green:cum}, for all fixed $p\in\overline{\mathbb{C}}$, since $\kappa_m(L_n(g_p))$ converges to a finite value as $n\to\infty$ for all $m\geq3$, while $\kappa_2(L_n(g_p))\to+\infty$, the Fréchet--Shohat theorem gives 
\begin{equation}
    \frac{L_n(g_p)-\mathbb{E}(L_n(g_p))}{\sqrt{\mathrm{Var}(L_n(g_p))}}\xrightarrow[n\to\infty]{\mathrm{d}}\mathcal{N}(0,1).
\end{equation}
Since moreover $\mathbb{E}(L_n(g_p))=-\frac{n}{2}$ and $\mathrm{Var}(L_n(g_p))\sim\frac{1}{4}\log(n)$, we get an alternative cumulants based proof of \eqref{eq:Green:CLT}.

\begin{lemma}[Cumulants of $L_n(g_p)$]\label{le:Green:cum}
  For all $p\in\overline{\mathbb{C}}$ and all $n\geq1$ and $m\geq1$,
  \begin{equation}\label{eq:Lngp:cum}
    \kappa_m(L_n(g_p)) = \frac{(-1)^m (m-1)!}{2^m}H_{n}^{(m-1)}    
  \end{equation}
  where $H^{(s)}_n=\sum_{k=1}^n\frac{1}{k^s}$ is the generalized harmonic number. In particular, with $H_n=H_n^{(1)}=\sum_{k=1}^n\frac{1}{k}$,
  \begin{equation}\label{eq:Green:mean:var}
    \mathbb{E}(L_n(g_p))=\kappa_1(L_n(g_p))=-\frac{n}{2},
    \quad \mathrm{Var}(L_n(g_p))=\kappa_2(L_n(g_p))=\frac{1}{4}H_n\sim\frac{\log(n)}{4}
   \end{equation}
   while for all $m\geq3$, denoting $\zeta(s)=\sum_{n=1}^\infty\frac{1}{n^s}$ the Riemann zeta function,
   \begin{equation}
    \lim_{n\to\infty}\kappa_m(L_n(g_p)) = \frac{(-1)^m (m-1)!}{2^m}\zeta(m-1).
  \end{equation}
\end{lemma}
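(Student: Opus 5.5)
By the $\mathrm{SO}(3)$ invariance of the gas \eqref{eq:gas:S} and of $\nu$, the law of $L_n(g_p)$ does not depend on $p$. We may therefore take $p=\infty$, where
\[
g_\infty(z)=-\tfrac12\log(1+|z|^2)
\]
is radial. The plan is to apply the Kostlan observation \eqref{eq:kostlan2}, which turns $L_n(g_\infty)$ into a sum of independent log-Beta variables. Their cumulants are explicit in terms of polygamma functions, and summing them telescopes into generalized harmonic numbers.

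\textbf{Step 1 (reduction to independent variables).} By \eqref{eq:kostlan2}, $L_n(g_\infty)\overset{\mathrm d}{=}-\frac12\sum_{k=1}^n\log(1+\xi_{n,k}^2)$, where $\xi_{n,k}^2\overset{\mathrm d}{=}\Gamma_k/\Gamma'_{n+1-k}$ with $\Gamma_k$ and $\Gamma'_{n+1-k}$ independent Gamma variables. Then
\[
(1+\xi_{n,k}^2)^{-1}=\frac{\Gamma'_{n+1-k}}{\Gamma_k+\Gamma'_{n+1-k}}=:B_k\sim\mathrm{Beta}(n+1-k,k),
\]
and the $B_k$ are independent. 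Hence
\[
L_n(g_\infty)\overset{\mathrm d}{=}\tfrac12\sum_{k=1}^n\log B_k.
\]
After reindexing $j=n+1-k$, this is $\frac12\sum_{j=1}^n\log B'_j$ with $B'_j\sim\mathrm{Beta}(j,n+1-j)$ independent. In every case the two Beta parameters add up to $a+b=n+1$.

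\textbf{Step 2 (cumulants of a log-Beta variable).} From the Mellin transform
\[
\mathbb E(B^s)=\frac{\Gamma(a+s)\,\Gamma(a+b)}{\Gamma(a)\,\Gamma(a+b+s)},
\]
the cumulant generating function of $\log B$ is $\log\Gamma(a+s)-\log\Gamma(a+b+s)+\text{const}$. Differentiating $m$ times at $s=0$ gives
\[
\kappa_m(\log B)=\psi^{(m-1)}(a)-\psi^{(m-1)}(a+b).
\]
Cumulants are additive over independent summands and homogeneous of degree $m$, so
\[
\kappa_m(L_n(g_p))=2^{-m}\sum_{j=1}^n\bigl[\psi^{(m-1)}(j)-\psi^{(m-1)}(n+1)\bigr].
\]

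\textbf{Step 3 (telescoping).} The recurrence $\psi^{(s)}(x+1)-\psi^{(s)}(x)=(-1)^s s!\,x^{-s-1}$ gives
\[
\psi^{(m-1)}(j)-\psi^{(m-1)}(n+1)=(-1)^m(m-1)!\sum_{i=j}^n i^{-m}.
\]
Summing over $1\le j\le n$ and exchanging the order of summation, each $i$ is counted $i$ times, so the double sum equals $\sum_{i=1}^n i\cdot i^{-m}=H_n^{(m-1)}$. This yields \eqref{eq:Lngp:cum}.

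\textbf{Step 4 (special cases).} For $m=1$ we get $H_n^{(0)}=n$, hence $\mathbb E(L_n(g_p))=-n/2$. This can also be checked directly: \eqref{eq:ELn} reduces it to $n\int g_p\,\mathrm d\mu$, and \eqref{eq:nunif} makes $d(\cdot,p)^2$ uniform on $[0,1]$ under $\mu$, so $\int g_p\,\mathrm d\mu=\frac12\int_0^1\log x\,\mathrm dx=-\frac12$. For $m=2$ we get $\mathrm{Var}(L_n(g_p))=\frac14H_n\sim\frac14\log n$. For $m\ge3$, $H_n^{(m-1)}\to\zeta(m-1)<\infty$, which gives the stated limits.

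The only delicate point is the bookkeeping: identifying the Beta parameters correctly and keeping track of the signs in the polygamma recurrence. Both are checked against the mean, which must equal $-n/2$ independently of the Kostlan computation, as in Step 4.
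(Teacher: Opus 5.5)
Your proof is correct and follows the paper's route. Both arguments reduce to $p=\infty$ by rotation invariance, apply Kostlan's observation, obtain the cumulants as $2^{-m}\sum_{k=1}^n\bigl(\psi^{(m-1)}(k)-\psi^{(m-1)}(n+1)\bigr)$, and use the polygamma recurrence with an exchange of summation to get $H_n^{(m-1)}$. The only cosmetic difference is that you identify $(1+\xi_{n,k}^2)^{-1}$ as a $\mathrm{Beta}(n+1-k,k)$ variable and quote its Mellin transform, whereas the paper computes the Laplace transform by integrating directly against the BetaPrime density.
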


\begin{proof}
  By the $\mathrm{O}(3)$ invariance of \eqref{eq:gas:S}, the law of
  $L_n(g_p)$ does not depend on $p$. We take $p=\infty$ in order to benefit from its radiality (we could alternatively take $g_0$). We have
  $g_\infty(w)=-\frac{1}{2}\log(1+|w|^2)$. For all $1\leq k\leq n$, with
  $\xi_{n,k}$ as in \eqref{eq:kostlan2}, the Laplace transform of
  $S_{n,k}=-\frac{1}{2}\log(1+\xi_{n,k}^2)$ is 
  \begin{equation}\label{eq:lap_uinf_wnk}
    \mathbb{E}(\mathrm{e}^{\theta S_{n,k}})
    =\mathbb{E}((1+|\xi_{n,k}|^2)^{-\frac{\theta}{2}})
    =\int_0^\infty\frac{n!}{(k-1)!(n-k)!}\frac{x^{k-1}(1+x)^{\frac{-\theta}{2}}}{(1+x)^{n+1}}\mathrm{d}x
    =\frac{n!}{(n-k)!}\frac{\Gamma(n-k+1+\frac{\theta}{2})}{\Gamma(n+1+\frac{\theta}{2})}
  \end{equation}
  for all real $\theta>-2(n-k+1)$. It follows then by \eqref{eq:kostlan2} that
  the log-Laplace transform of $S_n=\sum_{k=1}^nS_{n,k}=L_n(g_\infty)$ is
  \begin{equation}\label{eq:logmgf}
    \log\mathbb{E}(\mathrm{e}^{\theta S_n})
    =n\bigl(\log\Gamma(n+1)-\log\Gamma(n+1+\tfrac{\theta}{2})\bigr)
    +\sum_{k=0}^{n-1}\bigl(\log\Gamma(k+1+\tfrac{\theta}{2})-\log\Gamma(k+1)\bigr)
  \end{equation}
  for every $\theta > -2$. Differentiating
  \eqref{eq:logmgf} at $\theta=0$ gives the cumulants of $S_n$:
  \begin{equation}\label{eq:cum-psi}
    \kappa_m(S_n)
    =
      \frac{1}{2^m}\big(\sum_{k=1}^n \psi_{m-1}(k)-n\psi_{m-1}(n+1)\big)
    =\frac{1}{2^m}\sum_{k=1}^n \big(\psi_{m-1}(k)-
    \psi_{m-1}(n+1) \big)  
  \end{equation}
  where $\psi_m=\psi^{(m)}$ is the polygamma
  functions given by
  $\frac{\mathrm{d}^m}{\mathrm{d}\theta^m}\log\Gamma(x+\tfrac{\theta}{2})\big|_{\theta=0}=\frac{1}{2^m}\psi_{m-1}(x)$.
  We have by using a recursion argument after
  differentiating $\log \Gamma(s+1) = \log s + \log \Gamma(s)$ that for
  $k \in \{1,\dots,n\}$
  \[\psi_{m-1}(n+1) - \psi_{m-1}(k)
  = (m-1)!(-1)^{m-1}(H_n^{(m)} - H_{k-1}^{(m)}).\]
  The sum
    $\sum_{k=1}^n (H_n^{(m)} - H_{k-1}^{(m)})
  =\sum_{k=1}^n (\sum_{j=1}^n \frac{1}{j^m} 1_{j \geq k})
  =\sum_{j=1}^n (\sum_{k=1}^n \frac{1}{j^m} 1_{j \geq k})
  =\sum_{j=1}^n \frac{1}{j^{m-1}} = H_n^{(m-1)}$ 
  allows us to obtain
   \begin{equation}\label{eq:kappa-harmonic}
    \kappa_m(S_n) = \frac{(-1)^m (m-1)!}{2^m}H_{n}^{(m-1)}.
  \end{equation}
  
\end{proof}

\subsection{Proof of Corollary \ref{co:Green:CLT}}
\label{se:proof:co:Green:CLT}

From \eqref{eq:Green:mean:var} we know that $\mathbb{E}(L_n(g_p))=-\frac{n}{2}$, for all $p\in\overline{\mathbb{C}}$.
Proving \eqref{eq:Green:multi} is equivalent to prove the following multi-point CLT : for $k\geq1$ and all distinct points $p_1,\ldots,p_k\in\overline{\mathbb{C}}$, 
\begin{equation}
    \left(\frac{L_n(g_{p_1})+\frac{n}{2}}{\sqrt{\frac{1}{4}\log(n)}},\ldots,\frac{L_n(g_{p_k})+\frac{n}{2}}{\sqrt{\frac{1}{4}\log(n)}}\right)
    \xrightarrow[n\to\infty]{\mathrm{d}}
    \mathcal{N}(0,I_k).
\end{equation}
Equivalently, it suffices to prove that for all $c_1,\ldots,c_k\in\mathbb{R}$,
\begin{equation}
    \frac{L_n(c_1g_{p_1}+\cdots+c_kg_{p_k})+\frac{n}{2}(c_1+\cdots+c_k)}{\sqrt{\frac{1}{4}\log(n)}}
    \xrightarrow[n\to\infty]{\mathrm{d}}
    \mathcal{N}(0,c_1^2+\cdots+c_k^2).
\end{equation}
This holds by \eqref{eq:LogSing} since $c_1g_{p_1}+\cdots+c_kg_{p_k}$ has $k$ logarithmic singularities at points $p_1,\ldots,p_k$ with weights $c_1,\ldots,c_k$, and has mean $-\frac{n}{2}(c_1+\cdots+c_k)$ according to \eqref{eq:Green:mean:var}. 

\subsection{Proof of Theorem \ref{th:Green:Cov}}
\label{se:proof:th:Green:Cov}

The formula for the variance in \eqref{eq:Green:cov} comes from \eqref{eq:Green:mean:var}. The rest of this section is devoted to the proof of the formula for the off-diagonal covariance in \eqref{eq:Green:cov}. It is split in three lemmas.

\begin{lemma}[Covariance for antipodal points]\label{le:cov-antipodal}
  For all $p\in\overline{\mathbb{C}}$, denoting $p^*=T(-T^{-1}(p))$ its
  antipodal, for all $n\geq 1$, 
  \begin{equation}\label{eq:cov-limit-antip}
    \mathrm{Cov}\big(L_n(g_p),L_n(g_{p^*})\big)
    =-\frac{n}{4}\psi_1(n+1)=-\frac14+O_{n\to\infty}\Big(\frac{1}{n}\Big)
  \end{equation}
  where $\psi_1$ is the polygamma function of order $1$, second order derivative of $\log\Gamma$.
\end{lemma}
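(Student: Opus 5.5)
By the $\mathrm{O}(3)$ invariance of \eqref{eq:gas:S}, the joint law of $(L_n(g_p),L_n(g_{p^*}))$ does not depend on $p$: a rotation sending $T^{-1}(p)$ to the south pole sends $T^{-1}(p^*)$ to the north pole. So I take $p=0$ and $p^*=\infty$. Both functions are then radial:
\[
g_0(w)=\tfrac12\log\frac{|w|^2}{1+|w|^2},
\qquad
g_\infty(w)=\tfrac12\log\frac{1}{1+|w|^2}.
\]
The plan is to use the Kostlan observation \eqref{eq:kostlan2} jointly for the two radial statistics. Equation \eqref{eq:kostlan1} applied to the radial function $\alpha g_0+\beta g_\infty$ gives the joint characteristic function, so
\[
\bigl(L_n(g_0),L_n(g_\infty)\bigr)\overset{\mathrm d}{=}\Bigl(\sum_{k=1}^n\tfrac12\log\tfrac{\xi_{n,k}^2}{1+\xi_{n,k}^2},\ \sum_{k=1}^n\tfrac12\log\tfrac{1}{1+\xi_{n,k}^2}\Bigr),
\]
with $\xi_{n,1},\ldots,\xi_{n,n}$ independent. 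The covariance therefore splits into a sum over $k$ of one-dimensional covariances.

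Next, write $\xi_{n,k}^2=\Gamma_k/\Gamma'_{n+1-k}$ with independent Gamma variables. Setting $B_k=\Gamma_k/(\Gamma_k+\Gamma'_{n+1-k})\sim\mathrm{Beta}(k,n+1-k)$, we get
\[
\frac{\xi_{n,k}^2}{1+\xi_{n,k}^2}=B_k
\qquad\text{and}\qquad
\frac{1}{1+\xi_{n,k}^2}=1-B_k .
\]
Hence
\[
\mathrm{Cov}(L_n(g_0),L_n(g_\infty))=\frac14\sum_{k=1}^n\mathrm{Cov}(\log B_k,\log(1-B_k)).
\]

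The key step is the classical identity $\mathrm{Cov}(\log B,\log(1-B))=-\psi_1(a+b)$ for $B\sim\mathrm{Beta}(a,b)$. To derive it, take the mixed second derivative in $(s,t)$ at $0$ of
\[
\log\mathbb E\bigl(B^s(1-B)^t\bigr)=\log\Gamma(a+s)+\log\Gamma(b+t)-\log\Gamma(a+b+s+t)+\mathrm{const}.
\]
Only the last term depends on both $s$ and $t$, and it gives exactly $-\psi_1(a+b)$. Here $a+b=n+1$ for every $k$, so the sum equals $-\frac n4\psi_1(n+1)$, which is the exact formula in \eqref{eq:cov-limit-antip}.

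Finally, the asymptotic expansion $\psi_1(n+1)=\frac1{n+1}+\frac1{2(n+1)^2}+O(n^{-3})=\frac1n+O(n^{-2})$ gives $-\frac14+O(1/n)$. I expect no real obstacle here. The only points needing care are justifying the joint (rather than merely marginal) Kostlan reduction, which is immediate from \eqref{eq:kostlan1} applied to the radial combination $\alpha g_0+\beta g_\infty$, and checking the mixed-derivative computation for the Beta law.
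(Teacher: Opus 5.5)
Your proposal is correct and is essentially the paper's proof. Both reduce by rotation invariance to $p=0$, $p^*=\infty$, apply the Kostlan formula \eqref{eq:kostlan1} to the radial combination of $g_0$ and $g_\infty$, and take the mixed second derivative of the log-transform, in which only the $-\log\Gamma(n+1+\cdot)$ term couples the two variables. Your $\mathrm{Beta}(k,n+1-k)$ variables are just the factor-by-factor form of the paper's product of Beta-function ratios.
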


\begin{proof}
  By the invariance 
  under rotations of \eqref{eq:gas:S}, the result is the
  same for any $p$. We take $p=0$, which gives $p^*=\infty$. Now, for
  $t,s\in\mathbb{R}$, we set $g=t\,g_0+sg_\infty$. Since $g$ is radial, the
  Kostlan formula \eqref{eq:kostlan1} gives 
  \begin{equation}
    \mathbb{E}(\mathrm{e}^{\mathrm{i}(tL_n(g_0)+sL_n(g_\infty))})
    =\prod_{k=0}^{n-1}\frac{B(k+1+\mathrm{i}\frac{t}{2},n-k+\mathrm{i}\frac{s}{2})}{B(k+1,n-k)}.
  \end{equation}
  where $B$ is the Euler Beta function. Taking logarithms and using $B(a,b)=\Gamma(a)\Gamma(b)/\Gamma(a+b)$ gives
  \begin{equation}
    \log\mathbb{E}(\mathrm{e}^{\mathrm{i}(tL_n(g_0)+sL_n(g_\infty))})
    =\sum_{k=0}^{n-1}\Big(\log\Gamma(k+1+\mathrm{i}\tfrac{t}{2})+\log\Gamma(n-k+\mathrm{i}\tfrac{s}{2})-\log\Gamma(n+1+\mathrm{i}\tfrac{t+s}{2})
    \Big)+C,
  \end{equation}
  where $C$ does not depend on $(t,s)$. Only the last term contributes to the
  mixed derivative at $(t,s)=(0,0)$, hence
  \begin{equation}
    -\frac{\partial^2}{\partial t\partial s}
    \log \mathbb{E}(\mathrm{e}^{\mathrm{i}(tL_n(g_0)+sL_n(g_\infty))})\Big|_{t=s=0}
    = -\frac{n}{4}\psi_1(n+1),
  \end{equation}
  which is the covariance. Finally, the asymptotics comes from $\psi_1(x)=1/x+O_{x\to\infty}(1/x^2)$.
\end{proof}

\begin{lemma}[Covariance for distinct non-antipodal points]\label{le:cov-nonantipodal}
  If $p,q\in\overline{\mathbb{C}}$, $x\cdot y\in(-1,1)$ where $p=T(x)$, $q=T(y)$, then 
  \begin{equation}\label{eq:cov-limit}
    \mathrm{Cov}(L_n(g_p),L_n(g_q))
    =-\frac14-\frac14\log\Big(\frac{1-x\cdot y}{2}\Big)+o_{n\to\infty}(1)
    =-\frac14-\frac12\log d(p,q)+o_{n\to\infty}(1).
  \end{equation}
\end{lemma}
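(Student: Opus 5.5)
The plan is to diagonalize everything in spherical harmonics. By \eqref{eq:CovLnLn2} and \eqref{eq:Knxy}, working on $\mathbb{S}^2$ with $p=T(x)$, $q=T(y)$, we have
\[
\mathrm{Cov}(L_n(g_p),L_n(g_q))=n\langle g_p,g_q\rangle_{L^2(\nu)}-\langle g_p,\mathcal{B}_n g_q\rangle_{L^2(\nu)},
\]
where $\mathcal{B}_n$ is the integral operator with zonal kernel $b_n(u\cdot v)=n^2\bigl(\tfrac{1+u\cdot v}{2}\bigr)^{n-1}$ (the unnormalized Berezin transform). By the Funk--Hecke formula together with \eqref{eq:nunif}, $\mathcal{B}_n$ acts on $\mathcal{H}_\ell$ as multiplication by
\[
\lambda_{n,\ell}=\tfrac{n^2}{2}\int_{-1}^1\bigl(\tfrac{1+t}{2}\bigr)^{n-1}P_\ell(t)\,\mathrm{d}t .
\]
Using Rodrigues' formula and $\ell$ integrations by parts, this should evaluate to $\lambda_{n,\ell}=n\prod_{j=1}^{\ell}\frac{n-j}{n+j}$ for $\ell<n$, and to $\lambda_{n,\ell}=0$ for $\ell\ge n$; the vanishing for $\ell\ge n$ is because $\bigl(\tfrac{1+t}{2}\bigr)^{n-1}$ is a polynomial of degree $n-1$.

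Next I insert the expansion $g_p(\cdot)=-\tfrac12-G(x,\cdot)$ coming from \eqref{eq:G} and \eqref{eq:green-series}. Constants cancel in the covariance, since $\lambda_{n,0}=n$. Using orthonormality of the $Y_{\ell,m}$ and \eqref{eq:addition-identity}, the $L^2$ convergence of these series (recall $g_p\in L^2$) should justify
\[
\mathrm{Cov}(L_n(g_p),L_n(g_q))=\frac14\sum_{\ell\ge1}\frac{n-\lambda_{n,\ell}}{\ell^2(\ell+1)^2}\,(2\ell+1)P_\ell(x\cdot y).
\]
As a check, setting $x\cdot y=-1$ should reproduce Lemma~\ref{le:cov-antipodal}.

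For fixed $\ell$, the product formula gives $n-\lambda_{n,\ell}\to\ell(\ell+1)$. The termwise limit is therefore
\[
\frac14\sum_{\ell\ge1}\frac{(2\ell+1)P_\ell(x\cdot y)}{\ell(\ell+1)}=\frac12 G(x,y)=-\frac14-\frac12\log d(p,q),
\]
by \eqref{eq:green-series} and \eqref{eq:G}. This is exactly \eqref{eq:cov-limit}, and it agrees with the first form via \eqref{eq:d2xy}.

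The main step is justifying the exchange of limit and sum, which I would do by dominated convergence. First, $0\le n-\lambda_{n,\ell}\le\ell(\ell+1)$ for all $n,\ell$. For $\ell<n$ this follows from $\prod_j(1-\tfrac{2j}{n+j})\ge1-\sum_j\tfrac{2j}{n}=1-\tfrac{\ell(\ell+1)}{n}$. For $\ell\ge n$ we have $n\le\ell(\ell+1)$. Second, for $t=x\cdot y\in(-1,1)$ the classical Laplace/Bernstein bound $|P_\ell(t)|\le C(1-t^2)^{-1/4}\ell^{-1/2}$ holds. Together these give a summable dominating sequence of order $\ell^{-3/2}$. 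This is exactly where the hypothesis $x\cdot y\in(-1,1)$ enters, and it is why the antipodal case $t=-1$ is handled separately by Lemma~\ref{le:cov-antipodal}.

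I expect the only genuinely delicate points to be the closed form for $\lambda_{n,\ell}$ and the rigorous justification of the spectral representation for the singular but $L^2$ functions $g_p$. If the integration-by-parts computation of $\lambda_{n,\ell}$ is messy, a fallback is to expand $\bigl(\tfrac{1+t}{2}\bigr)^{n-1}$ in Legendre polynomials via the known Jacobi connection coefficients.
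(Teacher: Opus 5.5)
Your proposal is correct and follows essentially the same route as the paper: you write the covariance through the Berezin-type operator, and you diagonalize it on the spaces $\mathcal{H}_\ell$ with eigenvalues $\prod_{j=1}^{\ell}\frac{n-j}{n+j}$, obtained via Rodrigues' formula and integration by parts. You then insert the Green function series and pass to the limit by dominated convergence, using $n(1-b_{n,\ell})\le \ell(\ell+1)$ together with the $\ell^{-1/2}$ Legendre bound for $x\cdot y\in(-1,1)$. The differences are cosmetic: you use the unnormalized kernel $|K_n|^2$, so your $\lambda_{n,\ell}$ equals $n\,b_{n,\ell}$, and you use the product inequality $\prod_j(1-a_j)\ge 1-\sum_j a_j$ where the paper uses the equivalent Bernoulli union bound.
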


According to \eqref{eq:cov-limit-antip}, the formula \eqref{eq:cov-limit}
remains valid when the points are antipodal,
$x \cdot y = -1$, but \eqref{eq:cov-limit-antip}
is sharper.

\begin{proof}
For notational convenience,
in this proof we work
on $\mathbb S^2$
instead of $\overline{\mathbb C}$,
without explicitly referring to the map $T$
that sends $\mathbb S^2$ to $\overline{\mathbb C}$.
So, we may write
$g_x$ instead of the slightly more cumbersome
$g_{T(x)}$. 

  \medskip

  \noindent\emph{Step 1. Covariance via Berezin transform.} 

  \medskip
  
    From \eqref{eq:CovLnLn2}, we get, for all $f,g:\mathbb{S}^2\to\mathbb{R}$ in  $L^2(\nu)$,
  \begin{equation}\label{eq:cov-berezin}
    \mathrm{Cov}(L_n(f),L_n(g))
    =n\int_{\mathbb{S}^2}f(x)(g(x)-\mathcal{B}_ng(x))\mathrm{d}\nu(x)
    =n\langle f,(I-\mathcal{B}_n)g\rangle_{L^2(\nu)}
  \end{equation}
  where $\mathcal{B}_n$ is the Berezin transform\footnote{Note that $\mathcal{B}_n$ is 
    self-adjoint on $L^2_{\mathbb{C}}(\nu)$, with kernel
    $\frac{1}{n}|K_n(x,y)|^2$, and it is Markov: $\mathcal{B}_n1=1$ and $\mathcal{B}_ng\geq0$ when $g\geq0$. See also \cite{MR3494849}.}
  \begin{equation}\label{eq:berezin}
    (\mathcal{B}_ng)(y)=\frac{1}{n}\int_{\mathbb{S}^2} g(x)|K_n(x,y)|^2\mathrm{d}\nu(x),
  \end{equation}
  where 
  an explicit formula for $|K_n(x,y)|^2$ can be found in \eqref{eq:Knxy}.

  \medskip

  \noindent\emph{Step 2. Harmonic expansion via the Green kernel.}

  \medskip
  
    By denoting
  $G_x = G(x,\cdot) = -g_x -1/2$,
  we get from \eqref{eq:cov-berezin} that for all $x,y\in \mathbb S^2$
  \begin{equation}
    \mathrm{Cov}\big(L_n(G_x),L_n(G_y)\big)
    =n\langle G_x,(I-\mathcal{B}_n)G_y\rangle_{L^2(\nu)}.
  \end{equation}
  Since
  $G_x = \frac{1}{2} \sum_{\ell=1}^\infty
  \frac{1}{\ell(\ell+1)} c_\ell(x,\cdot)$
  as recalled in
  \eqref{eq:green-series}, where the
  convergence occurs in $L^2$,
  and since $c_\ell(x,\cdot)$ is an eigenvector
  of $I- \mathcal B_n$ of eigenvalue
  $1 - b_{n,\ell}$ from Lemma \ref{lem:eigenvalues}, 
  we can write
  \[(I - \mathcal B_n) G_x
  = \frac{1}{2}\sum_{\ell = 1}^\infty 
  \frac{1-b_{n,\ell}}{\ell(\ell + 1)}c_\ell(x,\cdot).\]
  Notice that $c_\ell(x,\cdot)$ and $c_{\ell'}(y,\cdot)$
  are orthogonal for $\ell \neq \ell'$
  since $c_\ell$ and $c_{\ell'}$ project 
  to orthogonal subspaces, 
  and notice that
  $\langle c_\ell(x,\cdot), c_\ell(y,\cdot) 
  \rangle_{L^2(\nu)} 
  = c_\ell(x,y)$ since $c_\ell$ is a projection.
  Then, we have
  \begin{equation}
  \mathrm{Cov}\big(L_n(G_x),L_n(G_y)\big)
    =
  n\langle G_x,(I-\mathcal{B}_n)G_y\rangle_{L^2(\nu)}
  =
  \frac{n}{4}\sum_{\ell=1}^\infty \big(1-b_{n,\ell}\big)\frac{c_\ell(x,y)}{\ell^2(\ell+1)^2}
  =
   \frac{n}{4}\sum_{\ell=1}^\infty \big(1-b_{n,\ell}\big)
    \frac{2\ell+1}{\ell^2(\ell+1)^2}P_\ell(x\cdot y),
  \end{equation}
where \eqref{eq:addition-identity},
$c_\ell(x,y)=(2\ell+1)P_\ell(x\cdot y)$,
was used.

\medskip
\noindent\emph{Step 3. Asymptotics.}
\medskip

\noindent
  By Lemma \ref{lem:eigenvalues} we know that
  $b_{n,\ell} =  
  \prod_{j=1}^{\ell} \alpha_{n,j}$
  with 
  $\alpha_{n,j} =\frac{n-j}{n+j}$.
  We may show that
  $n(1-b_{n,\ell}) \leq \ell (\ell+1)$. Indeed,
  if $X_1,\dots,X_{n-1}$ are independent Bernoulli
  random variables with
  $\mathbb E(X_j) = \alpha_{n,j}$ we get, for
  $\ell \in \{1,\dots,n-1\}$,
  \[1-b_{n,\ell} = 1-\prod_{j=1}^\ell \alpha_{n,j}
  = \mathbb P\big(\cup_{j=1}^\ell \{X_j = 0\} \big)
  \leq 
  \sum_{j=1}^\ell \mathbb P(X_j = 0)
  = \sum_{j=1}^\ell (1-\alpha_{n,j})
  =\sum_{j=1}^\ell \frac{2j}{n+j}
  \leq \frac{2}{n} \sum_{j=1}^\ell j
  = \frac{\ell (\ell+1)}{n}.\]
  Notice that
  $1-b_{n,\ell} \leq n^{-1} \ell(\ell+1)$ also holds 
  for $\ell \geq n$ since
  in that case $1-b_{n,\ell}  = 1$.
    It is known, see 
    \cite[Th.~7.32.2 with $\alpha=\beta=0$]{zbMATH03477793}\footnote{More precisely, this gives $|P_\ell(\cos(\theta))|\leq C\ell^{-1/2}\theta^{-1/2}$ for $1/\ell\leq\theta\leq\pi/2$. Combined with the simple bound $\sup_{\alpha\in[-1,1]}|P_\ell(\alpha)|\leq1$, this gives after discussion that  $\sqrt{\sin(\theta)}|P_\ell(\cos(\theta))|\leq C\ell^{-1/2}$ for all $0<\theta<\pi$, in other words $(1-\alpha^2)^{1/4}|P_\ell(\alpha)|\leq C\ell^{-1/2}$ for all $-1<\alpha<1$.}, that
    for $\alpha \in (-1,1)$, there is a constant
    $C_\alpha > 0$ such that
  \begin{equation}\label{eq:legendre-bound}
    |P_\ell(\alpha)|\leq\frac{C_\alpha}{\sqrt{\ell}},\quad \ell\geq 1.
  \end{equation}
  We can then bound\footnote{We see here the need
  for $x$ and $y$ not to be antipodal. Otherwise, we could not dominate the series
  by a summable one
  since the limit is not absolutely convergent
  because $P_\ell(-1) = (-1)^\ell$.}
  \begin{equation}
    \frac{n}{4} \big(1-b_{n,\ell}\big)
    \frac{2\ell+1}{\ell^2(\ell+1)^2}|P_\ell(x\cdot y)|
    \leq  \frac{2\ell + 1}{4\ell (\ell+1)} \frac{C_\alpha}{\sqrt \ell}, 
  \end{equation}
  where the right-hand side does not depend on $n$ and it is summable in $\ell$. 
  Thus, we are able to use the dominated convergence theorem as soon as we find the limit
  of $n(1-b_{n,\ell})$ as $n \to \infty$.
  For this we take the logarithm of $b_{n,\ell}$ and use that $\log(1-y)-\log(1+y)=-2y+O_{y\to0}(y^3)$ to get
  \begin{equation}
    \log b_{n,\ell}
    = \sum_{j=1}^\ell\Big(\log\Bigl(1-\frac{j}{n}\Bigr)-\log\Bigl(1+\frac{j}{n}\Bigr)\Big)
    = -\frac{\ell(\ell+1)}{n}+O_{n\to\infty}\Big(\frac{1}{n^3}\Big).
  \end{equation}
    Then, taking the exponential, we get, for fixed $\ell$,
    \begin{equation}
    b_{n,\ell} = 
    \exp\bigg(
    -\frac{\ell(\ell+1)}{n}+O_{n\to\infty}\Big(\frac{1}{n^3}\Big)
    \bigg)
    =1 - \frac{\ell(\ell+1)}{n} +
    O_{n\to\infty}\Big(\frac{1}{n^2}\Big)
    \end{equation}
    so that
    \[\lim_{n \to \infty} n(1-b_{n,\ell}) = 
    \ell(\ell+1).\]

\vspace{2mm}
  \noindent
  \emph{Step 4. Identification of the limit.} 

\vspace{2mm}
  \noindent
  We may
  now apply the dominated convergence theorem to conclude
  \begin{equation}
    \mathrm{Cov}\big(L_n(g_x),L_n(g_y)\big)
    = \frac14\sum_{\ell=1}^\infty \frac{2\ell+1}{\ell(\ell+1)}P_\ell(x \cdot y)+o_{n\to\infty}(1)
    = \frac14\sum_{\ell=1}^\infty \frac{1}{\ell(\ell+1)}c_\ell(x,y)+o_{n\to\infty}(1)
  \end{equation}
    which, by \eqref{eq:green-series},
    gives us
    $\frac{1}{2} G(x,y)$ or, equivalently, $
    = - \frac{1}{2} g_x(y) - \frac{1}{4}
    = - \frac{1}{2}\log d(p,q) - \frac{1}{4}$
  and we obtain \eqref{eq:cov-limit}.
\end{proof}

Since $(w,u)\mapsto |K_n(w,u)|=n(1-d(w,u)^2)^{\frac{n-1}{2}}$ depends only on
$d(w,u)$, the operator $\mathcal{B}_n$ is $\mathrm{SO}(3)$ invariant. Therefore it is
diagonal in the spherical harmonic decomposition. We state this in the
Legendre basis.

\begin{lemma}[Eigenvalues of the Berezin transform
  $\mathcal{B}_n$ \eqref{eq:berezin}]\label{lem:eigenvalues}
  Recall that
  $\mathcal H_\ell$ denotes the eigenspace
  of $-\Delta$ with eigenvalue $4\pi \ell (\ell+1)$.
  For every $n\geq1$, the 
  $\mathcal H_\ell$ is also eigenspace of 
  $\mathcal B_n$ of eigenvalue
\begin{equation}\label{eq:eigenvalue-formula}
    b_{n,\ell}
      =\prod_{j=1}^{\ell}\frac{n-j}{n+j}.
  \end{equation}
  Equivalently, for every $p \in 
  \overline{\mathbb C}$, the function
  $Q_\ell(q) = P_\ell(T^{-1}(p) \cdot T^{-1}(q))$
  is an eigenvector
  of $\mathcal B_n$ with eigenvalue
  $b_{n,\ell}$,
  \[\mathcal B_n Q_\ell = b_{n,\ell} Q_\ell.\]

\end{lemma}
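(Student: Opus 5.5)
Since $\mathcal{B}_n$ has kernel $\frac1n|K_n(x,y)|^2=n\bigl(\frac{1+x\cdot y}{2}\bigr)^{n-1}$ by \eqref{eq:Knxy}, it is a zonal kernel, i.e.\ a function of $t=x\cdot y$ only. The plan is to apply the Funk--Hecke formula and then evaluate a single explicit integral.

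\textbf{Step 1 (Funk--Hecke).} For a kernel of the form $k(x\cdot y)$ and the probability measure $\nu$, every $Y\in\mathcal H_\ell$ satisfies $\int k(x\cdot y)Y(x)\,\mathrm d\nu(x)=\lambda_\ell Y(y)$ with
\[
\lambda_\ell=\tfrac12\int_{-1}^1 k(t)P_\ell(t)\,\mathrm dt .
\]
This follows from \eqref{eq:addition-identity} and \eqref{eq:nunif}. Indeed, by rotation invariance the operator commutes with $\mathrm{SO}(3)$, so it acts as a scalar on each irreducible $\mathcal H_\ell$. To compute the scalar, test on the zonal function $Q_\ell=P_\ell(x_0\cdot\,\cdot)$ at the point $y=x_0$: there $Q_\ell(x_0)=1$, and the pushforward of $\nu$ by $x\mapsto x\cdot x_0$ is uniform on $[-1,1]$ by \eqref{eq:nunif}. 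This gives the displayed integral. Hence
\[
b_{n,\ell}=\frac n2\int_{-1}^1\Bigl(\frac{1+t}{2}\Bigr)^{n-1}P_\ell(t)\,\mathrm dt=n\int_0^1 u^{n-1}P_\ell(2u-1)\,\mathrm du,
\]
after the substitution $u=(1+t)/2$.

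\textbf{Step 2 (the integral).} Use Rodrigues' formula for the shifted Legendre polynomial, $P_\ell(2u-1)=\frac{1}{\ell!}\frac{\mathrm d^\ell}{\mathrm du^\ell}\bigl(u^\ell(u-1)^\ell\bigr)$, and integrate by parts $\ell$ times. The boundary terms vanish because $u^\ell(u-1)^\ell$ vanishes to order $\ell$ at both $0$ and $1$. This gives
\[
\int_0^1u^{n-1}P_\ell(2u-1)\,\mathrm du=\frac{(-1)^\ell}{\ell!}\frac{(n-1)!}{(n-1-\ell)!}\int_0^1u^{n-1}(u-1)^\ell\,\mathrm du
\]
for $\ell\le n-1$. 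The remaining integral equals $(-1)^\ell B(n,\ell+1)=(-1)^\ell\frac{(n-1)!\,\ell!}{(n+\ell)!}$. Multiplying by $n$ yields
\[
b_{n,\ell}=\frac{n!\,(n-1)!}{(n-1-\ell)!\,(n+\ell)!}=\prod_{j=1}^\ell\frac{n-j}{n+j}.
\]
For $\ell\ge n$, the $n$-th derivative of $u^{n-1}$ vanishes, so the integral is $0$, consistent with the product containing the factor $j=n$. Alternatively, one can expand $u^{n-1}$ in shifted Legendre polynomials, but the integration by parts is cleaner.

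\textbf{Main obstacle.} The only delicate point is justifying Funk--Hecke with the paper's normalizations ($\nu$ of unit mass, $P_\ell(1)=1$), in particular that the eigenvalue equals $\frac12\int k P_\ell$ with no extra factor $(2\ell+1)$. I would settle this directly through the zonal test function $Q_\ell$ described in Step 1, using that $\mathcal{B}_n$ is self-adjoint and commutes with rotations, so it preserves each $\mathcal H_\ell$ and acts there by a scalar, since $\mathcal H_\ell$ is irreducible.
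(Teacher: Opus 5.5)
Your proof is correct. Step 2 is exactly the paper's computation: Rodrigues' formula in $s=(1+t)/2$, $\ell$ integrations by parts, then the Beta integral $B(n,\ell+1)$.

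The difference is in Step 1. The paper does not use rotation invariance or Schur's lemma. It expands the degree-$(n-1)$ polynomial $k_n(t)=n\bigl(\frac{1+t}{2}\bigr)^{n-1}$ as $k_n=\sum_{\ell=0}^{n-1}b_{n,\ell}(2\ell+1)P_\ell$ and reads off $b_{n,\ell}=\frac12\int_{-1}^1k_nP_\ell$ from the Legendre orthogonality relations. The addition identity \eqref{eq:addition-identity} then gives directly $\mathcal B_n=\sum_{\ell=0}^{n-1}b_{n,\ell}C_\ell$, with $C_\ell$ the orthogonal projection onto $\mathcal H_\ell$. This route is fully explicit, needs no representation theory, and shows for free that $\mathcal B_n$ vanishes on $\mathcal H_\ell$ for $\ell\ge n$. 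The paper mentions your symmetry-based route only in a footnote. It is more conceptual and applies to any zonal kernel.

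One point in your justification needs tightening. Self-adjointness is not what makes $\mathcal B_n$ preserve each $\mathcal H_\ell$. The real reason is that the $\mathcal H_\ell$ are pairwise inequivalent irreducible representations of $\mathrm{SO}(3)$, since their dimensions $2\ell+1$ differ. Hence $C_{\ell'}\mathcal B_n$ restricted to $\mathcal H_\ell$ is an intertwiner between inequivalent irreducibles, and it vanishes for $\ell'\neq\ell$ by Schur. Alternatively, an operator commuting with rotations commutes with $\Delta$ and so preserves its eigenspaces. With that in place, testing on the zonal function $Q_\ell$ at its pole gives the correct normalization, with no extra factor $2\ell+1$, exactly as you say.
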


Notice that $b_{n,\ell} = 0$ if $\ell \geq n$ and that
$b_{n,0} = 1$ since the product would be empty.

\begin{proof}
As in the proof of Lemma \ref{le:cov-nonantipodal},
we work on $\mathbb S^2$ instead of $\overline{\mathbb C}$
for notational simplicity.
Define $k_n(t) = n(\frac{1+t}{2})^{n-1}$
which is a polynomial of degree $n-1$.
   Combining
  \eqref{eq:kernel-distance} and \eqref{eq:d2xy} gives
  \begin{equation}
    \frac1n|K_n(x,y)|^2
    =n\bigl(1-d(x,y)^2\bigr)^{n-1}
    =n\Big(\frac{1+x\cdot y}{2}\Big)^{n-1}
    =k_n(x\cdot y).
  \end{equation}
  The Berezin transform \eqref{eq:berezin}
  of $g \in L^2_{\mathbb C}(\nu)$ is
  \begin{equation}
    (\mathcal B_n g) (x)
    =\int_{\mathbb{S}^2} \,k_n(x\cdot y) g(y)\mathrm{d}\nu(y).
  \end{equation}

\noindent
  \emph{Spherical-harmonic expansion of the kernel.} 
  Expand the polynomial $k_n$ in the basis
  of Legendre polynomials as
  \begin{equation}
  \label{eq:PolynomialExpansion}
  k_n = \sum_{\ell=0}^{n-1} b_{n,\ell} (2\ell + 1)
  P_\ell.
  \end{equation}
  From the orthogonality relations
  $\int_{-1}^1 P_\ell(t) P_{\ell'}(t) \mathrm d t
  =\frac{2}{2\ell +1} \delta_{\ell, \ell'}$
  we obtain
  \begin{equation}
  \label{eq:bnlFormula}
  b_{n,\ell} = \frac{1}{2} \int_{-1}^1 
  k_n(t) P_\ell(t) \mathrm d t . 
  \end{equation}
From \eqref{eq:PolynomialExpansion},
if $C_\ell$ denotes the orthogonal projection map
onto $\mathcal H_\ell$ which has kernel
$(2\ell + 1)P_\ell(x\cdot y)$, 
we get\footnote{The invariance under rotations
of $\mathcal B_n$ already
tells us that $\mathcal H_\ell$ are eigenspaces
of $\mathcal B_n$ but a direct calculation
also works as explained here.}
\begin{equation}
\label{eq:BnExpansion}
\mathcal B_n = \sum_{\ell=0}^{n-1} b_{n,\ell} 
C_\ell.
\end{equation}
This tells us that $\mathcal H_\ell$
is an eigenspace of $\mathcal B_n$
with eigenvalue $b_{n,\ell}$.

\vspace{2mm}

\noindent
  \emph{Computation of $b_{n,\ell}$.} 
  Since \eqref{eq:BnExpansion}
  is a sum from $0$ to $n-1$, we already
  know that $b_{n,\ell} = 0$ for $\ell \geq n$.
  By the change of variables
  $t=2s-1$ in \eqref{eq:bnlFormula} we can write
  \begin{equation}
    b_{n,\ell}
    =\frac{n}{2}\int_{-1}^1\Big(\frac{1+t}{2}\Big)^{n-1}P_\ell(t)\mathrm{d}t
    =n\int_0^1 s^{n-1}P_\ell(2s-1)\mathrm{d}s.
  \end{equation}
  Now, the Rodrigues formula for Legendre polynomials in the $s$-variable gives
  \begin{equation}
    P_\ell(2s-1)=\frac{(-1)^\ell}{\ell!}\frac{\mathrm{d}^\ell}{\mathrm{d}s^\ell}\big(s^\ell(1-s)^\ell\big).
  \end{equation}
  Next, by integrating by parts $\ell$ times, we get\footnote{The boundary
    terms vanish since $s^\ell(1-s)^\ell$ has zeros of order $\ell$ at $0$ and
    $1$.}
  \begin{equation}
    b_{n,\ell}
    =\frac{n}{\ell!}\int_0^1 \frac{\mathrm{d}^\ell}{\mathrm{d}s^\ell}(s^{n-1})s^\ell(1-s)^\ell\mathrm{d}s.
  \end{equation}
If $0\leq\ell\leq n-1$, then
  $\frac{\mathrm{d}^\ell}{\mathrm{d}s^\ell}(s^{n-1})=\frac{(n-1)!}{(n-1-\ell)!}s^{n-1-\ell}$,
  and thus
  \begin{equation}
    b_{n,\ell}
    =\frac{n}{\ell!}\frac{(n-1)!}{(n-1-\ell)!}\int_0^1 s^{n-1}(1-s)^\ell\mathrm{d}s
    =\frac{n}{\ell!}\frac{(n-1)!}{(n-1-\ell)!}
    \Bigg(\frac{(n-1)!}{(n+\ell)!}\ell!\Bigg)
    =\frac{(n-1)!}{(n-1-\ell)!} \frac{n!}{(n+\ell)!}
    =\prod_{j=1}^{\ell}\frac{n-j}{n+j}.
  \end{equation}

\end{proof}

\section{Logarithmic potential : proof of Corollaries \ref{co:LogPot:CLT} and \ref{co:LogPot:Cov}}
\label{se:proof:LogPot}

\subsection{Proof of Corollary \ref{co:LogPot:CLT}}
\label{se:proof:co:LogPot:CLT}

\subsubsection{Mean}

For all $z\in\mathbb{C}$, we can find the
expected value of $L_n(f_z)$ as
\begin{align}
  \mathbb{E}(L_n(f_z))
  &=\mathbb{E}(\log|\det(AB^{-1}-zI)|)\\
  &=\mathbb{E}(\log|\det(A-zB)|)-\mathbb{E}(\log|\det(B)|)\\
  &=\mathbb{E}(\log|(1+|z|^2)^{\frac{n}{2}}\det(B)|)-\mathbb{E}(\log|\det(B)|)\\
  &=\frac{n}{2}\log(1+|z|^2), \label{eq:LogPot:mean}
\end{align}
where we used $A-zB\overset{\mathrm{d}}{=}\sqrt{1+|z|^2}B$. Alternatively, we could use
\eqref{eq:split-euclid} and \eqref{eq:Green:mean:var}, or \eqref{eq:ELn}.

\subsubsection{CLT}

Proving \eqref{eq:logpot:CLT} is equivalent to show that for all $k\geq1$ and distinct $z_1,\ldots,z_k\in\mathbb{C}$, with $m_i=\frac{n}{2}\log(1+|z_i|^2)$,
\begin{equation}
    \left(\frac{L_n(f_{z_1})-m_1}{\sqrt{\frac{1}{2}\log(n)}},\ldots,\frac{L_n(f_{z_k})-m_k}{\sqrt{\frac{1}{2}\log(n)}}\right)
    \xrightarrow[n\to\infty]{\mathrm{d}}
    \mathcal{N}(0,K)
    \quad\text{where}\quad
    K_{ij}=\frac{\mathbb{1}_{i=j}+1}{2}.
\end{equation}
This is equivalent to state that for all $c_1,\ldots,c_k\in\mathbb{R}$,
\begin{equation}
    \frac{L_n(c_1f_{z_1}+\cdots+c_kf_{z_k})-(c_1m_1+\cdots+c_km_k)}{\sqrt{\frac{1}{2}\log(n)}}
    \xrightarrow[n\to\infty]{\mathrm{d}}
    \mathcal{N}(0,\langle Kc,c\rangle).
\end{equation}
Now $c_1f_{z_1}+\cdots+c_kf_{z_k}$ has $k+1$ logarithmic singularities at points $z_1,\ldots,z_k,\infty$ with weights
$c_1,\ldots,c_k,-(c_1+\cdots+c_k)$. Its mean is $c_1m_1+\cdots+c_km_k$ according to \eqref{eq:LogPot:mean}. The desired result follows then from \eqref{eq:LogSing} since
\begin{equation}
    c_1^2+\cdots+c_k^2+(c_1+\cdots+c_k)^2
    =\sum_{i=1}^kc_i^2+\sum_{1\leq i,j\leq k}c_ic_j
    =2\sum_{i,j=1}^kK_{ij}c_ic_j.
\end{equation}
Alternatively, the splitting \eqref{eq:split-euclid} gives
\begin{equation}
    c_1f_{z_1}+\cdots+c_kf_{z_k}-\tfrac{1}{n}(c_1m_1+\cdots+c_km_k)
    =c_1g_{z_1}+\cdots+c_kg_{z_k}-(c_1+\cdots+c_k)g_\infty
\end{equation}
connecting directly \eqref{eq:Green:multi} and \eqref{eq:LogPot:multi}.

\subsection{Proof of Corollary \ref{co:LogPot:Cov}}
\label{se:proof:co:LogPot:Cov}

\subsubsection{Variance}

To prove \eqref{eq:LogPot:var}, we combine \eqref{eq:split-euclid}, and \eqref{eq:Green:cov} to get, for all $z\in\mathbb{C}$,
\begin{align}
  \mathrm{Var}(L_n(f_z))
  &=\mathrm{Var}(L_n(g_z-g_\infty))\\
  &=\mathrm{Var}(L_n(g_z))+\mathrm{Var}(L_n(g_\infty))-2\mathrm{Cov}(L_n(g_z),L_n(g_\infty))\\
  &=\frac14H_n+\frac14H_n-2\bigl(-\frac14+\frac14\log(1+|z|^2)+o_{n\to\infty}(1)\bigr)\\
  &=\frac{H_n}{2}+\frac{1}{2}-\frac{1}{2}\log(1+|z|^2)+o_{n\to\infty}(1).
\end{align}

\subsubsection{Covariance}

To get \eqref{eq:LogPot:cov}, we write, by combining \eqref{eq:split-euclid} and \eqref{eq:Green:cov}, for all
$z,w\in\mathbb{C}$, $z\neq w$,
\begin{align}
    \mathrm{Cov}(L_n(f_z),L_n(f_w))
    &=\mathrm{Cov}(L_n(g_z)-L_n(g_\infty),L_n(g_w)-L_n(g_\infty))\\
    &=\mathrm{Cov}(L_n(g_z),L_n(g_w))
    -\mathrm{Cov}(L_n(g_z),L_n(g_\infty))
    -\mathrm{Cov}(L_n(g_w),L_n(g_\infty))
    +\mathrm{Var}(L_n(g_\infty))\\
    &=\frac{1}{4}-\frac{1}{2}\log d(z,w)+\frac{1}{2}\log d(z,\infty)+\frac{1}{2}\log d(w,\infty)+\frac{H_n}{4}\\
    &=\frac{1}{4}-\frac{1}{2}\log|z-w|+\frac{H_n}{4}+o_{n\to\infty}(1).
\end{align}

\section{Matrix universality : proof of Theorem \ref{th:logpot:univ}}
\label{se:proof:th:logpot:univ}

Throughout this section, we will use $A$ and $B$ to denote two independent copies of Girko matrices, and use $A^\mathrm{Gin}$ and $B^\mathrm{Gin}$ to denote two independent Ginibre matrices, which are special cases of Girko matrices. For Ginibre matrices, $M^\mathrm{Gin}=(A^{\mathrm{Gin}})(B^\mathrm{Gin})^{-1}$ is indeed the spherical ensemble. More generally we use ${}^{\mathrm{Gin}}$ superscript to indicate the underlying model which is in use.

Given $M=AB^{-1}\in \C^{n\times n}$, define the empirical spectral measure of $M$ by $\mu_M:=\frac{1}{n}\sum_{i=1}^n \delta_{\sigma_i}$, where $\{\sigma_i\}_{i=1}^n$ are the eigenvalues of $M$. 
In this section we use the following short-hand notations: 
\begin{align}
L_n(f):=\sum_{i=1}^n f(\sigma_i)=n\int f\dd\mu_M,
\end{align}
for any test function $f:\C \rightarrow \R$. In particular, for any fixed $w\in \C$ and $p \in \overline{\C}$, we write
\begin{align}
L_n(f_w)=&\log |\det (M-w)|=n\int f_w \dd\mu_M, \quad \qquad  f_w(\cdot):=\log|\cdot -w|,\label{eq_fw}\\
L_n(h_p)=&n\int h_p \dd\mu_M, \quad \qquad  h_p(\cdot):=
\begin{cases}
\log|\cdot -p| \chi(|\cdot -p|), \qquad & p\in \C \\
h_0\Big(\frac{1}{\cdot}\Big)=-\log |\cdot| \chi\Big(\frac{1}{|~\cdot~|}\Big), \qquad  & p=\infty
\end{cases},\label{eq_h}
\end{align}
where $\chi:\R \rightarrow \R^+$ is a smooth and non-increasing cut-off function centered at zero.  In addition, for any random variable $X$, we use $\mathbb{Q}[X]:=X-\E[X]$ to denote the centered version for brevity.

\medskip

The proof of Theorem \ref{th:logpot:univ} crucially relies on the following technical result of comparison. Instead of studying $L_n(g_p)$ in the decomposition \eqref{eq:LogSingAlt} of function with logarithmic singularities, we focus on $L_n(h_p)$ by a slight different decomposition in \eqref{eq_decomp} below. Though the behavior of $L_n(f_w)$ is determined by $L_n(g_w)$ and $L_n(g_\infty)$, we still include a direct comparison for $L_n(f_w)$ as a toy case in our proof.

\begin{theorem}[Comparison theorem]\label{th:replacement}\ \\
    Fix constants $C_1,C_2,C_3,C_4>0$ and integers $k_1,k_2,k_3\in \mathbb{N}$. Let $\{w_i\}_{i=1}^{k_1}$ be fixed points in $\mathbb{C}$ and $\{p_i\}_{i=1}^{k_2}$ be fixed points in $\overline{\mathbb{C}}$.   
     For any $1\leq i\leq k_3$, let $(f^{(i)}_n)$ be sequences of regular test functions such that {\footnote{Here $\mathcal{C}_0^{2}(\C\to\mathbb{R})$ means the class of $\mathcal{C}^2$ functions vanishing at infinity.}}
     \begin{equation}\label{eq_f}
     f^{(i)}_n \in\mathcal{C}_0^{2}(\C\to\mathbb{R}),
     \qquad\quad  \int|\Delta_z f^{(i)}_n(z)| \dd^2 z  \leq C_1(1+\log n)^{C_2},
     \end{equation}
     uniformly in $n$. Set $K=k_1+k_2+k_3$. Let $F_n:\mathbb{R}^{K}\to\mathbb{C}$ be a sequence in $\mathcal{C}^{5}(\R^K \to\mathbb{C})$ such that for any $1\leq l\leq 5$,
     \begin{equation}\label{eq_F}
    \max_{l_1+\cdots+l_{K}=l}\left|\frac{\partial^{l} F_n(x_1,\cdots, x_{K})}{\partial x_1^{l_1} \cdots \partial x_K^{l_K}}\right| \leq C_3\prod_{i=1}^K\Big(1+\log n +|x_i|\Big)^{C_4},
     \end{equation}
     uniformly in $n$ and for any $(x_1,\cdots, x_K)\in \R^K$. Define 
\begin{align}\label{XX}
\mathcal{X}_n=F_n\left(\mathbb{Q}\big[L_n(f_{w_1})\big], \cdots , \mathbb{Q}\big[L_n(f_{w_{k_1}})\big], \mathbb{Q}\big[L_n(h_{p_1})\big], \cdots , \mathbb{Q}\big[L_n(h_{p_{k_2}})\big],\mathbb{Q}\big[L_n(f_n^{(1)})\big],\cdots, \mathbb{Q}\big[L_n(f_n^{(k_3)})\big] \right),
\end{align}
     and define $\mathcal{X}^{\mathrm{Gin}}_n$ similarly by replacing  $M$ with $M^{\mathrm{Gin}}$. Then, for all $0<\delta<c_0/400$ with $c_0$ being the constant from \ref{it:condmom4}, we have
    \begin{align}\label{eq_diff}
    \Big|\mathbb{E}\big[\mathcal{X}_n\big]
    -\mathbb{E}\big[\mathcal{X}^{\mathrm{Gin}}_n\big]\Big| =O\big( n^{-\delta}\big).
    \end{align}
    Moreover,  for the expectations, we have 
    \begin{align}\label{eq_diff_mean}
    	\Big|\E\big[L_n(f)\big] 
    	-\E^{\mathrm{Gin}}\big[L_n(f)\big]\Big| =O\big( n^{-\delta}\big), \qquad \forall f \in \Big\{ f_{w_i},h_{p_i}, f^{(i)}_n\Big\},
    \end{align}
where $\E^{\mathrm{Gin}}$ denotes the corresponding expectation with $M$ replaced by $M^{\mathrm{Gin}}$.
\end{theorem}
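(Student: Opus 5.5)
The plan is to follow the Girko Hermitization plus Green function comparison strategy of \cite{chafai-garcia-zelada-xu}, adapted from extreme eigenvalues to linear statistics.

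\textbf{Step 1: Hermitization.} Every linear statistic in \eqref{XX} becomes an integral of log-determinants of Hermitian matrices. For $w\in\C$,
\[
L_n(f_w)=\log|\det(A-wB)|-\log|\det B|=\tfrac12\log\det\bigl((A-wB)(A-wB)^*\bigr)-\tfrac12\log\det(BB^*).
\]
Since $A-wB$ is again a matrix with independent entries of variance $1+|w|^2$, it is a rescaled Girko matrix. For $h_p$ and $f_n^{(i)}$ we use Green's formula
\[
L_n(f)=\frac{1}{2\pi}\int\Delta f(z)\,L_n(f_z)\,\mathrm{d}^2z ,
\]
which for $h_p$ is a smoothed version of $f_p$. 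Here $\Delta h_p$ is $2\pi\delta_p$ plus a smooth compactly supported part, and the case $p=\infty$ is handled by inversion, $A\leftrightarrow B$. Thus each entry of $\mathcal{X}_n$ is a functional of $\log|\det(X_z)|$ with $X_z=A-zB$, integrated against measures of total mass $O((\log n)^{C_2})$.

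\textbf{Step 2: Regularization.} Write
\[
\log|\det X_z|=\tfrac12\sum_j\log\bigl(s_j(X_z)^2\bigr),
\]
and split it into three pieces.
\begin{itemize}
\item The regularized part is $\tfrac12\log\det(X_zX_z^*+\eta^2)$ with $\eta=n^{-1-\epsilon}$. It is expressed through $\int_\eta^{T}\operatorname{Im}\operatorname{Tr}G_z(\mathrm{i}t)\,\mathrm{d}t$, where $G_z$ is the resolvent of the Hermitization of $X_z$, plus a cutoff at $T=n^{C}$.
\item The small singular value error comes from $s_{\min}(X_z)\le\eta$. It is controlled using \ref{it:condensity}, which gives the tail bound $\P(s_{\min}(X_z)\le t)\lesssim n^{C}t$, together with the finite moments \ref{it:condmom} for the large singular values.
\item The polynomial growth \eqref{eq_F} of the derivatives of $F_n$ lets us absorb these errors after Hölder's inequality, provided the error events have probability $n^{-c}$ and the statistics have polylogarithmic moments. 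These moment bounds follow from the Ginibre case together with a priori local law bounds.
\end{itemize}

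\textbf{Step 3: Lindeberg replacement.} Replace the entries of $A$ and $B$ one at a time, $2n^2$ swaps in total, by Gaussian entries. Expand $F_n$ of the regularized functionals to fifth order in each entry. Resolvent derivatives are controlled by the local law for $X_z$ on scales $\eta\ge n^{-1-\epsilon}$ (isotropic entry bounds of order $n^{-1/2+\epsilon}$).
\begin{itemize}
\item The first and second order terms cancel because $\mathcal{E}_1=\mathcal{E}_2=0$.
\item The third order terms contribute $n^2\cdot n^{-3/2}\cdot\mathcal{E}_3\cdot n^{C\epsilon}$. This is $O(n^{-c_0+C\epsilon})$ by \ref{it:condmom4}, after using that off-diagonal third order terms carry an extra $n^{-1/2}$ in expectation.
\item The fourth order terms contribute $n^2\cdot n^{-2}\mathcal{E}_4 n^{C\epsilon}$.
\item The fifth order remainder contributes $n^2\cdot n^{-5/2+C\epsilon}$, using \ref{it:condmom}.
\end{itemize}
Choosing $\epsilon$ small relative to $c_0$ yields the rate $n^{-\delta}$ with $\delta<c_0/400$. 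The $z$-integrals against $\Delta f$ cost only $(\log n)^{C}$. Taking $F_n(x)=x$-type functions, or repeating the expansion without centering, gives \eqref{eq_diff_mean}. The centering $\mathbb{Q}$ is handled by comparing the means first and then using the Lipschitz bounds \eqref{eq_F}.

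\textbf{Main obstacle.} The hardest step is the uniformity in $z$ required in the Green's-formula integrals. The measure $\Delta h_p$ has an atom at $p$, and for $p=\infty$ we must control $X_z$ for large $|z|$. We also need the small singular value estimate and the local law uniformly over $z$ in the support. I would first treat large $|z|$ by rescaling $X_z/\sqrt{1+|z|^2}$, which makes the matrix model uniform, and use a union bound over an $n^{-C}$-net in $z$ with Lipschitz continuity of $z\mapsto\log\det(X_zX_z^*+\eta^2)$.
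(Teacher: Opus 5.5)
\textbf{The gap is in the regularization step: your small-singular-value bound is stated at the wrong scale.} The cutoff has to sit at $\eta_0=n^{-1-\epsilon}$.
\begin{itemize}
\item It must lie below the typical size $1/n$ of the smallest singular values of $\tilde{A}-z\tilde{B}$.
\item It cannot lie much below, because for $\eta<1/n$ resolvent entries are only $\prec 1/(n\eta)$, and your third and fourth order swap terms carry powers of this bound. At $\eta_0$ the entries are $\prec n^{\epsilon}$, cf.\ \eqref{eq_G_bound}, not $n^{-1/2+\epsilon}$ as you state.
\end{itemize}
With this cutoff, a singular value in the window $[n^{-L},n^{-1-\epsilon/2}]$ can contribute up to order $\log n$ to $\frac12\sum_j\log(1+\eta_0^2/s_j^2)$. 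So you need something like $\mathbb{P}\bigl(s_{\min}(\tilde{A}-z\tilde{B})\le n^{-1-\epsilon/2}\bigr)\le n^{-c}$ uniformly in $z$. This is exactly what Lemma~\ref{lemma_tail} provides: $\mathbb{P}(s_{\min}\le\eta)\le Cn^2\eta^2$, which is already sharp for Ginibre.

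The bound you quote, $\mathbb{P}(s_{\min}\le t)\lesssim n^{C}t$ with a generic $C$, handles the deep tail $s_{\min}\le n^{-L}$. It is vacuous in this window unless $C<1+\epsilon/2$. An estimate that loses a power of $n$, such as the elementary distance-to-hyperplane bound from \ref{it:condensity}, does not suffice.

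You also need two local-law inputs that your sketch does not mention:
\begin{itemize}
\item $\#\{j:s_j(\tilde{A}-z\tilde{B})\le n^{-1-\epsilon/2}\}\prec 1$;
\item singular values above $n^{-1-\epsilon/2}$ contribute only $O_\prec(n^{-\epsilon/2})$.
\end{itemize}
Both follow from monotonicity of $\eta\,\Im\mathrm{Tr}\,G^z(\mathrm{i}\eta)$, as in the proof of Lemma~\ref{lemma_tiny}. In this step the finite moments \ref{it:condmom} only serve to control the upper cutoff $T$.

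Granting that input, the rest is a legitimate variant of the paper's proof.
\begin{itemize}
\item \emph{Lindeberg instead of the flow.} Swapping entries one at a time, instead of the Ornstein--Uhlenbeck interpolation with cumulant expansions, is a standard equivalent route. It needs the local law for each of the $O(n^2)$ hybrid ensembles, which is just a union bound, in place of the paper's time grid. It uses the same one-entry resolvent expansion for the fifth-order remainder.
\item \emph{Treatment of $h_p$.} Your exact Green formula is more economical than the paper's. Since $\Delta h_p=2\pi\delta_p+\rho_p$ with $\rho_p$ smooth of total mass $-2\pi$, the $\log|\det\tilde{B}|$ terms cancel, and
$L_n(h_p)=\log|\det(\tilde{A}-p\tilde{B})|+\frac{1}{2\pi}\int\rho_p(z)\log|\det(\tilde{A}-z\tilde{B})|\,\mathrm{d}^2z$.
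For $p=\infty$, the Laplacian on $\mathbb{C}$ is smooth on a bounded annulus, and the singularity appears as the extra term $\log|\det\tilde{B}|$, so no large $|z|$ is involved. This reuses the $f_w$ estimates at a single point and dispenses with the $n^{-2L}$-smoothed $\widetilde{h_p}$ and Lemma~\ref{lemma_LL}.
\item \emph{A small correction.} Applying the comparison with $F_n(x)=x$ gives nothing for \eqref{eq_diff_mean}, since $\mathbb{E}\,\mathbb{Q}[\cdot]=0$. Your second option, expanding the uncentered regularized statistics as the paper does, is the one that works.
\end{itemize}
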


Armed with Theorem \ref{th:replacement}, we are ready to prove Theorem \ref{th:logpot:univ}.

\subsection{Proof of Theorem \ref{th:logpot:univ}}

We first extend Theorem \ref{th:LogSing} beyond the spherical ensemble. From Definition \ref{df:main:LogSing}, it suffices to consider test functions with multiple logarithmic singularities of the following form
\begin{align}\label{eq_decomp}
f=c_1 h_{p_1}+\cdots +c_k h_{p_k}+\mathfrak{h}+\mathfrak{c}, \qquad c_i \in \R \setminus \{0\}, \qquad p_i \in \overline{\C},
\end{align}
with $h_{p}$ defined in \eqref{eq_h},  $\mathfrak{h} \in \mathcal{C}_0^2$ being a regular function satisfying (\ref{eq_f}), and $\mathfrak{c}\in \R$ being a constant. Note that the decomposition above is slightly different than \eqref{eq:LogSingAlt}, and the constant $\mathfrak{c}$ is determined by the value of $f$ at the infinity point.  Applying the comparison principle \eqref{eq_diff_mean} to functions $\{h_{p_i}\}_{i=1}^{k}$ and $\mathfrak{h}$, we have
\begin{align}\label{eq_goal_mean}
\E\big[L_n(f)\big]=&\sum_{i=1}^k c_i\E\big[L_n(h_{p_{i}})\big]+\E\big[L_n(\mathfrak{h})\big]+n \mathfrak{c}=\sum_{i=1}^k c_i\E^{\mathrm{Gin}}\big[L_n(h_{p_{i}})\big]+\E^{\mathrm{Gin}}\big[L_n(\mathfrak{h})\big]+n \mathfrak{c}+O(n^{-\delta})\\
=&\E^{\mathrm{Gin}}\big[L_n(f)\big]+O(n^{-\delta})=n\int f\mathrm d \mu+O(n^{-\delta}),
\end{align}
where we also used the Ginibre expectation in \eqref{eq:ELn}. It then suffices to prove
   \begin{equation}\label{eq_goal_dist}
   \frac{\mathbb{Q} [L_n(f)]}{\sqrt{\frac{1}{4}\log n}} =\frac{\sum_{i=1}^{k} c_i \mathbb{Q} [L_n(h_{p_i})]+\mathbb{Q} [L_n(\mathfrak{h})]}{\sqrt{\frac{1}{4}\log n}} 
      \xrightarrow[n \to \infty]{\mathrm{d}}
	   \mathcal{N}(0, c_1^2+\cdots+c_k^2),
    \end{equation}
    with $\mathbb{Q}[\cdot]=\cdot-\mathbb{E}[\cdot]$. This follows directly from Theorem \ref{th:replacement} for $k_1=0$, $k_2=k$, $k_3=1$, and choosing  
\begin{align}\label{eq_FF}
F_n(x_{1}, \cdots,x_{k+1})=\prod_{i=1}^{k+1} \exp\Big(\ii t_i \frac{x_i}{\sqrt{\frac{1}{4}\log n}}\Big), \qquad t_i \in \R,
\end{align}
with $x_i=\mathbb{Q} [L_n(h_{p_i})]$ for $1\leq i\leq k$ and $x_{k+1}=\mathbb{Q} [L_n(\mathfrak{h})]$. Setting $t_i=c_i t$ for $1\leq i\leq k$ and $t_{k+1}=t$, we have, 
\begin{align}
\E[F_n(\cdots)]=&\E \Big[ \exp \Bigl( \ii t \frac{\mathbb{Q}[L_n(f)]}{\sqrt{\frac{1}{4}\log(n)}}\Bigr)\Big]=\E^{\mathrm{Gin}} \Big[ \exp \Bigl( \ii t \frac{\mathbb{Q}[L_n(f)]}{\sqrt{\frac{1}{4}\log(n)}}\Bigr)\Big]+O(n^{-\delta})\nonumber\\
\xrightarrow[n\to\infty]{}&\mathrm{e}^{-\frac{1}{2}(c_1^2+\cdots+c_k^2) t^2}, \qquad \qquad t \in \R,
\end{align}
where we also used \eqref{eq:LogSing} in the last step. This proves \eqref{eq_goal_dist} which, together with \eqref{eq_goal_mean}, yields Theorem \ref{th:LogSing}.

We next prove the universality of Corollary \ref{co:Green:CLT} and Theorem \ref{th:Green:Cov} for test functions $g_p=\log d(p,\cdot)$ with $p\in \overline{\C}$. Note that from \eqref{eq:d}, $g_{\infty}$ can be split into a sum of $h_\infty$ defined in \eqref{eq_h} and a regular test function $\mathfrak{f}$, \ie 
	\begin{align}\label{eq_split}
		g_{\infty}(z)=-\frac{1}{2}\log (1+|z|^2)=h_\infty(z)+\mathfrak{f}(z),
	\end{align}
    with $h_{\infty}(z)=-\log |z| \chi\Big(\frac{1}{|z|}\Big)$ and $\mathfrak{f} \in \mathcal{C}_0^2$  satisfying \eqref{eq_f}. 
Moreover, from \eqref{eq:split-euclid} and \eqref{eq_split},  we  have  
	\begin{align}\label{eq_split1}
		g_{w}(z)=f_w(z)+g_{\infty}(z)-C_{w}=f_w(z)+h_\infty(z)+\mathfrak{f}(z)-C_{w}, \qquad w\in \C.
	\end{align}
    Thus using the comparison principle \eqref{eq_diff_mean} and the Ginibre expectation \eqref{eq:Green:mean:var}, we have
\begin{align}\label{eq_mean_2}   \E\big[L_n(g_{p})\big]=&\E^{\mathrm{Gin}}\big[L_n(g_{p})\big]+O\big( n^{-\delta}\big)=-\frac{n}{2}+O\big( n^{-\delta}\big), \qquad p \in \overline{\C}.
\end{align}
Hence, the universal joint law as in \eqref{eq:Green:multi} for test functions $g_{p_i}$ with $k$ fixed points $w_i \in \C$, follows from Theorem \ref{th:replacement} with $k_1=k$, $k_2=k_3=1$ and choosing a proper $F_n$ as in \eqref{eq_FF}. This proves the universality of Corollary \ref{co:Green:CLT}. The universality of Theorem \ref{th:Green:Cov} (up to an error $o_n(1)$) is proved similarly using Theorem \ref{th:replacement} by choosing
\begin{align}\label{eq_FFF}
F_n(x_1)=x_1^2, \qquad  F_n(x_1,x_2)=x_1x_2, \qquad x_1=\mathbb{Q}[L_n(g_p)], \quad x_2=\mathbb{Q}[L_n(g_q)].
\end{align}

Similarly, for test functions $f_{w_i}$ defined in \eqref{eq_fw}, with $k$ fixed points $w_i \in \C$, we have  
\begin{align}\label{eq_mean_1}
\E\big[L_n(f_{w})\big]=&\E^{\mathrm{Gin}}\big[L_n(f_{w})\big]+O\big( n^{-\delta}\big)=\frac{n}{2} \log (1+|w|^2)+O\big( n^{-\delta}\big),	
\end{align}
where we used \eqref{th:replacement} and \eqref{eq:LogPot:mean} for Ginibre matrices. Then the universal joint law as in \eqref{eq:LogPot:multi} for test functions $f_{w_i}$ follows from Theorem \ref{th:replacement} with $k_1=k$, $k_2=k_3=0$ and choosing a proper function $F_n$ as in \eqref{eq_FF}. Moreover, the universality of Corollary \ref{co:LogPot:Cov} is obtained similarly using Theorem \ref{th:replacement} with a proper $F_n$ chosen as in \eqref{eq_FFF}.

We hence finish the proof of Theorem \ref{th:logpot:univ}.

\medskip

The remaining section is devoted to proving Theorem \ref{th:replacement}.

\subsection{Proof of Theorem \ref{th:replacement}}

We will follow the same proof strategy of \cite[Theorem 4.1]{chafai-garcia-zelada-xu}. For completeness, we will recall several notations, key ingredients, and repeat some routine arguments used there.

We first normalize both Girko matrices $A$ and $B$ by $1/\sqrt{n}$, \ie $\A=A/\sqrt{n}$ and $\B=B/\sqrt{n}$. It is easy to check that $M=\A \B^{-1}$. We also set $\MG={\tilde A}^{\mathrm{Gin}}({\tilde B}^{\mathrm{Gin}})^{-1}$ with ${\tilde A}^{\mathrm{Gin}}=\AG/\sqrt{n}$, ${\tilde B}^{\mathrm{Gin}}=\BG/\sqrt{n}$ being normalized Ginibre matrices. For conventional consistency, we always write $\mathrm{Gin}$ as a superscript to denote the Ginibre case.

Throughout the proof, $c,C>0$ denote small and large constants, respectively, which may change from line to line.  For integers $k,l\in\N$, with $k<l$, we write 
$\llbracket k,l \rrbracket= \{k,\dots, l\}$ and 
$\llbracket k \rrbracket=
\llbracket 1,k \rrbracket$.  
We will often use the concept of ``with very high probability'' for an $n$-dependent event, meaning that for any fixed $D>0$ the probability of the event is bigger than $1-n^{-D}$ when $n\ge n_0(D)$, which may depend on the constants $C_i~(1\leq i\leq 4)$ from Theorem \ref{th:replacement} and $D_k$ from \ref{it:condmom}. We also recall the standard notion of \emph{stochastic domination}:  
given two families of non-negative random variables
\[
X=\left(X^{(n)}(u) \,:\, n\in\N, u\in U^{(n)}\right)\quad\text{and}\quad Y=\left(Y^{(n)}(u) \,:\, n\in\N, u\in U^{(n)}\right)
\] 
indexed by $n$ (and possibly some parameter $u$  in some parameter space $U^{(n)}$), 
we say that $X$ is {\it stochastically dominated} by $Y$, for brevity $X\prec Y$ or $X= O_\prec(Y)$, if for any small $\xi$ and large $D>0$ we have 
\begin{equation}
	\label{prec}
	\sup_{u\in U^{(n)}} \mathbb{P}\left[X^{(n)}(u)>n^\xi  Y^{(n)}(u)\right]\le n^{-D}
\end{equation}
for large enough $n\geq n_0(\xi,D)$, which may depend on the constants $C_i~(1\leq i\leq 4)$ from Theorem \ref{th:replacement} and $D_k$ from \ref{it:condmom}. 
We often use the notation $\prec$ also for deterministic quantities, then~\eqref{prec} holds with probability one. 
Properties of $\prec$ can be found in, \eg \cite[Proposition 6.5]{zbMATH06780221}.
\begin{lemma}[Proposition 6.5 in \cite{zbMATH06780221}]\label{lemma_dominant}\ %
	\begin{enumerate}
		\item $X \prec Y$ and $Y \prec Z$ imply $X \prec Z$;
		\item If $X_1 \prec Y_1$ and $X_2 \prec Y_2$, then $X_1+X_2 \prec Y_1+Y_2$ and $X_1X_2 \prec Y_1Y_2;$
		\item If $X \prec Y$, $\E Y \geq N^{-c_1}$ and $|X| \leq N^{c_2}$ almost surely with some fixed exponents $c_1$, $c_2>0$, then we have $\E X \prec \E Y$.
	\end{enumerate}
\end{lemma}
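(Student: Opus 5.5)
The three properties follow directly from the definition \eqref{prec}. Each uses a union bound after splitting the exponent $\xi$, and property (3) also uses a truncation argument. Throughout, all variables are non-negative, and every estimate below is uniform in the parameter $u\in U^{(n)}$, because we take the supremum over $u$ only after bounding each probability by quantities that do not depend on $u$. We write $N=n$.

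\emph{Property (1).} Fix $\xi>0$ and $D>0$. If $X>n^{\xi}Z$, then either $X>n^{\xi/2}Y$ or $Y>n^{\xi/2}Z$: otherwise $X\le n^{\xi/2}Y\le n^{\xi}Z$. Hence
\[
\mathbb{P}\bigl[X>n^\xi Z\bigr]\le \mathbb{P}\bigl[X>n^{\xi/2}Y\bigr]+\mathbb{P}\bigl[Y>n^{\xi/2}Z\bigr]\le 2n^{-D-1}\le n^{-D}
\]
for $n\ge n_0(\xi,D)$. Here we applied $X\prec Y$ and $Y\prec Z$ with the pair $(\xi/2,D+1)$.

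\emph{Property (2).} For sums, the event $X_1+X_2>n^\xi(Y_1+Y_2)$ forces $X_1>n^\xi Y_1$ or $X_2>n^\xi Y_2$. A union bound with $D+1$ in place of $D$ then gives $X_1+X_2\prec Y_1+Y_2$. For products, non-negativity gives
\[
\{X_1X_2>n^\xi Y_1Y_2\}\subset\{X_1>n^{\xi/2}Y_1\}\cup\{X_2>n^{\xi/2}Y_2\},
\]
and the same union bound with $(\xi/2,D+1)$ gives $X_1X_2\prec Y_1Y_2$.

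\emph{Property (3).} This is the only step that needs care, since the bad event may carry large values of $X$. Fix $\xi>0$. Let $\Omega_n=\{X\le n^{\xi/2}Y\}$, and choose $D:=c_1+c_2+1$, so that $\mathbb{P}(\Omega_n^c)\le n^{-D}$ for $n$ large. Split the expectation:
\[
\mathbb{E}X=\mathbb{E}[X\mathbb{1}_{\Omega_n}]+\mathbb{E}[X\mathbb{1}_{\Omega_n^c}]\le n^{\xi/2}\,\mathbb{E}Y+n^{c_2}n^{-D}.
\]
By the choice of $D$ and the assumption $\mathbb{E}Y\ge N^{-c_1}$,
\[
n^{c_2}n^{-D}=n^{-c_1-1}\le n^{-1}\mathbb{E}Y .
\]
Therefore $\mathbb{E}X\le 2n^{\xi/2}\mathbb{E}Y\le n^{\xi}\mathbb{E}Y$ for $n$ large. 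Both sides are deterministic, so this inequality is exactly $\mathbb{E}X\prec\mathbb{E}Y$ in the deterministic sense of \eqref{prec}.

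The only point to watch is uniformity in $u$. The threshold $n_0$ depends only on $(\xi,D,c_1,c_2)$ and on the thresholds in the hypotheses, which are themselves uniform in $u$. So the conclusion holds uniformly in $u$.
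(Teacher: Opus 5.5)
Your proof is correct. It is the standard argument behind the cited Proposition~6.5 of Erd\H{o}s--Yau; the paper itself gives no proof and only cites it. For (1)--(2) you use union bounds after splitting $\xi$ (and replacing $D$ by $D+1$), and for (3) you split $\mathbb{E}X$ on the good event $\{X\le n^{\xi/2}Y\}$ and let the almost sure bound $X\le N^{c_2}$ together with $\mathbb{E}Y\ge N^{-c_1}$ absorb the contribution of the bad event. The version stated here has a random $Y$ and an almost sure bound on $X$; your step $\mathbb{E}[X\mathbb{1}_{\Omega_n}]\le n^{\xi/2}\mathbb{E}Y$ and your choice $D=c_1+c_2+1$ handle exactly that.
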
 

\medskip

Below are two key ingredients from \cite{chafai-garcia-zelada-xu} and we summarize them in two lemmas. The first one is the local laws for the resolvent of the Hermitise matrix of $\A-z\B$, which is stated in \cite{chafai-garcia-zelada-xu} and essentially follows from \cite{zbMATH06347297} (see also \cite{zbMATH06221300,zbMATH06261246,zbMATH06330939}).
\begin{lemma}[Local laws]\label{lemma_local_law}
Define the Hermitise matrix of $X^z:=\A-z\B$ and its resolvent by
\begin{align}\label{eq_hermitise}
    H^{z}:=\begin{pmatrix}
		0  &   \A-z\B \\
		(\A-z\B)^*    & 0
	\end{pmatrix}  \in \C^{2n \times 2n}, \qquad G^{z}(w):=(H^{z}-w)^{-1}, \qquad \quad w \in \mathbb{C} \setminus \mathbb{R}.
\end{align}
Fix any small $\xi>0$. Then the following
	\begin{align}\label{eq:local_law_G_z}
		\Big| \big\langle \mathbf{x}, \big(G^z(w)-m^z(w)\big) \mathbf{y} \big\rangle \Big|  \prec \sqrt{
			\frac{1}{n \eta}}+\frac{1}{n\eta}, \qquad \Big|\frac{1}{2n}  \mathrm{Tr}G^{z}(w)-m^z(w)\Big|  \prec \frac{1}{n\eta},
	\end{align}
	hold uniformly for any $z \in \C$, $|w|\leq 10$ and $\eta=|\Im w| >0 $ and for any deterministic unit vectors $\mathbf{x}, \mathbf{y}\in \mathbb{C}^{2n}$, with the deterministic function $m^{z}$ given by
	\begin{align}\label{rho}
		m^{z}(w):=\frac{1}{\sqrt{1+|z|^2}} m_{sc}\Big( \frac{w}{\sqrt{1+|z|^2}}\Big), \qquad w\in \C \setminus \R,
	\end{align}
     where $m_{sc}$ is the Cauchy\,--\,Stieltjes transform of the semicircle distribution $\rho_{sc}(x)=\frac{1}{2 \pi} \sqrt{(4-x^2)_+}$. In particular, $|m^{z}|$ is uniformly bounded by one for any $z\in \C$.
     \end{lemma}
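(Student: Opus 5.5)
The proof reduces the local law for $H^z$ to a single local law, for the Hermitization of one normalized i.i.d. matrix at spectral parameter $0$, by exploiting the scaling structure of $X^z=\A-z\B$. Set $s_z:=\sqrt{1+|z|^2}\geq1$ and $Y^z:=X^z/s_z$. The entries of $\sqrt{n}\,Y^z$ are $(A_{ij}-zB_{ij})/s_z$. They are i.i.d. in $(i,j)$, centered, and have unit variance $\mathbb{E}|\cdot|^2=1$. Their second moment $\mathbb{E}[(\cdot)^2]=(\mathbb{E}A_{11}^2+z^2\mathbb{E}B_{11}^2)/s_z^2$ vanishes by \ref{it:condmom4}. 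Their higher moments satisfy $\mathbb{E}|\cdot|^k\leq 2^k D_k$ by \ref{it:condmom}, uniformly in $z\in\C$ because $|z|/s_z\leq1$. So $\{\sqrt n Y^z\}_{z\in\C}$ is a family of Girko matrices with uniformly controlled moments. Writing $H^z=s_z\,\mathcal{H}(Y^z)$, where $\mathcal{H}(Y)$ denotes the $2n\times 2n$ chiral Hermitization, we get the exact identity
\begin{equation}
G^z(w)=\frac{1}{s_z}\,\mathcal{G}^{z}\Big(\frac{w}{s_z}\Big),\qquad \mathcal{G}^z(u):=(\mathcal{H}(Y^z)-u)^{-1}.
\end{equation}
The definition \eqref{rho} of $m^z$ is exactly the same rescaling applied to $m_{sc}$.

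The core input is the following isotropic and averaged local law: uniformly in $|u|\leq 10$, $\Im u\neq0$, and deterministic unit vectors,
\begin{equation}
|\langle\mathbf{x},(\mathcal{G}^z(u)-m_{sc}(u))\mathbf{y}\rangle|\prec\sqrt{\tfrac{1}{n\Im u}}+\tfrac{1}{n\Im u},\qquad |\tfrac{1}{2n}\mathrm{Tr}\,\mathcal{G}^z(u)-m_{sc}(u)|\prec\tfrac{1}{n\Im u}.
\end{equation}
The target $m_{sc}$ is correct because the nonzero eigenvalues of $\mathcal{H}(Y)$ are $\pm$ the singular values of $Y$. Their squares follow the Marchenko--Pastur law of ratio one, whose symmetrized square root is the semicircle law, so the limit Stieltjes transform is $u\,m_{\mathrm{MP}}(u^2)=m_{sc}(u)$. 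I would take this statement from the isotropic local law for sample covariance matrices in \cite{zbMATH06347297}, transferred to the chiral $2\times2$ block resolvent through the Schur complement formulas. I would cross-check it against the local laws for the Hermitization of i.i.d. matrices at $z=0$ in \cite{zbMATH06221300,zbMATH06261246,zbMATH06330939}. Granting it, rescaling finishes the argument. With $\eta'=\eta/s_z\leq\eta$, the error in the bound for $G^z$ is multiplied by $1/s_z$. We have $\frac{1}{s_z}\sqrt{1/(n\eta')}=\sqrt{1/(s_z n\eta)}\leq\sqrt{1/(n\eta)}$ and $\frac{1}{s_z}\cdot\frac{1}{n\eta'}=\frac{1}{n\eta}$. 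Moreover $|w/s_z|\leq 10$, and $|m^z|\leq|m_{sc}|/s_z\leq1$.

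The step I expect to be the main obstacle is that the matrix is square, which is the hard-edge situation. For large $|z|$, the rescaled spectral parameter $w/s_z$ approaches $0$, exactly where the Marchenko--Pastur density of ratio one is singular. So I must make sure the cited law holds uniformly near $u=0$, in the symmetric semicircle form, and not merely in the bulk of $\mathrm{supp}\,\rho_{\mathrm{MP}}$ away from the origin. In the chiral formulation $m_{sc}$ is perfectly regular at $0$, which is why I would state everything for $\mathcal{H}(Y)$ rather than for $YY^*$. For the regime $\eta\leq n^{-1+\xi}$, where the local law is not proved directly, I would use the standard monotonicity of $\eta\mapsto\eta\,\Im\langle\mathbf{x},\mathcal{G}(E+\ii\eta)\mathbf{x}\rangle$. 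Combined with polarization, this transfers the bound at $\eta=n^{-1+\xi}$ down to smaller $\eta$, where the right-hand side $1/(n\eta)$ already dominates up to the factor $n^\xi$ absorbed by $\prec$.

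Finally, to make the bound uniform in $z\in\C$ and $w$, I would first note that large $|z|$ poses no difficulty, since the entry law of $\sqrt nY^z$ varies in a compact family of coefficients $(1,z)/s_z$ in the unit ball. I would then place an $n^{-C}$-grid on this parameter set and on $\{|w|\leq10,\ \eta\geq n^{-C}\}$. The union bound is affordable because $\prec$ gives probability bounds of order $n^{-D}$ for every $D$. Between grid points, the resolvent is Lipschitz with constant $O(\eta^{-2}\|\A\|+\eta^{-2}\|\B\|)$, which is polynomial in $n$ with very high probability, and $m^z$ is similarly Lipschitz.
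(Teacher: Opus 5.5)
Your reduction is correct, but it reaches the statement by a different route than the paper.

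The paper takes the law of \cite{chafai-garcia-zelada-xu} for $|z|\le 1$, $\eta\ge n^{-1+\xi}$ as input and extends it in two steps. First, it reaches small $\eta$ by writing $G^z(\mathrm{i}\eta_1)-G^z(\mathrm{i}\eta)$ as an integral of $(G^z)^2$, bounded through Cauchy--Schwarz, the Ward identity and monotonicity of $\eta\langle\mathbf{x},\Im G^z(\mathrm{i}\eta)\mathbf{x}\rangle$. Second, it reaches $|z|>1$ through the inversion $X^z\overset{\mathrm{d}}{=}-zX^{z^{-1}}$ (swapping $A$ and $B$), with a block formula expressing $G^z(w)$ through $G^{z^{-1}}(w/|z|)$.

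You instead observe that $\sqrt n\,Y^z=(A-zB)/s_z$ is itself a Girko matrix with unit variance, zero pseudo-variance and moments bounded uniformly in $z$. The exact identity $G^z(w)=s_z^{-1}\mathcal{G}^z(w/s_z)$ then reduces all $z\in\mathbb{C}$ at once to a $z=0$ law for the chiral Hermitization of a single i.i.d. matrix, and your bookkeeping of the error terms is right. This removes the need for the inversion symmetry, so $A$ and $B$ need not even be identically distributed, and it explains the form of $m^z$. The price is that you need that $z=0$ law with constants depending only on the moment bounds, uniformly over the family $\sqrt n\,Y^z$. This is the same kind of external input the paper imports from \cite{chafai-garcia-zelada-xu}, while the paper's route is shorter once the $|z|\le1$ law is available.

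The hard-edge obstacle you flag is real but shared. For large $|z|$ both reductions send the spectral parameter to $0$, where the sample-covariance law of \cite{zbMATH06347297} is not stated in the square case. So in either route the input must be an isotropic law for the Hermitization that is uniform near spectral parameter $0$.

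Two small corrections. First, monotonicity of $\eta\,\Im\langle\mathbf{v},\mathcal{G}\mathbf{v}\rangle$ controls only imaginary parts, so your small-$\eta$ step should go through the integral of $\mathcal{G}^2$ as in the paper. Polarization is then unnecessary, since Cauchy--Schwarz treats $\mathbf{x}\neq\mathbf{y}$ directly. Second, the grid and union-bound step is superfluous. In the definition of $\prec$ used here the supremum over parameters sits outside the probability, so uniform constants suffice.
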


     \begin{proof}[Proof of Lemma \ref{lemma_local_law}] The local law was proved in \cite[Eq. (4.17)-(4.18)]{chafai-garcia-zelada-xu} for any $|z|\leq 1$ and $\eta \geq n^{-1+\xi}$ with $\xi>0$ being any fixed small constant. We next extend these results in two directions: 1) to the complementary $\eta$-regime $0 <\eta \leq n^{-1}$; 2) to the complementary $z$-regime $|z|>1$.
     \begin{itemize}
     \item[(1)] \textbf{Extension to $0< \eta \leq  n^{-1}$}. Fix any small constant $\xi>0$ and set $\eta_1=n^{-1+\xi}$. Using $G^2=-\ii \frac{\dd}{\dd \eta} G(\ii \eta)$ and that $\Big|\frac{\dd}{\dd \eta}m^z(\ii \eta)\Big|$ is uniformly bounded by some constant from (\ref{rho}), we have
     \begin{align}
		&\Big|\big\langle \mathbf{x}, \big(G^z(\ii \eta_1)-m^z(\ii \eta_1)\big)\mathbf{y} \big\rangle-\big\langle \mathbf{x}, \big(G^z(\ii \eta)-m^z(\ii \eta)\big)\mathbf{y} \big\rangle\Big|=\Big|\int_{\eta}^{\eta_1} \big\langle \mathbf{x}, \big(G^z(\ii s)\big)^2\mathbf{y} \big\rangle \dd s \Big|+O(\eta_1)\\
		& \qquad\qquad \qquad \qquad \qquad\qquad \qquad  \leq  \int_{\eta}^{\eta_1} \frac{1}{\eta}\sqrt{\big\langle \mathbf{x}, \Im G^z(\ii s) \mathbf{x} \big\rangle\big\langle \mathbf{y}, \Im G^z(\ii s) \mathbf{y} \big\rangle}   \dd s+O(\eta_1)\\
        &\qquad\qquad \qquad \qquad \qquad \qquad \qquad\leq  \eta_1 \sqrt{\big\langle \mathbf{x}, \Im G^z(\ii \eta_1) \mathbf{x} \big\rangle\big\langle \mathbf{y}, \Im G^z(\ii \eta_1) \mathbf{y} \big\rangle} \int_{\eta}^{\eta_1} \frac{1}{s^2} \dd s+O(\eta_1)\\
        &\qquad\qquad \qquad \qquad \qquad \qquad \qquad =O_\prec\Big( \frac{n^{\xi}}{n\eta}\Big)
	\end{align}
    where in the second line we used the Cauchy-Schwarz inequality and the Ward identity $GG^*(\ii s)=s^{-1}\Im G(\ii s)$, in the third line we used that the function $\eta \big\langle \mathbf{x}, \Im G^z(\ii \eta) \mathbf{x} \big\rangle$ is increasing in $\eta>0$, and in the last line we used that $\big\langle \mathbf{x}, \Im G^z(\ii \eta_1) \mathbf{x} \big\rangle \prec 1$ from \eqref{eq:local_law_G_z} and \eqref{rho} for $\eta_1=n^{-1+\xi}$ with $\xi>0$.  Since $\xi>0$ is chosen arbitrary small,  we have extended the first local law in \eqref{eq:local_law_G_z} for $\eta\geq n^{-1+\xi}$ to the regime $0< \eta \leq n^{-1}$, using the definition of $\prec$ in \eqref{prec}. One can extend the second local law  in (\ref{eq:local_law_G_z}) to the regime $0< \eta \leq n^{-1}$ using a similar argument. 
    
      \item[(2)] \textbf{Extension to $|z|> 1$}.
     Notice that from \eqref{eq_hermitise}, the matrix $X^{z}\overset{\mathrm{d}}{=}-zX^{z^{-1}}$ and thus 
     \begin{align}
     H^{z}\overset{\mathrm{d}}{=} - Z H^{z^{-1}}, \qquad Z:=\begin{pmatrix}
		z  &   0 \\
		0    & z^*
	\end{pmatrix}, \qquad G^{z}(w)\overset{\mathrm{d}}{=}-Z^{-1} \Big(H^{z^{-1}}+wZ^{-1}\Big)^{-1}.
    \end{align}   
    By the Schur complement formula, we have
    \begin{align}\label{eq_G_inverse}
    G^{z}(w)=\begin{pmatrix}
		\frac{1}{|z|} \big[G^{z^{-1}}\big(\frac{w}{|z|}\big)\big]_{11}  & -\frac{1}{z}  \big[G^{z^{-1}}\big(\frac{w}{|z|}\big)\big]_{12} \\
		-\frac{1}{z^*}\big[G^{z^{-1}}\big(\frac{w}{|z|}\big)\big]_{21}    & \frac{1}{|z|} \big[G^{z^{-1}}\big(\frac{w}{|z|}\big)\big]_{22}
	\end{pmatrix}, \qquad \quad G=:\begin{pmatrix}
		[G]_{11}  & [G]_{12} \\
		[G]_{21}    & [G]_{22}
	\end{pmatrix}.
    \end{align}
    Hence the local law for $G^{z}$ with $|z|>1$ follows from the local law for $G^{z^{-1}}$ proved in \cite{chafai-garcia-zelada-xu}. 
     
     \end{itemize}

       \end{proof}  
     
     The second one is the tail bound of the smallest singular value of $\A-z\B$ stated in \cite[Eq. (4.24)]{chafai-garcia-zelada-xu}, which directly follows from \cite[Lemma 3.4]{Davies} (see also \cite{invert,wegner}).
\begin{lemma}[Smallest singular value]\label{lemma_tail}
    There exists a constant $C$ depending on $D_0$ from \ref{it:condensity} such that the smallest singular value of $\A-z\B$, denoted by $\lambda_{\min}(\A-z\B)$,  satisfies
	\begin{align}\label{eq:tail_estimate}
		\mathbb{P} \Big(\lambda_{\min}(\A-z\B) \leq \eta \Big) \leq C n^2 \eta^2, \qquad \forall \eta>0,\quad z\in \C.
	\end{align}
\end{lemma}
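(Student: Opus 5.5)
The plan is to reduce Lemma \ref{lemma_tail} to an anti-concentration bound for a single matrix with independent entries of uniformly bounded density, and then to invoke (or reprove) the Davies-type smallest singular value estimate \cite[Lemma 3.4]{Davies}. The first step handles the dependence on $z$. We write $\A-z\B=n^{-1/2}X$ with $X:=A-zB$, so that $\lambda_{\min}(\A-z\B)=n^{-1/2}s_{\min}(X)$ and
\[
\mathbb P\bigl(\lambda_{\min}(\A-z\B)\le\eta\bigr)=\mathbb P\bigl(s_{\min}(X)\le\sqrt n\,\eta\bigr).
\]
The entries $X_{ij}=A_{ij}-zB_{ij}$ are i.i.d. For each fixed $z$, by \ref{it:condensity}, the density of $X_{ij}$ is the convolution of the law of $-zB_{ij}$ with the density $\varphi$ of $A_{ij}$. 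It is therefore pointwise bounded by $\|\varphi\|_\infty\le D_0$, uniformly in $z\in\mathbb C$; we keep only this uniform bound. No moment condition is needed. It thus suffices to prove that an $n\times n$ matrix $X$ with independent complex entries of density at most $D_0$ satisfies $\mathbb P(s_{\min}(X)\le t)\le C n t^2$, with $C=C(D_0)$. Plugging in $t=\sqrt n\,\eta$ then gives the claimed $Cn^2\eta^2$.

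The second step is the single-column anti-concentration estimate. Let $X_k$ be the $k$-th column and $H_k$ the span of the other columns. Let $u$ be a unit normal to $H_k$, which is measurable with respect to the other columns and hence independent of $X_k$. Then $\mathrm{dist}(X_k,H_k)=|\langle u,X_k\rangle|$. By the Rudelson--Vershynin bounded-density projection theorem (complex, i.e.\ two real dimensions), $\langle u,X_k\rangle$ has density at most $C D_0$. Hence, conditionally on the other columns,
\[
\mathbb P\bigl(\mathrm{dist}(X_k,H_k)\le s\bigr)\le C D_0 s^2 .
\]
A crude version, which only uses a coordinate with $|u_j|\ge n^{-1/2}$, loses a factor $n$; the sharp projection bound avoids this loss.

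The third step, passing from columns to $s_{\min}$, is where I expect the main obstacle. The naive route uses a unit $v$ with $\|Xv\|\le t$ and a coordinate $|v_k|\ge n^{-1/2}$, which forces $\mathrm{dist}(X_k,H_k)\le\sqrt n\,t$. A union bound over $k$ then gives only $C n^2 t^2$, one factor of $n$ too many. The negative second moment identity
\[
\sum_i s_i(X)^{-2}=\sum_k \mathrm{dist}(X_k,H_k)^{-2}
\]
combined with a weak-$L^1$ bound for sums of $n$ variables with tails $CD_0/s$ gives $C n\log n\, t^2$, which loses only a logarithm. To remove the logarithm I would follow Davies' argument, i.e.\ the Sankar--Spielman--Teng averaging over a random direction. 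If $s_{\min}(X)\le t$, the corresponding singular vector $v$ satisfies, for a uniformly random index $k$ (or a random unit test vector), $|v_k|\gtrsim n^{-1/2}$ with probability bounded below, and on that event $\mathrm{dist}(X_k,H_k)\le t/|v_k|$. Averaging over $k$ rather than taking a union bound, we get
\[
\mathbb P\bigl(s_{\min}(X)\le t\bigr)\le C\,\frac1n\sum_k \mathbb P\bigl(\mathrm{dist}(X_k,H_k)\le C\sqrt n\,t\bigr)\le C D_0\, n t^2 .
\]
Here the care is in controlling the event that the mass of $v$ is spread out. In practice, since the statement only needs this estimate as a black box, I would cite \cite[Lemma 3.4]{Davies} for this step after verifying its hypotheses, namely independent entries with density bounded by $D_0$ uniformly in $z$, as established in the first step.
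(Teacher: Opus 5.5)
Your proposal is correct and rests on the same ingredient as the paper, \cite[Lemma~3.4]{Davies}. The two routes differ only in how they handle $z$.
\begin{itemize}
\item The paper quotes the case $|z|\le 1$ from \cite[Eq.~(4.24)]{chafai-garcia-zelada-xu}. It then extends to $|z|>1$ via $\lambda_{\min}(\A-z\B)\overset{\mathrm{d}}{=}|z|\,\lambda_{\min}(\A-z^{-1}\B)$, which uses that $A$ and $B$ are identically distributed.
\item Your Step 1 instead notes that $A_{ij}-zB_{ij}$ has density at most $D_0$ for every $z$, being a convolution of $\varphi$ with the law of $-zB_{ij}$. This treats all $z\in\mathbb{C}$ at once and only needs a bounded density for $A$, so it is a slightly cleaner reduction.
\end{itemize}
Your rescaling $t=\sqrt{n}\,\eta$ also correctly identifies that the sharp bound $Cnt^2$ for unnormalized entries is what must be imported.

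Your self-contained Step 3, however, does not work as written.
\begin{itemize}
\item A uniformly random coordinate of a unit vector is $\gtrsim n^{-1/2}$ only with probability about $1/n$ when the vector is sparse (e.g.\ $v=e_1$). So the averaged inequality you display only controls incompressible singular vectors; it is the Rudelson--Vershynin invertibility-via-distance lemma, and compressible vectors need a separate argument.
\item The random-test-vector version of Sankar--Spielman--Teng needs rotation invariance of the entries to reduce $\|X^{-1}u\|$ to a single distance. That invariance is not available here.
\end{itemize}
Without the citation, your Step 2 combined with the negative second moment identity only gives $\mathbb{P}(\lambda_{\min}(\A-z\B)\le\eta)\le Cn^2\log n\,\eta^2$. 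This is weaker than the lemma as stated, though it would still suffice for its uses in Lemmas \ref{lemma_tiny} and \ref{lemma_LL}.
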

\begin{remark}[Extension to the complementary regime]
   Though the above result is stated in \cite{chafai-garcia-zelada-xu} for all $|z|\leq 1$, it can be easily extended to the complementary regime by using that  $\lambda_{\min}(\A-z\B) \overset{\mathrm{d}}{=} |z|\lambda_{\min}(\A-z^{-1}\B) $ and thus $\mathbb{P} \big(\lambda_{\min}(\A-z\B) \leq \eta \big) = \mathbb{P} \big(\lambda_{\min} (\A-z^{-1}\B) \leq \eta/|z| \big) \leq C n^2 \eta^2$ for all $|z|\geq 1$. 
\end{remark}

Now we are ready to prove Theorem \ref{th:replacement}.

\begin{proof}[Proof of Theorem \ref{th:replacement}]
Using that $\det(\A \B^{-1}-w)=\det(\A-w\B)/\det(\B)$ and \cite{zbMATH03901742} Hermitization, we have
\begin{align}
     L_n(f_{w})=&\log |\det (\A\B^{-1}-w)|=\log |\det (\A-w\B)|-\log |\det (\B)| =\frac{1}{2}\log |\det (H^w)|-\frac{1}{2} \log |\det (H)|,
\end{align}
where  $H^w~(w\in \C)$ and $H$ are $2n \times 2n$ matrices defined by
\begin{align}\label{eq_H}
	H^w=\begin{pmatrix}
		0  &   X^w \\
		(X^w)^*    & 0
	\end{pmatrix}, \qquad 
	H=\begin{pmatrix}
		0  &   X \\
		(X)^*    & 0
	\end{pmatrix},\qquad  X^{w}=\A-w \B, \qquad X=\B.
\end{align}

For any fixed $w \in \C$, by spectral decompositions of $H^{(w)}$ and using that $\partial_\eta \log(\lambda^2+\eta^2)=\frac{2\eta}{\lambda^2+\eta^2}$, we have
\begin{align}\label{eq_1}
     L_n(f_{w})=& -\frac{1}{2}\int_{0}^{T} \Im \mathrm{Tr} \big[G^w(\ii \eta)-G(\ii \eta)\big] \mathrm{d} \eta+\frac{1}{2} \Big(\log |\det (H^w-\ii T)|-\log |\det (H-\ii T)|\Big),
\end{align}
where we choose $T=n^{100}$, $G^{w}$ and $G$ are the resolvents of $H^{w}$ and $H$ given by
\begin{align}\label{eq_resolvent}
G^{w}(\zeta)=(H^{w}-\zeta)^{-1},  \qquad G(\zeta)=(H-\zeta)^{-1}, \qquad \quad \zeta \in \mathbb{C} \setminus \mathbb{R}.
\end{align}
With a slight abuse of notation, we often use $H^{(w)}, G^{(w)}, X^{(w)}$ to denote either the matrix $H^{w}, G^{w}, X^{w}$ or the matrix $H, G, X$, respectively.
In particular, 
$$X\overset{\mathrm{d}}{=}X^{w=0}, \qquad H\overset{\mathrm{d}}{=}H^{w=0}, \qquad G\overset{\mathrm{d}}{=}G^{w=0}.$$ 
Note that the last term in \eqref{eq_1} almost vanishes, because 
	\begin{align}
		\log |\det (H^{(w)}-\ii T)|-2n \log T
        = \frac{1}{2}\sum_{\lambda \in \mathrm{Spec}(H^{(w)})} \log \left( 1+\Big(\frac{\lambda}{T}\Big)^2\right) \leq \frac{\mathrm{Tr} (H^{(w)})^2}{T^2} \prec n^{-100},
	\end{align}
using $\log(1+|x|)\leq |x|$ and $(H^{(w)})_{ij} \prec n^{-1/2}$ from the moment condition \ref{it:condmom}. Hence we obtain from \eqref{eq_1} that
\begin{align}\label{eq_det}
    L_n(f_{w}) = -\frac{1}{2}\int_{0}^{T} \Im \mathrm{Tr} \big[ G^w(\ii \eta)-G(\ii \eta)\big] \mathrm{d} \eta+O_\prec(n^{-100}). 
\end{align}
 Moreover, for any test function $f_n \in \mathcal{C}_0^2$ satisfying \eqref{eq_f}, using $\Delta_z \log |z|=2\pi\delta_0$ and integration by parts, we have 
\begin{align}
    L_n(f_{n})=&\frac{1}{2\pi}\int \Delta_z f_n(z) \log |\det (\A\B^{-1}-z)| \dd^2 z\\
    =&\frac{1}{2\pi}\int \Delta_z f_n(z) \log |\det (\A-z\B)|\dd^2 z\\
    =&-\frac{1}{4\pi} \int \Delta_z f_n(z) \int_{0}^{T} \Im \mathrm{Tr} G^z(\ii \eta) \dd \eta \dd^2 z+O_\prec(n^{-100}),\label{eq_int_f}
\end{align}
with $T=n^{100}$ and $G^z$ defined in \eqref{eq_resolvent}, where we used $\log |\det (\A\B^{-1}-z)|=\log |\det (\A-z\B)|-\log |\det \B|$ and  $\Delta_z \log |\det(\wt B)|=0$ in the second line, and in the last step we also used $\partial_\eta \log(\lambda^2+\eta^2)=\frac{2\eta}{\lambda^2+\eta^2}$.
Here $\Delta_z=4\partial_z \partial_{\bar z}=\partial^2_{\Re z}+\partial^2_{\Im z}$ denotes the two-dimensional Laplace operator on $\C$, and $\dd^2 z= \frac{1}{2}\ii(\dd z\wedge \dd \overline{z})=\dd \Re z \dd \Im z$ denotes the two dimensional area form on $\C$.

\medskip

To regularize the right side of both \eqref{eq_det} and \eqref{eq_int_f}, we introduce the following short-hand notations:
\begin{align}
\wt{L_n(f_w)}:=&-\frac{1}{2}\int_{\eta_0}^{T} \Im \mathrm{Tr} \big[ G^{w}(\ii \eta)-G(\ii \eta)\big] \mathrm{d} \eta, \qquad \qquad w\in \C, \label{eq_regular1}\\
\wt{L_n(f_n)}:=&-\frac{1}{4\pi} \int \Delta_z f_n(z) \int_{\eta_0}^{T} \Im \mathrm{Tr} G^{z}(\ii \eta) \dd \eta \dd^2 z, \qquad f_n\in \mathcal{C}_0^2 \mbox{~satisfying~} \eqref{eq_f}.\label{eq_regular2}
\end{align}
Combining $L^{1}$-norm bound of $\Delta_z f_n$ in (\ref{eq_f}) with the following lemma, whose proof is presented in the next subsection, we obtain that, for any fixed $k \in \N$,
\begin{align}\label{eq_lemma_tiny}
	\E \Big| L_n(f_w)-\wt{L_n(f_w)}\Big|^k=O\big( n^{-\epsilon/3}\big), \qquad \E \Big| L_n(f_n)-\wt{L_n(f_n)}\Big|^k=O\big( n^{-\epsilon/3}\big).
\end{align}
\begin{lemma}\label{lemma_tiny}
Fix any small $\epsilon>0$ and set $\eta_0=n^{-1-\epsilon}$. Then the following holds uniformly for any fixed $k \in \N$
\begin{align}\label{eq_small}
	\sup_{z\in \C}\mathbb{E} \Big| \int_{0}^{\eta_0}\Im \mathrm{Tr} G^z (\ii \eta) \mathrm{d} \eta \Big|^k=O\big( n^{-\epsilon/2}\big).
\end{align}
 \end{lemma}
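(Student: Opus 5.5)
Since $H^z$ is Hermitian with eigenvalues $\pm\lambda_j$, where $\lambda_1,\ldots,\lambda_n$ are the singular values of $X^z=\A-z\B$, we have the explicit representation
\begin{equation}
\Im\mathrm{Tr}\,G^z(\ii\eta)=\sum_{j=1}^n\frac{2\eta}{\lambda_j^2+\eta^2},
\qquad
\int_0^{\eta_0}\Im\mathrm{Tr}\,G^z(\ii\eta)\,\mathrm d\eta=\sum_{j=1}^n\log\Bigl(1+\frac{\eta_0^2}{\lambda_j^2}\Bigr).
\end{equation}
The quantity to control is therefore a nonnegative sum, and I would split it according to the size of the singular values. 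Fix a small $\xi>0$. The singular values with $\lambda_j\ge n^{-1+\xi}$ contribute at most $\sum_j\eta_0^2/\lambda_j^2\le n\cdot n^{-2-2\epsilon}n^{2-2\xi}=n^{1-2\epsilon-2\xi}$ deterministically. This crude bound is too weak, so I would instead use the local law (Lemma~\ref{lemma_local_law}) in the form of a rigidity or counting estimate: the number of singular values in $[0,E]$ is $\prec nE+n^{\xi}$, because $m^z$ is bounded, giving a density of order one near $0$, and $\Im\mathrm{Tr}\,G^z(\ii E)\ge \#\{\lambda_j\le E\}/E$. A dyadic decomposition in $E\ge n^{-1+\xi}$ then gives
\begin{equation}
\sum_{\lambda_j\ge n^{-1+\xi}}\frac{\eta_0^2}{\lambda_j^2}\prec\sum_{E\text{ dyadic}}(nE+n^\xi)\frac{\eta_0^2}{E^2}\prec n^{-2\epsilon+O(\xi)},
\end{equation}
which is small with very high probability. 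Its $k$-th moment is then handled by Lemma~\ref{lemma_dominant}(3), together with a crude almost-sure polynomial bound.

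The main obstacle is the small singular values $\lambda_j<n^{-1+\xi}$. By the same counting estimate, there are at most $n^{2\xi}$ of them with very high probability, and each contributes $\log(1+\eta_0^2/\lambda_j^2)\le 2\log_+(\eta_0/\lambda_j)+\log 2$. The crucial input is Lemma~\ref{lemma_tail}, which gives $\mathbb P(\lambda_{\min}\le t)\le Cn^2t^2$. This yields
\begin{equation}
\mathbb E\Bigl[\log^k\Bigl(1+\frac{\eta_0^2}{\lambda_{\min}^2}\Bigr)\Bigr]\lesssim\int_0^\infty k s^{k-1}\,\mathbb P\bigl(\lambda_{\min}\le\eta_0\mathrm e^{-s/2}\bigr)\,\mathrm ds\lesssim n^2\eta_0^2=n^{-2\epsilon},
\end{equation}
together with a tail bound that makes all moments of $\log(1+\eta_0^2/\lambda_{\min}^2)$ of order $n^{-2\epsilon}$.

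I then bound the small-singular-value contribution by $n^{2\xi}\log(1+\eta_0^2/\lambda_{\min}^2)$ on the very-high-probability event, and control the complementary event by Hölder's inequality. Its $k$-th moment is at most $n^{2k\xi}\cdot n^{-2\epsilon}$ up to constants. Choosing $\xi$ small compared to $\epsilon/k$, this is at most $n^{-\epsilon/2}$. There is a subtlety here: $k$ is fixed but $\xi$ must depend on $k$. Since $\xi$ is arbitrary, this is allowed.

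Uniformity in $z\in\C$ comes from the uniformity of Lemmas~\ref{lemma_local_law} and~\ref{lemma_tail}, where the regime $|z|>1$ was already covered above.
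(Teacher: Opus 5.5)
Your proposal is correct and follows essentially the paper's argument. It uses the same spectral identity $\int_0^{\eta_0}\Im\mathrm{Tr}\,G^z(\ii\eta)\,\mathrm d\eta=\sum_j\log(1+\eta_0^2/\lambda_j^2)$, the local law both to bound the non-small singular values and to count the small ones via $\Im\mathrm{Tr}\,G^z(\ii E)\ge \#\{\lambda_j\le E\}/E$, and Lemma~\ref{lemma_tail} for the rest; the only difference is that you cut once at $n^{-1+\xi}$ and bound all moments of $\log(1+\eta_0^2/\lambda_{\min}^2)$ directly, whereas the paper cuts at $n^{-1-\epsilon/2}$ and $n^{-L}$. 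One harmless slip: $\{\log(1+\eta_0^2/\lambda_{\min}^2)>s\}=\{\lambda_{\min}<\eta_0(\mathrm e^{s}-1)^{-1/2}\}$ is larger than $\{\lambda_{\min}\le\eta_0\mathrm e^{-s/2}\}$ for small $s$, so the correct tail $\min\bigl(1,Cn^2\eta_0^2/(\mathrm e^{s}-1)\bigr)$ gives $O(n^{-2\epsilon}\log n)$ rather than $O(n^{-2\epsilon})$ when $k=1$, which still suffices.
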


We next regularize $L_n(h_p)$ for test function $h_{p}$ with $p\in \overline{\C}$. Recall the definition of $h_p$ in \eqref{eq_h}.  
It is clear that $h_p$ does not satisfy the $L^{1}$-norm condition (\ref{eq_f}). 
Then we define the regularized test function of $h_p$ by 
\begin{align}\label{eq_fun_reg}
\wt{h_p} (\cdot):=
\begin{cases}
\frac{1}{2}\log \big(|\cdot-p|^2+n^{-2L}\big) \chi(|\cdot-p|), \qquad & p\in \C\\
\wt{h_0}\Big(\frac{1}{\cdot}\Big)=\frac{1}{2}\log \Big(\frac{1}{|~\cdot~|^2}+n^{-2L}\Big) \chi\Big(\frac{1}{|~\cdot~|}\Big), \qquad &p=\infty
\end{cases},
\end{align}
for a sufficiently large $L>0$ to be fixed later, where $\chi:\R \rightarrow \R^+$ is a smooth and non-increasing cut-off function near zero. 
The follow result claims that, for any $p\in \overline{\C}$, $L_n(\wt{h_p})$ is approximately $L_n(h_p)$, whose proof is postponed till the next subsection.  
\begin{lemma}\label{lemma_LL}
Fix $p\in \overline{\C}$ and $K_0 \in \N$. Then there exists a large $L>0$ depending on $K_0$ such that the following holds true 
\begin{align}
\E \left| L_n(h_p)-L_n(\wt{h_p})\right|^k=O(n^{-100k}),
\end{align}
for any  $1\leq k\leq K_0$. 
\end{lemma}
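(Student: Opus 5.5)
The difference $h_p-\wt{h_p}$ is supported where $\chi\neq0$, and it is tiny except very close to the singularity. So the plan is to split $L_n(h_p)-L_n(\wt{h_p})$ according to whether an eigenvalue lies within distance $n^{-L/2}$ of the singularity, bound the far part deterministically, and show the near part occurs only with tiny probability.

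\emph{Reduction to $p=0$.} For $p\in\C$ we translate, replacing $M$ by $M-p=\A\B^{-1}-p$. For $p=\infty$ we note that $\wt{h_\infty}(\sigma)=\wt{h_0}(1/\sigma)$ and $h_\infty(\sigma)=h_0(1/\sigma)$, and the eigenvalues $1/\sigma_i$ are those of $\B\A^{-1}$, which has the same structure. In both cases it suffices to treat $\sum_i\bigl(h_0(\lambda_i)-\wt{h_0}(\lambda_i)\bigr)$ with $\lambda_i$ the eigenvalues of $\A-p\B$ times $\B^{-1}$ (or the swapped version). Pointwise,
\[
h_0(x)-\wt{h_0}(x)=-\tfrac12\log\bigl(1+n^{-2L}|x|^{-2}\bigr)\chi(|x|)\in[-\tfrac12 n^{-2L}|x|^{-2},0].
\]

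\emph{Far eigenvalues.} If $|x|\ge n^{-L/2}$, the pointwise bound gives $|h_0(x)-\wt{h_0}(x)|\le n^{-L}$. Summing over the at most $n$ eigenvalues contributes $O(n^{1-L})$ deterministically.

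\emph{Near eigenvalues.} If $|x|<n^{-L/2}$, we have the crude bound $|h_0(x)-\wt{h_0}(x)|\le |\log|x||+C\log n$. Let $E$ be the event that some eigenvalue $\lambda$ of $\A\B^{-1}-p$ satisfies $|\lambda|<r:=n^{-L/2}$. The main obstacle is controlling $\P(E)$ together with the $k$-th moment of the log-sum on $E$.

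On $E$ we have $\det(\A-p\B)=\det(\A\B^{-1}-p)\det\B$, but it is cleaner to use the singular values directly. If $(\A\B^{-1}-p)v=\lambda v$ with $|v|=1$, put $u=\B v$. Then $(\A-p\B)u'=\lambda\,\B v/|\B v|$ where $u'=v/|\B v|$, so
\[
\lambda_{\min}(\A-p\B)\le|\lambda|\,\|\B\|.
\]
Since $\|\B\|\prec1$ by the moment condition \ref{it:condmom}, Lemma \ref{lemma_tail} gives
\[
\P(E)\le\P\bigl(\lambda_{\min}(\A-p\B)\le n^{\xi}r\bigr)+n^{-D}\le Cn^{2+2\xi-L}+n^{-D}.
\]

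To control the size of the error on $E$, we bound
\[
\sum_i\bigl|\log|\lambda_i|\bigr|\,\mathbb 1_{|\lambda_i|<1}\le\bigl|\log|\det(\A-p\B)|\bigr|+\bigl|\log|\det\B|\bigr|+n\log^+\|\A\B^{-1}-p\|.
\]
All moments of these quantities are polynomially bounded in $n$. For $\log|\det|$ this follows from $|\log\lambda_{\min}|$ having all moments, by the tail bound in Lemma \ref{lemma_tail} integrated against the logarithm, together with $\|\A\|,\|\B\|\prec1$. The term involving $\|\B^{-1}\|$ is handled through $\lambda_{\min}(\B)$, again via Lemma \ref{lemma_tail} with $z=0$ and $\A$ replaced by $\B$.

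\emph{Combining.} By Hölder,
\[
\E\Bigl|\sum_{|\lambda_i|<r}\bigl(h_0-\wt{h_0}\bigr)(\lambda_i)\Bigr|^k\le\Bigl(\E|\cdots|^{2k}\Bigr)^{1/2}\P(E)^{1/2}\le n^{C_k}\,n^{(2+2\xi-L)/2}.
\]
Choosing $L$ large enough in terms of $K_0$ makes both this term and the far contribution $O(n^{-100k})$ for all $k\le K_0$, which proves the lemma.
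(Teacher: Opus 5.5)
Your proof is correct. Its skeleton matches the paper's: split according to whether $|\sigma_i-p|$ is below $n^{-L/2}$, bound the far part deterministically by $O(n^{1-L})$, and control the near part through the smallest singular value of $\A-p\B$ and Lemma \ref{lemma_tail}.

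The difference lies in the near part. The paper uses the Weyl--Horn inequality and $\lambda_i(\A\B^{-1}-w)\geq\|\B\|^{-1}\lambda_1(\A-w\B)$. On $\{\|\B\|\leq n\}$ this gives the pointwise bound $E_1\leq n\bigl(\log n+|\log\lambda_1(\A-w\B)|\bigr)\mathbf{1}_{\lambda_1(\A-w\B)\leq n^{-L/2+1}}$, and the paper then integrates the tail of $\lambda_1(\A-w\B)$ directly. It disposes of $\{\|\B\|>n\}$ separately, with a crude bound $\E|L_n(h_w)|^k=O((n\log n)^k)$ drawn from the local law and Lemma \ref{lemma_tiny}.

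You instead use only $\lambda_{\min}(\A-p\B)\leq|\lambda|\,\|\B\|$ to make $\P(E)$ small. You get polynomial moments of the near sum from the elementary inequality $\sum_{|\lambda_i|<1}|\log|\lambda_i||\leq|\log|\det(\A-p\B)||+|\log|\det\B||+n\log^+\|\A\B^{-1}-p\|$, and conclude by Cauchy--Schwarz. This avoids both Weyl--Horn and any local-law input. Everything follows from Lemma \ref{lemma_tail} applied to $\A-p\B$ and to $\B$ (legitimate since $\B\overset{\mathrm{d}}{=}\A$), plus the entry moments. The price is a square root on $\P(E)$, hence a larger $L$, which is harmless.

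Two minor points. First, your eigenvector computation mixes up $\A\B^{-1}$ and $\B^{-1}\A$: it is correct for an eigenvector of the similar matrix $\B^{-1}\A-p$. Starting from $(\A\B^{-1}-p)v=\lambda v$, take $u=\B^{-1}v$, so that $(\A-p\B)u=\lambda\B u$ and hence $\|(\A-p\B)u\|\leq|\lambda|\,\|\B\|\,\|u\|$. Second, you do not need $\|\B\|\prec 1$, which is true but not an immediate consequence of \ref{it:condmom}. The elementary bound $\|\B\|\leq\|\B\|_{\mathrm{HS}}\prec\sqrt n$, or the paper's $\|\B\|\leq n$ with very high probability, only shifts the exponent $2+2\xi-L$ by a constant.
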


By a direct computation, the regularized test function $\wt{h_p}~(p \in \C)$ defined in \eqref{eq_fun_reg} satisfies the condition (\ref{eq_f}), \ie 
\begin{align}
	\int|\Delta_z \wt{h_p}(z)| \dd^2 z
	=&\int \left| \Delta_z \Big[\frac{1}{2}\log \big(|z|^2+n^{-2L}\big) \chi(|z|)\Big] \right| \dd^2 z\\
	=& \int \frac{2 n^{-2L}}{(|z|^2+n^{-2L})^2} \chi(|z|)\dd^2 z +O(\log n)=O(\log n).\label{eq_L1}
\end{align}
The same also applies to $\wt{h_\infty}$ in \eqref{eq_fun_reg}, since the 
$L^{1}$-norm of the Laplacian is invariant under conformal composition including $z \rightarrow 1/z$. Similarly to \eqref{eq_regular2}, we introduce the following short-hand to regularize $L_n(h_{p})$:
\begin{align}\label{eq_regular3}
\wt{L_n(h_{p})}:=-\frac{1}{4\pi} \int \Delta_z \wt{h_{p}}(z) \int_{\eta_0}^{T} \Im \mathrm{Tr} G^{z}(\ii \eta) \dd \eta \dd^2 z,
\end{align}
with $\wt{h_{p}}$ defined in (\ref{eq_fun_reg}), and $\eta_0=n^{-1-\epsilon}$. Using \eqref{eq_L1}, Lemma \ref{lemma_tiny}, and Lemma \ref{lemma_LL}, we have for any fixed $k\in \N$
\begin{align}\label{eq_regular4}
\E \Big| L_n(h_p)-\wt{L_n(h_{p})}\Big|^k=O\big( n^{-\epsilon/3}\big).
\end{align}

Hence for any fixed $w_i\in \C~(1\leq i \leq k_1)$, $p_i\in \overline{\C}~(1\leq i \leq k_2)$ and regular test functions $f^{(i)}_n\in \mathcal{C}_0^2~(1\leq i \leq k_3) $ satisfying \eqref{eq_f},  we now readily use \eqref{eq_regular1}, \eqref{eq_regular2} and \eqref{eq_regular3} to regularize $\mathcal{X}_n$ in \eqref{XX} and define 
\begin{align}\label{eq_XX}	\wt{\mathcal{X}_n}:=F_n\left(\mathbb{Q}\big[\wt{L_n(f_{w_1})}\big], \cdots , \mathbb{Q}\big[\wt{L_n(f_{w_{k_1}})}\big], \mathbb{Q}\big[\wt{L_n(h_{p_1)}}\big], \cdots , \mathbb{Q}\big[\wt{L_n(h_{p_{k_2}})}\big],\mathbb{Q}\big[\wt{L_n(f_n^{(1)})}\big],\cdots, \mathbb{Q}\big[\wt{L_n(f_n^{(k_3)})}\big] \right).
\end{align}

Recall the local laws for $G^z~(z\in \C)$ from Lemma \ref{lemma_local_law}, which also applies to $G$. Then for any fixed $w\in \C$, 
\begin{align}\label{eq_bound1}
	\left|\mathbb{Q}\big[\wt{L_n(f_{w})}\big]\right| 
    \leq \int_{\eta_0}^{T} \left| (1-\E) \mathrm{Tr}  G^{w}(\ii \eta)\right| \mathrm{d} \eta+ \int_{\eta_0}^{T} \left| (1-\E) \mathrm{Tr} G(\ii \eta) \right| \mathrm{d} \eta
    =O_\prec\left(\int_{\eta_0}^{T} \frac{1}{\eta} \dd \eta \right)=O_{\prec}(\log n),
\end{align}
where we also used the third property of stochastic domination $\prec$ from Lemma \ref{lemma_dominant}. Note that $\max_{i,j} |G_{ij}(\ii \eta)|  \leq \|G(\ii \eta)\|_{\mathrm{op}} \leq \frac{1}{\eta}$ almost surely, so the condition in (3) of Lemma \ref{lemma_dominant} is satisfied. We remark that in the remaining part of this section, we will apply Lemma \ref{lemma_dominant} without specific mentioning.  Combining with (\ref{eq_f}) and (\ref{eq_L1}), we also have
\begin{align}\label{eq_bound2}
	\Big|\mathbb{Q}\big[\wt{L_n(h_{p})}\big]\Big|=O_\prec \Big((\log n)^C\Big), \qquad \Big|\mathbb{Q}\big[\wt{L_n(f_{n})}\big]\Big| =O_\prec \Big((\log n)^{C'}\Big),
\end{align}
for some constants $C,C'>0$.
Hence by a simple Taylor expansion using \eqref{eq_F}, \eqref{eq_bound1}, \eqref{eq_bound2}, together with  \eqref{eq_lemma_tiny} and \eqref{eq_regular4},  we obtain the following.
\begin{lemma}\label{lemma_regular}
Fix any small $\epsilon>0$ and set $\eta_0=n^{-1-\epsilon}$. Then the following holds
\begin{align}\label{eq_tiny}
\Big|\mathbb{E} \big[ \mathcal{X}_n \big] -\mathbb{E} \big[ \wt{\mathcal{X}}_n \big]  \Big|=O\big( n^{-\epsilon/4}\big).
\end{align}
Moreover, for any fixed $w\in \C$, any fixed $p\in \overline{\C}$, and any test function $f_n\in \mathcal{C}_0^2$ satisfying \eqref{eq_f}, 
    \begin{align}
    &\Big|\mathbb{E} \big[ {L_n(f)} \big] -\mathbb{E} \big[ \wt{L_n(f)}  \big]  \Big|=O\big( n^{-\epsilon/4}\big), \qquad f \in \big\{ f_{w}, h_{p}, f_{n}\big\},\label{eq_tiny2}
    \end{align}
    with $\wt{L_n(f_w)}$, $\wt{L_n(h_p)}$, $\wt{L_n(f_n)}$  defined in \eqref{eq_regular1}, \eqref{eq_regular2} and \eqref{eq_regular3}.
\end{lemma}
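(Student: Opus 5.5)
The plan is a first-order Taylor expansion of $F_n$, the moment bounds \eqref{eq_lemma_tiny} and \eqref{eq_regular4}, and Hölder's inequality. Write $x=(x_1,\dots,x_K)$ for the vector of centered statistics appearing in $\mathcal{X}_n$ and $\wt{x}$ for the vector of their regularized counterparts appearing in $\wt{\mathcal{X}_n}$. Then
\[
\mathcal{X}_n-\wt{\mathcal{X}_n}=\int_0^1\nabla F_n\bigl(\wt{x}+s(x-\wt{x})\bigr)\cdot(x-\wt{x})\,\mathrm{d}s .
\]
The componentwise differences $x_i-\wt{x}_i=\mathbb{Q}[L_n(f)]-\mathbb{Q}[\wt{L_n(f)}]$ are of the form $Y-\mathbb{E}Y$ with $Y=L_n(f)-\wt{L_n(f)}$. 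By \eqref{eq_lemma_tiny} (for $f_w$ and for $f_n$) and by \eqref{eq_regular4} (for $h_p$), together with Jensen's inequality to control the centering, these differences satisfy $\mathbb{E}|x_i-\wt{x}_i|^k=O(n^{-\epsilon/3})$ for each fixed $k$.

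\textbf{Moments of the gradient factor.} By \eqref{eq_F} with $l=1$, $|\nabla F_n(\cdot)|\le C_3\prod_i(1+\log n+|\wt{x}_i|+|x_i-\wt{x}_i|)^{C_4}$. From \eqref{eq_bound1}–\eqref{eq_bound2}, $|\wt{x}_i|\prec(\log n)^C$. To turn this stochastic domination into moment bounds, I would combine it with a crude deterministic bound on the bad event. For $\eta\ge\eta_0$ one has $\|G\|\le\eta^{-1}$, which gives $|\wt{x}_i|\le n^{C}$ almost surely, so the bad event contributes at most $n^{C}\cdot n^{-D}$. The differences $x_i-\wt{x}_i$ have bounded moments of every order by \eqref{eq_lemma_tiny}. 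Hence $\mathbb{E}\sup_{s\in[0,1]}|\nabla F_n|^2\le n^{\xi}$ for any small $\xi>0$.

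\textbf{Conclusion for \eqref{eq_tiny}.} Cauchy–Schwarz then gives
\[
|\mathbb{E}[\mathcal{X}_n-\wt{\mathcal{X}_n}]|\le n^{\xi}\,\bigl(\mathbb{E}|x-\wt{x}|^2\bigr)^{1/2}=O(n^{\xi-\epsilon/6}).
\]
This falls short of the exponent $\epsilon/4$. To obtain it, I would use the moment bounds with a better exponent, namely Hölder with a high moment of $x-\wt{x}$, and check that Lemma \ref{lemma_tiny} indeed gives $O(n^{-\epsilon/2})$ for every fixed $k$. Hölder with exponents $(q,q')$, $q'$ close to $1$, then yields $O(n^{\xi-\epsilon/(3q')})$, and choosing $q'$ close to $1$ and $\xi$ small gives the rate $n^{-\epsilon/4}$.

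\textbf{Means \eqref{eq_tiny2}.} These are immediate: $|\mathbb{E}L_n(f)-\mathbb{E}\wt{L_n(f)}|\le\mathbb{E}|L_n(f)-\wt{L_n(f)}|=O(n^{-\epsilon/3})$ by \eqref{eq_lemma_tiny} and \eqref{eq_regular4} with $k=1$.

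\textbf{Main obstacle.} The delicate step is controlling $\nabla F_n$ on the bad event. It requires polynomial a priori bounds on $x_i$ themselves, and $L_n(f_w)$ is unbounded. I would bound $\mathbb{E}|L_n(f_w)|^k$ polynomially using Lemma \ref{lemma_tail} for the small singular values together with the moment condition \ref{it:condmom} for the large ones. With these bounds in hand, the bad-event contribution is $O(n^{-D/2})$.
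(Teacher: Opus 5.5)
Your proposal is correct and is essentially the paper's argument: the paper proves this lemma only by the one-line remark that it follows from a Taylor expansion of $F_n$ using \eqref{eq_F}, \eqref{eq_bound1}--\eqref{eq_bound2}, \eqref{eq_lemma_tiny} and \eqref{eq_regular4}, and you supply the details. Those details are the crude deterministic bound $|\wt{x}_i|\le n^{C}$ off the high-probability event, and H\"older with the exponent on $x-\wt{x}$ close to $1$, which reaches $n^{-\epsilon/4}$ because \eqref{eq_lemma_tiny} gives the same rate $n^{-\epsilon/3}$ for every fixed $k$. Two small corrections: the moment of $x-\wt{x}$ you need is a low one ($q'$ near $1$), not a high one, and your ``main obstacle'' is already settled by your own gradient estimate, since $|x_i|\le|\wt{x}_i|+|x_i-\wt{x}_i|$ gives polynomial moments of $x_i$ without any separate appeal to Lemma \ref{lemma_tail}.
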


We remark that Lemma \ref{lemma_regular} also holds true if we replace $M=AB^{-1}$ with $\MG=\AG(\BG)^{-1}$, since the Ginibre matrix is also a Girko matrix. We next introduce the following key technical lemma for comparison.

 \begin{lemma}\label{lemma_GFT}
Choose $\epsilon<c_0/100$ with $c_0>0$ the small constant from \ref{it:condmom4} and let $\eta_0=n^{-1-\epsilon}$.  Then 
	\begin{align}\label{eq_GFT}
		\Big|\mathbb{E} \big[ \wt{\mathcal{X}}_n \big] -\mathbb{E}^{\mathrm{Gin}} \big[ \wt{\mathcal{X}_n} \big]  \Big|=O(n^{-c_0/4}),
	\end{align} 
    where we use $\E^{\mathrm{Gin}}$ to denote the corresponding expectation with $M$ replaced by $M^{\mathrm{Gin}}$.

Moreover, for any fixed $w\in \C$, any fixed $p\in \overline{\C}$, and any test function $f_n\in \mathcal{C}_0^2$ satisfying \eqref{eq_f}, we  have
    \begin{align}
    &\Big|\mathbb{E} \big[ \wt{L_n(f)} \big] -\mathbb{E}^{\mathrm{Gin}} \big[ \wt{L_n(f)} \big]  \Big|=O(n^{-c_0/3}), \qquad f \in \big\{ f_{w}, h_{p}, f_{n}\big\},\label{eq_GFT2}
    \end{align}
      with $\wt{L_n(f_w)}$, $\wt{L_n(h_p)}$, $\wt{L_n(f_n)}$  defined in \eqref{eq_regular1}, \eqref{eq_regular2} and \eqref{eq_regular3}.
\end{lemma}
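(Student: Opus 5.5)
This is a Green function comparison (Lindeberg-type replacement) argument, run along the lines of \cite[Theorem 4.1]{chafai-garcia-zelada-xu}. We interpolate between $(\A,\B)$ and $(\tilde A^{\mathrm{Gin}},\tilde B^{\mathrm{Gin}})$ entry by entry. Order the $2n^2$ entries of the pair $(\A,\B)$ and define hybrid matrices $(\A^\gamma,\B^\gamma)$, $\gamma=0,\dots,2n^2$, in which the first $\gamma$ entries are Ginibre and the rest are Girko. By telescoping, it suffices to bound each single-entry swap by $O(n^{-2-c_0/4})$, or more precisely by $O(n^{-2-c})$ uniformly plus an averaged gain from the third-moment term.

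Fix one swap at position $(a,b)$ of, say, $\A$ (swaps in $\B$ are analogous, with $-z\B$ in place of $\A$). Write $X^z=\A-z\B$. The matrix $H^z$ then differs from the matrix $H^z_0$, in which the $(a,b)$ entry is zero, by a rank-two perturbation $V=v(E_{a,n+b}+E_{n+b,a})$ with conjugate. We expand $G^z=(H^z_0+V-\ii\eta)^{-1}$ in a resolvent series up to order $4$, with a fifth-order remainder. The entries of the resulting expansion of $\Im\mathrm{Tr}\,G^z(\ii\eta)$ are sums of products of resolvent entries of $G^z_0$ times powers $v^{k}\bar v^{l}$. We insert this into the regularized quantities \eqref{eq_regular1}, \eqref{eq_regular2}, \eqref{eq_regular3} and then Taylor expand $F_n$ to fifth order using \eqref{eq_F}. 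The size of each term is controlled by three ingredients. First, the local law (Lemma \ref{lemma_local_law}) bounds entries as $|G_{ij}|\prec 1+(n\eta)^{-1}$, and off-diagonal entries by $(n\eta)^{-1/2}$. Second, for $\eta\ge\eta_0=n^{-1-\epsilon}$ this gives $\|G(\ii\eta)\|_{\max}\prec n^{\epsilon}$. Third, $\int_{\eta_0}^T\eta^{-1}\dd\eta=O(\log n)$, combined with the $L^1$ bounds \eqref{eq_f}, \eqref{eq_L1} on the Laplacians. Together these show that each derivative of $\mathbb{Q}[\wt{L_n(\cdot)}]$ in the swapped entry is $O_\prec(n^{C\epsilon})$. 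The derivative of $\Im\mathrm{Tr}G$ in $v$ is $\partial_\eta$ of an entry of $G$, and integrating it over $\eta$ turns into boundary terms, which again are $O_\prec(n^{C\epsilon})$.

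Because the first two moments of $v$ match the Gaussian ones exactly, the orders $1$ and $2$ cancel in expectation, since $v$ is independent of $H_0^z$. Order $3$ contributes $\mathcal{E}_3 n^{-3/2}\cdot n^{C\epsilon}\le n^{-2-c_0+C\epsilon}$. Order $4$ contributes $\mathcal{E}_4 n^{-2}n^{C\epsilon}\le n^{-2-c_0+C\epsilon}$. The fifth-order remainder is $n^{-5/2+C\epsilon}$, using \ref{it:condmom}. Summing over the $2n^2$ swaps gives $O(n^{-c_0+C\epsilon})$, and hence \eqref{eq_GFT} once $\epsilon<c_0/100$. The mean statement \eqref{eq_GFT2} is the special case $F_n(x)=x$ without centering.

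The main obstacle is the fifth-order remainder and the small-$\eta$ part. There the Taylor expansion is taken around a matrix whose resolvent must remain controlled when the swapped entry is large, and for $\eta$ near $\eta_0$ the bound on $\|G\|_{\max}$ is only $n^{\epsilon}$. I would handle this by the standard argument: first truncate entries at $n^{-1/2+\xi}$, which is justified by \ref{it:condmom}. Then use that $\|G_0\|_{\max}\prec n^{\epsilon}$ implies $\|G\|_{\max}\prec n^{\epsilon}$ through the resolvent expansion, since $|v|\prec n^{-1/2}$. Finally, use the operator bound $\|G\|\le\eta^{-1}\le n^{1+\epsilon}$ together with the tail bound of Lemma \ref{lemma_tail} to dispose of the exceptional events via item (3) of Lemma \ref{lemma_dominant}. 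Everything needed on the Ginibre side holds equally, since Ginibre matrices are Girko matrices.
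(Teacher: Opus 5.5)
Your Lindeberg swap is a legitimate alternative to the paper's interpolation, which uses the Ornstein--Uhlenbeck flow \eqref{OU}, It\^o's formula and the cumulant expansion of Lemma \ref{cumulant_expansion_lemma}. The moment bookkeeping is the same: $n^{-2-c_0}$ per entry at orders three and four, and $n^{-5/2+C\epsilon}$ for the remainder. However, the step ``swaps in $\B$ are analogous, with $-z\B$ in place of $\A$'' is where your argument breaks.

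A derivative in an entry of $\B$ carries a factor $z$. Up to negligible terms it equals $-\frac{1}{4\pi}\int\Delta_z f(z)\,z\,\bigl(G^z(\ii\eta_0)\bigr)_{Ba}\,\mathrm{d}^2z$ for $f\in\{\wt{h_p},f_n\}$, see \eqref{diff_LL}. The uniform-in-$z$ bound $|G^z_{ij}(\ii\eta_0)|\prec n^{\epsilon}$ that you quote only gives $n^{\epsilon}\int|z|\,|\Delta_z f|\,\mathrm{d}^2z$, and \eqref{eq_f} does not control this. For $\wt{h_\infty}$ this is fatal. The measure $\Delta\wt{h_\infty}(z)\,\mathrm{d}^2z$ is the image of $\Delta\wt{h_0}(u)\,\mathrm{d}^2u$ under $u\mapsto 1/u$, so a mass of order one sits at $|z|\gtrsim n^{L}$. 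Your derivative bound therefore becomes $n^{L+\epsilon}$ instead of $n^{C\epsilon}$.

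The paper isolates exactly this point at the end of the proof of Lemma \ref{lemma_diff} and invokes \eqref{eq_G_inverse}. Be aware, though, that \eqref{eq_G_inverse} gives $|z\,G^z_{Ba}(\ii\eta_0)|\overset{\mathrm{d}}{=}|G^{1/z}_{Ba}(\ii\eta_0/|z|)|$. This is at a spectral parameter far below $n^{-1}$, where the local law only yields $n^{\epsilon}|z|$. For $|z|\sim n^{L}$ the quantity is essentially $|(\B^{-1})_{ba}|$, whose tail is only polynomial because it is driven by the smallest singular value of $\B$, so it is not $O_\prec(n^{C\epsilon})$.

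A clean repair that fits your scheme is to write $\log|\det(\A-z\B)|=n\log|z|+\log|\det(\B-z^{-1}\A)|$. Then regularize $L_n(h_\infty)$ at $\eta_0$ in the variable $u=1/z$, that is, through the Hermitization of $\B-u\A$. Now $\A$ and $\B$ exchange roles, and derivatives in entries of $\A$ carry the factor $u$, which is bounded on the support of $\Delta\wt{h_0}$. The regularization lemmas transfer by the symmetry $(\A,\B)\mapsto(\B,\A)$.

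There is also a smaller omission about centering. The functional in \eqref{eq_GFT} is $F_n$ evaluated at $\mathbb{Q}[\cdot]$, and its centering constants depend on the model. It is therefore not a fixed function of the matrix entries and cannot be telescoped over hybrids as it stands. The paper instead centers by the model-independent constants $\E^{\mathrm{Gin}}[\mathcal{L}_i]$ (see \eqref{eq_XXX}) and proves the mean comparison \eqref{eq_GFT2} first. It then moves the centering back using \eqref{eq_F} and the a priori $(\log n)^{C}$ bounds on the statistics, which is \eqref{eq_minor}. You need the same step, or a per-swap re-centering controlled by the mean estimate.

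Finally, your hybrids have independent but non-identically distributed entries. You therefore need the local laws of Lemma \ref{lemma_local_law} for them, uniformly over the $2n^2$ hybrids, not only for Girko and Ginibre matrices. This is standard for entries with common variance and uniformly bounded moments. It is exactly what the Ornstein--Uhlenbeck flow spares the paper, since at each time the entries of $(\A_t,\B_t)$ are still i.i.d.
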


The proof of Lemma \ref{lemma_GFT} is postponed till the end of this section. Then the first statement \eqref{eq_diff} follows directly from combining \eqref{eq_GFT} with \eqref{eq_tiny} for both $M$ and $M^{\mathrm{Gin}}$. Similarly the second statement in \eqref{eq_diff_mean} is obtained by combining \eqref{eq_tiny2} with \eqref{eq_GFT2}.  We hence finish the proof of Theorem \ref{th:replacement}.
 \end{proof}

\subsection{Proof of several lemmas}

In this section, we collect the proofs of several lemmas used in the previous section. 

\begin{proof}[Proof of Lemma \ref{lemma_tiny}]
	The case for $k=1$ was proved in \cite[Lemma 4.2]{chafai-garcia-zelada-xu}, following the proof of \cite[Lemma 4]{zbMATH07329432}. With a minor modification, the proof can be extended to general $k\geq 1$. For completeness, we present the details below. The proof crucially relies on the local laws in Lemma \ref{lemma_local_law} and the tail bound for the smallest singular value in Lemma \ref{lemma_tail}.

	Note that the spectrum of $H^{z}$ defined in \eqref{eq_H} is symmetrically distributed. By a simple spectral decomposition of $H^{z}$ , we have
	\begin{align}
		I:=\int_{0}^{\eta_0}\Im \mathrm{Tr} G^z (\ii \eta) \mathrm{d} \eta=2  \int_{0}^{\eta_0} \sum_{j=1}^{n} \frac{\eta}{(\lambda_j^z)^2+\eta^2} \dd \eta, \qquad \eta_0=n^{-1-\epsilon},
	\end{align}
	where $\big(\lambda^{z}_{j}\big)_{j=1}^n$ are the non-negative eigenvalues of $H^z$ which are ordered non-decreasingly.
	Let $L>0$ be a large number to be fixed.  Set $\eta_L=n^{-L}$ and $\eta_2=n^{-1-\epsilon/2}$. Then we split the above sum of eigenvalues into three parts:
	\begin{align}\label{eq_part}
		I=& \Big(\sum_{\lambda^z_j \geq \eta_2}+\sum_{\eta_L\leq \lambda^z_j \leq \eta_2} +\sum_{\lambda^z_j\leq \eta_L}\Big)  \int_{0}^{\eta_0}  \frac{2\eta}{(\lambda_j^z)^2+\eta^2} \dd \eta=:I_1+I_2+I_3.
	\end{align}
	For the first part of \eqref{eq_part}, we have
	\begin{align}
		I_1\leq & 2 \int_{0}^{\eta_0} \sum_{\lambda_j\geq \eta_2} \frac{\eta}{\lambda_j^2+\eta_2^2} \dd \eta \leq   \sum_{j=1}^{n} \frac{\eta_0^2}{(\lambda^z_j)^2+\eta_2^2}   = \frac{\eta_0^2}{2\eta_2} \Im\mathrm{Tr}G^{z}(\ii \eta_2)\leq \frac{\eta_0^2}{\eta_2} \frac{\eta_3}{\eta_2} \Im\mathrm{Tr}G^{z}(\ii \eta_3),
	\end{align}
	where in the last step we used that the function $\eta\Im\mathrm{Tr}G^{z}(\ii \eta)$ is non-decreasing in $\eta>0$, and chose 
	$\eta_3=n^{-1+\xi}$ for a small $0<\xi<\epsilon/2$.  Applying the local law in \eqref{eq:local_law_G_z} for $G^{z}(\ii \eta_3)$, the third property of Lemma \ref{lemma_dominant}, and that $m^z$ from \eqref{rho} is uniformly bounded, we obtain that 
	\begin{align}\label{eq_first}
		\mathbb{E} \big[|I_1|^k\big]=O\big( n^{-k\epsilon/2}\big).
	\end{align}

	We next estimate the second and  third part of \eqref{eq_part}.  Using that $\partial_\eta \log(\lambda^2+\eta^2)=\frac{2\eta}{\lambda^2+\eta^2}$, we have
	$$I_2+I_3=\Big(\sum_{\eta_L\leq \lambda^z_j \leq \eta_2} +\sum_{\lambda^z_j\leq \eta_L}\Big)  \int_{0}^{\eta_0}  \frac{2\eta}{(\lambda_j^z)^2+\eta^2} \dd \eta =\Big(\sum_{\eta_L\leq \lambda^z_j \leq \eta_2} +\sum_{\lambda^z_j\leq \eta_L}\Big) \log \Big( 1+\frac{\eta_0^2}{(\lambda^z_j)^2}\Big).$$
	Note that for $\eta_2=n^{-1-\epsilon/2}$ and $\eta_3=n^{-1+\xi}$ for any small $\xi>0$
	$$ \frac{\eta_3}{\eta_2} \Im\mathrm{Tr}G^{z}(\ii \eta_3) \geq \Im\mathrm{Tr}G^{z}(\ii \eta_2)=2\sum_{j=1}^{n} \frac{\eta_2}{(\lambda^z_j)^2+\eta_2^2}  \geq \frac{2}{\eta_2} \# \big\{ 1\leq j\leq n: \lambda_j^z \leq \eta_2 \big\}.$$
	Hence using the local law in \eqref{eq:local_law_G_z}, by the definition of stochastic domination in \eqref{prec}, we have
	$$\# \big\{ 1\leq j\leq n: \lambda_j^z \leq \eta_2 \big\} \prec 1.$$
	Note that $\big(\lambda^{z}_{ i}\big)_{i=1}^n$ coincide with singular values of $X^z=\wt A-z\wt B$ defined in (\ref{eq_H}) arranged non-decreasingly. Hence
	\begin{align}
		I_2= \sum_{\eta_L \leq  \lambda^z_j \leq \eta_2} \log \Big( 1+\frac{\eta_0^2}{(\lambda^z_j)^2}\Big) \prec (C_L \log n) \mathbf{1}_{ \lambda^z_1 \leq \eta_2}.
	\end{align}
	Using Lemma \ref{lemma_dominant} and the tail bound for the smallest singular value in \eqref{eq:tail_estimate}, we have
	\begin{align}\label{eq_middle}
		\E\big[ |I_2|^k \big] \prec  (C_L \log n)^k \P\big( \lambda^z_1 \leq \eta_2\big)=O\big( n^{-\epsilon/2}\big).
	\end{align}	
	The third part $I_3$ in \eqref{eq_part} can be bounded similarly. That is,
	\begin{align}
		I_3=& \sum_{ \lambda^z_j \leq \eta_L} \log \Big( 1+\frac{\eta_0^2}{(\lambda^z_j)^2}\Big) \leq  -2 \sum_{ \lambda^z_j \leq \eta_L} \log (\lambda^z_j) \leq 
		-2n \log \big(\lambda^z_1\big) \mathbf{1}_{\lambda^z_1 \leq \eta_L},
	\end{align}
	where we used that $\log ((\lambda^z_j)^2 + (\eta_0)^2) \leq 0$ 
	for $\lambda^z_j \leq \eta_L=n^{-L}$. Hence by integration by parts, we have
	\begin{align}
		\E\big[ |I_3|^k\big]\leq &  (2n)^k \mathbb{E} \Big[\Big(-\log \big(\lambda^z_1\big) \Big)^k \mathbf{1}_{\lambda^z_1 \leq \eta_L}  \Big]\\
		\leq&  (2n)^k\bigg(
		(L \log n)^k \mathbb P \big(\lambda^z_1 
		\leq n^{-L} \big) + k 
		\int_{L \log n}^{\infty} s^{k-1} \mathbb{P}\big(\lambda^z_1 \leq \mathrm{e}^{-s} \big)\dd s \bigg)=O\big( n^{-100}\big),\label{eq_third}
	\end{align}
	by choosing a sufficiently large $L>0$ depending on $k$ using \eqref{eq:tail_estimate}.
	
	Summing up \eqref{eq_first}, \eqref{eq_middle} and \eqref{eq_third}, we finish the proof of Lemma \ref{lemma_tiny}.
\end{proof}

\begin{proof}[Proof of Lemma \ref{lemma_LL}]
	We will present the proof for any fixed $p\in \C$. For the infinity point $p=\infty$, the proof is similar using that the law of $\A \B^{-1}$ is invariant under inversions. From now on, we will use the letter $w \in \C$ to replace $p$. 
	
	For notational simplicity, we use $(\sigma^w_i=\sigma_i-w)_{i=1}^n$ to denote the eigenvalues of the matrix $\A \B^{-1}-w$, which are arranged by modulus in a non-decreasing order, \ie $|\sigma^w_1| \leq |\sigma^w_2| \leq \cdots \leq |\sigma^w_n|$.  For any matrix $X\in \C^{n\times n}$, we use $(\lambda_i(X))_{i=1}^n$ to denote the singular values of $X$, which are arranged  in a non-decreasing order. Then we have 
	\begin{align}
		\big|L_n(h_w)-L_n(\wt{h_w})\big| \leq &\left| \sum_{|\sigma^w_i|\leq n^{-L/2}} \log|\sigma^w_i| \chi(|\sigma^w_i|) \right|+\left| \sum_{|\sigma^w_i|\leq n^{-L/2}} \frac{1}{2}\log \big(|\sigma^w_i|^2+n^{-2L}\big) \chi(|\sigma^w_i|) \right|\\
		& \qquad + \left| \sum_{|\sigma^w_i|\geq n^{-L/2}} \log \Big( 1+\frac{n^{-2L}}{|\sigma^w_i|^2} \Big) \chi(|\sigma^w_i|)\right|=:E_1+E_2+E_3.
	\end{align}
	It is easy to check $E_2\leq E_1$ and $|E_3| \leq n^{-L+1}$ using that $\log (1+|x|)\leq |x|$. We next estimate $E_1$.  Note that the matrices $\A$, $\B$ and $\A \B^{-1}-w$ are  almost surely invertible. By the Weyl–Horn–Johnson inequality, we have $\prod_{i=1}^k |\sigma^w_i| \geq \prod_{i=1}^k \lambda_i(AB^{-1}-w)$. Hence the first term $E_1$ is bounded by
	\begin{align}
		E_1= \Big|\sum_{|\sigma^w_i|\leq n^{-L/2}} \log|\sigma^w_i| \chi(|\sigma^w_i|)\Big| \leq \sum_{i=1}^n \big|\log(\lambda_i(\A \B^{-1}-w))\big| \mathbf{1}_{|\sigma^w_i| \leq n^{-L/2}}.
	\end{align}
	By the min-max theorem for singular values, we have
	\begin{align}
		\lambda_{i}(\A \B^{-1}-w) \geq \lambda_{i}(\A-w \B)\lambda_{n}(\B^{-1})= \|\B\|^{-1} \lambda_i(\A-w \B) \geq \|\B\|^{-1}\lambda_1(\A-w \B),
	\end{align}
	with $\|\B\|=\lambda_{n}(\B)$. Note that $\|\B\|_{\max} \leq \|\B\| \leq n \|\B\|_{\max}$ and $\B=B/\sqrt{n}$. From the moment condition \ref{it:condmom} on the matrix $B$ and by the Markov inequality, for any fixed large $D>0$, we have
    \begin{align}\label{eq_largest}
    \P\Big( \|\B\| \geq n \Big) \leq  \P\Big( \|\B\|_{\max} \geq n \Big) \leq n^2 \P\Big( |B_{11}| \geq n^{1/2} \Big) \leq n^2 \frac{\E|B_{11}|^{D}}{n^{D/2}} \leq C_D n^{-D/4},
    \end{align}
    for sufficiently large $n$. On the event $\Omega:=\{\|\B\| \leq n\}$, using $\lambda_{i}(\A \B^{-1}-w) \geq n^{-1} \lambda_1(\A-w \B) $  and $\lambda_1(\A-w \B) \leq n\lambda_{1}(\A \B^{-1}-w) \leq n|\sigma^w_1| \leq n^{-L/2+1}$, we have
	\begin{align}\label{eq_E1}
		E_1 \leq n \Big( \log n+\big|\log(\lambda_1(\A-w \B))\big| \Big)\mathbf{1}_{\lambda_1(\A-w\B) \leq n^{-L/2+1}}.
	\end{align}
    We next show that the contribution from the event $\Omega^c$ is negligible. Note that from (\ref{eq_det}), \eqref{eq_small} and the local law in (\ref{eq:local_law_G_z}), we obtain that, for any fixed $k\in \N$,
    $$\E \big[ |L_n(h_{w})|^{k}\big]=  n^k \Big(\int_{0}^{T} \big(\Im m^w(\ii \eta)-\Im m^{w=0}(\ii \eta)\big) \mathrm{d} \eta \Big)^k + O_\prec\big((\log n)^k\big)=O\big( (n \log n)^k\big),$$
    where we also used that $m^{z}$ defined in (\ref{rho}) satisfies $|m^{w}(\ii \eta)|=O(\eta^{-1})$. Hence by the Cauchy-Schwarz inequality and using \eqref{eq_largest}, we have 
    $$\E\Big[\big|L_n(h_w)|^{k} \mathbf{1}_{\Omega^c}\Big] \leq C_k (n \log n)^{k} \P \big( \|\B\| \geq n \big) \leq C_D (\log n)^k n^{k-D/4}.$$
    By choosing $D$ sufficiently large depending on $K_0$, the contribution to $\E |L_n(h_{w})|^{k}~(1\leq k\leq K_0)$ from the event $\Omega^c$ is negligible. The same also applies to $\E |L_n(\wt{h_{w}})|^{k}$. Hence using \eqref{eq_E1}, and that $E_2\leq E_1$, $|E_3| \leq n^{-L+1}$, we conclude
	\begin{align}
		\E\big|L_n(h_w)-L_n(\wt{h_w})\big|^{k}  \leq (2 n)^k \E \left[ \Big( \log n+\big|\log(\lambda_1(\A-w \B))\big| \Big)^k \mathbf{1}_{\lambda_1(\A-w\B) \leq n^{-L/2+1}} \right]+O(n^{-L+1}),
	\end{align}
    for any $k\leq K_0$. Thus using Lemma \ref{lemma_tail} we have
	\begin{align}
		\E\big|L_n(h_w)-L_n(\wt{h_w})\big|^{k} \leq & C_k n^k \E\left[ \Big( (\log n)^k+\big|\log(\lambda_1(\A-w \B))\big|^k \Big) \mathbf{1}_{\lambda_1(\A-w\B) \leq n^{-L/2+1}} \right]+O(n^{-k(L-1)})\\ 
		\leq & C'_k n^k \bigg(
		(L \log n)^k \mathbb P \big(\lambda_1(\A-w \B)
		\leq   n^{-L/2+1} \big) + 
		\int_{L/4 \log n}^{\infty} k s^{k-1}\mathbb{P}\big(\lambda_1(\A-w \B) \leq  \mathrm{e}^{-s} \big)\dd s \bigg) =O(n^{-100k}),
	\end{align}
	for any $k\leq K_0$, by choosing a sufficiently large $L>0$ which depends on $K_0$ using \eqref{eq:tail_estimate}. This proves Lemma \ref{lemma_LL}.
	\end{proof}

We next prove  Lemma \ref{lemma_GFT}. The proof follows the same strategy as \cite[Lemma 4.3]{chafai-garcia-zelada-xu}, which is a special case of Lemma \ref{lemma_GFT} with $k_1=k_2=0$ and $k_3=1$.  A crucial tool of the proof is the cumulant expansion formula presented below.
\begin{lemma}[Lemma 3.1 from \cite{zbMATH06775355}]\label{cumulant_expansion_lemma}
		Let $h$ be a complex-valued random variable with finite moments. Define the $(p,q)$-cumulant of $h$ to be 
   \begin{align}\label{cumulant_pq}
	\kappa^{(p,q)}(h):=(-\ii)^{p+q} \Big( \frac{\partial^{p+q}}{\partial s^p
     \partial t^q} \log \E \mathrm{e}^{\mathrm {i} s h+\mathrm{i} t \overline{h}} \Big) \Big|_{s,t=0},
    \end{align}
    provided that the right side above exists.
    Let $f: \C \times \C \longrightarrow \C$ be a smooth function and denote its derivatives by
	$$f^{(p,q)}(z_1,z_2):=\frac{\partial^{p+q}}{\partial z^p_1 \partial z^q_2} f(z_1,z_2).$$ 
	Then for any fixed $l \in \N$, we have
	\begin{align}\label{cumulant_expansion_eq}
	\E\big[ \bar{h} f(h, \bar{h})\big]=\sum_{p+q+1=1}^l \frac{1}{p!q!} \kappa^{(p,q+1)}(h)\E\big[ f^{(p,q)}(h,\bar h)\big] +R_{l+1}\,,
	\end{align}
	provided that all the expectations in \eqref{cumulant_expansion_eq} exist. The error term satisfies
	\begin{align}\label{cumulant_error}
	|R_{l+1}| \leq C_l \E |h|^{l+1} \max_{p+q+1=l+1} \Big\{ \sup_{|z| \leq M} |f^{(p,q)}(z,\bar{z})| \Big\}+C_l \E \Big[ |h|^{l+1} 1_{|h|>M}\Big] \max_{p+q+1=l+1} \|f^{(p,q)}(z,\bar{z})\|_{\infty},
	\end{align}
	and $M>0$ is an arbitrary fixed cutoff.
\end{lemma}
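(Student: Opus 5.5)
The plan is to reduce the statement to an exact algebraic identity for polynomials, and then to control the Taylor remainder. For a polynomial test function, \eqref{cumulant_expansion_eq} is nothing but the moment--cumulant recursion. For a general smooth $f$, I would subtract the Taylor polynomial of $f$ at the origin and bound what is left; this is where the two-part error \eqref{cumulant_error} appears.

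\emph{Step 1 (polynomial identity).} Let $\phi(s,t)=\E\,\mathrm{e}^{\mathrm{i}sh+\mathrm{i}t\bar h}$. Differentiating under the expectation gives $-\mathrm{i}\,\partial_t\phi=\E[\bar h\,\mathrm{e}^{\mathrm{i}sh+\mathrm{i}t\bar h}]$. The chain rule gives $\partial_t\phi=\phi\,\partial_t\log\phi$. By definition \eqref{cumulant_pq}, the Taylor coefficients of $-\mathrm{i}\,\partial_t\log\phi$ at $(0,0)$ are $\kappa^{(p,q+1)}(h)\,\mathrm{i}^{p+q}/(p!\,q!)$.

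Apply $(-\mathrm{i}\partial_s)^a(-\mathrm{i}\partial_t)^b$ at $s=t=0$ to the identity $-\mathrm{i}\,\partial_t\phi=\phi\cdot(-\mathrm{i}\,\partial_t\log\phi)$ and use the Leibniz rule. This gives, for all $a,b\ge0$ with $a+b+1$ no larger than the number of finite moments,
\[
\E\bigl[\bar h\,h^a\bar h^b\bigr]=\sum_{p\le a,\;q\le b}\binom{a}{p}\binom{b}{q}\,\kappa^{(p,q+1)}(h)\,\E\bigl[h^{a-p}\bar h^{b-q}\bigr].
\]
Since $\partial_{z_1}^p\partial_{z_2}^q(z_1^az_2^b)=\frac{a!\,b!}{(a-p)!\,(b-q)!}z_1^{a-p}z_2^{b-q}$, this is exactly \eqref{cumulant_expansion_eq} with $R=0$ for $f(z_1,z_2)=z_1^az_2^b$. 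By linearity it then holds with $R=0$ for every polynomial $T$ of total degree at most $l-1$. Here the truncation $p+q+1\le l$ loses nothing, because derivatives of $T$ of order $\ge l$ vanish.

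\emph{Step 2 (Taylor splitting).} Let $T$ be the Taylor polynomial of $f$ at $0$ of total degree $l-1$ in $(z_1,z_2)$, and set $g=f-T$. By Step 1,
\[
R_{l+1}=\E\bigl[\bar h\,g(h,\bar h)\bigr]-\sum_{p+q+1\le l}\frac{1}{p!\,q!}\,\kappa^{(p,q+1)}(h)\,\E\bigl[g^{(p,q)}(h,\bar h)\bigr].
\]
The derivative $g^{(p,q)}$ vanishes to order $l-p-q$ at the origin, and its Taylor remainder is governed by the order-$l$ derivatives of $f$. Hence, by Taylor's formula along the segment $[0,h]$:
\begin{itemize}
\item on the event $\{|h|\le M\}$, $|g^{(p,q)}(h,\bar h)|\le C\,|h|^{l-p-q}\sup_{|z|\le M}|f^{(l)}|$, since the segment stays in the disc of radius $M$;
\item on $\{|h|>M\}$, $|g^{(p,q)}(h,\bar h)|\le C\,|h|^{l-p-q}\,\|f^{(l)}\|_\infty$.
\end{itemize}
Here $f^{(l)}$ stands for all derivatives $f^{(p',q')}$ with $p'+q'+1=l+1$, as in \eqref{cumulant_error}.

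\emph{Step 3 (moment bookkeeping).} Each cumulant $\kappa^{(p,q+1)}(h)$ is a polynomial in mixed moments of total degree $p+q+1$, so $|\kappa^{(p,q+1)}(h)|\le C\,\E|h|^{p+q+1}$ by Hölder. Multiplying by $\E|h|^{l-p-q}$ and using Hölder again gives $\E|h|^{p+q+1}\,\E|h|^{l-p-q}\le\E|h|^{l+1}$. For the main term, the first expectation is bounded directly, since $|\bar h|\,|h|^{l}=|h|^{l+1}$. Collecting the two events yields exactly the two terms of \eqref{cumulant_error}.

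\emph{Main obstacle.} The one delicate point is the $|h|>M$ contribution in the cumulant terms. There the product structure gives $\E|h|^{p+q+1}\,\E\bigl[|h|^{l-p-q}\mathbf 1_{|h|>M}\bigr]$, which is not literally $\E\bigl[|h|^{l+1}\mathbf 1_{|h|>M}\bigr]$. I would handle it with Markov's inequality, $\mathbf 1_{|h|>M}\le(|h|/M)^{p+q+1}$, which turns it into a bound of the form $\E\bigl[|h|^{l+1}\mathbf 1_{|h|>M}\bigr]$ up to constants depending only on $l$ and the first $l+1$ moments. Alternatively, since in applications one takes $M$ of order $n^{-1/2+\xi}$ with tails $\prec n^{-D}$, absorbing this discrepancy into $C_l$ is harmless.
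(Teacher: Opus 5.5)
The paper gives no proof of this lemma; it is quoted from \cite{zbMATH06775355}. Your route is the standard one for such expansions: exactness for polynomials of degree at most $l-1$ via the joint moment--cumulant recursion, then Taylor remainders of $g=f-T$ split on $\{|h|\le M\}$ and $\{|h|>M\}$. Steps 1--3 are sound except at the point you flagged, and there your fix does not give the stated bound.

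The tail cross term is bounded by $C\,\E|h|^{a}\,\E\bigl[|h|^{b}\mathbf 1_{|h|>M}\bigr]\,\|f^{(l)}\|_\infty$, where $a=p+q+1$, $b=l-p-q$, and $a+b=l+1$. Using $\mathbf 1_{|h|>M}\le(|h|/M)^{a}$ turns this into $C\,(\E|h|^{a}/M^{a})\,\E\bigl[|h|^{l+1}\mathbf 1_{|h|>M}\bigr]\,\|f^{(l)}\|_\infty$. The prefactor $\E|h|^{a}/M^{a}$ depends on $M$, not just on $l$ and the moments, and it is unbounded when $M$ is small compared with the typical size of $h$. So you do not get a constant $C_l$ depending only on $l$ and valid for every cutoff $M>0$, which is what the statement asserts. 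Your fallback uses that in the application $M=n^{-1/2+\xi}$ and $\E|h|^{a}\lesssim n^{-a/2}$, so the ratio is at most $1$. That proves an application-specific variant, not the lemma.

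The missing step is short and needs no assumption on $M$.
\begin{itemize}
\item Split $\E|h|^{a}\le M^{a}+\E\bigl[|h|^{a}\mathbf 1_{|h|>M}\bigr]$.
\item The first piece contributes $M^{a}\E\bigl[|h|^{b}\mathbf 1_{|h|>M}\bigr]\le\E\bigl[|h|^{l+1}\mathbf 1_{|h|>M}\bigr]$, since $M^{a}<|h|^{a}$ on that event.
\item For the second piece, Hölder's inequality on $\{|h|>M\}$ gives $\E\bigl[|h|^{a}\mathbf 1_{|h|>M}\bigr]\le\P(|h|>M)^{1-a/(l+1)}\,\E\bigl[|h|^{l+1}\mathbf 1_{|h|>M}\bigr]^{a/(l+1)}$, and likewise with $b$. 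Multiplying and using $a+b=l+1$ yields $\E\bigl[|h|^{a}\mathbf 1_{|h|>M}\bigr]\,\E\bigl[|h|^{b}\mathbf 1_{|h|>M}\bigr]\le\P(|h|>M)\,\E\bigl[|h|^{l+1}\mathbf 1_{|h|>M}\bigr]$.
\end{itemize}
Hence the cross term is at most $2C\,\E\bigl[|h|^{l+1}\mathbf 1_{|h|>M}\bigr]\,\|f^{(l)}\|_\infty$, uniformly in $M$. This is exactly the second term of \eqref{cumulant_error}.

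A minor point in Step 1: for real $s\neq t$ one has $|\mathrm{e}^{\mathrm{i}sh+\mathrm{i}t\bar h}|=\mathrm{e}^{-(s-t)\Im h}$. With only finite moments, $\phi$ may be infinite off the diagonal, so you cannot literally differentiate under the expectation. Instead, do the Leibniz computation on formal power series in $(s,t)$ with coefficients $\mathrm{i}^{a+b}\E[h^a\bar h^b]/(a!\,b!)$. This is also the only sense in which the cumulants of the statement are defined, and the recursion then follows coefficientwise.
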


\begin{proof}[Proof of Lemma \ref{lemma_GFT}]
	
Recall the definitions of $\wt{\mathcal{X}_n}$, $\wt{L_n(f_{w})}$, $\wt{L_n(f_n)}$, and $\wt{L_n(h_{p})}$ from \eqref{eq_XX}, \eqref{eq_regular1}, (\ref{eq_regular2}) and (\ref{eq_regular3}). From (\ref{eq_bound1})-(\ref{eq_bound2}), we know $\wt{h_{p}}$ with $p \in \overline{\C}$ defined in (\ref{eq_fun_reg}) are also regular test function satisfying \eqref{eq_f}. So we will not distinguish $\wt{h_{p_i}}~(1\leq i\leq k_2)$ from $f^{(i)}_n~(1\leq i\leq k_3)$, and may assume $k_2=0$ and $k=k_1+k_3$ without loss of generality. It then suffices to study	\begin{align}\label{eq_cf}
\wt{\mathcal{X}_n}=F_n\left(\mathbb{Q}\big[\wt{L_n(f_{w_1})}\big], \cdots , \mathbb{Q}\big[\wt{L_n(f_{w_{k_1}})}\big],\mathbb{Q}\big[\wt{L_n(f_n^{(1)})}\big],\cdots, \mathbb{Q}\big[\wt{L_n(f_n^{(k_3)})}\big] \right), \qquad k=k_1+k_3.
\end{align}

We introduce two independent matrix flows below interpolating between general Girko matrices and Ginibre matrices. Define the following Ornstein\,--\,Uhlenbeck matrix flows
	\begin{equation}\label{OU}
		\dd  \A_t=-\frac{1}{2} \A_t \dd t+\frac{\dd  \mathcal{B}^{(1)}_t}{\sqrt{n}}, \qquad \dd  \B_t=-\frac{1}{2} \B_t \dd  t+\frac{\dd  \mathcal{B}^{(2)}_t}{\sqrt{n}}, 
	\end{equation}
	with initial ensembles $\A_0=\A=A/\sqrt{n}$ and $\B_0=\B=B/\sqrt{n}$, where $A,B$ are independent Girko matrices satisfying \ref{it:condensity} --\ref{it:condmom}, and $\mathcal{B}^{(1)}_t$ and $\mathcal{B}^{(2)}_t$ are two independent $n\times n$ matrices whose entries are i.i.d. standard complex Brownian motions. 	Then for any $w \in \C$, we set as in \eqref{eq_H}
	\begin{align}\label{xh}
		H^w_{t}=\begin{pmatrix}
			0  &   X^w_{t} \\
			(X^w_{t})^*    & 0
		\end{pmatrix}, \qquad 
		H_{t}=\begin{pmatrix}
			0  &   X_{t} \\
			(X_{t})^*    & 0
		\end{pmatrix},\qquad  X^{w}_{t}=\A_t-w \B_t, \qquad X_{t}=\B_t,
	\end{align}
	and denote the resolvents of $H^{w}_{t}$ and $H_{t}$ by $G^{w}_{t}$ and $G_{t}$, respectively. 
	With a slight abuse of notation, we often use $G_t^{(w)}$ to denote the resolvent of either $H_t^{w}$ or $H_t$.
	We also introduce the following notations:
	\begin{align}\label{def_HAB}
		H_t^{(1)}=\begin{pmatrix}
			0  &   \A_t \\
			(\A_t)^*    & 0
		\end{pmatrix}, \qquad H_t^{(2)}=\begin{pmatrix}
			0  &   \B_t \\
			(\B_t)^*    & 0
		\end{pmatrix}, \qquad \dd\mathcal{M}^{(l)}_t=\begin{pmatrix}
			0  &   \dd\mathcal{B}^{(l)}_t \\
			\big(\dd\mathcal{B}^{(l)}_t\big)^*    & 0
		\end{pmatrix}, \qquad l=1,2.
	\end{align}
	For notational simplicity, we may often drop the dependence $t$ and write $H^{(l)}_t=(h^{(l)}_{ij})_{i,j\in 
		\llbracket 2n \rrbracket}$ for $l=1,2$.
	Note that the non-zero entries $(h^{(l)}_{aB})$ for $a\in \llbracket n \rrbracket$ and 
	$B\in \llbracket n+1,2n \rrbracket$ are independent random variables that depend on time.

	Notice that $\mathrm{Tr} G_t^{(w)}(\ii \eta)$ is purely imaginary, because the spectrum of $H_t^{(w)}$ is symmetrically distributed, from the block structure of $H_t^{(w)}$ in \eqref{xh}. Hence we have $\mathrm{Tr} G_t^{(w)}(\ii \eta)=\ii\Im \mathrm{Tr} G_t^{(w)}(\ii \eta)$. Then we define the following short-hands for time-dependent versions of $\wt{L_n(f_{w_i})}$ and $\wt{L_n(f^{(i)}_n)}$:
	\begin{align}\label{eq_LL}
		\mathcal{L}_{i,t}:=\begin{cases}
			&%
		\frac{\ii}{2}\int_{\eta_0}^{T}  \mathrm{Tr} \big[ G_t^{w_i}(\ii \eta)-G_t(\ii \eta)\big] \mathrm{d} \eta, \quad\qquad\qquad 1\leq i\leq k_1, \\
		&%
		\frac{\ii}{4\pi} \int \Delta_z f^{(i-k_1)}_n(z) \int_{\eta_0}^{T} \mathrm{Tr} G^z_t(\ii \eta) \dd \eta \dd^2 z,  \qquad k_1+1\leq i\leq k,
		\end{cases}
	\end{align}
	where $T=n^{100}$ and $\eta_0=n^{-1-\epsilon}$ with $0<\epsilon<c_0/100$, and also the time-dependent version of $\wt{\mathcal{X}_n}$:  
    \begin{align}\label{eq_XXX}
    \wt{\mathcal{X}_{n,t}}:=F_n\left(\mathcal{L}_{1,t}-\E^{\mathrm{Gin}}\big[\mathcal{L}_{1}\big],\cdots,\mathcal{L}_{k,t}-\E^{\mathrm{Gin}}\big[ \mathcal{L}_{k}\big] \right),
    \end{align}
    where $\E^{\mathrm{Gin}}[\cdot]$ denotes the corresponding  expectation with respect to Ginibre matrices. In contrast to \eqref{eq_cf}, we use $\mathcal{L}_{i,t}-\E^{\mathrm{Gin}}\big[\mathcal{L}_{i}\big]$ instead of $\mathbb{Q}\big[\mathcal{L}_{i,t}\big]=\mathcal{L}_{i,t}-\E\big[\mathcal{L}_{i,t}\big]$, which simplifies  the computations since $\E^{\mathrm{Gin}}[\cdot]$ is  time-independent. This modification is really minor for comparison, since we also note that for $t=0$,
    \begin{align}\label{eq_minor}
    \Big|\E \big[\wt{\mathcal{X}_{n}}\big]- \E \big[\wt{\mathcal{X}_{n,0}}\big]\Big|=O((\log n)^C n^{-c_0/3}),
    \end{align}
    by a simple Taylor expansion of $F_n$ using (\ref{eq_F}), assuming we have proved the second statement (\ref{eq_GFT2}), \ie for $t=0$, 
	 \begin{align}\label{eq_goal1}
	 	\Big| \E [\mathcal{L}_{i,0}]-\E^{\mathrm{Gin}} [\mathcal{L}_{i}]\Big|=O(n^{-c_0/3}), \qquad 1\leq i\leq k.
	 \end{align}

	\medskip

	 We start with proving the second statement (\ref{eq_GFT2}).  We first use the matrix flows \eqref{OU} and Itô's formula to derive the dynamics of $G^{(w)}_t$, and hence $\II~(1\leq i\leq k)$, as functions of $\big\{h^{(1)}_{aB}\big\}$ and $\big\{h^{(2)}_{aB}\big\}$ in \eqref{def_HAB} for $a\in 
	\llbracket n \rrbracket$ and $B\in \llbracket n+1,2n \rrbracket$. More precisely, 
	\begin{align}\label{eq_dynamics}
		\dd \II=& \Theta_{i,t} \dd t+  \dd M_{i,t}, \qquad 1\leq i\leq k,
	\end{align}
	with the drift function $\Theta$ given by
	\begin{align}\label{eq_drift}
		\Theta_{i,t} =   \sum_{a=1}^n \sum_{B=n+1}^{2n} \sum_{l=1}^2 \left(-\frac{1}{2}  h^{(l)}_{aB} \frac{  \partial \II}{\partial h^{(l)}_{aB}}  -\frac{1}{2} \overline{h^{(l)}_{aB}} \frac{\partial \II}{\partial \overline{h^{(l)}_{aB}}}  + \frac{1}{n} \frac{\partial^2 \II}{\partial h^{(l)}_{aB} \partial  \overline{h^{(l)}_{aB}}} \right)
		=:&~\Theta^{(1)}_{i,t}+\Theta^{(2)}_{i,t}+\Theta^{(3)}_{i,t},
	\end{align}
	and the diffusion term $\dd M_t$ given by
	\begin{align}\label{eq_diffusion}
		\dd M_{i,t} =&\sum_{a=1}^n \sum_{B=n+1}^{2n} \sum_{l=1}^2  \left(\frac{ \partial \II}{\partial h^{(l)}_{aB}}  \frac{\dd\mathcal{M}^{(l)}_{aB}}{\sqrt{n}} + \frac{\partial \II}{\partial \overline{h^{(l)}_{aB}}} \frac{\big(\dd\mathcal{M}^{(l)}_{aB}\big)^*}{\sqrt{n}}  \right).
	\end{align}

	Taking the expectation of \eqref{eq_dynamics}, the diffusion terms in (\ref{eq_diffusion}) vanish from the martingale property of Brownian motions, and we have
	\begin{align}
		\frac{\dd \E[\II]}{\dd t}=\E\big[\Theta^{(1)}_{i,t}\big]+\E\big[\Theta^{(2)}_{i,t}\big]+\E\big[\Theta^{(3)}_{i,t}\big].
	\end{align}
We next apply the cumulant expansion formula  in Lemma \ref{cumulant_expansion_lemma} to the first two drift terms, \ie $\Theta^{(1)}_{i,t}$ and $\Theta^{(2)}_{i,t}$ in (\ref{eq_drift}), with respect to $\big(h^{(l)}_{aB}\big)$ and $\big(\overline{h^{(l)}_{aB}}\big)$.  We remark that the cumulant expansion formula has been widely used in random matrix theory recently, \eg \cite{zbMATH06775355,zbMATH07549410,cipolloni2024universalityextremaleigenvalueslarge}. 
From \ref{it:condmom4}-\ref{it:condmom}, we find that the second order cumulants of $\big(h^{(l)}_{aB}\big)$ are invariant over time, \ie
\begin{align}\label{eq_var}
	\kappa^{(1,1)}\big(h^{(l)}_{aB}\big)=\frac{1}{n}, \qquad \kappa^{(0,2)}\big(h^{(l)}_{aB}\big)=\kappa^{(2,0)}\big(h^{(l)}_{aB}\big)=0, 
\end{align}
and the higher-order cumulants are bounded by
\begin{align}\label{eq_cumu_bound}
	|\kappa^{(p,q)}\big(h^{(l)}_{aB}\big)| \lesssim  n^{-2-c_0},\qquad p+q=3,4, \qquad |\kappa^{(p,q)}\big(h^{(l)}_{aB}\big)| \lesssim  n^{-\frac{p+q}{2}}, \qquad p+q\geq 5,
\end{align}
uniformly for any $t\geq 0$. We then write out the second order terms in the expansions of $\Theta^{(1)}_{i,t}$ and $\Theta^{(2)}_{i,t}$, \ie
\begin{align}\label{eq_cumulant_exp}
	\mathbb{E} \big[  \Theta^{(1)}_{i,t}+\Theta^{(2)}_{i,t} \big]=&-\frac{1}{2}\sum_{a=1}^n \sum_{B=n+1}^{2n} \sum_{l=1}^2  \mathbb{E} \left[ h^{(l)}_{aB} \frac{  \partial \II}{\partial h^{(l)}_{aB}}  +\overline{h^{(l)}_{aB}} \frac{\partial \II}{\partial \overline{h^{(l)}_{aB}}} \right]\nonumber\\
	=&-\frac{1}{n}\sum_{a=1}^n \sum_{B=n+1}^{2n} \sum_{l=1}^2  \mathbb{E} \left[   \frac{  \partial^2 \II}{\partial h^{(l)}_{aB} \partial \overline{h^{(l)}_{aB}}}  \right]+\mbox{higher order terms},
\end{align}
using \eqref{eq_var}. Then we observe that the above second order cumulant expansion terms will cancel precisely with the third drift term in \eqref{eq_drift}.
This cancellation is a consequence of the fact that the second moments of the matrix entries in \eqref{OU} do not change with time. Therefore, the remaining terms in \eqref{eq_derivative} are the higher order cumulant expansion terms in \eqref{eq_cumulant_exp}.

	To estimate the higher order terms in \eqref{eq_cumulant_exp}, we introduce the following differentiation rules  obtained from \eqref{xh} and the definition of resolvent:
\begin{align}
	&\frac{\partial (G^w_t)_{ij}}{\partial h^{(1)}_{aB}}=-(G^w_t)_{ia} (G^w_t)_{Bj}, \qquad \frac{\partial (G^w_t)_{ij}}{\partial \overline{h^{(1)}_{aB}}}=-(G^w_t)_{iB} (G^w_t)_{aj},\label{eq_diff_1}\\
	&\frac{\partial (G^w_t)_{ij}}{\partial h^{(2)}_{aB}}=w(G^w_t)_{ia} (G^w_t)_{Bj}, \qquad \frac{\partial (G^w_t)_{ij}}{\partial \overline{h^{(2)}_{aB}}}=\overline{w} (G^w_t)_{iB} (G^w_t)_{aj},\label{eq_diff_2}\\
	&\frac{\partial (G_t)_{ij}}{\partial h^{(1)}_{aB}}=\frac{\partial (G_t)_{ij}}{\partial \overline{h^{(1)}_{aB}}}=0, \qquad\qquad \frac{\partial (G_t)_{ij}}{\partial h^{(2)}_{aB}}=-(G_t)_{ia} (G_t)_{Bj}, \qquad\qquad \frac{\partial (G_t)_{ij}}{\partial \overline{h^{(2)}_{aB}}}= -(G_t)_{iB} (G_t)_{aj},\label{eq_diff_3}
\end{align}
for any $i,j \in \llbracket 2n \rrbracket $, $a \in 
\llbracket n \rrbracket$, and 
$B\in \llbracket n+1,2n \rrbracket$, together with the following lemma. The proof relies on the above differentiation rules and the local laws in Lemma \ref{lemma_local_law}, and is postponed until the end of this section.
\begin{lemma}\label{lemma_diff}
	For any $p,q \in \N$ with $p+q\geq 1$, we have
	\begin{align}\label{eq_bound_k}
		\sup_{t \in[0,T_0]} \left|  \frac{\partial^{p+q} \II }{\partial (h^{(l)}_{aB})^{p} (\overline{h^{(l)}_{aB}})^{q}} \right| \prec n^{2(p+q)\epsilon}, \qquad 1\leq i\leq k, \qquad l=1,2, \qquad T_0=100 \log n.
	\end{align}
\end{lemma}

 Applying Lemma \ref{cumulant_expansion_lemma}, we stop the cumulant expansions in (\ref{eq_cumulant_exp}) at the fourth order, \ie 
\begin{align}\label{eq_derivative_F0}
	 \frac{\dd \mathbb{E}\big[ \II\big]}{\dd t}
	=&-\frac{1}{2} \sum_{a=1}^n \sum_{B=n+1}^{2n}
	\sum_{l=1}^2 \left( \sum_{p+q+1= 3}^{4}\frac{\kappa^{(p+1,q)}(h^{(l)}_{aB})}{p!q!}
	\mathbb{E} \left[   \frac{\partial^{p+q+1} \II}{\partial (h^{(l)}_{aB})^{p+1} \partial (\overline{h^{(l)}_{aB}})^{q} } \right] \right)\nonumber\\
	&-\frac{1}{2} \sum_{a=1}^n \sum_{B=n+1}^{2n} \sum_{l=1}^2 \left(\sum_{p+q+1 = 3}^{4}\frac{\kappa^{(p+1,q)}(\overline{h^{(l)}_{aB}})}{p!q!}
	\mathbb{E} \left[   \frac{\partial^{p+q+1} \II }{\partial (\overline{h^{(l)}_{aB}})^{p+1} \partial (h^{(l)}_{aB})^{q}}\right]\right)+R_5,
\end{align}
with the truncating error $R_5$ given as in (\ref{cumulant_error}) and bounded by
\begin{align}\label{eq_error}
	|R_5|=O_\prec(n^{-1/2+c\epsilon}),
\end{align}
for some constant $c>0$, using the fifth cumulant bound \eqref{eq_cumu_bound}, the differentiation rules \eqref{eq_diff_1}-\eqref{eq_diff_3}, and the local law estimates in \eqref{eq:local_law_G_z}. Such truncation argument is standard and frequently used in previous works, so we postpone the details till the end of this section. Moreover, using  the cumulant bounds in \eqref{eq_cumu_bound} and Lemma \ref{lemma_diff}, the third and fourth order terms with $p+q+1=3,4$ above are bounded by $O_\prec(n^{-c_0+(p+q+1)\epsilon})$.  Therefore, we obtain
\begin{align}\label{eq_derivative_F}
	\left| \frac{\dd \mathbb{E}\big[ \II\big]}{\dd t}\right|=O(n^{-c_0/2}),
\end{align}
using that $\epsilon<c_0/100$ is chosen sufficiently small. We remark that the above error bounds holds uniformly for any $0\leq t\leq 100 \log n$. 	

Integrating \eqref{eq_derivative_F} over $t \in [0, T_0]$ with  $T_0=100\log n$,  we obtain 
\begin{align}\label{eq_long_time}
	\sup_{t \in [0,T_0]}\Big| \mathbb{E}\big[ \mathcal{L}_{i,t}\big]-\mathbb{E}\big[ \mathcal{L}_{i,T_0}\big] \Big| =O(n^{-c_0/3}). 
\end{align}
Note that $H^{(w)}_{t}$ is defined as in \eqref{xh} with the time dependent matrix $X^{(w)}_t$ under the evolution \eqref{OU}. For any fixed $t\ge 0$, $X^{(w)}_t$ in distribution equals to
\begin{align}\label{interpolate}
 \mathcal{X}^{(w)}_t := \mathrm{e}^{-\frac{t}{2}} X^{(w)}+\sqrt{1-\mathrm{e}^{-t}} {X^{\mathrm{Gin}}},
\end{align}
with $\mathcal{X}^{(w)}_0=X^{(w)}$ defined in \eqref{eq_H} and $\mathcal{X}^{(w)}_\infty={X^{\mathrm{Gin}}}$ being the Ginibre counterpart which is independent of $X^{(w)}$. We use $\mathcal{G}_t^{(w)}$ to denote the corresponding resolvents defined as in (\ref{eq_H}) and (\ref{eq_resolvent}) and $\mathcal{G}_t^{(w)} \overset{\mathrm{d}}{=} G_t^{(w)}$. Using the resolvent identity, we obtain
\begin{align}\label{eq_final}
	\|\mathcal{G}^{(w)}_{T_0}(\ii \eta)-\mathcal{G}^{(w)}_{\infty}(\ii \eta)\|_{\mathrm{op}} \leq \|\mathcal{G}^{(w)}_{T_0}(\ii \eta)\|_{\mathrm{op}} \|\mathcal{G}^{(w)}_{\infty}(\ii \eta)\|_{\mathrm{op}}\|\mathcal{X}^{(w)}_{T_0}-\mathcal{X}^{(w)}_{\infty}\|_{\mathrm{op}} 
	\prec n^{-50}, \qquad \eta \geq \eta_0,
\end{align}
where we also used  $\|\mathcal{G}^{(w)}(\ii \eta)\|_{\mathrm{op}} \leq \eta^{-1}$, $\|\mathcal{X}\|_{\mathrm{op}} \leq n \|\mathcal{X}\|_{\mathrm{max}}$, and $\big|(\mathcal{X}^{(w)}_{T_0})_{ij}-(\mathcal{X}^{(w)}_{\infty})_{ij}\big| \prec n^{-100}$ which follows from \eqref{interpolate} and the moment condition \ref{it:condmom}. Thus 
\begin{align}\label{eq_long_time_2}
		\Big| \mathbb{E}\big[ \mathcal{L}_{i,T_0}\big]-\mathbb{E}^{\mathrm{Gin}}\big[ \mathcal{L}_{i}\big] \Big|  =O(n^{-10}). 
\end{align}
Combining \eqref{eq_long_time} with \eqref{eq_long_time_2}, we conclude
\begin{align}\label{eq_conclude}
	\sup_{t \in [0,T_0]}\Big| \mathbb{E}\big[ \mathcal{L}_{i,t}\big]-\mathbb{E}^{\mathrm{Gin}}\big[ \mathcal{L}_{i}\big]  \Big| =O(n^{-c_0/3}). 
\end{align}
We hence finish the proof of the second statement (\ref{eq_GFT2}) in Lemma \ref{lemma_GFT}.

	\medskip

	The proof of the first statement (\ref{eq_GFT}) is similar but with more complexity. Recall the definition of $\wt{\mathcal{X}_{n,t}}$ from \eqref{eq_XXX}. Since $\E^{\mathrm{Gin}}[\mathcal{L}_{i}]$ is a deterministic number independent of $t$, the same dynamics as in (\ref{eq_dynamics}) also applies to $\mathcal{L}_{i,t}-\E^{\mathrm{Gin}}\big[\mathcal{L}_{i}\big]$. Using Itô's formula, (\ref{eq_dynamics}), and the chain rule to $\wt{\mathcal{X}_{n,t}}$, then taking the expectation, we have
	\begin{align}\label{eq_derivative}
		\dd \mathbb{E}\big[ \wt{\mathcal{X}_{n,t}}\big]=\sum_{j=1}^k \mathbb{E} \Big[ \big( \partial_{j} \wt{\mathcal{X}_{n,t}}\big) \big( \Theta^{(1)}_{j,t}+\Theta^{(2)}_{j,t}+\Theta^{(3)}_{j,t} \big) \Big] \dd t + \frac{1}{2} \sum_{j_1=1}^k \sum_{j_2=1}^k  \mathbb{E}\Big[ \big(\partial^2_{j_1j_2} \wt{\mathcal{X}_{n,t}} \big) \big\< \dd M_{j_1,t}, \dd M_{j_2,t}  \big\>\Big],
	\end{align}
	 where $\partial_{j}$ denotes the partial derivative of $F_n$ with respect to the $j$-th coordinate, and $\partial^2_{j_1j_2}$ denotes the second order partial derivative with respect to the $j_1$-th and $j_2$-th coordinate. From \eqref{eq_diffusion} and the 
	covariations 
	$\langle \mathrm d \mathcal M_{aB}^{(\ell)}, 
	\mathrm d \overline{\mathcal M}_{a'B'}^{(\ell')} \rangle
	= \delta_{\ell,\ell'} \delta_{a,a'} \delta_{B,B'}$ 
	and
	$\langle \mathrm d \mathcal M_{aB}^{(\ell)}, 
	\mathrm d \mathcal M_{a'B'}^{(\ell')} \rangle
	= 0$ with $a \in 
	\llbracket n \rrbracket$ and 
	$B\in \llbracket n+1,2n \rrbracket$, we obtain
	\begin{align}\label{eq_variation}
		\frac{1}{2} \mathbb{E}\Big[ \big(\partial^2_{j_1j_2} \wt{\mathcal{X}_{n,t}} \big) \big\< \dd M_{j_1,t}, \dd M_{j_2,t}  \big\>\Big]=&
		\frac{1}{2n} \sum_{l=1}^2 \mathbb{E} \left[\big(\partial^2_{j_1j_2} \wt{\mathcal{X}_{n,t}} \big)
		\sum_{a=1}^n \sum_{B=n+1}^{2n}  \left(   \frac{ \partial \mathcal{L}_{j_1,t}}{\partial h^{(l)}_{aB}}      \frac{\partial \mathcal{L}_{j_2,t}}{\partial \overline{h^{(l)}_{aB}}} +\frac{ \partial \mathcal{L}_{j_2,t}}{\partial h^{(l)}_{aB}}      \frac{\partial \mathcal{L}_{j_1,t}}{\partial \overline{h^{(l)}_{aB}}}  \right)\right] \dd t.
	\end{align}

	Recall the definitions of $\Theta^{(1)}_{j,t}$, $\Theta^{(2)}_{j,t}$, and $\Theta^{(3)}_{j,t}$ from (\ref{eq_drift}). To compute expectations of the first two drift terms in \eqref{eq_derivative}, we apply the cumulant expansion formula  with respect to $\big(h^{(l)}_{aB}\big)$ and $\big(\overline{h^{(l)}_{aB}}\big)$ as in (\ref{eq_cumulant_exp}). Since the variances of matrix entries \eqref{eq_var} are invariant in time,  we  again observe that the second order terms in the cumulant expansions cancel precisely with the third drift term and the term in \eqref{eq_variation}.  We then stop the expansions at the fourth order as in \eqref{eq_derivative_F0} with the truncating error term denoted by $\wt{R_5}$,\ie
	\begin{align}\label{eq_cumulant_exp_FF}
		\frac{\dd \mathbb{E}\big[ \wt{\mathcal{X}_{n,t}}\big]}{\dd t}
		=&-\frac{1}{2} \sum_{a=1}^n \sum_{B=n+1}^{2n}
		\sum_{l=1}^2 \left( \sum_{p+q+1= 3}^{4}\frac{\kappa^{(p+1,q)}(h^{(l)}_{aB})}{p!q!}
		\mathbb{E} \left[   \frac{\partial^{p+q+1} \wt{\mathcal{X}_{n,t}}}{\partial (h^{(l)}_{aB})^{p+1} \partial (\overline{h^{(l)}_{aB}})^{q} } \right] \right)\nonumber\\
		&-\frac{1}{2} \sum_{a=1}^n \sum_{B=n+1}^{2n} \sum_{l=1}^2 \left(\sum_{p+q+1 = 3}^{4}\frac{\kappa^{(p+1,q)}(\overline{h^{(l)}_{aB}})}{p!q!}
		\mathbb{E} \left[   \frac{\partial^{p+q+1}\wt{\mathcal{X}_{n,t}} }{\partial (\overline{h^{(l)}_{aB}})^{p+1} \partial (h^{(l)}_{aB})^{q}}\right]\right)+\wt{R_5}.
	\end{align}
  One can use the differentiations rules in (\ref{eq_diff_1})-(\ref{eq_diff_3}) and the chain rule to compute the partial derivatives of $\wt{\mathcal{X}_{n,t}}$ defined in (\ref{eq_XXX}). Note that $$\frac{\partial^{p+q} }{\partial (h^{(l)}_{aB})^{p} (\overline{h^{(l)}_{aB}})^{q}}\Big(\mathcal{L}_{i,t}-\E^{\mathrm{Gin}}\big[\mathcal{L}_{i}\big] \Big) =\frac{\partial^{p+q} \mathcal{L}_{i,t}}{\partial (h^{(l)}_{aB})^{p} (\overline{h^{(l)}_{aB}})^{q}}.$$ 
Moreover, combining \eqref{eq_conclude} with \eqref{eq_bound1}-\eqref{eq_bound2}, we know
    \begin{align}\label{eq_bound3}
    \left| \mathcal{L}_{i,t}-\E^{\mathrm{Gin}}\big[\mathcal{L}_{i}\big] \right| \prec (\log n)^C, \qquad 1\leq i\leq k,
    \end{align}
    uniformly for $t\in [0,T_0]$.
Using the upper bounds of derivatives of $F_n$ in \eqref{eq_F} and \eqref{eq_bound3}, Lemma \ref{lemma_diff} directly implies
	\begin{align}\label{eq_bound_F}
		\sup_{t \in[0,T_0]} \left|  \frac{\partial^{p+q} \wt{\mathcal{X}_{n,t}} }{\partial (h^{(l)}_{aB})^{p} (\overline{h^{(l)}_{aB}})^{q}} \right| \prec (\log n)^{C} n^{(p+q)\epsilon}, \qquad p,q \in \N, \qquad p+q\geq 1,
	\end{align}
for some constant $C>0$. Hence using the cumulant bounds in \eqref{eq_cumu_bound} and \eqref{eq_bound_F}, the third and fourth order terms with $p+q+1=3,4$ above are bounded by $O_\prec\Big((\log n)^{C}n^{-c_0+(p+q+1)\epsilon}\Big)$. Moreover, the truncating error term $\wt{R_5}$ can be bounded by $O_\prec(n^{-1/2+c'\epsilon})$ using a similar argument as in \eqref{eq_error}. Therefore, we obtain
	\begin{align}\label{eq_cumulant_exp_F}
		\Big|\frac{\dd \mathbb{E}\big[ \wt{\mathcal{X}_{n,t}}\big]}{\dd t} \Big|
		=&O(n^{-c_0/2}),
	\end{align}
with $\epsilon<c_0/100$ chosen sufficiently small, uniformly for $t\in [0,T_0]$. Repeating the same arguments as in \eqref{eq_long_time}-\eqref{eq_conclude} and combining with \eqref{eq_minor}, we finish the proof of \eqref{eq_GFT} and hence Lemma \ref{lemma_GFT}.	
\end{proof}

It remains to prove Lemma \ref{lemma_diff}.

\begin{proof}[Proof of Lemma \ref{lemma_diff}]
	
	Recall $\II$ in \eqref{eq_LL} and the differentiation rules in \eqref{eq_diff_1}-\eqref{eq_diff_3}. We start with $1\leq i\leq k_1$ for fixed points $w_i \in \C$. Below we will present the proof for $l=2$ using \eqref{eq_diff_2}, and the proof for $l=1$ is very similar using \eqref{eq_diff_1}. 
	Using \eqref{eq_diff_2},\eqref{eq_diff_3}, and that $G^2=-\ii \frac{\dd}{\dd \eta} G(\ii \eta)$, we have
	\begin{align}\label{diff_L}
		\frac{\partial \II }{\partial h^{(2)}_{aB}}
		=&\frac{\ii}{2}  \int_{\eta_0}^{T}  \big[ w_i\big(G_t^{w_i}(\ii \eta)\big)^2  +\big(G_t(\ii \eta)\big)^2 \big]_{Ba}  \dd \eta =   -\frac{1}{2}\big[w_i G^{w_i}_t(\ii \eta_0)+G_t(\ii \eta_0)\big]_{Ba} +O_\prec(n^{-10}), \qquad 1\leq i\leq k_1,
	\end{align}
	where we also used that $\|G(\ii \eta)\|_{\mathrm{op}} \leq \eta^{-1}$ for $\eta=T=n^{100}$. Note that the variances of entries of $X^z_t$ in \eqref{xh} do not change with time (see \eqref{eq_var}). Hence, the local laws as in Lemma \ref{lemma_local_law} also hold true for $G^z_t$ with a fixed $t\geq 0$. 
Using \eqref{eq:local_law_G_z} and \eqref{rho} for $\eta_0=n^{-1-\epsilon}$, $\mathbf{x}=\mathbf{e}_{x}$, and $\mathbf{y}=\mathbf{e}_{y}$, we conclude that, for each fixed $t\geq 0$,
	$$
	\sup_{z\in \C} \max_{x,y \in \llbracket 2n \rrbracket}   \Big|\big( G^{z}_t(\ii \eta_0) \big)_{xy}\Big| \prec n^{\epsilon}, \qquad \mbox{~with~} \quad \eta_0=n^{-1-\epsilon}.
	$$
	Due to the H\"{o}lder continuity of $G^{z}_t$ in $t$, one can show that the above bound indeed holds true simultaneously for any $t\in [0,T_0]$ with $T_0=100\log n$, \ie
	\begin{align}\label{eq_G_bound}
		\sup_{t\in [0,T_0]}  \left\{    \sup_{z\in \C} \max_{x,y \in \llbracket 2n \rrbracket}  \Big|\big( G^{z}_t(\ii \eta_0) \big)_{xy}\Big| \right\} \prec n^{\epsilon}, \qquad \mbox{~with~} \quad \eta_0=n^{-1-\epsilon},
	\end{align}
	using a grid argument on $[0,T_0]$ by taking the union bound.  More precisely, from the definition of $\prec$ in \eqref{prec}, fixing any small $\xi>0$ and large $D>100$, and setting $t_j:=n^{-100}j$, we have 
    \begin{multline}\label{union}
    \P\left( \exists j \in \lfloor n^{100}T_0 \rfloor : \sup_{z\in \C} \max_{x,y \in \llbracket 2n \rrbracket}   \Big|\big( G^{z}_{t_j}(\ii \eta_0) \big)_{xy}\Big| \geq n^{\epsilon+\xi}  \right) \\
    \leq \sum_{j \in \lfloor n^{100}T_0 \rfloor}\P\Big(  \sup_{z\in \C} \max_{x,y \in \llbracket 2n \rrbracket}   \Big|\big( G^{z}_{t_j}(\ii \eta_0) \big)_{xy}\Big| \geq n^{\epsilon+\xi} \Big) \leq n^{-D+100}.
    \end{multline}
    By choosing $D$ sufficiently large, this proves \eqref{eq_G_bound} for all $t_i$ simultaneously. Note that from the dynamics (\ref{OU}), (\ref{eq_H}) and the resolvent identity, then for any $|t-t'| \leq n^{-100}$ and $|z|\leq 1$, we have
    \begin{align}
    \max_{x,y \in \llbracket 2n \rrbracket}\Big| \big(G^{z}_t(\ii \eta_0)\big)_{xy}-\big(G^{z}_{t'}(\ii \eta_0)\big)_{xy}\Big| \leq \|G_t^{z}(\ii \eta_0)-G^{z}_{t'}(\ii \eta_0)\|_{\mathrm{op}} \leq  \frac{1}{\eta_0^2} \|H_t^{z} -H_{t'}^{z} \|_{\mathrm{op}} \leq  \frac{n}{\eta_0^2} \|H_t^{z} -H_{t'}^{z} \|_{\mathrm{max}} \prec n^{-10},
    \end{align}
    where we also used  $\|G^z_t(\ii \eta)\|_{\mathrm{op}} \leq \eta^{-1}$ and that OU paths in (\ref{OU}) are almost surely $\alpha$-H\"{o}lder continuous with $\alpha<1/2$. For $|z|>1$, a similar estimate holds true using the relation \eqref{eq_G_inverse}. This, together with \eqref{union},  proves \eqref{eq_G_bound}.  The same bound also applies to $G_t$ using that $G_t\overset{\mathrm{d}}{=}G^{z=0}_t$.
    Hence we obtain from \eqref{diff_L} that
	\begin{align}
		\sup_{t\in [0,T_0]} \Big|\frac{\partial \II }{\partial h^{(2)}_{aB}}\Big|  =O_\prec (n^{\epsilon}), \qquad 1\leq i\leq k_1.
	\end{align} 
	The same result also applies to $\partial /\partial \overline{h^{(l)}_{aB}}$. In general, for any $p,q \in \N$ with $p+q\geq 1$, using the differentiation rules in \eqref{eq_diff_1}-\eqref{eq_diff_3} and the local law estimates in \eqref{eq_G_bound} repeatedly, we obtain the desired estimate \eqref{eq_bound_k} for any $(p+q)$-th derivatives of $\II$ for $\eqref{eq_bound_k}$. 

    The proof of \eqref{eq_bound_k} for $k_1 \leq i\leq k$ is similar, by noting that (\cf \eqref{diff_L})
    	\begin{align}\label{diff_LL}
		\frac{\partial \II }{\partial h^{(2)}_{aB}}
		=&\frac{\ii}{4\pi} \int \Delta_z f^{(i)}_n(z) \int_{\eta_0}^{T}  z\big[ \big(G_t^z(\ii \eta)\big)^2  \big]_{Ba}  \dd \eta \dd^2 z=-\frac{1}{4\pi} \int \Delta_z f^{(i)}_n(z)  \big(z G^z_t(\ii \eta_0)\big)_{Ba} \dd^2 z+O_\prec(n^{-10}).
	\end{align}
    It is clear from \eqref{eq_G_bound} that $\big|z\big( G^z_t(\ii \eta_0)\big)_{Ba}\big| \prec n^{\epsilon}$ for $|z|\leq 1$, and this bound can be extended to $|z|\geq 1$ using the relation \eqref{eq_G_inverse}. Combining this with the $L^{1}$-norm bound in (\ref{eq_f}), we obtain the desired upper bound as in \eqref{eq_bound_k}.  This ends the proof of Lemma \ref{lemma_diff}.
\end{proof}

We end this section with the proof of Eq. \eqref{eq_error}.

\begin{proof}[Proof of equation \eqref{eq_error}]
	We will only present the details for $1\leq i\leq k_1$. The proof for $k_1+1 \leq i\leq k$ is quite similar using the $L^{1}$-norm bound in (\ref{eq_f}), so we omit it. 
	
	Recall  (\ref{cumulant_error}) and the differentiation rules from (\ref{eq_diff_1})-(\ref{eq_diff_3}). To bound the truncating error in \eqref{eq_derivative_F0}, it then suffices to bound, for any $a\in \llbracket n \rrbracket $ and $B\in \llbracket n+1,2n \rrbracket$, 
	\begin{align}\label{R_5}
		|R^{(aB)}_{5}| &\leq  C \E[ |h^{(l)}_{aB}|^{5}] \E \Big[ \max_{p+q+1=5} \Big\{ \sup_{|z| \leq n^{-1/2+\xi}} \Big| \frac{\partial^{p+q+1} }{\partial (h^{(l)}_{aB})^{p+1} \partial (\overline{h^{(l)}_{aB}})^{q} } \II^{(aB)}(z) \Big| \Big\} \Big]\nonumber\\
		&+C  \E \Big[ |h^{(l)}_{aB}|^{5} 1_{|h_{aB}|> n^{-1/2+\xi}}\Big] \E \Big[\max_{p+q+1=5} \Big\{ \sup_{z \in \C} \Big| \frac{\partial^{p+q} }{\partial (h^{(l)}_{aB})^{p} \partial (\overline{h^{(l)}_{aB}})^{q} } \II^{(aB)}(z)  \Big| \Big\} \Big],
	\end{align}
	with a fixed small $\xi>0$, and where $h^{(l)}_{aB}$ for $l=1,2$ are defined in (\ref{def_HAB}), and $\II^{(aB)}(z)$ is obtained from $\II$ (which is defined in (\ref{eq_LL})) by replacing the random variable $h^{(l)}_{aB}$  with the deterministic value $z$, and replacing $h^{(l)}_{Ba}$ with $\overline{z}$. To keep the proof short, we will only consider $l=1$, and the proof for $l=2$ is similar with a slight modification. In such case, we write $h_{aB}=h_{aB}^{(1)}$, drop the dependence on $t$ for brevity, and define
		$$\mathfrak{H}(z): = H^{(aB)}+z E^{(Ba)}+\overline{z} E^{(aB)},\qquad \mathfrak{G}(z) :=\big( \mathfrak{H}(z) -\ii \eta_0 \big)^{-1},$$
with  $H^{(aB)}:=H^{w_i}-h_{aB}E^{(aB)}-\overline{h_{aB}}E^{(Ba)}$ and $E^{(aB)}:=(\delta_{aB})_{i,j=1}^{2n}$. By a similar argument as in (\ref{diff_L}), we obtain
\begin{align}
	\frac{\partial \II^{(aB)}(z) }{\partial h_{aB}}= \frac{1}{2}\big[\mathfrak{G}(z)\big]_{Ba} +O_\prec(n^{-100}),
\end{align}
where we used (\ref{eq_diff_1}), (\ref{eq_diff_3}), and that $G^2=-\ii \frac{\dd}{\dd \eta} G(\ii \eta)$. One can compute higher order partial derivatives similarly.

We first estimate the second line of (\ref{R_5}). Using the deterministic upper bound for $\max_{i,j} |\mathfrak{G}_{ij}(z)| \leq \|\mathfrak{G}(z)\|_{\mathrm{op}} \leq \eta^{-1}_0=n^{1+\epsilon}$ uniformly for any $z\in \C$, we obtain from (\ref{eq_diff_1}) and (\ref{eq_diff_3}) that
$$\max_{p+q+1=5} \left\{ \sup_{z \in \C} \Big| \frac{\partial^{p+q} }{\partial (h_{aB})^p \partial (\overline{h_{aB}})^{q}} \big[\mathfrak{G}(z)\big]_{Ba} \Big| \right\} =O\big(n^{5(1+\epsilon)}\big).$$
Moreover, from the moment condition \ref{it:condmom} and by the Markov inequality, we have
$$\E \Big[ |h_{aB}|^{5} 1_{|h_{aB}|> n^{-1/2+\xi}}\Big] \leq \Big( \E \Big[ |h_{aB}|^{10}\Big] \Big)^{1/2} \Big( \P\big( |h_{aB}|> n^{-1/2+\xi} \big) \Big)^{1/2}  \leq n^{-D}$$
for any large $D>0$, since all the moments are finite. Thus the second line of \eqref{R_5} can be bounded by $O(n^{-100})$ by choosing $D$ sufficiently large.

We next estimate  the first line of (\ref{R_5}). Using the second resolvent identity, we have
	\begin{align}\label{resolvent_expansion}
		\mathfrak{G}(0)=G^{w_i}+ \mathfrak{G}(0) \Big( h_{aB}E^{(aB)}+\overline{h_{aB}} E^{(Ba)} \Big) G^{w_i}.
	\end{align}
	From the local law in~\eqref{eq_G_bound}, we have $ \max_{i,j}|G^{w_i}_{ij}| \prec n^{\epsilon}$. In addition, we have $|h_{aB}| \prec n^{-1/2}$ from the moment condition \ref{it:condmom}. Therefore, we have from \eqref{resolvent_expansion} that $\max_{i,j}|\mathfrak{G}_{ij}(0)| \prec n^{\epsilon}$. Similarly, we have
	\begin{align}\label{resolvent_expansion2}
		\mathfrak{G}(z)=\mathfrak{G}(0)- \mathfrak{G}(z) \Big( z E^{(aB)}+\overline{z} E^{(Ba)} \Big) \mathfrak{G}(0),
	\end{align}
	and thus
	\begin{align}
		\sup_{|z|<n^{-1/2+\xi}}\Big\{ \max_{i,j} \Big|\mathfrak{G}_{ij}(z)\Big|\Big\} \prec n^{\epsilon}.
	\end{align}
	Further using (\ref{eq_diff_1}) and (\ref{eq_diff_3}), we obtain that
	$$\max_{p+q+1=5} \left\{ \sup_{|z| < n^{-1/2+\xi}} \Big| \frac{\partial^{p+q} }{\partial (h_{aB})^p \partial (\overline{h_{aB}})^{q}} \big[\mathfrak{G}(z)\big]_{Ba} \Big| \right\}  \prec n^{\epsilon}.$$
	Together with $\E|h_{aB}|^5\le Cn^{-5/2}$ from the moment condition \ref{it:condmom} and Lemma \ref{lemma_dominant}, the first term on the right side of (\ref{R_5}) is bounded by $O_{\prec}(n^{-5/2+c\epsilon})$. 
	
	Hence summing over $a \in \llbracket n \rrbracket,B\in \llbracket n+1,2n \rrbracket$ the truncation error $R_5$ in the cumulant expansions satisfies $|R_5|=O_{\prec}(n^{-1/2+c\epsilon})$.

\end{proof}

\section{Discussion about geometric universality}
\label{se:geometric}

\subsection{Discussion about background charge universality}
\label{se:Background}

The name Coulomb gas given to \eqref{eq:gas:C} 
is justified by thinking of it as
the law of $n$ positively charged particles at equilibrium with
charge $c=1/(n+1)$, confined by the 
background charge $-\mu$ and at inverse
temperature $\beta = 2(n+1)^2$. 
Indeed, the total (potential) energy would be
\[H_n(z_1,\dots,z_n) = 
-c^2 \sum_{i<j}  \log |z_i - z_j| + c\sum_{i=1}^n Q(z_i)\]
so that the Boltzmann-Gibbs measure with
energy $H_n$ and inverse temperature $\beta$ coincides with
\eqref{eq:gas:C},
\[\mathrm{e}^{-\beta H_n}
= \mathrm{e}^{-2(n+1)\sum_{k=1}^nQ(z_k)}\prod_{i<j}|z_i-z_j|^2.\]
We may think our space $\mathbb C$ as filled with
a different background charge $-\mu$ and ask if
Theorem \ref{th:LogSing} still holds true. 
If we wish to allow
singularities at any points, even at $\infty$, 
we should choose 
a  probability measure $\mu$ 
on $\overline{\mathbb C}$ having 
a strictly positive smooth density everywhere (a volume form). 
More precisely, this means that
$\mu$ has a strictly positive smooth
density $\rho$ with respect to Lebesgue measure
and that
$|z|^{-4}\rho(1/z)$ smoothly extends and 
it is positive
around $0$. An equivalent way to obtain this
is to consider a smooth function
$Q: \mathbb C \to \mathbb R$ with
a strictly positive Laplacian and such that
$\widetilde Q(z) = Q(1/z) + \log|z|$ extends
smoothly at zero and $\Delta \widetilde Q(0) > 0$.

Due to the logarithmic interaction
together with the choice of inverse temperature,
we still get a determinantal point process
governed by the projection
on $L^2_{\mathbb C}(\mathrm{e}^{-2(n+1)Q(z)} \mathrm d^2 z )$
onto the space of polynomials
of degree less or equal than $n-1$. Due to the
behavior of $Q$ at infinity, the 
measure $\sigma$ given by $\mathrm d \sigma(z) = 
\mathrm{e}^{-4Q(z)} \mathrm d^2 z $
has a strictly positive smooth density everywhere
on $\overline{\mathbb C}$,
as defined above. The space
$\widetilde {\mathcal P_n}$ that replaces \eqref{eq:Pn}
would be
\begin{equation}\label{eq:Pn:replace}
    \widetilde{\mathcal P_n} = 
    \bigg\{(a_0 + a_1 z+ \dots + a_{n-1}z^{n-1})\mathrm{e}^{-(n-1)Q(z)}:
	a_0,\dots,a_{n-1} \in \mathbb C\bigg\},
\end{equation}
and the $L^2_{\mathbb C}(\mu)$ space is replaced by 
$L^2_{\mathbb C}(\sigma)$. Then, 
if we consider any orthonormal basis $\{P_{n,k}: 0 \leq k \leq n-1 \}$ of $\widetilde{\mathcal P_n}
\subset L^2_{\mathbb C}(\sigma)$, the kernel
$K_n$ is replaced by the orthogonal projection 
$\widetilde {K_n}(z,w)$
of $L^2_{\mathbb C}(\sigma)$ onto
$\widetilde {\mathcal P_n}$
\[\widetilde {K_n}(z,w)
= \sum_{k=0}^{n-1} P_{n,k}(z) \overline{P_{n,k}(w)}.\]
In the general setting
the inequality in \eqref{eq:kernel-distance}
needs to be replaced by the weaker
$|K_n(x,y)|\leq Cn\mathrm{e}^{-c\sqrt{n}d(x,y)}$
some $C,c>0$. Here we can keep the same (half) chordal
distance $d$ if we prefer.
This weaker inequality is enough to show 
Lemmas
\ref{lem:lip-variance}, \ref{lem:Covariance} and \ref{lem:Independence}. The only missing step
is handling, for every $p \in \overline{\mathbb C}$, a fixed function with a single
logarithmic singularity at $p$,
i.e., a substitute for Lemma \ref{le:Green:cum}
or Subsection \ref{sub:SingleSingularity}. 
Since the local behavior matches that of
the spherical ensemble
and the fluctuations only feel the
singular part,
 we should obtain the same kind of fluctuations.

\subsection{Discussion about surface universality}
\label{se:Surface}

Here we recall one of the standard generalizations
of the spherical ensemble to other surfaces. Consider a 
compact Riemann
surface $S$ and choose
a holomorphic line bundle $L$ over $S$ endowed with
a (smooth) Hermitian metric $h$. 
Denote by $K$ the canonical line bundle over $S$ or,
in other words, the complex dual $T^{*_{\mathbb C}}S$
of the tangent bundle of $S$.
For any integer $n \geq 1$, the line bundle
$L_n = L^{\otimes_n} \otimes K$ is endowed with
a linear map
$\langle \cdot, \cdot \rangle:\overline{L_n} 
\otimes L_n \to  \overline K \otimes K
= \Lambda^2 S \otimes \mathbb C$
which is defined, for $v_1,v_2$ in the fiber of 
$L^{\otimes_n}$ at $x$
and $f_1,f_2$ in the fiber of $K$ at $x$ by
\[\langle v_1 \otimes f_1, v_2 \otimes f_2 \rangle_x 
= (2\mathrm{i})^{-1} h_x^{\otimes_n}(v_1,v_2) \overline{f_1} \wedge f_2.
\]
In particular,
for a non-zero vector $\eta$ of $L_n$ at $x$, the product
$\langle \eta,\eta \rangle_x $ is a strictly positive\footnote{A $2$-form
$\omega$
is strictly positive if locally
$\omega = f\mathrm{d}x \wedge\mathrm{d}y$ for a 
strictly positive function $f$.}
real two-form. Notice that the term $(2\mathrm{i})^{-1}$ 
serves to that purpose since
$\overline{\mathrm{d}z} \wedge \mathrm{d}z = (\mathrm{d}x - \mathrm{i} \mathrm{d}y) \wedge (\mathrm{d}x + \mathrm{i} \mathrm{d}y)
= 2\mathrm{i} \mathrm{d}x \wedge \mathrm{d}y$. 
Denote by $\omega \in \Omega^2(S)$ the curvature of 
$(L,h)$ multiplied by $\mathrm{i}/2\pi$. 
We assume that
$\omega$ is strictly positive so that, in particular,
the degree $\mathrm{deg}(L)$ of $L$ is strictly positive and
the dimension $d_n$ of the space $H^0(S,L_n)$ of holomorphic sections of $L_n$ behaves like $\mathrm{deg}(L)n$
as $n$ goes to infinity.
We replace the sequence of spaces 
of weighted polynomials $\mathcal P_n$
by the sequence of spaces of holomorphic sections $H^0(S,L_n)$. Our process can be defined, taking a basis
$\sigma_1,\dots,\sigma_{d_n}$ of $H^0(S,L_n)$,
as
\[
\frac{1}{Z_n}\langle \sigma_1\wedge \dots \wedge \sigma_{d_n},
\sigma_1\wedge \dots \wedge \sigma_{d_n} \rangle
\in \Omega^{2d_n}(S^{d_n}),\]
for $Z_n$ a normalization constant.
In an equivalent fashion, we may 
denote by $K_n$ the kernel of the orthogonal
projection of
$L^2(S,L_n)$ onto $H^0(S,L_n)$. 
This $K_n$ is understood as a section
of $L_n \otimes \overline{L_n}$
so that it can couple to a section 
of $L_n$ 
via $\langle \cdot, \cdot \rangle$
to give us an element of $L_n$.
It makes sense to 
talk about $\det{[K_n(z_i,z_j)]}_{1\leq i,j\leq d_n}$
using $\langle \cdot, \cdot \rangle$ with the combinatorial definition of the
determinant, so that the law of our process
has the equivalent description
\begin{equation}
\label{eq:LawSurface}
\frac{1}{d_n!}\det{[K_n(z_i,z_j)]}_{1\leq i,j\leq d_n}
\in \Omega^{2d_n}(S^{d_n}).
\end{equation}
Let us consider $(Z_{n,1},\dots,Z_{n,d_n})$
following the law given by \eqref{eq:LawSurface}.
Here are some classical aspects known about
this sequence of point processes.
\begin{itemize}
\item Almost surely, $\lim_{n \to \infty} \frac{1}{n}
\sum_{i=1}^{d_n} \delta_{Z_{n,i}}
= \omega$ in the topology
induced by bounded continuous test functions.
Furthermore, there is a large deviation 
principle associated to it
with rate function given by
the Green function energy associated
to $\omega$. See \cite{BermanLDP}.
    \item
For every smooth function $f:S \to \mathbb R$,
the random variable
$\sum_{i=1}^{d_n} f(Z_{n,i})
- n \int_X f \omega$
converges to a centered\footnote{The absence
of bias comes from the kernel asymptotics on the diagonal.} Gaussian random variable
with variance
$\frac{1}{4\pi}\int df \wedge *df$.
See \cite{10.1007/978-3-030-01588-6_5}. See also \cite{bourgoin}.
    \item 
For every $x \in S$, choosing
a diffeomorphism $\varphi: U \subset S \to V \subset 
T_x S$ that satisfies $\varphi(x) = 0$ and
$d \varphi_x = \mathrm{Id}_{T_x S}$,
the point process $\sum_{i=1}^{d_n} \sqrt{n} 
\delta_{\varphi(Z_{n,i})} 1_{Z_{n,i} \in U}$
converges to the Ginibre point process
in $(T_xS, \omega_x)$. See
\cite{10.1007/978-3-030-01588-6_5}.
\end{itemize}

\noindent
\textbf{Coulomb gases example:}
When $S=\overline{\mathbb C}$ and $L$ is the hyperplane line
bundle $\mathcal O(1)$, any metric $h$ on $S$
is identified with a function
$\mathrm{e}^{-2V}$ on $\mathbb C$ such that
$w \mapsto |w|^{-2} \mathrm{e}^{-2V(1/w)}$ extends smoothly
and it is positive at $0$.
Equivalently, $w \mapsto \log|w| + V(1/w)$
extends smoothly at $0$ or, in other words,
$V$ has a logarithmic singularity
at $\infty$ of weight $-1$.
We have fixed a trivialization
of $L|_{\mathbb C}$ and in this trivialization at
a point $z \in \mathbb C$,
$h_z(1,1) = \mathrm{e}^{-2V(z)}$.
The product of
the section $1 \otimes \dots \otimes 1 \otimes 
dz$ of $L_n=L^{\otimes_n} \otimes K$ with itself is
\[\langle 
1 \otimes \dots \otimes 1 \otimes 
dz , 1 \otimes \dots \otimes 1 \otimes 
dz \rangle_z = \mathrm{e}^{-2n V(z)} dx \wedge dy.\]
Since $L^{\otimes_n} \otimes K$
has degree $n \mathrm{deg}(L) + \mathrm{deg}(K)
= n - 2$, the space of holomorphic sections
is identified with
the space of polynomials of degree less or equal
than $n-2$.
Then $\mathcal P_{n} = H^0(S,L_{n+1})$ and
\[ \langle p_1, p_2 \rangle_{L^2(L_{n+1},h)}
= \int_{\mathbb C} \overline{p_1(z)}
p_2(z) \mathrm{e}^{-2(n+1)V(z)} \mathrm d^2 z.\]
The curvature 
of $(L,h)$ is 
$\overline{\partial} \partial (-2V)
= -i \Delta V dx \wedge dy$ 
so that $\omega = \frac{\Delta V}{2\pi} dx \wedge dy$ whose
integral is necessarily\footnote{The Laplacian
of a smooth function $f: \overline{\mathbb C}\setminus 
\{p_1,\dots,p_k\} \to \mathbb R$ 
with logarithmic singularities
of weights $c_i$ at $p_i$  has integral
(over $\overline{\mathbb C} \setminus \{p_1,\dots,p_k\}$)
equal to $-2\pi$ times $c_1+\dots+c_k$. 
Other way of looking at this phenomenon is
by noticing that the Laplacian over $\overline{\mathbb C}$
is $2\pi \sum_{i=1}^n c_i\delta_{p_i}$ 
plus a regular measure so that the total mass
of $\Delta f$ is still $0$ on
$\overline{\mathbb C}$.} 1.
The spherical ensemble corresponds to the case
$V(z) = \frac{1}{2}\log(1+|z|^2)$.

\medskip

For the generalization, we consider
    $\mathcal{S}$ the set of smooth functions $f:S\setminus\{p_1,\ldots,p_m\}\to\mathbb{R}$ with logarithmic singularities at $p_1,\ldots,p_m$ with weights $c_1,\ldots,c_m$, meaning that for each $j \in \{1,\dots,m\}$ there are holomorphic coordinates $\varphi_j:\mathbb D \to S$ with $\varphi_j(0)=p_j$
    such that $z \mapsto f(\varphi_j(z)) - c_j\log|z|$ extends smoothly at $0$. If this holds for some 
    holomorphic coordinates $\varphi_j$ around $p_j$, it holds for any $\varphi_j$ around $p_j$ because the change of coordinates are holomorphic.
    In the general case it does
    not seem to hold that 
    $|K_n(x,y)| \leq Cn\mathrm{e}^{-c n d(x,y)^2}$
    as in 
    \eqref{eq:kernel-distance}, but it
    does hold that
    \[
    |K_n(x,y)|\leq Cn\mathrm{e}^{-c\sqrt{n}d(x,y)},
    \]
    for some constants $C,c>0$ and all $x,y\in S$,
    see for instance \cite[right after Th.~2.4]{MR2016088}.
    Here $d$ is the distance induced by any
    Riemannian metric on $X$ 
    (any two such such distances are equivalent
    in the compact surface $X$).
    This allows us to show Lemmas
    \ref{lem:lip-variance}, \ref{lem:Covariance} and
    \ref{lem:Independence}. As in the background charge universality section, the fluctuations for
    a single logarithmic singularity is missing.
    We expect this to be similar
    to the spherical ensemble since the support of
    the function
    can be chosen to be in any open set containing
    the point $x$ so that it should depend
    on the local behavior of
    the point process which is the same as the
    one of the spherical ensemble.
    
\subsection{Heuristics on the variance asymptotics}
\label{se:HeuristicsVarianceAsymptotics}

Since our approach to deal with
a single logarithmic singularity
involves a somewhat explicit calculation
 for Lemma \ref{le:Green:cum} with
 $\log(1+|z|^2)$ and for
Subsection \ref{sub:SingleSingularity} with
$\log|z|$, we would like to give some heuristic 
explanation to the variance asymptotics. 
So, in the context of Section \ref{se:Surface}
or Section \ref{se:Background}, we take 
a function $f$ on $S$
having logarithmic singularities at 
$p_1, \dots, p_m$ with weights 
$c_1,\dots,c_m$ and try to explain
\begin{equation}
\mathrm{Var}(L_n(f)) \underset{n\to\infty}{\sim} 
(c_1^2 + \dots + c_m^2)\frac{\log(n)}{4} .
\end{equation}
To simplify matters, we consider
$S = \overline{\mathbb C}$ so that the setting simplifies
to the one in
Section \ref{se:Background}.

\subsubsection{Reduction by localization to the singularities.}
	In order to localize, we consider a compactly supported smooth function $\chi: \mathbb D \to [0,1]$ satisfying $\chi(z) = 1$ for
	$|z| \leq  1/2$. If $f\in\mathcal{S}$, then we can choose neighborhood coordinates
	$\varphi_j:\mathbb D \to S$ whose images are 
    pairwise disjoint
	and	construct the functions $f_j: S \setminus \{p_1,\dots,p_m\} \to \mathbb{R}$ as
	\begin{equation}
    f_j(p) = \left\{ \begin{array}{ll} c_j\chi(\varphi_j^{-1}(p)) \log|\varphi_j^{-1}(p)| & \text{ if }
	p \in \varphi_j(\mathbb D)  			\\
	0 & \mbox{ if } p \notin \varphi_j(\mathbb D) \end{array}\right.  .
    \end{equation}
	The function $g=f-(f_1+\cdots+f_m)$ is smooth so that the random variable
    $L_n(g)-n\int g\mathrm{d}\nu$ converges in distribution as $n\to\infty$
    and its variance converges to the variance of its limit.
	Since we expect $\mathrm{Var}(L_n(f))$ behave logarithmically, the fluctuations of $L_n(g)$ will go to zero after normalization and we can look at $\mathrm{Var}(L_n(f_1+\dots+f_m))$. By construction, $f_1,\ldots,f_m$ have disjoint compact support, and $f_j$ 
    has a single logarithmic singularity
    at $p_j$ with weight $c_j$.
    
\subsubsection{Asymptotic de-correlation of singularities.}
    Since $f_1$ and $f_2$ have disjoint and compact supports, we have
    $\rho=\mathrm{dist}(\mathrm{supp}(f_1),\mathrm{supp}(f_2))>0$.
    Moreover, the off-diagonal exponential decay of the kernel \eqref{eq:kernel-distance} for the spherical ensemble gets substituted by
    $|K_n(p,q)|\leq Cn\mathrm{e}^{-c\sqrt{n}d(p,q)}$, for some constants $C,c>0$ and all $p,q\in S$,
    see for instance \cite[right after Th.~2.4]{MR2016088}. By proceeding as in Lemma 
    \ref{lem:Covariance}, this gives
   \begin{equation}
    |\mathrm{Cov}(L_n(f_1),L_n(f_2))|
	\leq C^2n^2\mathrm{e}^{-2c\rho\sqrt{n}}
	\Bigl(\int|f_1|\mathrm{d}\sigma\Bigr) 
	\Bigl(\int|f_2|\mathrm{d}\sigma\Bigr). 
	\end{equation}
    The integrals above are finite because $\log\left|\cdot\right|$ is integrable near $0$
	so that the covariance vanishes as $n\to\infty$. Then,
	\begin{equation}
    \mathrm{Var}(L_n(f_1+\cdots+f_m))
    \underset{n\to\infty}{\sim}
    \mathrm{Var}(L_n(f_1))+\cdots+\mathrm{Var}(L_n(f_m))    
	\end{equation}
    and the asymptotic variance problem is therefore further reduced to the single singularity case $m=1$.

\subsubsection{Reduction to variance of truncation.} 
    Let $f:S\setminus\{p\}\to\mathbb{R}$ be
    one of the $f_j$ so
    that its support is contained in the coordinate
    neighborhood
    $\varphi_j(\mathbb D)$ and we may
    rescale it so that $c_j=1$. The variance is
	\begin{equation}
	\mathrm{Var}(L_n(f)) 
    =\frac{1}{2}\int_{\mathbb{D}\times\mathbb{D}}(\chi(z)\log|z|-\chi(w)\log|w|)^2|K_n(z,w)|^2\mathrm{d}
    \sigma(z)\mathrm{d}\sigma(w).
    \end{equation}
	   To proceed, we use the truncation decomposition
	\begin{equation}
        \log|z| = \ell_n(z) +  r_n(z)
        \quad\text{where}\quad
        \ell_n(z) = \max\big(\log|z|,-\tfrac{1}{2}\log(n)\big)
        \quad\text{and}\quad
	      r_n(z) = \log|z| - \ell_n(z).
    \end{equation}
	We have $\ell_n(z)=\log|z|$ if $|z| \geq 1/\sqrt{n}$ while
	$\ell_n(z)=-\frac{1}{2}\log n$ if $|z| \leq 1 /\sqrt{n}$,
	which tells us that $r_n(z)=0$ if $|z| \geq 1/\sqrt{n}$.
	For the variance associated to $r_n$ we have
	\begin{equation}
	\int_{\mathbb D \times \mathbb D} 
	(\chi(z)r_n(z) - \chi(w)r_n(w))^2|K_n(z,w)|^2\mathrm{d}\sigma(z)\mathrm{d}\sigma(w)
	\leq 2\int_{\mathbb D}(\chi(z)r_n(z))^2K_n(z,z)\mathrm{d}\sigma(z),
	\end{equation}
	where in the last term we have used	that $K_n$ is a projection and we have discarded
	an integral of a negative term.	Now, we use that $K_n(z,z) \leq Cn$ for some $C>0$ 
	and that $\sigma$ has a bounded density on any fixed compact set of $\mathbb D$ to obtain
	\begin{equation}
	\int_{\mathbb D}
	(\chi(z)r_n(z))^2 K_n(z,z) \mathrm{d}\sigma(z)
	\leq  \widetilde C n \int_{|z| \leq \frac{1}{\sqrt n}} |r_n(z)|^2 \mathrm d^2 z 			
	= 2\pi \widetilde C n \int_0^{\frac{1}{\sqrt n}}(\log (\sqrt n r))^2 r\mathrm d r			
    =2\pi \widetilde C \int_0^1 s(\log s)^2 \mathrm d s<\infty.
	\end{equation}
    Therefore $\sup_{n\geq1}\mathrm{Var}(L_n(\chi r_n))<\infty$. It follows that if 
	$\mathrm{Var}(L_n(\chi\ell_n))\to+\infty$ as $n\to\infty$ then
	\begin{equation}
		\mathrm{Var}(L_n(f))
		\underset{n\to\infty}{\sim}
    	\mathrm{Var}(L_n(\chi\ell_n)).
     \end{equation}						
     This reduces the problem to the computation of the asymptotic variance for a sequence of bounded
     functions.

\subsubsection{Asymptotic variance of the sequence of truncated functions.} Recalling that
	\begin{equation}
	\mathrm{Var}(L_n(f))
    =
	\frac{1}{2}\int\Big(\int(f(z)-f(w))^2|K_n(z,w)|^2\mathrm d\sigma(z)\Big)\mathrm d\sigma(w),												\end{equation}
	we may follow the ideas of
   \cite[Theorem 5.8]{10.1007/978-3-030-01588-6_5} 
   to exploit the asymptotics
    \begin{equation}
      \frac{1}{n^2}\Big| K_n \Big(z + \frac{v_1}{\sqrt n} ,z + \frac{v_2}{\sqrt n}\Big) \Big|^2
	\xrightarrow[n \to \infty]{} C_z^2\mathrm{e}^{ -\pi C_z \sigma_z |v_1-v_2|^2}
    \quad\text{with}\quad
    C_z = \lim_{n \to \infty} \frac{1}{n} K_n(z,z)
    \end{equation}
	and where $\sigma_z$ is the density at
	$z$ of $\sigma$ with respect to Lebesgue measure \cite[Theorem 1.1]{MR1794066}. 
	We obtain %
	\begin{equation}
	\mathrm{Var}(L_n(f))
	\underset{n\to\infty}{\sim}
	\frac{1}{4}\int
	\Big(
	\frac{1}{ 2 \pi \sigma_w} \| \nabla f_w \|^2 \Big) \mathrm d \sigma(w)									
	= \frac{1}{4 \pi} \int \|\nabla f_w\|^2 \mathrm d^2 w
	\end{equation}
    holds even for a function $f$
    that depends nicely enough on $n$.
	So, if this asymptotics holds for $f=\chi\ell_n$ despite its dependency on $n$,
    then $\mathrm{Var}(L_n(\chi\ell_n))$ would behave like
	\begin{equation}
	\frac{1}{4 \pi} \int \|\nabla (\chi \ell_n)(w)\|^2 \mathrm d^2 w
	=\frac{1}{4 \pi} \int_{|w| \geq \frac{1}{2}} \|\nabla (\chi \log|\cdot|)(w)\|^2 \mathrm d^2 w	+ \frac{1}{4\pi} \int_{\frac{1}{\sqrt n}}^{1/2} \frac{1}{r^2} (2\pi r \mathrm d r)					
	=O(1) + \frac{1}{4}\log(n).
	\end{equation}

{
  \footnotesize
  \bibliographystyle{abbrvnat}
  \bibliography{radius}
}

\begin{figure}[p]
\centering
\includegraphics[width=.45\textwidth]{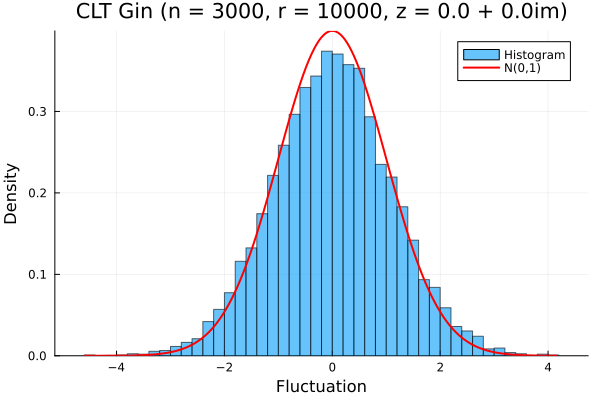}
\includegraphics[width=.45\textwidth]{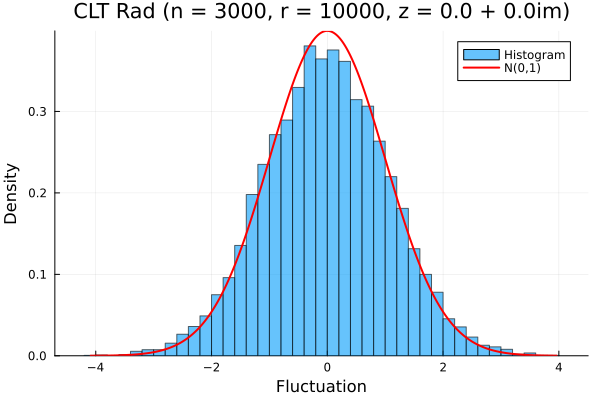}\\
\includegraphics[width=.45\textwidth]{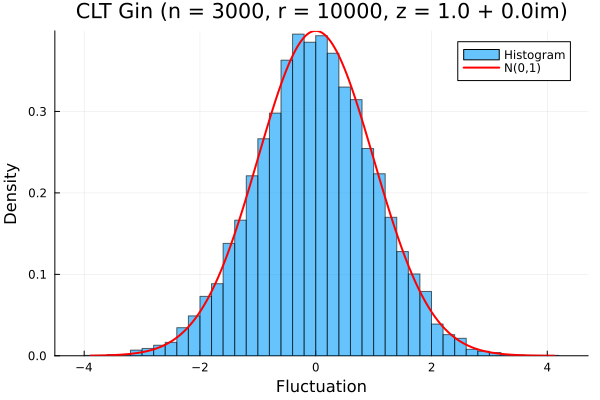}
\includegraphics[width=.45\textwidth]{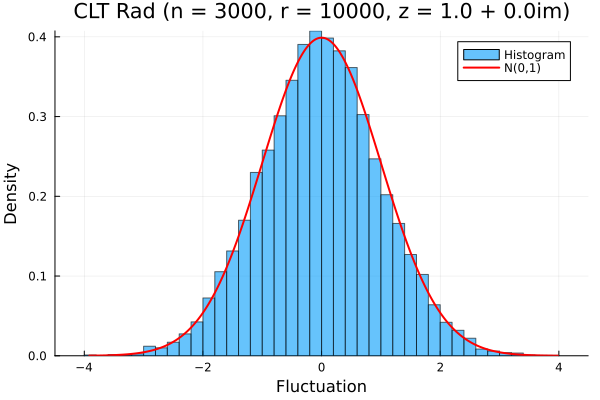}\\
\includegraphics[width=.45\textwidth]{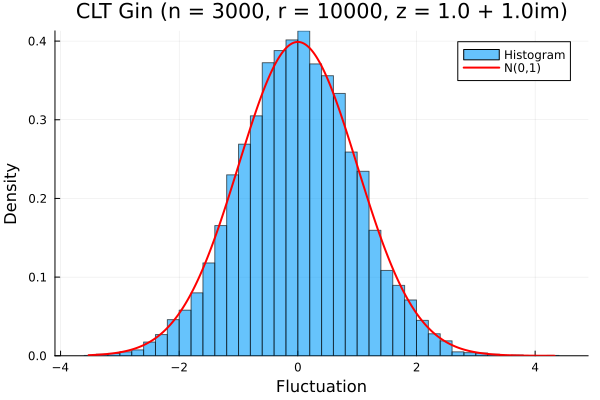}
\includegraphics[width=.45\textwidth]{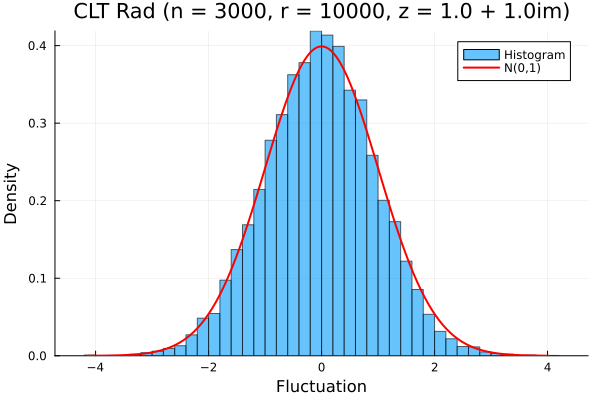}
\caption{\label{fi:LogPot:CLT}Numerical experiment : histogram of a sample of size $r$ of the left
  hand side of the CLT in Corollary \ref{co:LogPot:CLT}, for a large value of $n$, and
  three different values of $z$, together with the density of $\mathcal{N}(0,1)$. On the left, the entries of $A$ and $B$ are i.i.d.
  $\mathcal{N}_{\mathbb{C}}(0,1)$, while on the right they are i.i.d.\ uniform
  on $\{(\pm1,\pm1)\}/\sqrt{2}$. This illustrates the universality of the CLT
  beyond \ref{it:condensity} and \ref{it:condmom4}.}
\end{figure}

\begin{figure}[p]
  \centering
  \begin{tikzpicture}[scale=2.5]
    \draw[thick] (0,0) circle(1);

    \draw[->] (-1.2,0) -- (4,0) node[right] {};
    \draw[->] (0,-0.2) -- (0,1.4) node[above] {};

    \coordinate (N) at (0,1);                      %
    \coordinate (O) at (0,0);                      %
    \coordinate (P) at ({sqrt(2)/2}, {sqrt(2)/2}); %
    \coordinate (P') at ({sqrt(2)/2},0);           %
    \coordinate (P'') at (0,{sqrt(2)/2},0);        %
	\coordinate (Q) at ({1/(sqrt(2)-1)}, 0);       %

    \draw[dashed] (N) -- (Q);          %
    \draw[dashed] (P) -- (Q);      %
    \draw[dashed] (P) -- (P');         %
    \draw[dashed] (P) -- (P'');        %

    \fill (O) circle(0.04) node[below left] {$O$};
    \fill (N) circle(0.04) node[above right] {$N$};
    \fill (P') circle(0.04) node[above right] {$P'$};
    \fill (P'') circle(0.04) node[below right] {$P''$};
    \fill[blue] (P) circle(0.04) node[above right] {$P$};
    \fill[blue] (Q) circle(0.04) node[below] {$Q$};
  \end{tikzpicture}
  \caption{\label{fi:stereo}The point $Q\in\mathbb{R}$ is the image of
    $P\in\mathbb{S}^1\setminus\{N\}$ by the 1D stereographic projection $T$
    with respect of the north pole $N$. The image of $N$ is $\infty$. The
    Pythagoras Theorem for the triangle $ONQ$ gives $1+OQ^2=NQ^2$ since
    $ON=1$. The Thales Theorem for the alignments $QP'O$ and $QPN$ gives
    $OQ/OP'=NQ/NP$. Combining the two and eliminating $NQ$ gives
    $1+OQ^2=OQ^2 NP^2/OP'^2$, hence $OQ^2=OP'^2/(NP^2-OP'^2)$. The Pythagoras
    Theorem again, this time for the triangle $NPP''$, gives
    $NP^2=PP''^2+NP''^2=OP'^2+NP''^2$, therefore $OQ^2=OP'^2/NP''^2$, which is
    the 1D formula $T(x)=x_1/(1-x_2)$ for
    $(x_1,x_2)\in\mathbb{S}^1\setminus\{e_2\}$ with the Cartesian transcription
    $O=0$, $N=e_2$, $P=x$, $Q=T(x)$. Finally, the 2D formula
    $T(x)=(x_1,x_2)/(1-x_3)$ for
    $(x_1,x_2,x_3)\in\mathbb{S}^2\setminus\{e_3\}$ follows from the 1D formula
    by the coplanarity of $0$, $e_3$, $x$, and $T(x)$. See
    \cite{zbMATH07645442} for more in this spirit. More generally, in
    arbitrary dimension $d\geq1$, $T(x)=(x_1,\ldots,x_{d-1})/(1-x_d)$,
    $x\in\mathbb{S}^{d-1}\setminus\{e_d\}$.}
  \begin{tikzpicture}[scale=2.5]
    \draw[dotted, thick] (0,0) circle(1);

    \draw[dotted,->] (-1.2,0) -- (4,0) node[right] {};
    \draw[dotted,->] (0,-0.2) -- (0,1.4) node[above] {};

    \coordinate (N) at (0,1);                       %
    \coordinate (O) at (0,0);                       %
    \coordinate (P1) at ({sqrt(3)/2}, {1/2}); %
	\coordinate (Q1) at ({sqrt(3)}, 0);       %
    \coordinate (P2) at ({1/2}, {sqrt(3)/2});       %
	\coordinate (Q2) at ({1/(2-sqrt(3))}, 0);       %

    \draw[dotted] (N) -- (Q1);        %
    \draw[dotted] (N) -- (Q2);        %
    \draw[solid, very thick] (P1) -- (P2); %
    \draw[solid, very thick] (Q1) -- (Q2); %

    \fill (O) circle(0.04) node[below left] {$O$};
    \fill (N) circle(0.04) node[above right] {$N$};    
    \fill (P1) circle(0.04) node[above right] {$P_1$};
    \fill (Q1) circle(0.04) node[below] {$Q_1$};
    \fill (P2) circle(0.04) node[above right] {$P_2$};
    \fill (Q2) circle(0.04) node[below] {$Q_2$};    
  \end{tikzpicture}    
  \caption{The closer the chordal line segment $[P_1,P_2]$ is to the pole $N$, the greater the length of its stereographic image $[Q_1,Q_2]$. The metric splitting \eqref{eq:dists} reads $Q_1Q_2=\frac{1}{2} P_1P_2 /(\frac{1}{2} P_1N\times \frac{1}{2}P_2N)$.
  \label{fi:dists}}
\end{figure}

\begin{figure}[p]
    \centering
    \includegraphics[width=0.33\linewidth]{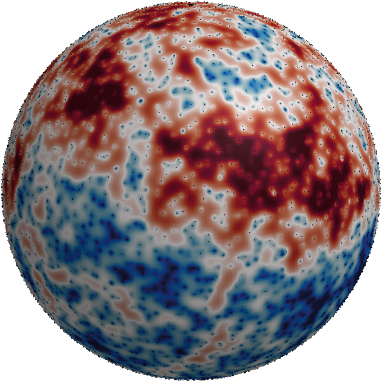}
    \caption{Numerical experiment : sample of the Green field ${(L_n(g_p))}_{p\in\mathbb{S}^2}$ with $n=5000$.}
    \label{fig:placeholder}
\end{figure}

\end{document}